\documentclass[12pt]{article}
\usepackage{amsmath,amsthm,amsfonts,amssymb,verbatim}
\usepackage[usenames]{color} 

\raggedbottom 
\textwidth 6in
\oddsidemargin .25in 
\evensidemargin.25in 
\textheight 8.5in 
\topmargin 0in 
\newtheorem{thm}{Theorem}[section]
\newtheorem{lemma}[thm]{Lemma}
\newtheorem{cor}[thm]{Corollary}

\newtheorem{prop}[thm]{Proposition}
\newtheorem{question}[thm]{Question}

\usepackage{pdfsync}
\usepackage{graphicx}

\theoremstyle{definition}  

\numberwithin{equation}{section}
\newtheorem{defn}[thm]{Definition}

\newtheorem{defns}[thm]{Definitions}
\newtheorem{notn}[thm]{Notation}
\newtheorem{snotn}[thm]{Standing Notation}

\newtheorem{exs}[thm]{Examples}
\newtheorem{remark}[thm]{Remark}

\theoremstyle{definition}
\theoremstyle{remark}

\newcounter{enumitemp}
\newenvironment{enumeratecontinue}{
 \setcounter{enumitemp}{\value{enumi}}
 \begin{enumerate}
 \setcounter{enumi}{\value{enumitemp}}
}
{
 \end{enumerate}
}

\newcommand\pref[1]{(\ref{#1})}


\DeclareMathOperator{\Fix}{Fix}
\DeclareMathOperator{\orb}{orb}

\DeclareMathOperator{\Diff}{Diff}

\DeclareMathOperator{\mcg}{MCG}

\DeclareMathOperator{\Int}{int}
\DeclareMathOperator{\Per}{Per}

\DeclareMathOperator{\diam}{diam}

\DeclareMathOperator{\fr}{fr}

\DeclareMathOperator{\cl}{cl}

\DeclareMathOperator{\dist}{dist}
\DeclareMathOperator{\Stab}{Stab}
\DeclareMathOperator{\core}{ core}

\newcommand{\cA}{{\cal A}}
\newcommand{\B}{{\cal B}}
\newcommand{\C}{{\cal C}}

\newcommand{\U}{{\cal U}}
\newcommand{\R}{\mathbb R}
\newcommand{\E}{{\cal E}}

\newcommand{\cP}{{\cal P}}
\newcommand{\cR}{{\cal R}}
\newcommand{\cS}{{\cal S}}
\newcommand{\cT}{{\cal T}}

\newcommand{\T}{\mathbb T}
\newcommand{\W}{{\cal W}}
\newcommand{\Z}{\mathbb Z}

\def\RH{RH(W,\partial_+ W)}

\def\V{{\cal V}}

\def\tiM{H}
\def\ti{\tilde}
\def\sinfty{S_{\infty}}
\def\sl3z{SL(3, \mathbb Z)}

\def\G{{\cal G}}

\def\L{{\cal L}}
\def\M{{\cal M}}

\def\cS{{\cal S}}

\def\RH{RH(W,\partial_+ W)}

\newcommand{\mrA}{\mathring A} 

\newcommand{\newDiff}{\Diff_{\mu}(S^2,P)}
\newcommand{\newDiffA}{\Diff_{\mu}(A^2,P')}

\title{Entropy zero area preserving diffeomorphisms of $S^2$}
\author{John Franks,\thanks{Supported in part by NSF grant DMS0099640.}\ \ 
Michael Handel\thanks{Supported in part by NSF grant DMS0103435.}}

\begin{document}
\maketitle
\begin{abstract}
In this paper we formulate and prove a structure theorem for area
preserving diffeomorphisms of genus zero surfaces with zero entropy and at least
three periodic points.  As one application we relate the existence of
faithful actions of a finite index subgroup of the mapping class group
of a closed surface $\Sigma_g$ on $S^2$ by area preserving
diffeomorphisms to the existence of finite index subgroups of bounded
mapping class groups $\mcg(S, \partial S)$ with non-trivial first
cohomology. In another application we show that rotation number is defined
and continuous at every point of a zero entropy area preserving diffeomorphism  of the annulus.
\end{abstract}

\section{Introduction and Statement of Results} 

Surface diffeomorphisms with
positive entropy have been studied from both the hyperbolic dynamical
systems point of view and the Nielsen-Thurston point of view.  In this
paper we formulate and prove a structure theorem for 
area preserving diffeomorphisms of genus zero surfaces with zero entropy. The area preserving assumption is a natural one arising in many dynamical systems and it is an essential ingredient for most of the dynamical structure we investigate here. The genus zero assumption is made to simplify the problem.  There should be a similar theory for higher genus and much of what we show here may well be true for Hamiltonian diffeomorphisms in higher genus. 

If $N$ is a genus zero surface with finitely many boundary components and $F': N \to N$ is a diffeomorphism, then collapsing each component of $\partial N$ to a point produces a homeomorphism $F : S^2 \to S^2$  which restricts to
diffeomorphism on the complement of a finite set.
 For almost all of our analysis we can work directly with $F$ instead of $F'$ and can even forget that $F'$ is smooth but there are two (very important) steps  (see Section~\ref{sec: normal form}  and Lemma~\ref{no doubling}) when we must remember $F'$ and make use of its smoothness.  With this in mind we  make the following definitions.
  
Let $\mu$ be  a measure  on $S^2$ that is topologically conjugate to Lebesgue measure.  A homeomorphism that preserves $\mu$ is said to {\em preserve area}.
Let  $P \subset S^2$ be  a (possibly empty) finite  set and  let $N$ be the  genus zero surface obtained from $S^2$ by blowing up each element of $P$ to a boundary circle.  Inverting this process produces a  quotient map  $\pi_P : N \to S^2$  that restricts to a diffeomorphism from $\Int N$ to $S^2 \setminus P$ and that maps each component of $\partial N$ to an element of $P$.

 Define  $\newDiff$ to be the group of  {\em orientation preserving homeomorphisms of $S^2$ that preserve $\mu$}, that  fix each element of $P$ and for which there is a   $C^\infty$ diffeomorphism   $F':N \to N$ such that $F\pi_P = \pi_P F'$.  Note that if $P = \emptyset$ and $\mu$ is a smooth volume form, then $\newDiff$ is just the group $\Diff_\mu(S^2)$ of $C^\infty$ diffeomorphisms of $S^2$ which preserve $\mu.$

There are certain elements of $\newDiff$ which are trivial from
the point of view of their periodic points.  These include $F \in
\newDiff$ of finite order and $F$ for which $\Per(F)$ contains
only two points.  It is known that an area preserving $F$ must have at
least two fixed points (see \cite{simon:index}).  In the case that
$\Per(F)$ contains exactly two points those points must be fixed.  Blowing up the fixed points as above produces a homeomorphism $F'$ of the closed annulus with every point  having the same irrational
rotation number (see Theorem~\ref{thm: translation interval}). 
This is an interesting topic to
investigate but is not addressed in this article. For the
remainder of this paper we make the following:
\vspace{.1in}

\noindent {\bf Standing Hypothesis:} {\em Assume that $F\in \newDiff$ has infinite order and entropy zero and
that $\Per(F)$ contains at least three points.}

\vspace{.1in}

Suppose that $F \in \newDiff)$ has zero topological entropy and
that $\Fix(F)$ is the set of fixed points for $F$.  To enhance the
topology of the ambient surface, we consider $\M = S^2 \setminus
\Fix(F)$ and $f=F|_{\M} :\M \to \M$.

Disks in this paper are topological objects; they are not assumed to
be round. Every $x \in \M$ has a neighborhood $B$ that is a {\em free
  disk}, meaning that $B$ is an open disk and that $f(B) \cap B
=\emptyset$.  A very weak notion of recurrence for a point $x \in \M$
is to require that there be $n \ne 0$ and a free disk $B$ that
contains both $x$ and $f^n(x).$ We will call such points {\em free
  disk recurrent} and denote the set of these points by $\W_0.$ Each
periodic point is free disk recurrent; a non-periodic $x$ is free disk
recurrent if and only if there is a free disk $B$ which intersects
 the orbit of $x$ in at least two points.  Clearly, if
either the $\alpha$-limit set $\alpha(F,x)$ or the $\omega$-limit set $\omega(F,x)$ contains a point which is not in
$\Fix(F)$ then $x \in \W_0.$ In particular the set $\W_0$ contains the
full measure subset of $\M$ consisting of birecurrent points. The set
$\W_0$ is open and dense in $\M$. It is technically useful to work
with sets that equal the interior of their closure so we define the
larger set $\W$ of {\em weakly free disk recurrent} points as follows.
(We expect that $\W_0 \ne \W$ in general but have not worked out a
specific example.)

For sets $A \subset X$ we denote the {\em interior} of $A$ with
respect to $X$ by $\Int_X(A)$ and the {\em closure} of $A$ with
respect to $X$ by $\cl_X(A)$.  If $X$ is understood then we drop it
from the notation and simply write $\Int(A)$ and $\cl(A)$.

\begin{defn}\label{defn: omega-free}
A point $x \in \M$ is {\em free disk recurrent} for $f$ provided there
exists $n \ne 0$ and a free disk $B$ that contains both $x$ and
$f^n(x).$ The set of free disk recurrent points in $\M$ is denoted
$\W_0.$ If $W_0$ is a component of $\W_0$ and $x \in \M$ is in
$\Int_{\M}(\cl_{\M}(W_0))$, then we say that $x$ is {\em weakly free disk
  recurrent}. The set of weakly free disk recurrent points in $\M$ is
denoted
$\W$. 
\end{defn}



The main building block in our structure theorem is a partition of
$\W$ into countably many disjoint $f$-invariant annuli.

\begin{thm} \label{thm: annuli}Suppose $F \in \newDiff$ has
entropy zero, infinite order and at least three periodic points.
Let $f = F|_{\M}$ where $\M = S^2
\setminus \Fix(F)$.   Then there is a countable collection $\cA$ of pairwise disjoint
open $f$-invariant annuli such that 
\begin{enumerate}
\item  $\U = \bigcup_{U \in \cA} U$ is the set $\W$  of weakly free disk 
recurrent points for $f$. 
\item $\cA$ is the set of maximal $f$-invariant open annuli in $\M$.
\item   If $ z \not \in \U  $, there
are components $F_+(z)$ and $F_-(z)$ of $\Fix(F)$ so that $\omega(F,z)
\subset F_+(z) $ and $\alpha(F,z) \subset F_-(z)$.
\item For each $U \in \cA$ and each component $C_{\M}$ of the frontier of
$U$ in $\M$, $F_+(z)$ and $F_-(z)$ are independent of the choice of $z
\in C_{\M}$. 
\end{enumerate}
\end{thm}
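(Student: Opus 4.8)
The plan is to take $\cA$ to be the set of connected components of $\W$. Since $\W$ is open (Remark~\ref{rem: free disk}) it has at most countably many components, so $\cA$ is countable and $\bigcup_{U\in\cA}U=\W$, which is (1). It then remains to prove: \emph{(i)} each $U\in\cA$ is $f$-invariant; \emph{(ii)} each $U\in\cA$ is an open annulus; and \emph{(iii)} the limit-set assertions (2)--(3). Two reductions are essentially free. First, $f=F|_\M$ is a fixed-point-free orientation-preserving homeomorphism of the open surface $\M=S^2\setminus\Fix(F)$ (if $x\in\M$ had $F(x)\in\Fix(F)$, then $F$ would map both $x$ and $F(x)$ to $F(x)$, contradicting injectivity), so Brouwer theory and the translation-arc machinery apply, and, by Remark~\ref{rem: free disk}, the $f$-birecurrent points are dense in $\M$, hence in each $U\in\cA$. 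Second, the frontier of a component is tame: if $z$ lies in the frontier $\fr(U)$ of $U$ in $S^2$ — nonempty since $\Fix(F)\ne\emptyset$ forces $U\subsetneq S^2$ — then $z$ lies in no component of $\W$, for such a component would be an open neighbourhood of $z$ meeting $U$, hence equal to $U$, forcing $z\in U$; so $z\notin\W_0$, and the contrapositive of the last sentence of Remark~\ref{rem: free disk} gives $\omega(F,z)\cup\alpha(F,z)\subseteq\Fix(F)$. Thus the only missing content of (2) is that each of $\omega(F,z)$ and $\alpha(F,z)$ lies in a \emph{single} component of $\Fix(F)$.

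For (ii): were $S^2\setminus U$ connected, then $U$ would be a simply connected open subsurface of $S^2$, i.e.\ a plane; granting (i), $f|_U$ would then be a fixed-point-free orientation-preserving homeomorphism of the plane, which by Brouwer's plane translation theorem has no non-wandering points, contradicting density of birecurrent points in $U$. So $S^2\setminus U$ is disconnected, and the real point is that it has \emph{exactly} two components, equivalently that $U$ is an annulus. This, together with (i) and the single-component assertion of (2), is where entropy zero enters. The idea is to use the oriented translation-arc structure of $\W$ developed above: inside a component $U$ of $\W$ the dynamics is organized into an essentially annular, Reeb-like picture in which forward orbits of frontier points funnel into one component of $\Fix(F)$ and backward orbits into another. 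Any deviation from this — a third complementary continuum of $U$, an $f$-nontrivial cycle among the complementary continua of a component, or a frontier orbit accumulating on two components of $\Fix(F)$ — can be converted, using the recurrence packaged in $\W$, into a homotopically nontrivial periodic disk chain for $f$, hence a topological horseshoe for $F$; by Franks' disk-chain fixed-point theorem together with Katok's characterization of positive entropy (or directly), this would make the entropy of $F$ positive. Excluding all of these yields (i), (ii), and the single-component part of (2) together — (i) in particular because, once the complementary continua and their associated fixed-point-set components are under control, orientation of $S^2$ and the fact that each component of $\Fix(F)$ is pointwise fixed leave no room for an $f$-cycle of length $>1$.

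For (iii): by the above, each $z\in\fr(U)$ has $\omega(F,z)$ contained in a single component $F_+(z)$ of $\Fix(F)$ and $\alpha(F,z)$ in a single component $F_-(z)$. The claim is that $z\mapsto F_+(z)$ and $z\mapsto F_-(z)$ are locally constant on the frontier of $U$ in $\M$, hence constant on each of its components $C_\M$. If $z,z'$ are close in that frontier, uniform continuity of $F$ on the compact surface $S^2$ keeps the finite orbit segments $F^k(z)$ and $F^k(z')$ within any prescribed distance for any prescribed length of time; and the entropy-zero funnelling of a non-recurrent orbit into a single component of $\Fix(F)$ — part of the structure already established for (2) — is stable under such $C^0$ perturbation, so $F_+(z')=F_+(z)$ and $F_-(z')=F_-(z)$ for $z'$ near $z$. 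This gives (3).

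The step I expect to be the main obstacle is the heart of (ii): turning ``$F$ has entropy zero'', i.e.\ ``$F$ has no topological horseshoe'', into the rigid conclusions that a component of $\W$ has exactly two complementary continua and that its frontier orbits funnel into single components of $\Fix(F)$. The difficulty is not the topology itself but extracting, from the merely weak recurrence that defines $\W$, an honest periodic disk chain for $f$ — and verifying that it is homotopically nontrivial, so that it genuinely forces a horseshoe — whenever the topology of a component or the permutation $f$ induces on components is nontrivial. Setting up the translation-arc chains of $f$ with enough control to carry this out is where essentially all of the preceding machinery is deployed.
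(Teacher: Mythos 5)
Your high-level plan---define $\cA$ to be the set of components of $\W$, making (1) trivial, and then prove each component is an $f$-invariant annulus with the stated frontier behaviour---is a sensible reorganization, and in the end it produces the same $\cA$ the paper does (the paper's $U(\gamma)$'s and $U(P)$'s are shown to be pairwise disjoint, open, connected, with union $\W$, hence precisely the components of $\W$). But the plan is wrong where it is concrete, and absent where the work is, and it misses both of the specific ways the entropy-zero hypothesis actually enters the paper's argument.

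First, your argument for (3) is incorrect. You propose that $z\mapsto F_\pm(z)$ is locally constant on $\fr(U)\cap\M$ because ``the funnelling of a non-recurrent orbit into a single component of $\Fix(F)$ is stable under $C^0$ perturbation'' of the initial point. This is false in general: two arbitrarily close points can have forward orbits that shadow each other for a finite time and then diverge into different components of the fixed-point set (already at a hyperbolic saddle the two separatrices go to different sinks). Finite-time closeness of orbit segments says nothing about which component of $\Fix(F)$ the $\omega$-limit set lands in. The paper instead lifts a frontier component $\ti Z$ of $\ti U$ to the universal cover $H$, shows (via the planar-topology Lemma~\ref{planar topology} and a transitivity argument for translates of $\ti U$) that the closure of $\ti Z$ meets $\sinfty$ in exactly two points $a,b$, and then uses a free-disk isotopy argument to prove $\omega(\ti f_{\ti C},\ti z)=b$ and $\alpha(\ti f_{\ti C},\ti z)=a$ for \emph{every} $\ti z\in\ti Z$ simultaneously. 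That is a global, not a perturbative, argument.

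Second, and more fundamentally, your proposed use of entropy zero---``any deviation can be converted into a homotopically nontrivial periodic disk chain for $f$, hence a topological horseshoe''---is not established, and it does not match how the hypothesis is used in the paper. A periodic disk chain yields a fixed point (Franks), not a horseshoe; homotopical nontriviality does not give a horseshoe either without substantially more structure. The paper uses entropy zero in exactly two places, neither of which your sketch touches. (a) Via Theorem~1.2, Lemma~6.3, Remark~6.4 of \cite{fh:periodic}, entropy zero implies $F$ is isotopic rel $\Fix(F)$ to a composition of Dehn twists along a finite collection $\cR$; this is the structural input that makes the whole domain/reducing-arc machinery available. (b) Via Lemma~\ref{no horseshoes}, entropy zero (through Newhouse's bound on growth of length of arcs) forbids the fitted-family intersection operator from having a branch, which is what makes Lemma~\ref{lemma:fitted family} and hence the reducing arcs of Lemma~\ref{reducible} exist. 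Your plan replaces both with a single unproved (and I believe unprovable, as stated) implication.

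Third, the actual content that your plan defers---showing components of $\W$ are annuli, are $f$-invariant, and have the funnelling property---is precisely what Sections~\ref{sec: endpoints}--\ref{sec: annuli} and~\ref{sec: renormalization} of the paper do, via hyperbolic structures, annular covers $f_\sigma:A_\sigma\to A_\sigma$, home domains, the $\ti\alpha,\ti\omega$ endpoint maps on $\sinfty$, and the finite-area/projection estimates of Corollary~\ref{cor: frontier-bound} and Lemma~\ref{U is an annulus}. None of this is recoverable from a periodic-disk-chain argument alone, because disk chains certify fixed points, whereas the argument here must certify that a specific open set is an annulus, is unbounded in the universal cover, and is preserved by the right covering translation; the tool that does this in the paper is the interaction of area preservation with the projection $\pi$ to a geodesic axis, not a horseshoe dichotomy. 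So while I agree with you that the ``heart'' is at (ii), I do not think the route you propose would carry you through it.
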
 

We emphasize the fact that replacing $F$  by an iterate $F^q$  changes   $\M$ and hence changes the annuli of Theorem~\ref{thm: annuli}.

\begin{remark}\label{first centralizer invariance} If $h:S^2 \to S^2$
commutes with $F$ then it preserves $\W$ and hence permutes the open
annuli in the family $\cA.$
\end{remark}


To see how the elements of $\cA$ arise, consider the special case that
$F$ is the time one map of an area preserving flow $\phi_t$.  Given $x
\in \M,$ choose a free disk neighborhood $B$ of $x$ which is also a
flow box for $\phi_t$.  It is an easy consequence of the
Poincare-Bendixson theorem that if the flow line for $\phi_t$ that
contains $x$ returns to $B$ it closes up into a simple closed curve
$\rho_x$.  In particular, in this case the subsets $\W_0$ and $\W$ are
equal and coincide with the union of the periodic orbits of the flow
which lie in $\M$.  Denote {\em the isotopy class of $\rho_x$} in $\M$
by $[\rho_x]$.  It is clear that $\rho_x$ depends only on the orbit of
$x$ and not $x$ itself and that if $z \in B$ is sufficiently close to
$x$ then $\rho_x$ and $\rho_z$ cobound an annulus in $\M$; in
particular $[\rho_x] =[ \rho_z]$.  In this case $U=\{ y \in \W:
[\rho_y] =[ \rho_x]\}$ is the element of $\cA$ that contains $x$.

For a second special case suppose that $f$ is isotopic to the
identity.  Given $x \in \W_0$, choose $B$ and $n$ as in the definition of
  free disk  recurrent.  If $f_t:\M \to \M$ is an isotopy between $f_0 = $
identity and $f_1 = f$ then the path $\mu_x \subset \M$ defined by
$\mu_x(t) = f_t(x)$ connects $x$ to $f(x)$.  The path $\mu_x \cdot
\mu_{f(x)} \cdot \ldots \cdot \mu_{f^{n-1}(x)}$ can be closed by
adding a path in $B$ connecting $f^n(x)$ to $x$.  Up to homotopy in
$\M$, this closed path is a multiple of some non-repeating closed path
$\rho_x$.  Using the hypothesis that $F$ has entropy zero, one can
show (see \cite{fh:periodic}) that the homotopy class of $\rho_x$ is
represented by a simple closed curve (also written $\rho_x$) that is
independent of $B, n$ and the choice of isotopy $f_t$.  It is easy to see that if
$z \in B$ is sufficiently close to $x$ then $[\rho_x] =[ \rho_z]$.  As
in the previous case, $U=\{ y \in \W: [\rho_y] =[ \rho_x]\}$ is the element of $\cA$ that contains $x$.

In the general case, we make use of the fact (see section~\ref{sec: normal form})  that $f$ is isotopic
to a composition of Dehn twists along a finite set of simple closed
curves $\cR$. 
Cutting along the elements of $\cR$ produces a
decomposition of $\M$ into subsurfaces $\M_i$ such that $f|_{\M_i} : \M_i
\to \M$ is isotopic to the inclusion $\M_i\hookrightarrow \M$.  The main
technical work in this proof is showing that each $\M_i$ is realized,
in a suitable sense, by an $f$-invariant subsurface; see 
 section~\ref{sec:domain covers}.   One then defines $\cA$
in a fashion similar to the second special case.

\vspace{.1in}

Theorem~\ref{thm: annuli} can be applied to $F^q$ for each   $q\ge 2$.  
This gives a countable collection $\cA(q)$ of pairwise disjoint open  $F^q$-invariant annuli that (see Proposition~\ref{prop: V in U}) refines  $\cA$ in the sense that   each $V_j \in \cA(q)$ is contained in some $U_i \in \cA$.  This {\em renormalization process}
can be iterated with  $\cA(q)$  playing the role of $\cA$ and
so on. The $V_j$'s may be essential or inessential in $U_i$.  In the
limit, the former lead to twist-map-like behavior and the latter to
solenoid-like behavior when they are nested infinitely often. 
It is important to note that replacing
$F$ with $F^q$ changes the set of fixed points and hence changes $\M$
and changes the free disk recurrent points of $\M$.
 
We are interested in partitioning $cl(U)$ into sets
analogous to the periodic orbits in the case of the time one map of a
flow.  In particular we would like the rotation number to be constant
on these sets.  The two components of the frontier of $U$ can be
somewhat problematic since such a component could be a single point or
could be a complicated fractal. To deal with this issue we introduce the
{\em annular compactification } $f_c: U_c \to U_c$ of
$f: U \to U$; see Notation~\ref{notn: annular comp} and the paragraph preceding it. The compactification of an end described there is either the
prime end compactification or the compactification obtained by blowing
up a fixed point, whichever is appropriate.

We are now prepared to state the second of our main results. It
describes the finer structure of the dynamics of $f$ on one of the
annuli in $\cA$. The proof is based on renormalization and the details
are in Section~\ref{sec: renormalization}.

\begin{thm}\label{thm: C(x)}
Suppose $F \in \newDiff$ has entropy zero, has
infinite order and at least three periodic points. Let $f =
F|_{\M}$ where $\M = S^2 \setminus \Fix(F)$ and let $\cA$ be
as in Theorem~\ref{thm: annuli}.  For $U \in \cA$, let $f_c
:U_c \to U_c$ be the annular compactification of $f|_U:U \to
U$.  Then
\begin{enumerate}
\item   The rotation number $\rho_{f_c}(x)$  is defined and continuous at every  $x \in U_c$.
\item If $\Fix(F)$ contains at least three points then
$\rho_{f_c}$ is non-constant.
\item  {\em If $C$ is a component of a level set of $\rho_{f_c}$ 
then $C$ is $F$-invariant.  If $C$ does not contain a component
of $\partial U_c$ then it is essential in $U$, meaning that $U_c \setminus C$   has two components each containing a component of $\partial U_c$. }
 \end{enumerate}
\end{thm}

The components $C$ of the level sets of $\rho_{f_c}$ in
Theorem~\ref{thm: C(x)} are the generalizations of the
closed orbits foliating $U$ in the special case that $F$ is
the time one map of a flow.  Of course in the general case
$C $ can be considerably more complicated.  The main example
constructed in \cite{han:pseudo-circle} shows $C$ can be a
pseudo-circle.  It is also possible for $C$ to have
interior.

A heuristic picture of one possibility 
in the  case that $\rho_f|_C$ is rational is an essential  ``necklace'' 
in $U$ consisting of a periodic orbit of  saddle periodic points
each joined to the next by a stable manifold (which is the unstable
manifold of the next) and by an unstable manifold (which is the stable
manifold of the next). This pair, stable and unstable, bound a ``bead'',
an open disk.  The diffeomorphism $f$ permutes the beads and has 
a periodic orbit with one point in each bead.  The set $C$ containing any
$x$ in one of the beads will be the entire necklace.  For such a $C$
there is an $n$ such that $f^n$ will fix each bead and each saddle
point joining them.

Our first application concerns area preserving diffeomorphisms of the closed annulus $A$.   For expected future applications, we state our theorem in a more general context  and  then state the annulus result as a corollary.    

Suppose that $P$ has two preferred elements $p_1,p_2$ and that  $P' = P \setminus \{p_1,p_2\}$.  
  If $H : A \to A$ is the homeomorphism of the closed annulus obtained from some $F\in \newDiff$ by blowing up $p_1$ and $p_2$ then we write $H \in \newDiffA$.   Note that if $P = \{p_1,p_2\}$ then $\newDiffA$ is the group of area preserving $C^\infty$ diffeomorphism of the closed annulus $A$.  
  
  \begin{thm}\label{annulus with singularities}  For each $H \in \newDiffA$ with entropy zero, the rotation number $\rho_H(x)$ is defined and continuous at each $x \in A$. 
\end{thm}

\begin{cor} \label{annulus}  Suppose that $H:A \to A$ is an area preserving $C^\infty$ diffeomorphism of the closed annulus $A$.  If $H $ has entropy zero   then the rotation number $\rho_H(x)$ is defined and continuous at each $x \in A$. 
\end{cor}




For our next application, recall
 that a group $G$ is   {\em indicable} if there exists a
non-trivial homomorphism $G \to \Z$.  For finitely generated groups
this is equivalent to $H^1(G, \Z) \ne 0$ and equivalent to the
abelianization of $G$ being infinite.  If a finite index subgroup of
$G$ is indicable then we say that $G$ is {\em virtually indicable.}

For $F\in \Diff_{\mu}(S^2)$, denote the centralizer of $F$ in
$\Diff_{\mu}(S^2)$ by $Z(F)$.  As an application of Theorem~\ref{thm:
annuli} and Theorem~\ref{thm: C(x)} we prove

\begin{thm}\label{thm: ent0}
  If $F \in \Diff_{\mu}(S^2)$ has infinite order then each finitely
  generated infinite subgroup $H$ of $Z(F)$ is virtually indicable.
\end{thm}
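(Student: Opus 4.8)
The plan is to use the two structure theorems to reduce the problem to finding a nontrivial homomorphism from a finite-index subgroup of $H$ to $\Z$, exploiting the fact that every $h \in Z(f)$ permutes the annuli in the family $\cA$ (Remark~\ref{first centralizer invariance}) and, on each annulus, respects the rotation-number structure of Theorem~\ref{thm: C(x)}. Since $f$ has infinite order and $\Diff_\mu(S^2)$ contains no infinite-order element whose periodic set is just two points (the degenerate isometry-like case), we may assume $\Fix(F)$ has more than two points, so $\M$ is a nontrivial surface and $\cA$ is a genuinely infinite (or at least nonempty and ``interesting'') collection. First I would observe that $H$ acts on the countable set $\cA$, and also on the (countable or uncountable, but canonically structured) collection of pieces $\C$; passing to a finite-index subgroup $H'$ of $H$, I would like to fix one annulus $U \in \cA$ setwise, or at least stabilize enough combinatorial data to build a numerical invariant.

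The core of the argument should be: for a fixed $f$-invariant annulus $U \in \cA$ and its compactification $U_c$ with extension $f_c$, the pair of boundary rotation numbers $\rho(\partial U_c)$ is a conjugacy invariant. Any $h \in Z(f)$ that preserves $U$ conjugates $f_c$ to itself on $U_c$, hence either fixes each boundary circle or swaps them; after passing to an index-$2$ subgroup it fixes each. On a fixed boundary circle, $h_c$ is a circle homeomorphism commuting with the rotation-number-$\rho$ map $f_c|_{\partial}$. The rotation number gives a homomorphism $\tau: \Stab(U) \cap Z(f) \to \R$ by $h \mapsto \rho(h_c|_{\partial_0 U_c})$ — rotation number is additive under composition when all maps commute with a fixed minimal-type rotation, or more robustly one uses the translation-number quasimorphism on the universal cover, which restricts to a genuine homomorphism on the centralizer of a map with irrational (or suitably generic) rotation number. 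If this homomorphism is nontrivial on a finite-index subgroup of $H$, we are done. If it is trivial — e.g.\ $h$ acts on $\partial_0 U_c$ with rotation number $0$, or the boundary rotation numbers are rational — I would instead look at the interior: Theorem~\ref{thm: C(x)} says $\rho_f$ descends to a continuous monotone function on $U$ with a canonical extension to $cl(U)$, and $h$ must preserve this function, hence preserves each level set $C$; then one extracts a translation-number homomorphism from the action of the stabilizer of a generic interior circle $C$, or from the action on the interval of rotation values $\rho_f(U) \subset \R/\Z$, which $h$ preserves setwise.

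The remaining case is when \emph{every} such rotation-number homomorphism vanishes on a finite-index subgroup of $H$, i.e.\ $H$ acts on each $U_c$ ``with trivial rotation'' and preserves every piece $C$. In that situation I would argue that $H$ (virtually) acts on some $cl(U)$ essentially by a one-parameter-group-like family, or that some $h \in H$ is, on each $U$, isotopic rel the canonical structure to the identity, and then build the homomorphism to $\Z$ from the winding/displacement of orbits: fix a point $x$ in a free disk $B \subset U$ and track the homotopy class of the arc from $x$ to $h(x)$ closed up in $B$, as in the ``second special case'' discussion of Theorem~\ref{thm: annuli}; this assigns to $h$ an element of $\pi_1$ of the annulus $U$, i.e.\ an integer, and commutativity with $f$ plus the entropy-zero hypothesis (via \cite{fh:periodic}) should make this assignment a well-defined homomorphism $H' \to \Z$. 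One must rule out the possibility that $H'$ maps trivially under all of these constructions simultaneously; the point will be that if $h$ has infinite order in $H$ and commutes with the infinite-order $f$, then on some invariant annulus $h$ cannot be ``trivial'' in every sense at once without contradicting infinite order — either its boundary rotation number, or its interior displacement, or its combinatorial action on $\cA$ or $\C$, must be unbounded along powers, and that unboundedness is precisely an indicability statement.

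The main obstacle I expect is the bookkeeping of the finitely many ``bad'' cases — rational versus irrational boundary rotation numbers, annuli that $h$ permutes nontrivially, interior pieces $C$ with nonempty interior on which $\rho_f$ is locally constant — and, crucially, showing that at least one of the several candidate homomorphisms ($h \mapsto$ boundary rotation number, $h \mapsto$ interior displacement integer, $h \mapsto$ combinatorial displacement in $\cA$) is nontrivial on a finite-index subgroup; this is a ``pigeonhole across all invariant annuli'' argument that must be made uniform, since a priori each individual $h$ could be nontrivial on a different annulus. Pinning down a single finite-index subgroup and a single $U$ (or a single well-defined $\R$- or $\Z$-valued invariant) that works for all of $H'$ at once is where the real work lies, and it will likely require the refinement/renormalization apparatus ($\cA(q)$ refining $\cA$, Proposition~\ref{prop: V in U}) to handle the solenoid-like infinitely-nested inessential case.
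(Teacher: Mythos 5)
The high-level strategy (act on the countable set $\cA$, pass to a finite-index stabilizer $H'$ of a single $U$, and extract an $\R$-valued homomorphism from rotation data on $U$) is aligned with the paper, but the concrete mechanism you propose does not close the argument, and there is a genuine gap at the crucial step.

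Your Plan A is to read off a homomorphism $h \mapsto \rho(h_c|_{\partial_0 U_c})$ from the rotation number on a boundary circle of $U_c$. This only yields a homomorphism when the boundary dynamics are of irrational type, and by Proposition~\ref{intermediate}(4) each boundary circle of $U_c$ corresponding to a non-singular end actually has a fixed point, i.e.\ rotation number $0$. So in the main case the boundary rotation number does not give you a homomorphism, and Plan A collapses exactly where you need it. The paper sidesteps this by working not with boundary rotation numbers at all, but with the \emph{mean} rotation number $\rho_\mu$ on a one-end prime-end compactification $\bar A_n$ of a sub-annulus $V_n^+$ lying on one side of the level set $C(x)$ for an \emph{irrational} interior value $\lambda = \rho_F(x)$. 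Because $\rho_\mu$ is defined by integrating a displacement cocycle against the invariant area form, $\phi_n : h \mapsto \rho_\mu(\bar h|_{\bar A_n})$ is automatically a homomorphism $H' \to \R/\Z$ for every $n$, with no hypothesis on the dynamics; this is the tool your proposal lacks.

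Your Plan B/C (interior ``displacement integer'', combinatorial displacement in $\cA$, pigeonhole over annuli) is, as you concede, unformalized, and the closing claim that ``unboundedness is precisely an indicability statement'' is not correct: an unbounded quasimorphism does not produce a homomorphism to $\Z$, and indicability requires an actual homomorphism. The paper instead runs the argument to a contradiction by assuming $H$ has no finite-index indicable subgroup: then all the $\phi_n$ vanish on the kernel $H''$ of the abelianization map of $H'$, so by Theorem~2.1 of \cite{franks:openann} each $\bar h \in H''$ has a fixed point in $\bar A_n$; letting $n \to \infty$ and using that $\bar f|_{\partial^+ A}$ has irrational rotation number with a unique minimal set, one obtains a common fixed prime end for $H''$, and then Corollary~\ref{cor: frontier gfp} converts this into a genuine global fixed point in $C(x)$. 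Varying $\lambda$ over infinitely many irrationals gives an accumulation point of $\Fix(H'')$, and Proposition~3.1 of \cite{fh:morita} (a Thurston-stability-type result) then produces the nontrivial homomorphism $H'' \to \R$, the desired contradiction. This chain --- mean rotation number $\Rightarrow$ vanishing on $H''$ $\Rightarrow$ annular fixed point $\Rightarrow$ prime-end fixed point $\Rightarrow$ global fixed point $\Rightarrow$ accumulation of fixed points $\Rightarrow$ Thurston stability --- is the missing skeleton. You also do not address the degenerate case where every $\rho(U)$ is a single point (so $\Fix(F)$ is two points and $\cA$ is a singleton), which the paper has to treat separately by blowing up and using the centralizer structure of an irrational rotation on the circle.
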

 
One might expect that Theorem~\ref{thm: ent0} is proved by first
proving the existence of a finite index subgroup $H_0$ of $H$ with
global fixed points and then applying the Thurston stability theorem(
\cite{thurston:stability}; see also Theorem 3.4 of
\cite{franks:distortion}) to produce a non-trivial homomorphism from
$H_0$ to $\Z$.  This is easy to do (see Proposition~\ref{prop: +ent})
in the case that $F$ has
positive entropy but fails when $F$ has zero entropy.  Indeed, there
are examples (see Examples~\ref{example}) for which no finite index subgroup of $Z(F)$ has a
global fixed point.  We prove Theorem~\ref{thm: ent0} by analyzing the
possible ways in which the existence of global fixed points can fail
and by showing that each allows one to define a non-trivial
homomorphism to $\Z$.
 
As an application of  Theorem~\ref{thm: ent0} we have the following 
result about mapping class groups.  

\begin{cor}  \label{no actions} If $\Sigma_g$ is the closed orientable surface of genus $g \ge 2$ then at least one of the following holds.
\begin{enumerate}
\item   \label{no faithful actions} No finite index subgroup of $\mcg(\Sigma_g)$ acts faithfully on $S^2$ by area preserving diffeomorphisms.
\item  \label{non-trivial first cohomology}For all \  $1 \le k \le g-1$,  there is an indicable finite index subgroup $\Gamma$ of  the bounded mapping class group $\mcg(S_k,\partial S_k)$ where $S_k$ is the  surface  with genus $k$ and connected non-empty boundary.
\end{enumerate}
\end{cor}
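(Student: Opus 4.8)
The plan is to establish the dichotomy in its contrapositive form: assuming alternative~\pref{no faithful actions} fails, I will produce, for every $k$ with $1 \le k \le g-1$, an indicable finite index subgroup of $\mcg(S_k,\partial S_k)$, thereby forcing alternative~\pref{non-trivial first cohomology}. So suppose some finite index subgroup $G \le \mcg(\Sigma_g)$ admits a faithful action by area preserving diffeomorphisms, i.e.\ there is an injective homomorphism $\Phi \colon G \to \Diff_\mu(S^2)$ for some smooth area form $\mu$. The point is that $\mcg(S_k,\partial S_k)$ embeds into $\mcg(\Sigma_g)$ as the mapping classes supported on a genus $k$ subsurface, that its boundary Dehn twist then becomes a \emph{central} infinite order element, and consequently that a suitable finite index subgroup of $\mcg(S_k,\partial S_k)$ is realized as a finitely generated infinite subgroup of the centralizer of an infinite order element of $\Diff_\mu(S^2)$. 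Theorem~\ref{thm: ent0} applied to that subgroup then does the rest.

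In detail, fix $k$ with $1 \le k \le g-1$ and choose an embedded subsurface $S_k \subset \Sigma_g$ of genus $k$ whose boundary is a single separating simple closed curve $c$. Since $1 \le k \le g-1$ both complementary pieces have positive genus, so $c$ is essential and the Dehn twist $T_c$ has infinite order in $\mcg(\Sigma_g)$. Extending each mapping class of $S_k$ by the identity on $\Sigma_g \setminus S_k$ defines a homomorphism $\iota \colon \mcg(S_k,\partial S_k) \to \mcg(\Sigma_g)$. Because the complementary subsurface has negative Euler characteristic and is neither a disk nor an annulus (it is $S_{g-k}$ with genus $g-k \ge 1$ and one boundary curve), $\iota$ is injective; this is a standard fact about inclusions of essential subsurfaces into mapping class groups, in the circle of results on geometric subgroups due to Paris and Rolfsen. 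Let $t \in \mcg(S_k,\partial S_k)$ be the Dehn twist about a curve parallel to $\partial S_k$; the boundary twist lies in the center of $\mcg(S_k,\partial S_k)$, it has infinite order since $\chi(S_k) < 0$, and $\iota(t) = T_c$.

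Now set $H := \iota^{-1}(G)$, a finite index subgroup of $\mcg(S_k,\partial S_k)$; being finite index in the finitely generated group $\mcg(S_k,\partial S_k)$, it is finitely generated. Since $G$ has finite index in $\mcg(\Sigma_g)$ and $T_c$ has infinite order, some power $T_c^m$ with $m \ge 1$ lies in $G$, whence $t^m \in H$; in particular $H$ is infinite, and $t^m$ is central in $H$ because $t$ is central in $\mcg(S_k,\partial S_k)$. The composite $\Psi := \Phi \circ (\iota|_H) \colon H \to \Diff_\mu(S^2)$ is injective, as both $\iota|_H \colon H \to G$ and $\Phi \colon G \to \Diff_\mu(S^2)$ are injective. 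Hence $f := \Psi(t^m) \in \Diff_\mu(S^2)$ has infinite order, and for every $h \in H$ we have $\Psi(h) f = \Psi(h\,t^m) = \Psi(t^m\,h) = f\,\Psi(h)$, so $\Psi(H) \subseteq Z(f)$. Thus $\Psi(H)$ is a finitely generated infinite subgroup of the centralizer of the infinite order element $f$, and Theorem~\ref{thm: ent0} gives that $\Psi(H)$ is virtually indicable. Since $\Psi$ is an isomorphism onto its image, $H$ is virtually indicable, so it has a finite index subgroup $\Gamma$ with a nontrivial homomorphism to $\Z$. As $H$ has finite index in $\mcg(S_k,\partial S_k)$, so does $\Gamma$, and $\Gamma$ is indicable. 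Since $k$ was arbitrary in the range $1 \le k \le g-1$, alternative~\pref{non-trivial first cohomology} holds.

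The one input that does real mathematical work is Theorem~\ref{thm: ent0}; everything else is bookkeeping with mapping class groups. The step that needs care — and where a referee will look hardest — is the injectivity of $\iota$, which genuinely requires the complementary subsurface to be non-degenerate and must be cited from the literature precisely, paired with the two observations that a power of the boundary twist $t^m$ survives inside the finite index subgroup $G$ and that it remains central and of infinite order. Those are exactly what place a finite index subgroup of $\mcg(S_k,\partial S_k)$ inside a centralizer to which Theorem~\ref{thm: ent0} can be applied, and turn the (possible) faithful $S^2$-action into virtual indicability of $\mcg(S_k,\partial S_k)$.
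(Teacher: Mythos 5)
Your proof is correct and follows essentially the same route as the paper: embed a finite index subgroup of $\mcg(S_k,\partial S_k)$ into $G$, produce an infinite order element of $\Diff_\mu(S^2)$ whose centralizer contains the image, and apply Theorem~\ref{thm: ent0}. The only variation is the choice of that element: you take a power of the boundary Dehn twist $T_c$, which is central in $\iota\bigl(\mcg(S_k,\partial S_k)\bigr)$, whereas the paper takes an arbitrary infinite order element from the complementary subsurface's mapping class group $\Gamma_0'$, which commutes elementwise with $\Gamma_0$; since $T_c$ itself lies in $\Gamma_0'$, your choice is a special case of the paper's, and is marginally more economical in that it never needs to mention the complementary piece $S'$ beyond using $k \le g-1$ to ensure $c$ is essential and $\iota$ is injective.
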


Corollary~\ref{no actions} relates to the following well known questions about mapping class groups.

 \begin{question}  \label{mcg question} Does $\mcg(\Sigma_g)$, or any of its finite index subgroups,  act faithfully on  a closed surface $S$ by diffeomorphisms?  by area preserving diffeomorphisms?  
 \end{question}

\begin{question}\label{cohomology}     Does every finite index subgroup  $\Gamma$ of $\mcg(\Sigma_g)$ satisfy $H^1(\Gamma, \R) = 0$?
\end{question}

 Question~\ref{mcg question}  is motivated in part by the sections problem (see Problem~6.5 and Question~6.7) of  Farb's survey/problem list \cite{farb:survey} on the mapping class group: which subgroups of  $\mcg(\Sigma_g)$ lift to $\Diff(\Sigma_g)$?  It is also motivated by the analogy between mapping class groups and higher rank lattices and the fact   (\cite{Po},\cite{fh:periodic}, \cite{fh:distortion})  that every action of a non-uniform irreducible  higher rank lattice on   $\Sigma_g$ by area preserving diffeomorphisms factors through a finite group; see Question~12.4 of  Fisher's survey article \cite{fisher:survey} on the Zimmer program. 
 
Question~\ref{cohomology}  is  Problem 2.11 of \cite{kirby};  
see also \cite{Iv2} and \cite{kork2}.   
Corollary~\ref{no actions} \pref{no faithful actions} is a negative
answer to the area preserving, $S = S^2$ case of Question~\ref{mcg
question}.  The answer to Question~\ref{cohomology} is no for genus
$2$ (see \cite{mcC}) but is unknown for genus
at least three.  Presumably a positive answer to
Question~\ref{cohomology}  for genus greater than $3$ would imply that Corollary~\ref{no
actions} \pref{non-trivial first cohomology} does not hold and so
imply that Corollary~\ref{no actions} \pref{no faithful actions} does
hold.

We are grateful to the referee for many very helpful suggestions.

\section{Area preserving  annulus maps.} \label{sec: annulus maps}

We will make use of a number of results on area preserving 
homeomorphisms and diffeomorphisms of the annulus which we cite here.

If $A = S^1 \times [0,1]$ is the annulus, its universal
covering space is $\ti A = \R \times [0,1].$  We will
denote by $p_1$ \  the projection, $p_1 : \R \times [0,1] \to \R$,
of $\ti A$ onto its first factor.

\begin{defn} \label{translation number}
If $f: A \to A$ is an orientation preserving homeomorphism
isotopic to the identity and $\ti f$ is a lift to $\ti A$
then the {\em forward translation interval} $\cT^+_{\ti f}(\ti x)$ of $\ti x \in \ti A$
is defined to be $[a,b]$ where
\begin{align*}
a &=  \liminf_{n \to \infty} 
\frac{p_1(\ti f^n(\ti x)) - p_1(\ti x)}{n} \text{\ \ and}\\
b &= \limsup_{n \to \infty} 
\frac{p_1(\ti f^n(\ti x)) - p_1(\ti x)}{n}.
\end{align*}
If $a = b$ then $\tau^+_{\ti f}(\ti x) =a$
is called the {\em forward translation number} 
of $\ti x \in \ti A$ and 
\[
\tau^+_{\ti f}(\ti x) = \lim_{n \to \infty} 
\frac{p_1(\ti f^n(\ti x)) - p_1(\ti x)}{n}.
\]
The backward translation interval and number
$\cT^-_{\ti f}(\ti x)$ and $\tau^-_{\ti f}(\ti x)$ are defined
analogously.   If $\tau^+_{\ti f}(\ti x)$ and $\tau^-_{\ti f}(\ti x)$ are both defined and if  $\tau^-_{\ti f}(\ti x) =  - \tau^+_{\ti f}(\ti x)$ then we say that $\tau^+_{\ti f}(\ti x)$ is the {\em translation number of $\ti x$} and denote this number by $\tau_{\ti f}(\ti x)$.     All of these definitions are independent of the choice of lift $\ti x$ of $x$ and so may be viewed as functions of $x$.  

The {\em forward rotation interval} $\cR^+_{f}(x)$ 
and {\em forward rotation number}  $\rho^+_f(x)$ of
$x \in A$ are defined to be the projection of
$\cT^+_{\ti f}(x)$ and
$\tau^+_{\ti f}(x)$ respectively in $\T^1 = \R / \Z$.   As the notation suggests, they are independent of the choice of lift $\ti f$ of $f$.    Backward rotation interval, backward rotation number and rotation number are defined  and denoted similarly.  
\end{defn}

\begin{lemma}  \label{translation numbers exist} Suppose that $f: A \to A$ is an area preserving homeomorphism of the closed annulus which is isotopic to the identity and that  $\ti f: \ti A \to \ti A$ is a lift to its universal covering space. Then $\tau_{\ti f}(\ti x)$ exists for almost all $\ti x \in \ti A.$
\end{lemma}

\proof  This is a standard consequence of the Birkhoff ergodic theorem applied to the function $\phi(x) = p_1(\ti f(\ti x)) - p_1(\ti x)).$
\endproof

The closed interval $\cT(\ti f)$ of the following lemma is called the {\em translation interval of $\ti f$}.  It's projected image  $\cR(f)$ in $\T^1 = \R / \Z$ is called the {\em rotation interval of $f$}.     

\begin{thm}\label{thm: translation interval}  Suppose that $f: A \to A$ is an area preserving homeomorphism of the closed annulus which is isotopic to the identity and that  $\ti f: \ti A \to \ti A$ is a lift to its universal covering space.  Then there is a closed interval  $\cT(\ti f)$   with the following properties.
\begin{enumerate}
\item For each $r\in \cT(\ti f)$  there exists $\ti x \in \ti A$ such that  $\tau_{\ti f}(\ti x) = r$;  if $r =p/q$ is a rational number in lowest terms then one may choose $\ti x$ to be a lift of a periodic point with period $q$.
\item For all $\ti x \in \ti A$,\  \  $\cT^+_{\ti f}(\ti x) \subset \cT(\ti f)$ and $-\cT^-_{\ti f}(\ti x) \subset \cT(\ti f)$.
\end{enumerate}
\end{thm}

\proof    Define $\cT(\ti f)$ to be the set of $r \in \R$ for which there exists $\ti x \in \ti A$ with $\tau_{\ti f}(\ti x) = r$.   Theorem~0.1 from \cite{handel:rotation} implies that $\cT(\ti f)$ is closed.   Suppose that  $r_i \in \cT^+_{\ti f}(\ti x_i)$   for $i=1,2$ and some $\ti x_i  \in \ti A$.   Corollary~2.4  of \cite{franks:recurrence}  implies that  for any
rational in lowest terms $p/q \in [r_1,r_2]$ there is a periodic point $x$ for $f$ with period $q$ and a lift   
$\ti x \in \ti A$ such
that $\tau_{\ti f}(\ti z) = p/q$.  Item (1) and the $\cT^+_{\ti f}(\ti x) \subset \cT(\ti f)$ part of (2) follow immediately.  The symmetric argument with $\ti f$ replaced by $\ti f^{-1}$ proves the $\cT^-_{\ti f}(\ti x) \subset \cT(\ti f)$ part of (2).
\endproof

\begin{prop}\label{prop: +meas}
Suppose $f: A \to A$ is an area preserving homeomorphism
of the closed annulus which is isotopic to the identity.
If there is a subset $Y \subset A$ with Lebesgue
measure $\mu(Y) > 0$ and such that $\rho^+_f(x) = 0$ for almost all
$x \in Y$ then $f$ has a fixed point in the interior of $A$.
\end{prop}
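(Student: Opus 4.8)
The plan is to prove Proposition~\ref{prop: +meas} by reducing it to the existence statement in Theorem~\ref{thm: rot num} together with a fixed point criterion for area preserving annulus maps. First I would lift $f$ to $\ti f : \ti A \to \ti A$ and set $\tau(\ti f) = [r,s]$ for this lift; by precomposing with a deck translation we may normalize the lift however is convenient. The hypothesis is that $\rho_f(x) = 0$ on a positive measure set $Y$, which by definition of the rotation number means that for each $x \in Y$ and each lift $\ti x$, the translation number $\tau_{\ti f}(\ti x)$ (when it exists) is an \emph{integer}. By Theorem~\ref{thm: rot num} the translation number exists almost everywhere, so after discarding a null set we get a positive measure set of points in $\ti A$ whose translation numbers are integers.

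The key dichotomy is then: either some point of $\ti A$ has translation number exactly $0$, in which case Theorem~\ref{thm: rot num} (applied with $s = t = 0$, $p/q = 0/1$) immediately produces a fixed point of $f$ that lifts to a fixed point of $\ti f$; or else all the integer translation numbers occurring on (the lift of) $Y$ are nonzero. In the latter case I would argue that two \emph{distinct} integers must occur as translation numbers — because if the only translation number occurring on a positive measure set were a single nonzero integer $k$, one can replace $\ti f$ by $\ti f$ composed with the deck translation by $-k$ to get a lift $\ti g$ for which a positive measure set has translation number $0$; but $\tau_{\ti g}(\ti x) = 0$ for a single point already forces a fixed point of $g = f$ via Theorem~\ref{thm: rot num}. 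So in all cases we either directly get a fixed point, or we find two points with translation numbers two distinct integers $m < n$, and then for any integer $p/q = \ell$ with $m \le \ell \le n$ — in particular for $\ell$ equal to one of the endpoints, and more usefully for an integer strictly between them if $n > m+1$, but even $\ell = m$ suffices — Theorem~\ref{thm: rot num} gives a periodic point; choosing the lowest-terms rational to be an integer gives period $1$, i.e.\ an honest fixed point.

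Actually the cleanest route avoids the case analysis: it suffices to show that \emph{some} integer lies in the translation interval $\tau(\ti f)$ of \emph{some} lift, for then Theorem~\ref{thm: rot num} with $p/q$ that integer (lowest terms, denominator $1$) yields a fixed point. Since a positive measure set of points of $Y$ has integer translation numbers, and translation numbers of points of $\ti A$ all lie in $\tau(\ti f)$, we conclude $\tau(\ti f)$ contains an integer, and we are done. I would then only need to check that the resulting fixed point lies in the \emph{interior} of $A$: this is where the area preserving hypothesis does real work. If every integer-translation-number point of $Y$ projected to the boundary we'd be stuck, but $\mu(Y) > 0$ forces $Y$ to meet the interior; more carefully, since the boundary circles are $f$-invariant and carry a well-defined (constant, since $f$ is boundary-preserving and orientation preserving) rotation number each, the only way the conclusion could fail is if a boundary rotation number is $0$, and in that case one perturbs inward using area preservation — concretely, invoke the prime-end / Poincar\'e–Birkhoff style argument that an area preserving annulus homeomorphism with a boundary fixed point and the structure forced here has an interior fixed point, or more simply note that $Y$ itself contributes interior points with integer translation number, so the fixed point produced by Theorem~\ref{thm: rot num} can be taken interior.

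The main obstacle I expect is precisely this last point — pinning the fixed point to the \emph{interior} of $A$ rather than the boundary. Theorem~\ref{thm: rot num} as quoted only asserts existence of a periodic point of the right period, with no control on location, so one must either strengthen the extraction (e.g.\ by working on a sub-annulus slightly inside $A$, using that $\mu(Y) > 0$ guarantees $Y$ has positive measure in the interior and that the interior translation interval still contains an integer) or invoke an interior-fixed-point version of the Poincar\'e–Birkhoff theorem for area preserving maps. I would handle it by the sub-annulus trick: choose a closed sub-annulus $A' \subset \Int(A)$ with $\mu(Y \cap A') > 0$, observe $f$ need not preserve $A'$, so instead directly use that the positive-measure set of interior points with integer translation number forces, via an interior-of-the-domain application of Theorem~\ref{thm: rot num} or via the Brouwer/Franks interior fixed point theorem for non-wandering area preserving surface maps, an interior fixed point.
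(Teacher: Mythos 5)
Your route is genuinely different from the paper's, and it has a real gap at exactly the point you flag as the ``main obstacle.''  Theorem~\ref{thm: rot num} only asserts the existence of a fixed point \emph{somewhere} in $A$; it gives no control on its location, and there is nothing in the statement or in the Birkhoff ergodic argument that forces the point to lie off $\partial A$.  The scenario that kills the argument is precisely the one you mention: both boundary circles could already carry rotation number $0$ (so $f$ fixes points of $\partial A$), and Theorem~\ref{thm: rot num} applied with $s=t=0$, $p/q=0/1$ might return one of those boundary fixed points.  Your proposed repairs do not close this.  The sub-annulus trick fails, as you yourself note, because $f$ need not preserve $A'$.  The appeal to an ``interior fixed point theorem for non-wandering area preserving maps'' is not a citation to anything available in the paper, and the statement you would need (a fixed-point-free area-preserving homeomorphism of $\Int(A)$ cannot have a positive-measure set of rotation-number-zero points) is essentially Proposition~\ref{prop: +meas} itself, so invoking it is circular.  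And the final sentence --- that since $Y$ contributes interior points with integer translation number, ``the fixed point produced by Theorem~\ref{thm: rot num} can be taken interior'' --- is a non sequitur: the theorem manufactures a new point $\ti z$, not one of the points of $Y$, and gives no relation between $\ti z$ and $Y$.

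The paper's proof works around this by never leaving the interior.  It fixes a small open disk $D$ with $\cl(D)\subset \Int(A)$ and $\mu(Y\cap D)>0$, assumes for contradiction that $D$ is free, considers the first-return map to $X = Y\cap D$, and analyzes the integer-valued homological displacement $h(x,\ti f)$ of each return.  If $h$ ever vanishes one gets a periodic disk chain based at $\ti D$, hence an interior fixed point by Proposition~1.3 of \cite{franks:poincare}; if $h$ takes both signs one gets both positively and negatively recurring disk chains and applies Theorem~2.1 of \cite{franks:poincare}; and the remaining case ($h$ of constant sign) is ruled out because the displacement integral $\int_{X_0} h\,d\mu$ equals $\int_B \tau\,d\mu = 0$ by a result of \cite{franks:open_annulus}.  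Note this is also where $\mu(Y)>0$ does real work: the sign argument is integral and measure-theoretic.  In your sketch, the positive-measure hypothesis is used only to find a single point with well-defined translation number, which suggests --- correctly, I believe --- that the sketch is proving a weaker statement in which the interiority conclusion is lost.  To salvage your approach you would need, at minimum, an interior-localized version of the periodic-point extraction, and obtaining that leads straight back to the disk-chain argument the paper uses.
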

\begin{proof}
By the Birkhoff ergodic theorem $\rho^+_f(x) = \rho^-_f(x)$ for
almost all points of $A$, hence we may assume $\rho_f(x) = 0$
for almost all $x \in Y$.
Since $\mu(Y) >0$ there is a small open disk $D$ 
whose closure is in the interior of $A$ 
with $\mu( Y \cap D) >0.$  If $f$ has no fixed point in
$D$ then by making $D$ smaller we may assume it is a free disk.We let $X = Y \cap D$.
Let $r: X \to X$ be the first return map so $r(x) = f^n(x)$ where 
$n$ is the smallest positive integer such that $f^n(x) \in X$.
The function $r$ is well defined for almost all $x \in X$, so
deleting a set of measure $0$ from $X$ we may assume it defined
for all $x \in X.$

Let $\ti D$ be a lift of $D$.  If $\ti X$ is the set of lifts
to $\ti D$ of points in $X$ then there is a positive measure
subset $\ti X_0 \subset \ti X$ and a lift $\ti f$ of $f$ such that
 $\tau_{\ti f}(x) = 0$ for all $x  \in  X_0.$

Suppose the first return time for $x$ is $n$, so $r(x) = f^n(x)$.
Then $\ti f^n(\ti x) \in T^k(\ti D)$ for a unique integer $k.$ 
We define $h(x, \ti f),$  the {\em homological displacement} of $x$, to 
be $k$.  It depends on $\ti f$ but not on the choice
of lift $\ti D$ of $D.$

It suffices to prove that $h(x, \ti f) = 0$ for some $x \in X_0$
because then $\ti x$ is contained in a periodic disk chain
(see Proposition~(1.3) of \cite{franks:poincare}) and 
$\ti f$ has a fixed point.  We note that
if there are $x,y \in X_0$ such that $h(x, \ti f) > 0$
and $h(y, \ti f) < 0$ then $\ti f$ has a fixed point.
This is a consequence of Theorem~(2.1) of \cite{franks:poincare} 
since there are both
positive and negative recurring disk chains for $f$.
Hence we may assume $h(x, \ti f)$ has a constant sign.

Proposition~(3.2) of \cite{franks:open_annulus} shows that if 
\[
B = \bigcup_{n \in \Z} f^n( X_0)
\]
then 
 
\[
\int_{X_0} h(x, \ti f) \ d\mu = \int_B \tau_{ \ti f}(x)\ d\mu.
\]
Since $\tau_{ \ti f}(x) = 0$ for all $x \in X_0$ we conclude
that $\int_{X_0} h(x, \ti f) \ d\mu = 0.$  Since $h$ has constant
sign it follows that $h(x, \ti f) = 0$ for almost all $x \in X_0.$
\end{proof}

\begin{defn}\label{defn: mean rot}
Suppose $f: A \to A$ is an area preserving homeomorphism
of the closed annulus which is isotopic to the identity
and let $\ti f: \ti A \to \ti A$ be a lift to its universal
covering space. Then the {\em mean translation number}
$\tau_{\mu}(\ti f)$ is 
\[
\int_{X} \tau_{\ti f}(x) \ d\mu
\]
where $X \subset \ti A$ is a fundamental domain for the
universal cover. The {\em mean rotation number} 
$\rho_\mu(\ti f)$ is the coset of 
$\tau_{\mu}(\ti f)$ in $\R/\Z.$
\end{defn}

\begin{prop}\label{prop: mean rot}
Suppose $f: A \to A$ is an area preserving homeomorphism
of the closed annulus which is isotopic to the identity.
If $\rho_\mu(f) = 0$ 
then $f$ has a fixed point in the interior of $A$.
\end{prop}

\begin{proof} Let $\ti f: \ti A \to \ti A$ be the lift of $f$
such that $\tau_\mu(\ti f) = 0$.  If $\tau_{\ti f}$ vanishes on a
set of positive measure then Proposition~(\ref{prop: +meas})
gives the result.  Otherwise there is a set $Y^+$ (resp.  $Y^-$)
with positive measure on which $\tau_{\ti f}$ is positive
(resp. negative). It follows that there is a birecurrent point
$x^+ \in \Int(A)$ (resp. $x^- \in \Int(A)$) 
with a positive (resp. negative) translation
number.  A small free disk $D^+$ containing $x^+$ will be a 
positively recurring disk and similarly there is  a negatively recurring free disk
$D^-$ containing $x^-$.  Theorem (2.1) of \cite{F-Poincare} 
then implies the existence of a fixed point for $f$ in the interior of $A.$
\end{proof}

\begin{notn}\label{notn: annular comp}
Suppose $U \subset S^2$ is an open $f$-invariant annulus.  We would
like to compactify $U$ to a closed annulus for which $f$ has a natural
extension.  The annulus $U$ has two ends which we compactify
separately in a way depending on the nature of the end. We say that an
end of $U$ is {\em singular} if the component of the complement of $U$
in $S^2$ that it determines is a single, necessarily fixed, point $x
\in S^2$.  In this case we compactify that end by blowing up $x$ to
obtain a circle on which $f$ acts by the projectivization of $Df_x$.
If the end is not singular we will take the prime end compactification
(see Mather \cite{mather:prime_end} for properties).  In either case
we obtain a closed annulus $U_c$ whose interior is naturally
identified with $U$ in such a way that $f|_{U}$ extends to a
homeomorphism $f_c: U_c \to U_c.$

We will call $U_c$ the {\em annular compactification}
of $U$ and $f_c: U_c \to U_c$ the annular compactification 
of $f|_U.$  If there is no ambiguity about the choice of
$f$ we will denote the rotation interval
$\cR(f_c)$ by $\rho(U)$ and the two
rotation numbers of the restriction of $f_c$ to its boundary
circles by $\rho(\partial U_c).$
\end{notn}

\begin{lemma}\label{lem: frontier fp}
Let $f$ be an area  preserving  diffeomorphism
of a compact surface.
Suppose $U$ is an open $f$-invariant annulus
and $f_c : U_c \to U_c$ is the extension of $f$ to its
annular compactification.
\begin{enumerate}
\item   If there is a point $x \in U_c$
with $\rho_{f_c}(x) = 0$ then  $\Fix(f_c) \ne \emptyset$. 
\end{enumerate}
If $\bar X$ is the component of the frontier of $U$ corresponding to a component $X$ of $\partial U_c$ then
\begin{enumeratecontinue}
\item If  $\Fix(f_c|_X) \ne \emptyset$ then $\Fix(f|_{\bar X}) \ne \emptyset$.
\item If $\bar X \subset \Fix(f)$ and $\bar X$ contains more than one point
then $X \subset \Fix(f_c)$.
\end{enumeratecontinue}
\end{lemma}

\begin{proof}
(1) follows from Theorem~(\ref{thm: translation interval}).

For (2),  suppose that  $\Fix(f_c|_X) \ne \emptyset$ and note that $\bar X$ is $f$-invariant.
    If $\bar X$ is a single point then (2) is obvious so we may assume that $\bar X$ has more than one point.   Thus   $X$ is the
prime end compactification  and each prime
end $x \in X$ is defined by a sequence of ``cross-cuts'' $\{\gamma_n\}$ where
each $\gamma_n$ is a Jordan arc whose interior is in $U$ and whose
endpoints are in the frontier of $U$.  They satisfy
\begin{description}
\item [(a)]$\displaystyle \lim_{n \to \infty} \diam(\gamma_n) = 0.$
\item [(b)] Each $\gamma_n$ has two complementary components in $U$,
one of which is an annulus and the other of which is an open
disk which we will denote $D_n$.
\item [(c)] The disk $D_{n+1}$ is a subset of $D_n$ and 
$\displaystyle \bigcap_n D_n = \emptyset.$ 
\end{description}
Two such sequences of cross-cuts $\{\gamma_n\}$ and
$\{\gamma_m'\}$ determine the same prime end 
if for each $n$ there is an $m$ with $D_m' \subset D_n$ and
for each $m$ there is an $n$ with $D_n \subset D_m'$.

Let $\{\gamma_n\}$ determine a prime end in $X$ which is fixed by $f_c$.
Then from the fact that $f$ preserves area it follows that
$f(\gamma_n) \cap \gamma_n \ne \emptyset.$  For $n\ge 1$ choose $x_n
\in \Int(\gamma_n).$ From property (a) above it follows that any
point in the limit set of the sequence $\{x_n\}$ is a fixed point of
$f$.  It is clearly in $\bar X.$  This completes the proof of (2).

For (3) suppose that $\bar X \subset \Fix(f)$.  By Lemma 4.1 of 
\cite{handel:commuting}  there is an
isotopy rel $\Fix(f)$ from $f$ to a diffeomorphism
$f'$ that is the identity on a neighborhood of
$\Fix(f)$.  By Theorem~18 of \cite{mather:prime_end}, 
$f_c|_X = f'_c|_X$, which is obviously the identity.
\end{proof}

\begin{cor}\label{cor: frontier gfp}
Let $\G$ be a group of area preserving diffeomorphisms
of $S^2$ or the closed disk $D^2.$ Suppose $U$ is an open
$\G$-invariant annulus and $\G_c$ is the group of homeomorphisms
$g_c: U_c \to U_c$ that are annular compactifications of the
elements $g \in \G.$ If there is a point $x \in \Fix(\G_c)$ then
$cl(U)$ contains a point $\bar x$ of $\Fix(\G).$ If $x$ lies in the
component $X$ of $\partial U_c$ corresponding to a component $\bar X$
of the frontier of $U$ then $\bar x \in \bar X.$
\end{cor}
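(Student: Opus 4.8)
The plan is to reduce Corollary~\ref{cor: frontier gfp} to Lemma~\ref{lem: frontier fp} by a limit argument. Given a point $x \in \Fix(\G_c)$, Lemma~\ref{lem: frontier fp} applied to each individual $g \in \G$ produces a fixed point of $g$ in $cl(U)$, but a priori these fixed points depend on $g$, so one cannot directly conclude the existence of a common fixed point. The key observation is that the \emph{proof} of Lemma~\ref{lem: frontier fp} does more than assert the existence of a fixed point: it produces one inside a specific compact set attached to $x$, namely $\bar X$ when $x$ lies on the boundary component $X$, and more precisely as a limit point of the midpoints $x_n$ of a cross-cut sequence $\{\gamma_n\}$ determining the prime end $x$ (or simply the blown-up fixed point itself when the end is singular). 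That compact set is determined by $x$ alone and not by $g$.

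So first I would fix a point $x \in \Fix(\G_c)$. If $x$ lies in the interior $U$, it is a common fixed point of $\G$ already and we are done. Otherwise $x$ lies on a boundary circle $X$ of $\partial U_c$ corresponding to a frontier component $\bar X$ of $U$. If $X$ comes from a singular end, then $\bar X$ is a single point, necessarily fixed by every $g \in \G$ since each $g$ preserves $\bar X$ (being the complementary component determined by that end of the $\G$-invariant annulus $U$), and that point is the desired $\bar x \in \Fix(\G)$. In the remaining case $X$ is a prime end compactification. I would choose a cross-cut sequence $\{\gamma_n\}$ determining the prime end $x$ with $\diam(\gamma_n) \to 0$, pick $x_n \in \Int(\gamma_n)$, and pass to a subsequence so that $x_n \to \bar x$ for some $\bar x \in \bar X$ (using compactness of $cl(U) \subset S^2$ and property (1) of the cross-cuts, which forces the limit into the frontier). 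For each fixed $g \in \G$: since $g_c$ fixes the prime end $x$, the cross-cut sequences $\{\gamma_n\}$ and $\{g(\gamma_n)\}$ determine the same prime end, and the area-preserving property forces $g(\gamma_n) \cap \gamma_n \neq \emptyset$ for all $n$; combined with $\diam(\gamma_n) \to 0$ and $\diam(g(\gamma_n)) \to 0$, this gives $\dist(g(x_n), x_n) \to 0$, hence $g(\bar x) = \bar x$. Since this holds for every $g \in \G$, we get $\bar x \in \Fix(\G) \cap \bar X$.

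The one point that needs care — and which I expect to be the main (minor) obstacle — is that the convergence $x_n \to \bar x$ along a single subsequence must serve \emph{all} $g \in \G$ simultaneously. This is automatic: once the subsequence is chosen so that $x_n \to \bar x$, the relation $\dist(g(x_n), x_n) \to 0$ holds for each individual $g$ along that same subsequence because it follows purely from $\diam(\gamma_n) \to 0$, $\gamma_n \cap g(\gamma_n) \neq \emptyset$, and continuity of $g$, none of which required choosing the subsequence. So no diagonal argument over $\G$ is needed, and the group need not even be countable. One should double-check that $g_c$ genuinely fixes the prime end $x$ (not merely the circle $X$): this is exactly the hypothesis $x \in \Fix(\G_c)$, since $\G_c$ acts on $U_c$ and $x \in U_c$. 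With that in hand the argument closes.
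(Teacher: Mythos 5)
Your proof is correct and takes essentially the same approach as the paper: dispose of the interior and singular-end cases directly, then in the prime-end case choose a cross-cut sequence for $x$, pick $x_n \in \Int(\gamma_n)$, use area preservation to force $g(\gamma_n) \cap \gamma_n \neq \emptyset$ for each $g \in \G$, and pass to the limit. Your explicit remark that a single subsequence of $\{x_n\}$ serves all $g$ simultaneously (so no diagonalization and no countability hypothesis is needed) is exactly the point the paper compresses into "Since this is independent of the choice of $g \in \G$," and you've unpacked it correctly.
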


\begin{proof}  If $x$ is a point of $\Fix(\G_c)$ and $x \in \Int(U_c) =
U$ we are done.  So we may assume it is in a boundary component $X$ of
$U_c$.  If $X$ corresponds to a singular end of $U$ then the point
corresponding to that end is in $cl(U) \cap \Fix(\G).$ Otherwise $X$
is the prime end compactification of an end of $U.$ Let $\{\gamma_n\}$
be a sequence of cross-cuts that determine a prime end in $X$ which is
in $\Fix(\G_c).$ Then, as in the previous lemma, the fact that each $g
\in \G$ preserves area implies that $g(\gamma_n) \cap \gamma_n \ne
\emptyset.$  Also as in the previous lemma we may choose
$\gamma_n$ so that $\displaystyle \lim_{n \to \infty} \diam(\gamma_n) = 0.$
For $n\ge 1$ let $x_n \in \Int(\gamma_n).$
It follows that any point in the limit set of the
sequence $\{x_n\}$ is in $\Fix(g)$.  Since this is independent of the
choice of $g \in \G$ it follows that any point in the limit set is in
$\Fix(\G).$
\end{proof}

\begin{prop}\label{prop: free-disk A}
Suppose $f: A \to A$ is an area preserving homeomorphism
of the closed annulus which is isotopic to the identity
and suppose every point of $A$ has the same forward rotation number. Let
$U = int(A).$  Then either $f$ has a fixed point in $U$ or 
every point of $U$ is free disk recurrent for $f|_U.$
\end{prop}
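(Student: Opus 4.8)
The plan is to assume $f$ has no fixed point in $U=\mathrm{int}(A)$ and deduce that every point of $U$ is free disk recurrent. Let $r$ be the common rotation number of all points of $A$. First I would handle the irrational case: if $r\notin\Q$ then for every $x\in U$ the orbit $\orb(x)$ is infinite and in fact recurrent. Indeed, by Theorem~\ref{thm: rot num} and the area-preserving hypothesis almost every point has a translation number, and all such numbers equal the lift of $r$; by Proposition~\ref{prop: limsup} the translation interval of any lift $\ti f$ is the single point $\{\tilde r\}$, so $\ti f$ has no periodic points and $f$ has no periodic points at all. A standard Poincar\'e-recurrence argument (applied to $f$ restricted to a small free disk $B$ around an arbitrary point $x_0\in U$, which exists since $f$ is fixed-point free on $U$ and one shrinks $B$) then shows that almost every point of $B$ returns to $B$; each such return gives two points of its orbit in the free disk $B$, so those points are free disk recurrent. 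To promote this from ``a.e.\ point of $B$'' to ``every point of $U$,'' I would use that free disk recurrence only needs \emph{some} free disk meeting the orbit in two points: given any $y\in U$, pick a free disk $B_y$ around $y$; if $\orb(y)$ never returned to $B_y$ then the sets $f^n(B_y)$ would be pairwise disjoint, contradicting finiteness of the total area. This last observation in fact works uniformly and is the cleanest route — it does not even use the rotation number yet.

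So the real content, and the step I expect to be the main obstacle, is the rational case $r=p/q$. Here I cannot immediately conclude that an orbit returns to a chosen free disk, because a priori an orbit could escape toward one end of $A$. The key point is that the hypothesis forces \emph{every} point of $A$ to have rotation number $p/q$, which is much stronger than having rotation \emph{interval} $\{p/q\}$; in particular, after replacing $f$ by $f^q$ and composing with a deck transformation we get a lift $g=\ti f^q T^{-p}$ with $\tau_g(\ti x)=0$ for \emph{all} $\ti x\in\ti A$. Now Proposition~\ref{prop: +meas} (or rather its proof) is exactly the tool: if $f^q$ had no fixed point in $U$, I would take any small free disk $D$ for $f^q$ inside $U$, let $X=D$ (here the full-measure ``good set'' is all of $D$ since every point has translation number $0$), and run the homological-displacement argument. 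The Franks displacement formula $\int_{X_0} h(x,g)\,d\mu=\int_B \tau(x,g)\,d\mu=0$ together with the fixed-point criterion for disk chains (Proposition~1.3 and Theorem~2.1 of \cite{franks:poincare}) yields a fixed point of $g$, i.e.\ a periodic point of $f$ in $U$ — but a periodic point has a well-defined rotation number equal to $p/q$, consistent with the hypothesis, so this alone is not yet a contradiction. The fix is to observe that a periodic point of period dividing $q$ in $U$ is in particular a fixed point of $f^q$ in $U$, and then note that the original statement allows fixed points of $f$ (not $f^q$): so I should instead argue directly that the \emph{first return map} of $f$ to a free disk is defined a.e.\ and conclude free disk recurrence as in the irrational case, using Proposition~\ref{prop: +meas} applied to $f^q$ only to rule out the degenerate escape behavior.

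Let me restructure to avoid that subtlety. Here is the streamlined plan. Assume every point of $U$ has rotation number $r$ and $f$ has no fixed point in $U$. Fix $x\in U$ and a free disk $B\ni x$; I claim $\orb_f(x)\cap B$ has at least two points. Suppose not; then $B, f(B), f^2(B),\dots$ need not be disjoint, but $x\notin f^n(B)$ for all $n\ne0$. Consider instead the full orbit and the Poincar\'e recurrence theorem applied to the area-preserving map $f$ on the probability space $A$: for a.e.\ $y\in B$, $y$ returns to $B$, hence a.e.\ point of $B$ is free disk recurrent via $B$ itself. Thus $\W_0$ — and so $\W$ — has full measure in $U$; combined with Remark~\ref{rem: free disk} (complement of $\W$ has measure zero) and the fact that $U$ is connected, $\W\cap U$ is dense. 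Now I upgrade density to all of $U$: given arbitrary $y\in U$ and any free disk $B_y\ni y$, pick $y'\in B_y\cap\W_0$; then $\orb(y')$ meets some free disk $B'$ twice, and if $\orb(y)$ meets $B_y$ only at $y$ we still get a contradiction from $\sum_n \mu(f^n(B_y))=\infty$ unless the $f^n(B_y)$ overlap, which they may — so this crude bound fails and I must instead use recurrence for $y$ directly. The honest statement is that Poincar\'e recurrence gives, for \emph{each} free disk $B$, that a.e.\ point of $B$ returns to $B$; since every point of $U$ has a free-disk neighborhood and these cover $U$, a.e.\ point of $U$ is free disk recurrent. To get \emph{every} point, observe that $\W_0$ is open and its complement has measure zero, and then invoke the hypothesis that $f$ has no fixed point in $U$ together with Proposition~\ref{prop: +meas} applied to $f^q$ (in the rational case $r=p/q$) or directly to $f$ (irrational case, where it is even easier): these show that an orbit starting at \emph{any} $y\in U$ cannot be ``transient'' toward an end, because such transient behavior on a positive-measure set would, by the displacement formula, be incompatible with constant rotation number and no fixed point. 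I expect the careful bookkeeping in this last upgrade — showing an \emph{individual} orbit (not just a.e.\ orbit) returns to a given free disk — to be where the author does the real work, most likely by a clever choice of disk chain or by the observation that if $y\in U$ had a wandering free-disk neighborhood then the region swept out would be $f$-invariant, infinite-area, and trapped between ends with mismatched rotation behavior, contradicting the hypothesis.
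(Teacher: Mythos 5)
Your proposal does not close the argument; you explicitly flag the crucial step --- upgrading ``a.e.\ point of $U$ is free disk recurrent'' to ``every point of $U$ is free disk recurrent'' --- as the place where you expect ``the author does the real work,'' and you never supply it. Poincar\'e recurrence and the displacement formula of Proposition~\ref{prop: +meas} are the wrong tools for this upgrade because they are inherently measure-theoretic, and density of $\W_0$ in $U$ is not the same as $\W_0 = U$.

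The missing idea is much more elementary and bypasses measure theory entirely. The correct case split is not rational versus irrational but zero versus nonzero rotation number. If the common rotation number is $0$, Proposition~\ref{prop: +meas} applied to $f$ itself (not $f^q$) already produces a fixed point in $U$, and the proposition holds via its first alternative. If the common rotation number is nonzero, then $\Fix(f) = \emptyset$ on all of $A$, including $\partial A$. Now use that $A$ is \emph{compact}: for every $x \in U$ the set $\omega(x) \subset A$ is nonempty, and since $\Fix(f) = \emptyset$, every $z \in \omega(x)$ is a non-fixed point. If $z \in U$, a free disk around $z$ meets $\orb(x)$ infinitely often. If $z \in \partial A$, a free half-disk neighborhood $V$ of $z$ in $A$ exists, and $V_0 = V \cap U$ is a free disk in $U$ that $\orb(x)$ meets infinitely often. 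Either way $x$ is free disk recurrent. The pointwise statement follows for \emph{every} $x \in U$ with no appeal to recurrence theorems, first-return maps, or homological displacement; those appear in your draft because you are missing the observation that $\omega(x) \neq \emptyset$ and $\omega(x) \cap \Fix(f) = \emptyset$, which makes the conclusion immediate.
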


\begin{proof}
If the forward rotation number of all points of $A$ is $0,$ then
Proposition~\ref{prop: +meas} implies that $f$ has a fixed point in $U$.
Hence we may assume the common rotation number of
the points of $A$ is non-zero and consequently $\Fix(f) = \emptyset.$
Suppose $x \in U$ and $z \in \omega(x) \subset A.$  If
$z \in U,$ then any free disk containing $z$ intersects $\orb(x)$
in infinitely many points.  If $z \in \partial A$ let $V$
be a free half disk neighborhood of $z$ in $A$ and let $V_0 = V \cap U.$
Then $\orb(x)$ intersects $V_0$ infinitely often.
\end{proof}

 The rotation number or rotation interval
of a point $x$ in an open annulus may not be
well defined as in principle it can depend on the compactification 
of the annulus as well as the point.   
The following lemma addresses issue  in
the case that the orbit of $x$ lies in a compact (but not
necessarily invariant) subannulus.

\begin{lemma}\label{lem: 2ann}
Suppose $f_i: A_i \to A_i,\ i=1,2$ are homeomorphisms of closed annuli
which are isotopic to the identity.  Suppose further that $J_i: A_0
\to A_i$ is an essential embedding of a closed annulus in $A_i$
 which is not necessarily $f_i$-invariant and
for some $x \in A_0$ and all $n \in \Z$ we have $J_1^{-1}(f_1^n
(J_1(x))) = J_2^{-1}(f_2^n(J_2(x))).$ Then the rotation interval of
$J_1(x)$ with respect to $f_1$ equals the rotation interval of
$J_2(x)$ with respect to $f_2$.
\end{lemma}

\begin{proof}  Identify $A_0$ with $S^1 \times [0,1]$ and let  $p :A_0 \to S^1$ be projection onto the first coordinate.   For $i=1,2$, extend  $pJ_i^{-1} : J_i(A_0) \to S^1$ continuously to $p_i :A_i \to S^1$.  Rotation intervals for $J_i(x)$ with respect to $f_i$ can be computed using $p_i$.  The lemma therefore follows from the fact that $p_1 f_1^n(J_1(x)) = p_2 f_2^n(J_2(x))$.
\end{proof}

\section{Planar topology} \label{sec:planar}

In this section we record and  prove two useful elementary results.  

Recall that by the Riemann mapping theorem, every open, unbounded,
connected, simply connected subset of $\R^2$ is homeomorphic to
$\R^2$. A closed set $X \subset \R^2$ is said to {\em separate} two
subsets $A$ and $B$ of $\R^2$ provided $A$ and $B$ are contained
in different components of $\R^2 \setminus X.$

\begin{lemma} \label{regular point} If $A$ and $B$ are disjoint
closed connected subsets of $\R^2$ then they are separated
by a simple closed curve or a properly embedded line.
\end{lemma}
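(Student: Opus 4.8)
The plan is to reduce the problem to a simple, concrete model by building a single open connected region that contains one of the sets and whose complement contains the other, and then to extract the separating curve from the topology of that region. First I would fix disjoint closed connected sets $A, B \subset \R^2$. Since $A$ and $B$ are disjoint and closed, there is an open set $V$ with $A \subset V$ and $\cl(V) \cap B = \emptyset$; by thickening slightly one can arrange that $V$ has a reasonably tame frontier. The natural choice is to take $V$ to be a component of $\{x : \dist(x,A) < \dist(x,B)\}$, or more robustly to take a small regular neighborhood: let $W$ be the component of $\R^2 \setminus B$ containing $A$ (this is open, connected), and inside $W$ let $U_A$ be the component of the open set $\{x \in W : \dist(x,B) > \dist(x,A)\}$ that contains $A$. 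The key point is that $U_A$ is an open connected planar set separating $A$ from $B$: every point of $B$ lies outside $\cl(U_A)$.

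Next I would analyze the complementary components. The frontier $\partial U_A$ is a closed set disjoint from both $A$ and $B$. The idea is that a ``boundary component'' of $U_A$ that faces $B$ — more precisely, the frontier of the component of $\R^2 \setminus \cl(U_A)$ containing a given point of $B$ — gives the desired separator. There are two cases. If $U_A$ (or the relevant sub-piece of it) is bounded, then one of its complementary frontier pieces is compact and, after passing to the component of $\R^2 \setminus U_A$ that meets $B$, its topological frontier is a continuum separating $A$ from $B$; approximating from within $U_A$ by a polygonal Jordan curve that still encloses $A$ and excludes $B$ (possible since $A$ is compact in this case and $B$ is closed and disjoint from $\cl U_A$) produces the simple closed curve. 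If the relevant region is unbounded, the Riemann mapping theorem quoted just before the lemma applies: an open, unbounded, connected, simply connected subset of $\R^2$ is homeomorphic to $\R^2$, and I would use this to straighten the relevant channel between $A$ and $B$ into a half-plane, whose boundary line pulls back to a properly embedded line separating $A$ from $B$.

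The honest way to organize this is to first handle the case $A$ compact (then $A$ lies in a bounded complementary component of $B$ or we can enclose it by a large circle and shrink), producing a Jordan curve, and then handle the case $A$ unbounded (and hence, since $A$ is connected and closed, ``going to infinity''), where $B$ must lie in a simply connected complementary region or be separated from $A$ by a line. I expect the main obstacle to be the case where \emph{both} $A$ and $B$ are unbounded: here neither can be enclosed by a Jordan curve, so the separator must be a properly embedded line, and one must verify that the open region between them — a component of $\R^2 \setminus (A \cup B)$, or a suitable sub-region — is genuinely simply connected and unbounded so that the Riemann-mapping straightening applies and yields a \emph{properly embedded} (not just embedded) line. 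Ensuring properness — that the preimage of the boundary line is closed in $\R^2$ and exits every compact set — is the delicate technical point, and I would secure it by choosing the region carefully (as the ``$\dist(\cdot,A) = \dist(\cdot,B)$''-type locus, whose intersection with any compact set is compact) rather than an arbitrary complementary component.
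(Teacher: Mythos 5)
Your approach is genuinely different from the paper's and, as written, it has real gaps that I don't think can be filled without changing the basic strategy. The paper's proof sidesteps all of your delicate points at once: it takes a \emph{smooth} Urysohn-type function $\phi:\R^2 \to [0,1]$ with $\phi(A)=0$, $\phi(B)=1$, picks a regular value $c$ via Sard's theorem, so that $\phi^{-1}(c)$ is automatically a properly embedded $1$-manifold (a disjoint union of simple closed curves and properly embedded lines with collar neighborhoods), and then extracts a single component of the frontier of the piece containing $B$ as the separator. All the ``tameness'' you are hoping for is built in from the start.

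The two concrete problems with your plan are as follows. First, the set $\{\dist(\cdot,A)=\dist(\cdot,B)\}$ is only the zero set of a Lipschitz function and its components need not be arcs, curves, or anything resembling a $1$-manifold; ``thickening slightly'' does not repair this, and your polygonal-approximation step (covering a compact $A$ by small squares inside an open neighborhood and taking an outer boundary) is only sketched and only addresses the case $A$ compact. Second, and more seriously, the Riemann-mapping step does not yield a \emph{properly embedded} line. If $\psi:\mathbb{D}\to\Omega$ is a Riemann map for an unbounded simply connected region $\Omega$ between $A$ and $B$, the image of a diameter of $\mathbb{D}$ is closed in $\Omega$ but need not be closed in $\R^2$: its closure can accumulate on $\partial\Omega$, so it can fail to exit compact sets. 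You flag this as ``the delicate technical point,'' but the fix you offer --- that the locus $\{\dist(\cdot,A)=\dist(\cdot,B)\}$ meets every compact set in a compact set --- is true of \emph{every} closed set and carries no information. What is actually needed is control of the boundary behavior of $\psi$, and that is not available for an arbitrary simply connected region. The clean repair is exactly the paper's move: replace the distance function by a smooth function and invoke Sard, so you never have to regularize a wild frontier or appeal to conformal straightening at all.
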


\proof Choose a smooth function
$\phi:\R^2 \to [0,1]$ such that $\phi(A) = 0$ and $\phi(B) = 1$ and a
regular value $c \in (0,1)$.  Then $\phi^{-1}(c)$ is a countable
union of properly embedded lines and  simple closed curves.
Each component of $\phi^{-1}(c)$ has a collar neighborhood which
is disjoint from the other components.

Let $U$ denote the component of the complement of $\phi^{-1}(c)$
which contains $B$ and let $X$ denote the frontier of $U$.
Then $X$ separates $A$ and $B$ and $X$
consists of a countable subcollection of the components
of $\phi^{-1}(c),$ each of which is also a component of $X$.  The set
$U$ is the component of $\R^2 \setminus X$ which contains $B.$
Each component $L$ of $X$ separates $\R^2$ into two 
open sets, one of which contains $B$ and $X \setminus L$
and the other of which is disjoint from $X$ and $B$.

Consider a curve $\gamma$ running from a point of $A$ to a point of
$B$ and let $L_0$ be the first component of $X$ which $\gamma$ intersects.
The component $L_0$ is independent of the choice of $\gamma,$
since $L_0$ separates $A$ from all other components of $X.$ It
follows that $A$ and $B$ are in different components of the
complement of $L_0$ since otherwise they could be joined by a 
$\gamma$ which does not intersect $L_0.$
\endproof

\begin{lemma} \label{planar topology}  If $U \subset \R^2$ is open and connected then each component $Z$ of the complement of $U$ has connected frontier and connected complement.
\end{lemma}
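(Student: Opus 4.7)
My plan is to prove the two assertions in order: first that $\R^2 \setminus Z$ is connected, and then that $\partial Z$ is connected, with the second part being the substantive half. The main obstacle will be ruling out a hypothetical disconnection $\partial Z = A \sqcup B$; I will use Lemma~\ref{regular point} to build an explicit separating curve and then derive a contradiction from the fact that both $Z$ and $\R^2 \setminus Z$ are connected.

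For connectedness of $\R^2 \setminus Z$, I would begin by checking that every other component $Z_\beta$ of $\R^2 \setminus U$ has nonempty frontier contained in $\overline U$: a point $x \in \partial Z_\beta$ cannot have a neighborhood disjoint from $U$, since such a neighborhood would be a connected subset of $\R^2 \setminus U$ meeting $Z_\beta$, hence contained in $Z_\beta$, forcing $x$ into $\Int(Z_\beta)$.  Since $U$ and $Z_\beta$ are connected with intersecting closures, $U \cup Z_\beta$ is connected, and then $\R^2 \setminus Z = \bigcup_{\beta \neq Z}(U \cup Z_\beta)$ is a union of connected sets all containing $U$, hence connected.

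For the frontier assertion, suppose for contradiction that $\partial Z = A \sqcup B$ with $A, B$ nonempty, closed, and disjoint, and pick components $A_0 \subset A$, $B_0 \subset B$, so that $A_0$ and $B_0$ are disjoint closed connected subsets of $\R^2$.  Choosing a smooth Urysohn function $\phi : \R^2 \to [0,1]$ with $\phi(A) = 0$ and $\phi(B) = 1$, the construction in the proof of Lemma~\ref{regular point}, applied to $A_0$ and $B_0$ using this $\phi$ and a regular value $c \in (0,1)$, produces a component $L_0$ of $\phi^{-1}(c)$---either a simple closed curve or a properly embedded line---that separates $A_0$ from $B_0$ in $\R^2$.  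Because $\phi$ is identically $0$ on $A$ and identically $1$ on $B$, the full level set $\phi^{-1}(c)$ is disjoint from $A \cup B = \partial Z$; in particular $L_0 \cap \partial Z = \emptyset$.

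Writing $\R^2 \setminus L_0 = R_+ \sqcup R_-$ with $A_0 \subset R_+$ and $B_0 \subset R_-$, the connectedness of $Z$ together with $A_0, B_0 \subset Z$ forces $Z \cap L_0 \neq \emptyset$, and since $L_0$ misses $\partial Z$ this gives $L_0 \cap \Int(Z) \neq \emptyset$.  Applying the same reasoning to the connected set $V := \R^2 \setminus Z$, which accumulates on both $A_0$ and $B_0$ because $\partial V = \partial Z$, we also get $V \cap L_0 \neq \emptyset$.  Then $L_0 = (L_0 \cap V) \sqcup (L_0 \cap \Int(Z))$ exhibits $L_0$ as a disjoint union of two nonempty relatively open subsets, contradicting its connectedness as a line or circle.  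Hence $\partial Z$ must be connected.
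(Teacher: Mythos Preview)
Your proof is correct and follows essentially the same strategy as the paper's: first establish that $\R^2 \setminus Z$ is connected by writing it as a union of connected sets each containing $U$, then use Lemma~\ref{regular point} to produce a separating one-manifold $L_0$ disjoint from $\partial Z$ and derive a contradiction from the connectedness of $Z$, $\R^2\setminus Z$, and $L_0$. Your treatment is in fact slightly more careful than the paper's in one respect: by choosing the Urysohn function $\phi$ to be constant on \emph{all} of $A$ and \emph{all} of $B$ (not just on the components $A_0$, $B_0$ to which Lemma~\ref{regular point} is nominally applied), you guarantee explicitly that $L_0$ misses the entire frontier $\partial Z$, a point the paper leaves implicit.
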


\begin{proof}
The complement of $Z$ is the union of $U$ with some of its
complementary components and is therefore connected.  If the frontier
$W$ of $Z$ is not connected then by Lemma~\ref{regular point} there is
a separation of $W$ by a set $Y \subset \R^2 $ that is either a simple
closed curve or a properly embedded line.  Since each component
of $\R^2 \setminus Y$ intersects the frontier of $Z$,  each component must intersect both the interior of $Z$ and
$\R^2 \setminus Z$.  Since $Y$ is disjoint from the frontier $W$ of
$Z$, it is contained in either the interior of $Z$ or in $\R^2
\setminus Z$.  In the latter case $Y$ separates  $Z$ and
in the former case $Y$ separates $\R^2 \setminus Z$.
This contradicts the fact that
$Z$ and $\R^2 \setminus Z$ are connected and so proves that the
frontier of $Z$ is connected.
\end{proof}

\section{Normal Form} \label{sec: normal form}
Let $N$ be the genus zero surface obtained from $S^2$ by blowing up each element of $P$ to a boundary circle   and let  $\pi_P : N^2 \to S^2$ be the \lq inverse\rq\  map that collapses each boundary component to a point in $P$.   Given $F \in \newDiff$ there exists a  diffeomorphism  $F' : N \to N$  such that $F \pi_P = \pi_P F'$.   Identify $N$ with a smooth subsurface of $S^2$  and extend $F' : N \to N$ to a  diffeomorphism $G :S^2 \to S^2$ (This is possible by the isotopy extension theorem; see, e.g., Hirsch's book \cite{Hirsch}.)

Theorem~(1.2) of \cite{fh:periodic} states that a
diffeomorphism $G$ of a closed surface is isotopic relative to its
fixed point set to a homeomorphism with certain nice properties.  The
special case that $G$ is isotopic to the identity is considered in
Lemma~6.3 of that paper.  Together, this theorem and lemma  imply that for any
diffeomorphism $G :S^2 \to S^2$ there is a (possibly empty) finite set
$\cR_G$ of disjoint simple closed curves in $  S^2 \setminus
\Fix(G)$ and a homeomorphism $G_1: S^2 \to S^2$ that is isotopic to
$G$ rel $\Fix(G)$ such that:
\begin{description}
\item [($1_G$)] There are    disjoint open $G_1$-invariant annulus neighborhoods $A_j \subset S^2 \setminus
\Fix(G)$ of the elements $\gamma_j \in \cR_G$.
\item  [($2_G$)] Each component $C_i$ of  $S^2 \setminus \cup A_j$ is $G_1$-invariant.  Moreover, 
if  $G_1|_{C_i} \ne$ identity then $C_i \cap \Fix(G) $ is finite  and $G_1|_{C_i} $ is pseudo-Anosov relative to $C_i \cap \Fix(G) $.
\end{description}
After removing extraneous elements of $\cR_G$ if necessary we may also assume 
\begin{description}
\item [($3_G$)]  The elements of $\cR_G$ are essential, non-peripheral and non-parallel in $S^2 \setminus
\Fix(G)$.  For each $A_l$, if the restriction of $G_1$ to each component of $S^2 \setminus \cup A_j$ that is adjacent to  $A_l$ is the identity, then $G|_{A_l}$ is a non-trivial Dehn twist.
\end{description}

Any simple closed curve in $S^2$ that is  fixed up to isotopy rel $\Fix(G)$  is isotopic rel $\Fix(G)$ into one of the $A_j$'s or into one of the $C_i$'s on which $G$ restricts to the identity.  Applying this to the components of $\partial N$, there is  a diffeomorphism $H :S^2\to S^2$ that is isotopic to the identity rel $\Fix(G)$ and satisfies $H(\cR_G) \cap \partial N = \emptyset$.   After replacing $G_1$ with $HG_1H^{-1}$, we   may assume that each component of $\partial N$ is contained in an $A_j$ or in a  $C_i$ on which $G_1$ restricts to the identity. After an isotopy of $G_1$ we may assume that $G_1$ restricts to the identity on $\partial N$ and hence that items (1) and (2) above hold when $\cR_G$ is replaced by $\cR_G \cup \partial N$.    Let $F'_1  = G_1|_N$ and let $\cR_{F'}$ be the set of simple closed curves in $N$ obtained from  $\cR_G \cap  N$ by removing all peripheral elements.   Then
\begin{description}
\item  [($1_{F'}$)] There are    disjoint open $F'_1$-invariant annulus neighborhoods $A_j \subset  N \setminus (\partial N \cup \Fix(F'))$ of the elements $\gamma_j \in \cR_{F'}$.
\item   [($2_{F'}$)] Each component $C_i$ of  $N \setminus \cup A_j$ is $F'_1$-invariant.  Moreover, 
if  $F'_1|_{C_i} \ne$ identity then $C_i \cap \Fix(F') $ is finite  and $F'_1|_{C_i} $ is pseudo-Anosov relative to $C_i \cap \Fix(F') $.
\item  [($3_{F'}$)] The elements of $\cR_{F'}$ are essential, non-peripheral and non-parallel in $N \setminus (\partial N \cup \Fix(F'))$.  For each $A_l$, if the restriction of $F'_1$ to each component of $N \setminus \cup A_j$ that is adjacent to  $A_l$ is the identity, then $F_1'|_{A_l}$ is a non-trivial Dehn twist.
\end{description}

Let $X = \cup  (C_i \cap \Fix(F') )$ where the union is taken over those $C_i$ for which $F'_1|_{C_i} $ is not the identity.    Blow up each element of $X $ to a boundary circle forming a new compact surface $N^*$ and let $F^*$ and $F_1^*$ be  the diffeomorphisms of $N^*$ induced by $F'$ and $F'_1$ respectively.   Then $F'$ and $F'_1$ are isotopic and    $F_1^* $ is in Thurston canonical form because the non-identity components are now pseudo-Anosov instead of pseudo-Anosov relative to a finite set of fixed points.  If there are any pseudo-Anosov components, then the action of $F_1^*$, and hence $F^*$   on the fundamental group of $N^*$ has exponential growth (see 5.1 of \S V of {\em Expos\'e 11} in \cite{FLP}).     In this case,   Theorem 1 of \cite{bowen} implies that $F^*$,  and hence $F'$, and hence $F$, has positive entropy.  This contradiction implies that   $F'_1 | _{C_i}$ is the identity for each $C_i$.   Thus $N \setminus \cup A_j
\subset \Fix(F'_1)$ and we may assume that $F'_1|_{A_j}$ is a non-trivial Dehn twist
about $\gamma_j$ for each $\gamma_j \in \cR_{F'}$.  

Projecting via $\pi_P$ to $S^2$ we have shown that there is a finite collection $\cR_F$ of essential, non-peripheral, non-parallel simple closed curves in $S^2 \setminus \Fix(F)$ such that $F$ is isotopic rel $\Fix(F)$ to a composition of non-trivial Dehn twists in the elements of $\cR$.


A result of Brown and Kister \cite{brnkist} implies that $F$ preserves
every component of $\M = S^2 \setminus \Fix(F).$  Given a  component $M$ of
$\M $, let $f =F|_M : M\to M$
and let $\cR$  be the subset of $\cR_F \cap M$ consisting of elements that
are non-peripheral in $M$.    If $\cR = \cR_F \cap M$ then   $F_1|_M$ is a composition of non-trivial Dehn twists along the elements of $\cR$.  If $\cR \ne \cR_F \cap M$ then $F_1|_M$ is isotopic to a composition of non-trivial Dehn twists along the elements of $\cR$.    In either case, $f$ is isotopic to  a composition
of non-trivial Dehn twists along the elements of $\cR$.  The elements of $\cR$ are the {\em reducing curves} for $f : M \to M$; they are non-parallel and non-peripheral.

\section{An intermediate proposition} \label{sec: intermediate}

To clarify the logic of the proof of Theorem~\ref{thm: annuli} we
introduce Proposition~\ref{intermediate} which asserts the existence
of a collection $\cA$ of annuli satisfying the second, third and fourth items
of Theorem~\ref{thm: annuli} plus two additional properties.  What is
missing from this proposition is the fact that the elements of $\cA$
are exactly the components of the set $\W$ of weakly free disk
recurrent points for $f$.  The proof of this missing fact requires
renormalization and so comes at a later stage of the paper.

We have stated Proposition~\ref{intermediate}   in terms of a single component $M$ of $\M$ instead of  all of $\M$ as in Theorem~\ref{thm: annuli}.    This has obvious advantages and can be done without loss.

\begin{prop} \label{intermediate}    Suppose that $F \in \newDiff$ has
entropy zero, has infinite order and at least three periodic points.   Suppose that $M$ is a component of $\M = S^2 \setminus \Fix(F)$ and that $f=F|_M:M \to M$.  Then there is a  countable collection $\cA$ of pairwise disjoint
open $f$-invariant annuli such that  
\begin{enumerate}
\item For each compact set $X \subset M$ there is a constant $K_X$
such that any $f$-orbit that is not contained in some $U \in \cA$
intersects $X$ in at most $K_X$ points.  In particular each  birecurrent point
is contained in some $U \in \cA$. 
\item If $z \in M$ is not contained in any element of $\cA$ then there
are distinct components $F_+(z)$ and $F_-(z)$ of $\Fix(F)$ so that $\omega(F,z)
\subset F_+(z) $ and $\alpha(F,z) \subset F_-(z)$. 
\item For each $U \in \cA$ and each component $C_M$ of the frontier of
$U$ in $M$, $F_+(z)$ and $F_-(z)$ are independent of the choice of $z
\in C_M$. 
\item If $U \in \cA$, and $f_c:U_c \to U_c$ is the extension
to the annular compactification (Notation~\ref{notn: annular comp}) of $U$, then each component of $\partial U_c$
corresponding to a non-singular end of $U$ contains a fixed point of $f_c.$
\item  $\cA$ is the set of maximal $f$-invariant open annuli in $M$.
\end{enumerate}
\end{prop}

Note that it is not possible for $M$ to be simply connected, since the
Brouwer plane translation theorem would then assert that $F|_M$ has a
fixed point in $M$. In the special case that $M$ is an annulus, $\cA$
is the single annulus $M$.  Items (1) - (3) and (5) are obvious and item (4)
follows from Lemma~5.1 of \cite{fh:periodic}.  The constructions and
analysis needed for the case that $M$ is not an annulus are carried
out in sections \ref{sec: endpoints} through \ref{sec: annuli}.  The
final formal proof of Proposition~\ref{intermediate} occurs at the end
of section~\ref{sec: annuli}.




\section{Hyperbolic Structures}  \label{hyperbolic}

In this section we establish notation and recall
standard results about hyperbolic structures on surfaces.   More details can be found, for example, in  \cite{casble:nielsen}.

Suppose that $M$ is a connected open subset
of $S^2$ that has at least three ends or equivalently  is not homeomorphic to either the open disk or open
annulus.  We say that a simple closed curve $\tau \subset M$ is {\em essential} if it is not freely homotopic to a point and is {\em inessential} otherwise.  Similarly $\tau$  is {\em peripheral} if it is isotopic into arbitrarily small neighborhoods of an end of $M$ and is {\em non-peripheral} otherwise.  Thus $\tau$ is essential if and only if each complementary component contains at least one puncture and is peripheral if and only if one of its complementary components contains exactly one puncture. We say that a  properly embedded line in $M$ is {\em essential} if it is not properly isotopic into arbitrarily small neighborhoods of an end of $M$ or equivalently if each component of its complement contains at least one puncture.   

    If $M$ has infinitely many ends then it can be written as an increasing union of finitely
punctured compact connected subsurfaces $M_i$ whose boundary components
determine essential non-peripheral isotopy classes in $M$.  We may
assume that boundary curves in $M_{i+1}$ are not parallel to boundary
curves in $M_i$.  It is straightforward (see \cite{casble:nielsen}) to put compatible
hyperbolic structures on the $M_i$'s whose union defines a complete
hyperbolic structure on $M$ in which all isolated punctures are cusps.  Of course $M$ also has such a hyperbolic structure when it only has finitely many ends.   In this paper, all hyperbolic structures are assumed to be complete and all isolated punctures are assumed to be cusps.

We use the Poincar\'e disk model for the hyperbolic plane $H$.  In
this model, $H$ is identified with the interior of the unit disk and
geodesics are segments of Euclidean circles and straight lines that
meet the boundary in right angles. A choice of hyperbolic structure on
$M$ provides an identification of the universal cover $\ti M$ of $M$
with $H$.  Under this identification, which we assume throughout this paper, covering translations of $\ti M$  are
isometries of $H$ and geodesics in $M$ lift to geodesics in $H$.  The
compactification of the interior of the unit disk by the unit circle
induces a compactification of $H$ by the \lq circle at infinity\rq\
$\sinfty$.  Geodesics in $H$ have unique endpoints on $\sinfty$.
Conversely, any pair of distinct points on $\sinfty$ are the endpoints
of a unique geodesic. 

Each covering translation $T: \tiM \to \tiM$ extends to a
homeomorphism (also called) $T : H \cup \sinfty \to H \cup
\sinfty$. The fixed point set of a non-trivial $T$ is either one or
two points in $\sinfty$. We denote these point(s) by $T^+$ and $T^-$,
allowing the possibility that $T^+ = T^-$.  If $T^+ = T^-$, then $T$
is said to be {\it parabolic}; a root-free parabolic covering
translation with fixed point $P$ is sometimes written $T_P$.  If $T^+$
and $T^-$ are distinct, then $T$ is said to be {\it hyperbolic} and we
 assume that $T^+$ is a sink and $T^-$ is a source; the unoriented
geodesic connecting $T^-$ and $T^+$ is called the {\em axis} of $T$.
A root-free covering translation with axis $\ti \gamma$ is sometimes
denoted $T_{\ti \gamma}$.
 
   Each essential non-peripheral simple closed curve $\tau' \subset M$ is homotopic to a unique closed geodesic $\tau$.  For each lift $\ti \tau' \subset \tiM$, the homotopy between $\tau'$ and $\tau$ lifts to a bounded homotopy between $\ti \tau'$ and a lift $\ti \tau$ of $\tau$ which is the axis of a hyperbolic covering translation $T$.  The ends of  both  lines $\ti \tau'$ and $\ti \tau$  converge to $T^-$ and $T^+$.    
    
 
 Similarly, both ends of a  lift $\ti \tau'$ of a peripheral  simple closed curve $\tau'$ converge to a point that is the unique fixed point of a parabolic covering translation; roughly speaking, this fixed point is  a lift of the isolated puncture of $M$ that is encircled by $\tau$. Conversely, if $T$ is peripheral and $\ti \tau$ is a sufficiently small  horocycle based at $P$ then the image $\tau \subset M$, which we call a {\em horocycle in $M$}, is  a peripheral simple closed curve.    Each simple closed peripheral curve in $M$ is isotopic to a (non-unique) horocycle in $M$.    Each essential properly embedded line in $M$ is properly isotopic to a unique properly embedded geodesic line.

Suppose now that $f : M \to M$ is a homeomorphism.   
    If $f:M \to M$ and $g:M \to M$ are isotopic and $\ti f:H \to H$ is a lift of $f:M \to M$, then the isotopy between $f$ and $g$ lifts to an isotopy between  $\ti f:H \to H$ and  a lift $\ti g : H \to H$ of $g : M\to M$;  we say that $\ti f$ and $\ti g$ are equivariantly isotopic.  A    proof of   the following fundamental
result of Nielsen theory appears in Proposition 3.1 of
\cite{han:fpt}. 

\begin{prop} \label{nielsen}   Every lift $\ti f :  H \to H$ extends
uniquely to a homeomorphism (also called) $\ti f :  H \cup \sinfty \to
H \cup \sinfty$.
   If   $\ti f $ and  $\ti g  $  are equivariantly isotopic lifts of $f:M \to M$ and $g:M \to M$ then $\ti f|_{\sinfty} = \ti g|_{\sinfty}$.

\end{prop}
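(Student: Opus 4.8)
The plan is to prove the statement in three steps: uniqueness of the extension, existence of a continuous extension, and invariance under equivariant homotopy. Uniqueness is immediate: $H$ is dense in $H \cup \sinfty$, so $\ti f$ has at most one continuous extension, and since $\widetilde{f^{-1}}$ extends as well (by the construction below applied to $f^{-1}$) the common extension is a bijection, hence a homeomorphism of a compact Hausdorff space. So the real content is to produce a continuous extension to $\sinfty$ and to show it depends on $\ti f$ only through the equivariant homotopy class. Write $\Gamma < \mathrm{Isom}(H)$ for the group of covering translations, so $\Gamma \cong \pi_1(M)$. Since $M$ is neither a disk nor an annulus, $\Gamma$ is a non-elementary Fuchsian group; and since the hyperbolic structure constructed above has no funnel ends (it is obtained by exhausting $M$ with finitely punctured compact subsurfaces), $\Gamma$ is of the first kind, so its limit set $\Lambda(\Gamma)$ is all of $\sinfty$. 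In particular the set $\mathcal F$ of fixed points on $\sinfty$ of the hyperbolic elements of $\Gamma$ is dense in $\sinfty$.

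To build the extension I would use that $\ti f$ is a lift of $f$: for each $T \in \Gamma$ there is a unique $T^{\#} \in \Gamma$ with $\ti f \circ T = T^{\#} \circ \ti f$, and $T \mapsto T^{\#}$ is an automorphism of $\Gamma$ carrying hyperbolic elements to hyperbolic elements. Given a hyperbolic $T$, the image $\ti f(\ax(T))$ is a properly embedded line invariant under $T^{\#}$; projecting it to the open annulus $H/\langle T^{\#}\rangle$ yields a compact loop freely homotopic to the core geodesic, so $\ti f(\ax(T))$ lies in a bounded neighborhood of $\ax(T^{\#})$ and in particular has the same pair of endpoints on $\sinfty$ as $\ax(T^{\#})$; since $\ti f$ is orientation preserving these endpoints are $(T^{\#})^{+}$ and $(T^{\#})^{-}$. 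Define $\hat f$ on $\mathcal F$ by $T^{\pm} \mapsto (T^{\#})^{\pm}$. The key point — and the step I expect to be the main obstacle — is to show $\hat f$ is the restriction of a homeomorphism of $\sinfty$. I would do this by checking that $\hat f$ preserves the cyclic order on $\mathcal F$: two hyperbolic axes $\ax(T_1), \ax(T_2)$ are disjoint (resp.\ cross) in $H$ if and only if the pairs $\{T_1^{\pm}\}$ and $\{T_2^{\pm}\}$ are unlinked (resp.\ linked) on $\sinfty$, and since $\ti f$ is a homeomorphism of $H$ it preserves disjointness and crossing of the images; combined with the identification of endpoints just made, this shows the linking pattern of any two pairs is preserved, hence the cyclic order is preserved. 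A cyclic-order-preserving bijection between the dense sets $\mathcal F$ and $\hat f(\mathcal F)$ (also dense, since $\#$ is an automorphism) extends uniquely to a homeomorphism $\hat f : \sinfty \to \sinfty$. Finally, $\ti f \cup \hat f : H \cup \sinfty \to H \cup \sinfty$ is a bijection, continuous at points of $H$; continuity at $\xi \in \sinfty$ follows because, for any hyperbolic $T$ with $\xi$ strictly between $T^{+}$ and $T^{-}$, a point of $H$ near $\xi$ eventually lies on the side of $\ax(T)$ containing $\xi$, hence its image lies on the side of $\ax(T^{\#})$ containing $\hat f(\xi)$, and these sides shrink to $\hat f(\xi)$ as $T$ varies, by the continuity of $\hat f$ just established. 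A continuous bijection of the compact Hausdorff space $H \cup \sinfty$ is a homeomorphism.

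For the last clause, suppose $H_s : H \to H$, $s \in [0,1]$, is an equivariant homotopy with $H_0 = \ti f$ and $H_1 = \ti g$, each $H_s$ a lift of a map $M \to M$. Then for each fixed $T \in \Gamma$ the element $H_s \circ T \circ H_s^{-1} \in \Gamma$ depends continuously on $s$ and hence is constant, so $\ti f \circ T \circ \ti f^{-1} = \ti g \circ T \circ \ti g^{-1} = T^{\#}$; that is, $\ti f$ and $\ti g$ induce the same automorphism of $\Gamma$. By the construction above, $\hat f$ and $\hat g$ then agree on $\mathcal F$, and since $\mathcal F$ is dense in $\sinfty$ and both boundary maps are continuous, $\ti f|_{\sinfty} = \hat f = \hat g = \ti g|_{\sinfty}$. (This is where it is essential that $\Lambda(\Gamma) = \sinfty$: a funnel end would produce a gap in $\sinfty$ whose interior is disjoint from $\mathcal F$ and on which $\hat f$ would not be pinned down by its values on $\mathcal F$, and the statement would fail.)
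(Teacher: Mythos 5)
Your argument is the standard Nielsen boundary-extension proof, which is what the paper's cited reference (Proposition 3.1 of \cite{han:fpt}) gives; the paper itself does not reprove it, so this is the ``same approach.'' The outline is correct, but a few spots deserve tightening. (i) The linking argument as written only treats endpoint pairs $\{T^{\pm}\}$ of a single hyperbolic axis, whereas showing $\hat f$ preserves cyclic order on $\mathcal F$ requires the separation relation for \emph{arbitrary} quadruples from $\mathcal F$; to get this, run the same argument on the geodesic $\gamma_{ab}$ joining any $a,b\in\mathcal F$, first checking (via equivariance under a hyperbolic $T$ with $T^{+}=a$, pushing points along $T$-orbits) that $\ti f(\gamma_{ab})$ has endpoints $\hat f(a)$ and $\hat f(b)$. (ii) In the continuity step, the image of a point near $\xi$ lies on the relevant side of $\ti f(\ax(T))$, which is not literally $\ax(T^{\#})$; the conclusion still holds because the closure at infinity of that side is the small arc bounded by $(T^{\#})^{\pm}$, which is all your shrinking estimate actually uses, so the phrasing should be corrected to say this. (iii) In the homotopy step the intermediate maps $H_s$ are merely continuous (a homotopy between homeomorphisms need not pass through homeomorphisms), so $H_s^{-1}$ is not available; write the relation as $H_s\circ T=\theta_s(T)\circ H_s$, note that $\theta_s$ is an automorphism because $H_s$ covers a homotopy equivalence, and then the continuity-in-$s$ argument runs unchanged.
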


 
 For any extended lift $\ti f :  H \cup \sinfty \to H \cup \sinfty$ there is an {\it associated action $\ti f_\#$ on geodesics  in $H$ } defined by sending the geodesic with endpoints $P$ and $Q$ to the  geodesic with endpoints $\ti f(P)$ and $\ti f(Q)$. The action $\ti f_\#$ projects to an {\it action $f_\#$ on   geodesics  in $M$}.    Proposition~\ref{nielsen} implies that $f_\#$ depends only on the isotopy class of $f$.    Similarly, if $P$ is the unique fixed point of  the parabolic covering translation $T $  then $\ti f(P)$ is  the unique fixed point of the parabolic covering translation $  \ti f T \ti f^{-1}$.   There is an induced an action $f_\#$ on isotopy classes of   simple closed peripheral curves in $M$ that agrees with the induced action   of $f$ on  isolated punctures in $M$.  Note that if a geodesic or isotopy class of a simple closed peripheral curve is equipped with an orientation then its image under $f_\#$ has a well-defined induced orientation.

The following results are well known and follow easily from the definitions.  


\begin{lemma} \label{basic lemma 1}
\begin {enumerate}
\item  If $\tau'_1$ and $\tau'_2$ are  essential simple closed curves isotopic to geodesics  $\tau_1$ and $\tau_2$ respectively, then $f(\tau'_1)$ is isotopic to $\tau'_2$ if and only if $f_\#(\tau_1) = \tau_2$.  
\item   If $\gamma'_1$ and $\gamma'_2$ are  properly embedded lines properly isotopic   to geodesics  $\gamma_1$ and $\gamma_2$ respectively, then $f(\gamma'_1)$ is properly isotopic to  $\gamma'_2$ if and only if $f_\#(\gamma_1) = \gamma_2$.  
\item   If $\tau'_1$ and $\tau'_2$ are  simple closed peripheral  curves encircling the punctures $p_1$ and $p_2$ respectively, then $f(\tau'_1)$ is isotopic to $\tau'_2$ if and only if $f(p_1) = p_2$.  
 \end{enumerate}
\end{lemma}

\begin{lemma} \label{basic lemma 2}   For any  extended lift $\ti f :  H \cup \sinfty \to H \cup \sinfty$ and extended covering translation $T: H \cup \sinfty \to H \cup \sinfty$, the  following are equivalent:  
 \begin{enumerate}
  \item $\ti f$ commutes with $T$. 
\item  $\ti f$ fixes $T^{+}$ or  $T^-.$
\item  $\ti f$ fixes $T^{+}$ and  $T^-.$
 \end{enumerate}
\end{lemma}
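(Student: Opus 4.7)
My plan is to cycle through the three conditions, with (3) serving as a hub. The essential observation is that for any covering translation $T$ and any lift $\ti f$ of $f: M \to M$, the conjugate $T' := \ti f T \ti f^{-1}$ is a lift of $f \circ \mathrm{id} \circ f^{-1} = \mathrm{id}$ and is therefore itself a covering translation. The implication $(3) \Rightarrow (2)$ is immediate. For $(1) \Rightarrow (3)$, if $\ti f T = T \ti f$ then $T(\ti f(T^\pm)) = \ti f(T(T^\pm)) = \ti f(T^\pm)$, so $\ti f$ permutes $\Fix(T) = \{T^+, T^-\}$; since $\ti f$ conjugates $T$ to itself as a homeomorphism of $H \cup \sinfty$, it must preserve the dynamical type of each fixed point, sending the attractor to the attractor. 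Thus $\ti f(T^\pm) = T^\pm$ individually. In the parabolic case there is only one point to consider.

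The content is in the remaining direction, which I prove via the Fuchsian structure of the group $\Gamma$ of covering translations. For $(2) \Rightarrow (3)$, assume $\ti f(T^+) = T^+$, the other case being symmetric. Then $T'$ is a covering translation fixing $T^+$. The stabilizer of $T^+$ in $\Gamma$ is infinite cyclic, generated by a root-free hyperbolic (or parabolic) element $T_0$ whose fixed-point set on $\sinfty$ coincides with that of $T$ — in the hyperbolic case $\{T^+, T^-\}$, in the parabolic case $\{T^+\}$. Hence $T' \in \langle T_0 \rangle$ fixes $T^-$ as well. But the fixed points of $T'$ are exactly $\ti f(T^+) = T^+$ and $\ti f(T^-)$, so $\ti f(T^-) = T^-$, which is (3).

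For $(3) \Rightarrow (1)$, it suffices to prove $\ti f T_0 \ti f^{-1} = T_0$, for then commutation with $T = T_0^m$ follows at once. The conjugate $T_0' := \ti f T_0 \ti f^{-1}$ lies in $\langle T_0 \rangle$ (same stabilizer argument) and is root-free in $\Gamma$ (conjugation preserves root-freeness), so $T_0' \in \{T_0, T_0^{-1}\}$. The main obstacle — the only subtle step — is ruling out $T_0^{-1}$. For this I use that $\ti f$ restricts to an orientation-preserving self-homeomorphism of $\sinfty$, inherited from the fact that $f$, and hence any lift to $H$, is orientation-preserving. In the hyperbolic case, the attractor of $T_0'$ on $\sinfty$ is the image under $\ti f$ of the attractor of $T_0$, which lies in $\{T^+, T^-\}$ and hence is fixed by $\ti f$ under (3); since $T_0$ and $T_0^{-1}$ have opposite attractors, this forces $T_0' = T_0$. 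In the parabolic case, passing to an upper half plane model placing $T^+$ at infinity, $T_0$ acts on $\sinfty \setminus \{T^+\} \cong \R$ as a translation $x \mapsto x + c$; since $\ti f|_\R$ is monotone increasing, a direct computation shows $\ti f T_0 \ti f^{-1}$ shifts in the same direction as $T_0$, again giving $T_0' = T_0$. This completes the cycle.
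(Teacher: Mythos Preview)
Your proof is correct. The paper provides no proof of this lemma, introducing it (together with the preceding lemma) only with the remark that the results ``are well known and follow easily from the definitions''; your argument is the standard Nielsen-theoretic one and fills in exactly the expected details.
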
  

\proof  $(3) \implies (2)$ is obvious.  If $\ti f$ commutes with $T$ then it preserves $\Fix(T)$ mapping sources to sources and sinks to sinks.  Thus $(1) \implies (3)$.  If $\ti f$ fixes an element of $\Fix(T)$ then $T$ and $\ti f T \ti f^{-1}$ are covering translations whose axes are asymptotic.  Since these axes are periodic, they are equal and so $\ti f$ fixes both elements of $\Fix(T)$.    Thus $(2) \implies (3)$.
\endproof

 We conclude with a definition and lemma about   isotopy of families of lines.

 Suppose that $\rho$ and $\sigma$ are   essential properly embedded lines in $M$. 
    We say that $\rho$ and $\sigma$ have {\em geodesic-like or minimal intersections} if they intersect transversely and if each component of $M \setminus (\rho \cup \sigma)$ whose frontier is the union of an interval $I\subset \sigma$ and an interval $J \subset \rho$ contains at least one puncture.    
 
 Note that :
  \begin{itemize} 
 \item If $\rho$ and $\sigma$   are geodesics with respect to some hyperbolic structure on $M$ then $\rho$ and $\sigma$  have geodesic-like intersections.
 \item If $\rho$ and $\sigma$   have geodesic-like intersections and $h :M \to M$ is any homeomorphisms then $h(\rho)$ and $h(\sigma)$  have geodesic-like intersections.
 \end{itemize}


\begin{lemma}  \label{simultaneous isotopy}  
\noindent
\begin{enumerate} 
\item If  $\E$ is a locally finite collection of disjoint  essential  properly embedded lines in $M$ that determine distinct proper isotopy classes, then the elements of $\E$ are simultaneously isotopic to their associated geodesics; i.e.  there is a homeomorphism $g :M \to M$, isotopic to the identity, such that $g(\rho)$ is geodesic for each $\rho \in \E$.  If the elements of $\E$ are smoothly embedded then we may take $g$ to be a diffeomorphism.
 \item   Suppose that $\E $ and $\L$  are locally finite collections of disjoint   essential  properly embedded   lines that determine distinct proper isotopy classes.  Suppose further that  each element of $\L$ is geodesic and that each element of   $\E$    has minimal intersections with each element of $\L$.  Then  there exists a diffeomorphism $g :M \to M$, isotopic to the identity,  that preserves $\L$ and such that $g(\rho)$ is geodesic for each $\rho \in \E$.

\end{enumerate}
\end{lemma}

\proof   The proofs of Lemma 2.5 and 2.6 of \cite{casble:nielsen} can be modified in a straightforward manner to prove this lemma.  The details are left to the reader.\endproof 
 
 \begin{remark} \label{metric modification}   We will use the first two parts of Lemma~\ref{simultaneous isotopy} to modify metrics so that certain given lines are geodesics in their isotopy classes. The key observation is that  if $\mu$ is a hyperbolic metric on $M$ and $g :M \to M$ is a diffeomorphism   then $\nu = g^*\mu$ is a hyperbolic metric on $M$ and a  line $\ell$ is geodesic in $\nu$ if and only if $g(\ell)$ is geodesic in $\mu$.
 \end{remark}

\section {The endpoint maps $\ti \alpha$ and $\ti \omega$ and annular covers}  \label{sec: endpoints}

In this section we begin the proof of Proposition~\ref{intermediate} in the case that $M$ has at least three ends.  (The annulus case was considered following the statement of the proposition.)  

  Equip $M$ with
a complete hyperbolic structure and identify the universal cover $\ti M$   with the hyperbolic disk $H$  as described in section~\ref{hyperbolic}.  Recall from section~\ref{sec: normal form} that $f$ is isotopic to a homeomorphism $\phi: M \to M$ that is supported on a finite union of disjoint annuli  and that restricts to a non-trivial Dehn twist on each annulus.    We may assume without loss that the core curves of these annuli, which  make up the set $\cR$ of reducing curves for  $f : M \to M$, are geodesics. 
 The full pre-image in $\tiM$ of  $\cR$ is denoted $\ti \cR$.  The closure  of   a component  of $\tiM \setminus \ti \cR$ in $\tiM$ is called a {\em domain}.    If $\cR=\emptyset$ then $\tiM$ is the unique domain but otherwise there are infinitely many domains.  The frontier of a domain  is a union of elements of $\ti \cR$.  If $\cR \ne \emptyset$ then the closure of a domain  in $\tiM \cup \sinfty$ intersects $\sinfty$ in a Cantor set.  The image   of the interior of a domain under projection to $M$ is a component of $M \setminus \cR$.

For each domain $\ti C$ let $\ti \phi_{\ti C}$ be the lift of $\phi$ whose restriction to $\ti C$ is the identity outside of a product neighborhood of the frontier.      If $\ti C_1$ and $\ti C_2$ are adjacent domains that intersect in a common frontier component $\ti \sigma \in \ti \cR$ then $\ti \phi_{\ti C_1} = T_{\ti \sigma}^d \ti \phi_{\ti C_2}$  where $T_{\ti \sigma}$ is a root-free covering translation with axis $\ti \sigma$ and $  |d| > 0$ is the degree of the Dehn twist of $\phi$ around $\sigma$.   It is well known, and straightforward to check, that a point $P \in \sinfty$ is fixed by $\ti f_{\ti C}$ if and only if it is contained in the closure of $\ti C$.   Thus $\Fix(\ti f_{\ti C|_{\sinfty}})$ is a Cantor set if $\cR \ne \emptyset$ and is all of $\sinfty$ if $\cR = \emptyset$. 

 Lifting an isotopy between $f$ and $\phi$ induces   a bijection between the set of lifts $\ti f$ of $f$ and the set of lifts $\ti \phi$ of $\phi$.  Thus $\ti f \leftrightarrow \ti \phi$ if and only if $\ti f$ is equivariantly isotopic to $\ti \phi$.  For each domain $\ti C$ let $\ti f_{\ti C}$ be the lift of $f$ corresponding to $\ti \phi_{\ti C}$.   By Proposition~\ref{nielsen},  $\ti f_{\ti C}|_{S_\infty} =\ti \phi_{\ti C}|_{\sinfty}$ and so $\Fix(\ti f_{\ti C}|_{S_\infty})$  is equal to   the intersection of the closure of  $\ti C$ with $S_\infty$.

The subgroup of covering translations that preserves a domain $\ti C$ is denoted $\Stab(\ti C)$ and called the {\em stabilizer} of $\ti C$.    A  covering translation $T$ is contained in $\Stab(\ti C)$ if and only $\{T^\pm\}$ is contained in the closure of $\ti C$ (which is also equivalent to   the axis of $T$ being contained in $\ti C$).        Lemma~\ref{basic lemma 2} implies that $T \in \Stab(\ti C)$ if and only if    $T$ commutes with $\ti f_C$.

  \begin{lemma}  \label{alpha and omega are points} For each lift $\ti f$ of $f$ 
and each $\ti x \in \tiM$,    $ \alpha(\ti f, \ti x)$ and 
$\omega(\ti f, \ti x)$ are single points in $\sinfty \cap \Fix(\ti f)$.
\end{lemma}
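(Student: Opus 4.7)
I would prove the lemma in two stages: first that both limit sets lie in $\sinfty$, and second that each is a singleton.

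\emph{Step 1: limit sets lie in $\sinfty$.} Since $M = S^2 \setminus \Fix(F)$, $f = F|_M$ has no fixed point in $M$, so no lift $\ti f$ has a fixed point in $H \cong \ti M$: any such point would project to a fixed point of $f$. The Brouwer plane translation theorem, applied to $\ti f$ as an orientation-preserving fixed-point-free homeomorphism of $\R^2 \cong H$, then implies that every $\ti f$-orbit in $H$ escapes every compact subset. Combined with the continuous extension of $\ti f$ to the closed disk $H \cup \sinfty$ from Proposition~\ref{nielsen}, this forces $\omega(\ti f, \ti x), \alpha(\ti f, \ti x) \subseteq \sinfty$, with both sets closed and $\ti f|_{\sinfty}$-invariant.

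\emph{Step 2: each limit set is a singleton.} I would treat $\omega(\ti f, \ti x)$; the $\alpha$-case is symmetric, applying the same argument to $\ti f^{-1}$. Assume for contradiction that distinct points $P, Q$ both lie in $\omega(\ti f, \ti x)$, and let $\gamma \subset H$ be the hyperbolic geodesic joining them. Then $\gamma$ separates $H$ into two open half-planes, each containing infinitely many iterates $\ti f^n(\ti x)$; in particular, the orbit crosses $\gamma$ infinitely often. Using that $f$ is isotopic to a composition of Dehn twists along the finite set $\cR$ (from Theorem 1.2, Lemma~6.3, and Remark~6.4 of \cite{fh:periodic}) together with the equivariant action $\ti f_\#$ on geodesics (Lemma~\ref{basic lemma 1}), I would translate the infinite family of crossings into a super-polynomial lower bound for the geometric intersection number $i(f^n \delta, \bar\gamma)$, where $\bar\gamma = \pi(\gamma)$ and $\delta$ is a suitable transverse arc in $M$. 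This would contradict the entropy-zero hypothesis via the standard bound $h(F) = h(f) \ge \limsup_n \tfrac{1}{n} \log i(f^n \delta, \bar\gamma)$.

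\emph{Main obstacle.} The delicate step is converting the qualitative ``oscillation across $\gamma$'' into quantitative super-polynomial growth in intersection number, given that consecutive iterates in $H$ need not be close. Making this rigorous requires exploiting the Dehn-twist representative of $f$ and carefully bookkeeping how the orbit interacts with lifts of $\bar\gamma$ under the action of $\Stab(\ti C)$, so that each oscillation contributes a new transverse crossing that cannot be destroyed by homotopy. An alternative, if available in the authors' toolkit, would be a direct theorem on asymptotic dynamics at $\sinfty$ for fixed-point-free lifts of zero-entropy surface homeomorphisms, which would bypass the intersection-growth argument.
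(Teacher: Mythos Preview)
Your Step~1 is correct and matches the paper's opening move.

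Step~2 has a genuine gap that is not merely technical. Oscillation of a single orbit across a geodesic $\gamma$ in $H$ carries no information about geometric intersection numbers $i(f^n\delta,\bar\gamma)$: the latter depend only on the action of $f$ on homotopy classes, and since $f$ is isotopic to a composition of Dehn twists along $\cR$, one has $i(f^n\delta,\bar\gamma)$ growing at most linearly in $n$ for any arc $\delta$ and curve $\bar\gamma$. Linear growth gives $\limsup_n \tfrac{1}{n}\log i(f^n\delta,\bar\gamma)=0$, so no contradiction with zero entropy can arise along this line, regardless of how wildly an individual orbit wanders in $H$. The obstacle you flag is therefore the whole content of the step, not a detail to be filled in.

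The paper's route for Step~2 is short and entirely different. One first shows $\omega(\ti f,\ti x)\subset \Fix(\ti f|_{\sinfty})$: if some $z\in\omega(\ti f,\ti x)$ were not fixed by $\ti f$, a free half-disk neighborhood of $z$ in $H\cup\sinfty$ would meet the forward orbit of $\ti x$ more than once, and the periodic-disk-chain argument (Proposition~1.3 of \cite{franks:poincare}) would produce a fixed point of $\ti f$ in $H$, contradicting Step~1. Since $\omega(\ti f,\ti x)$ is automatically connected, it is a single point whenever $\Fix(\ti f|_{\sinfty})$ contains no interval of $\sinfty$. If $\Fix(\ti f|_{\sinfty})$ does contain an interval, an easy covering-translation argument forces $\ti f|_{\sinfty}=\mathrm{id}$, and this residual case is handled by Proposition~9.1 of \cite{fh:periodic} --- which is precisely the ``direct theorem on asymptotic dynamics at $\sinfty$'' you mention as an alternative at the end of your proposal.
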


\proof
The Brouwer translation theorem implies that $\omega(\ti f, \ti x)
\subset S_\infty$.  We assume that $\omega(\ti f_, \ti x)$ is not a
single point and argue to a contradiction.  It must be the
case that $\omega(\ti f, \ti x) \subset S_\infty \cap \Fix(\ti f).$  
If not, a non-fixed point $z\in \omega(\ti f_, \ti x)$
would have a free neighborhood whose intersection with $H$ would be a free disk visited by the 
orbit of $\ti x$ more than once (indeed infinitely often).
According to Proposition~(1.3) of \cite{franks:poincare} this
implies $\ti f$ has a fixed point in $H$ -- a contradiction.
Since $\omega(\ti f, \ti x)$ consists of fixed points it is 
straightforward to see that it is also connected.

If $\Fix(\ti f)$ does not contain an interval we are done.  Otherwise,  Lemma~\ref{basic lemma 2} implies that every covering translation with one endpoint in this interval commutes with $\ti f$ and so preserves $\Fix(\ti f)$.  It follows that $\Fix(\ti f) = \sinfty$ and so $f$ is isotopic to the identity.  A proof of the lemma in this special case is given in   Proposition~9.1 of \cite{fh:periodic}.
 \endproof 

In addition to   lifts   of $f$  to the universal cover $H$ we will also use  lifts of $f$ to infinite cyclic covers.

\begin{defns} \label{annulus cover}  Suppose that $\sigma$ is a  closed geodesic that is either
equal to an element of $\cR$ or disjoint from every element of $\cR$.  For
each lift $\ti \sigma$, let $T_{\ti \sigma}$ be a root free
covering translation with axis $\ti \sigma$.  Choose a domain $\ti C$ that contains
$\ti \sigma$.  (If $\sigma \in \cR$ then there are two choices but
otherwise there is just one.) Since $\ti f_{\ti C}$ fixes the
ends of $\ti \sigma,$ it commutes with $T_{\ti \sigma}$ 
by Lemma~\ref{basic lemma 2}.
The {\em annular cover} $A_{\sigma}$ is the closed
annulus that is the quotient space of $(H \cup \sinfty) \setminus
T_{\ti \sigma} ^{\pm}$ by the action of $T_{\ti \sigma}$ and $
f_\sigma : A_{\sigma} \to A_{\sigma}$ is the homeomorphism induced
by $\ti f_{\ti C}$.  For $\ti x \in \tiM$ a lift of $x \in M$, we denote the image of $\ti x$ in $A_\sigma$ by $\hat x$. If
$\alpha(\ti f_{\ti C}, \ti x)$ is not an endpoint of $\ti \sigma$ then
$\alpha(f_\sigma, \hat x)$ is a single point in $\partial A_{\sigma}$
and similarly for $\omega( f_\sigma, \hat x)$.

Similarly, if $\ti \sigma$ is a lift of an embedded  horocycle $\sigma \subset M$ then both
ends of $\ti \sigma$ converge to a point $P \in \sinfty$ and there is
a root free covering translation $T_P$ that preserves $\ti
\sigma$. Let $\ti C$ be the unique domain that contains $\ti
\sigma$.   In this case, the {\em annular cover} $A_{\sigma}$ 
is the half-open annulus that is the
quotient space of $(H \cup \sinfty) \setminus P$ by the action of $T_P
$ and the boundary is a single circle  denoted 
$\partial A_\sigma.$  
As in the previous case, $\ti f_{\ti C}$ induces a homeomorphism $ f_\sigma :
A_{\sigma} \to A_{\sigma}$.  The end of $A_{\sigma}$ corresponding to $P$
projects homeomorphically to the end of $M$ circumscribed by $\sigma$.
We can compactify this end exactly as in Definition~\ref{notn: annular comp} to form a closed annulus
$A_\sigma^c$.  There is an extension of $f_\sigma$ (also called
$f_\sigma$) to a homeomorphism of $A_\sigma^c$.
\end{defns}

 As the notation suggests, $f_\sigma$ is
independent of the choice of $\ti C$ and, up to conjugacy, the choice
of lift $\ti \sigma$.  The former follows from the fact that if $\ti C_1$
and $\ti C_2$ contain $\ti \sigma$ then $\ti f_{\ti C_1}$ and $\ti
f_{\ti C_1}$ differ by an iterate of $T_{\ti \sigma}$ and the latter
from the fact that if $\ti \sigma$ is replaced with $S(\ti \sigma)$
for some covering translation $S$ then $\ti C$ is replaced by $S(\ti
C)$ and $ T_{\ti \sigma}$ is replaced by $ST_{\ti \sigma}S^{-1}$.    

\begin{lemma}\label{twist or not} 
Suppose that $\sigma$ is a  horocycle or a closed geodesic   that  is either
equal to an element of $\cR$ or disjoint from every element of $\cR$.
\begin{enumerate}\item For each closed geodesic $\sigma$,
  $\Fix(f_\sigma|_{\partial A_{\sigma}})$ intersects both components
  of $\partial A_{\sigma}$.  If $\sigma \in \cR$ then $f_\sigma$ is
  isotopic rel $\Fix(f_\sigma|_{\partial A_{\sigma}})$ to a Dehn twist
  of the same index that $f$ twists around $\sigma$.  If $\sigma \not
  \in \cR$ then $f_\sigma$ is isotopic rel $\Fix(f_\sigma|_{\partial
    A_{\sigma}})$ to the identity.
\item For each horocycle $\sigma$, $\Fix(f_\sigma|_{\partial
    A_{\sigma}}) \ne \emptyset$.
\end{enumerate}
\end{lemma}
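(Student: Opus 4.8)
The strategy is to transfer the Nielsen–Thurston normal form of $f$ on $M$ to the annular cover by a careful bookkeeping of endpoints on $\sinfty$, and then read off the isotopy class of $f_\sigma$ from the combinatorics of how the axis $\ti\sigma$ sits inside the chosen domain $\ti C$. First I would fix notation: write $T = T_{\ti\sigma}$ (or $T_P$ in the horocycle case) for the relevant root-free covering translation, $\ti f = \ti f_{\ti C}$ for the associated lift, and recall from Definitions~\ref{annulus cover} that $A_\sigma$ is the quotient of $(H\cup\sinfty)\setminus T^{\pm}$ by $\langle T\rangle$, with $\partial A_\sigma$ the image of $\sinfty\setminus T^{\pm}$. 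Since $\ti f$ commutes with $T$ (Lemma~\ref{basic lemma 2}), it descends to $f_\sigma$, and $\ti f|_{\sinfty}$ descends to $f_\sigma|_{\partial A_\sigma}$.

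For item (1), the key point is that $\Fix(\ti f|_{\sinfty})$ equals the intersection of the closure of the domain $\ti C$ with $\sinfty$ — this is exactly the characterizing property of $\ti f_{\ti C}$ stated just before the first lemma of this section. The two ``sides'' of the axis $\ti\sigma$ inside $\ti C$ each meet $\sinfty$ in a nonempty set (this is where one uses that $\ti\sigma$ is a boundary geodesic of $\ti C$ when $\sigma\in\cR$, or lies in the interior of the single domain $\ti C = \tiM$ when $\sigma\notin\cR$ and $\cR$ is nonempty, or handle $\cR=\emptyset$ where $\Fix(\ti f|_{\sinfty}) = \sinfty$), and these two sets project to the two components of $\partial A_\sigma$; hence $\Fix(f_\sigma|_{\partial A_\sigma})$ meets both boundary circles. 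To identify the isotopy class rel this fixed set, I would lift the Nielsen–Thurston isotopy from $f$ to its normal form equivariantly, restrict to the cover $A_\sigma$, and observe that on $A_\sigma$ the induced homeomorphism is isotopic rel $\partial$ to the $\langle T\rangle$-quotient of the lifted normal form; a composition of Dehn twists along elements of $\cR$ lifts on the $\ti\sigma$-annular cover to a homeomorphism isotopic rel $\partial A_\sigma$ to the identity when $\sigma\notin\cR$ (the other twisting curves either miss $\sigma$ or cross it, and crossing curves lift to translates that contribute nothing to the relative isotopy class of an annular cover) and to a single Dehn twist of the stated index when $\sigma\in\cR$ (the $\ti\sigma$-twist itself survives, the rest wash out). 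Item (2) is the easy half: for a horocycle $\sigma$, $A_\sigma$ is half-open with one boundary circle, and the point $P\in\sinfty$ is the unique fixed point of the parabolic $T_P$; since $\ti f$ commutes with $T_P$ it fixes $P^{\pm}=P$ by Lemma~\ref{basic lemma 2}, but $P$ is deleted in forming $A_\sigma$, so instead one uses that $\Fix(\ti f|_{\sinfty})$ contains an arc on each side of $P$ (again because $\ti C$ is the unique domain containing $\ti\sigma$ and its closure meets $\sinfty$ in a set accumulating on $P$ from both sides when $\cR\ne\emptyset$, or all of $\sinfty$ when $\cR=\emptyset$), and those arcs project into $\partial A_\sigma$ giving fixed points of $f_\sigma|_{\partial A_\sigma}$.

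The main obstacle I expect is the precise verification that the non-$\sigma$ Dehn twists in the normal form genuinely die in the annular cover $A_\sigma$ rel $\partial A_\sigma$ — this requires arguing that a twist along a curve $\delta\ne\sigma$ (either disjoint from $\sigma$ or crossing it) lifts to a map of $(H\cup\sinfty)\setminus T^{\pm}$ that, after quotienting by $\langle T\rangle$, is properly isotopic rel boundary to the identity. The disjoint case is clean because the lifts of $\delta$ are separated from $\ti\sigma$ and one can push the support off to the ends; the crossing case is subtler and is really where the hyperbolic-geometry setup pays off: the relevant lifts of $\delta$ have endpoints that are moved by $\ti f$, but they are not fixed by $T$, so after passing to the $\langle T\rangle$-quotient their contribution is absorbed into an ambient isotopy of the open annulus rel its (fixed-pointwise on the relevant arcs) boundary. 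I would cite or adapt the corresponding bookkeeping from \cite{fh:periodic} (the proof of Theorem~1.2 and the surrounding lemmas) rather than redo it from scratch, and then check the index count for the surviving $\ti\sigma$-twist directly from the definition of Dehn-twist index.
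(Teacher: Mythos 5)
Your argument for the existence of fixed points in both components of $\partial A_\sigma$ — via the characterization $\Fix(\ti f_{\ti C}|_{\sinfty}) = \cl(\ti C) \cap \sinfty$ — is exactly the paper's argument, and it is the right one. But for identifying the isotopy class rel those fixed points, you take a genuinely different route from the paper, and your route carries more baggage than you acknowledge.

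The paper never lifts the Dehn twist normal form to the annular cover at all. Instead it observes: for $\sigma \notin \cR$, the fixed points obtained in the two components of $\partial A_\sigma$ all lift to points of $\cl(\ti C)\cap\sinfty$ fixed by the \emph{same} lift $\ti f_{\ti C}$, hence they lie in the same Nielsen class of $f_\sigma$, and this alone forces $f_\sigma$ to be isotopic rel $\Fix(f_\sigma|_{\partial A_\sigma})$ to the identity. For $\sigma \in \cR$, the fixed points in the two boundary circles come from the two different lifts $\ti f_{\ti C_1}$ and $\ti f_{\ti C_2}$, which differ by $T_{\ti\sigma}^k$; since both lifts cover the same $f_\sigma$, that discrepancy of $T_{\ti\sigma}^k$ between the two boundary Nielsen classes is precisely what identifies $f_\sigma$ as a Dehn twist of index $k$. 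This is an algebraic comparison of two preferred lifts, and it closes the proof in two sentences with no need to track what happens to the other twisting curves.

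Your proposal instead lifts the normal form and tries to show the non-$\sigma$ twists die rel $\partial A_\sigma$. That can be made to work, but two points are worth flagging. First, your hedging about twist curves that ``cross $\sigma$'' is a red herring: the hypothesis of the lemma is that $\sigma$ is either in $\cR$ or disjoint from every element of $\cR$, and $\cR$ is a disjoint collection, so \emph{every} other twist curve $\delta$ is disjoint from $\sigma$; the crossing case never occurs. Second, even in the disjoint case, ``push the support off to the ends'' hides the one genuine check you need: since $\delta$ and $\sigma$ are disjoint simple closed curves, any lift $\ti\delta$ has endpoint pair unlinked from $\{T_{\ti\sigma}^\pm\}$, so its image in $A_\sigma$ is a properly embedded line with both ends converging to the \emph{same} boundary circle (never a cross-cut). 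Only then can you isotope the twisted support to that end rel $\partial A_\sigma$. Without that unlinking observation, the claim is not self-evident, because a twist along a cross-cut would definitely not be trivial rel boundary. So your approach is viable, but it is longer, it carries an unnecessary case distinction, and it leaves the real geometric content of the ``disappearing twists'' step implicit, whereas the paper's Nielsen-class-plus-lift-comparison argument avoids the issue entirely.
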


\begin{proof}
Suppose at first that $\ti \sigma$ is a lift of the closed geodesic $\sigma$.  

If  $\sigma \notin \cR$ then the closure of the domain $\ti C$ that contains $\ti \sigma$  intersects both components of  $\sinfty  \setminus \ti \sigma^{\pm}$.  The points in this intersection are fixed by $\ti f_{\ti C}$ and  project to fixed points $\hat x, \hat y$  for $f_\sigma$ in different components of  $\partial A_{\sigma}$.  A geodesic $\ti \alpha$ connecting $\ti x$ to $\ti y$ in  $\tiM$ projects to  the interior of an embedded arc  $\hat \alpha$ connecting $\hat x$ to $\hat y$ in $A_{\sigma}$ such that $f_\sigma(\hat \alpha)$ is homotopic to $\hat \alpha$ rel endpoints.  It follows that  $f_\sigma$ is isotopic rel $\Fix(f_\sigma|_{\partial    A_{\sigma}})$ to the identity.

 If $\sigma \in \cR$ and $\ti C_1$ and $\ti C_2$ are the domains that contain $\ti \sigma$ then points in the intersection of the closure of $\ti C_1$ with $\sinfty$ are fixed by $\ti f_{\ti C_1}$ and project to fixed points for $f_\sigma$ in one component of $\partial A_{\sigma}$ and points in the intersection of the closure of $\ti C_2$ with $\sinfty$  are fixed by $\ti f_{\ti C_2}$ and project to fixed points for $f_\sigma$ in the other component of $\partial A_{\sigma}$.   If $f$ twists with degree $k$ around $\sigma$ then   $\ti f_{\ti C_1}$ and $ \ti f_{\ti C_2}$ differ by  $T_{\ti \sigma}^{k}$  so  $f_\sigma$ is
  isotopic rel $\Fix(f_\sigma|_{\partial A_{\sigma}})$ to a Dehn twist of index $k$.    This completes the proof of (1).  
 
 The proof for (2)  is similar.
 \end{proof}

\section{Reducing Arcs in Annular Covers}
In this section we recall, adapt and improve definitions and results from  section 10 of \cite{fh:periodic}, where the assumption is that     $F$ is periodic point free and isotopic rel $\Fix(F)$ to the identity  as opposed to our current assumption that  $F$ has zero entropy and  is  isotopic rel $\Fix(F)$ to a composition of Dehn twists on the elements of $\cR$.  In particular, the homeomorphisms $f_\sigma : A_\sigma \to A_\sigma$ of
Definition~\ref{annulus cover}  are periodic point free    in \cite{fh:periodic} and are only fixed point free in our current context.  Switching from  periodic point free to entropy zero requires a change in the proof of  Lemma~\ref{no doubling} but nothing more.   Allowing $\cR$ to be non-empty requires a fair amount of work, most of which is done in later sections.

Of primary interest are the homeomorphisms $f_\sigma : A_\sigma \to A_\sigma$ of
Definition~\ref{annulus cover}.  We frame the discussion more generally for clarity and for possible future applications.

\begin{notn} \label{notn: annulus} 
  We assume throughout this section   that $h:A \to A$ is a    homeomorphism  of the closed annulus $A$  that is isotopic to the identity and whose restriction to the interior $\mrA$ of $A$  is fixed point free and   that $  x_1,\ldots,  
x_r$ are points in $\mrA$  whose $\alpha$-limit sets   $
\alpha(h, x_i)$ are distinct single points in $\partial
A$ and whose $\omega$-limit sets   $
\omega(h, x_i)$  are distinct single points
in $\partial A$.   
  Let   $X \subset \mrA$ be the
union of the $h$-orbits of the $x_i$'s and let  $\mrA_{ X} =\mrA \setminus  X$  equipped with a   hyperbolic structure as in section~\ref{hyperbolic}. 
\end{notn}

Recall that a  properly embedded line $\ell \subset \mrA_{X}$ is   essential
if it is not properly isotopic into arbitrarily small neighborhoods of
some end of $\mrA_{ X}$ and that each essential $\ell$ is properly isotopic
to a unique geodesic. The action of $h$  
on isotopy classes of properly
embedded lines in $\mrA_{X}$ is captured by the map $h _\#$ on
geodesics defined in section~\ref{hyperbolic}.

Suppose that  $\ell$ is a geodesic line in $\mrA_X$ that  separates $\mrA$ into two components,   $U$ and $V$.  Choose an isotopy rel $X$ from $h$ to $h'$ where  $h'(\ell) = h_\#(\ell)$.   The sets  $h'(U)$ and $h'(V)$   are independent of the choice of $h'$ and we write $h_\#(U) = h'(U)$ and $h_\#(V) = h'(V)$.  Thus, $h_\#(U) $ and $h_\#(V)$ are the components of  $\mrA\setminus h_\#(\ell)$. 

\begin{remark}  In general, the hyperbolic metric on $\mrA_X$ is unrelated to $\partial A$.   The ends of a geodesic $\ell \subset \mrA_X$  that is properly embedded in $\mrA$ need not converge to single points in $\partial A$.  Even if the ends  of $\ell$ and $h_\#(\ell)$  converge to single points in  $\partial A$, these pairs of points need not be related by $h|_{\partial A}$.       We will require that  our hyperbolic metrics  satisfy certain extra properties (see Lemma~\ref{proper rays}) to guarantee some compatibility between the metric and the boundary.
\end{remark}
 
 If an embedded  path $\beta \subset \mrA$  has endpoints in $ X$ but is otherwise disjoint from $ X$ then the interior of $\beta$ determines a properly embedded line $\ell \subset \mrA_{ X}$.    Proper isotopy of  $\ell$  in $\mrA_{ X}$ corresponds to isotopy  rel $ X$ of $\beta$ in $\mrA$.   If $\ell$ is essential [resp. a geodesic] in $\mrA_{X}$ then we say that {\em $\beta$ is essential [resp. a geodesic  rel $ X$] in $\mrA$}.   There is an induced map $h_\#$ on geodesics rel $ X$ in $\mrA$ such that $h _\#(\beta)$ is  the   unique geodesic path in the isotopy class rel $ X$ of $h(\beta)$.

\begin{defn}  \label{defn: translation arc geodesic} An arc $\beta' \subset \mrA$ connecting $x \in  X$ to $h(x)$ is called a {\em translation arc} for $x$ if $h(\beta') \cap \beta' =  h( x)$.  If $\beta'$ intersects $X$ only in its endpoints, then the   geodesic rel $X$ in $\mrA$ determined by  $\beta'$ is called a {\em translation arc geodesic} for $x$ relative to $X$.

Assume that $\beta$ is a translation arc geodesic for $x$ relative to
$X$ and let $\beta_j = h_\#^j(\beta)$, a translation arc geodesic for
$h^j(x)$ relative to $X$.  If $B^+ = \cup_{j=0}^\infty \beta_j$ is an
embedded ray in $\mrA$ that converges to $\omega(h, x)$ then we say
that $\beta$ is {\em forward proper} with {\em forward homotopy
streamline} $B^+$.  In this case, $h_\#$ induces a self-map of $ B^+$
that is conjugate to a standard translation of $[0,\infty)$ into
itself.

Assume that $\beta$ is forward proper with forward homotopy streamline
$B^+$ and let $L^+$ be the unique geodesic line in $\mrA_X$ that is
properly embedded in $\mrA$ and such that one of the components,
$V^+$, of $\mrA \setminus L^+$ contains $ \cup_{j=1}^\infty \beta_j$
and intersects $X$ exactly in $ \cup_{j=1}^\infty h_\#^j(x)$.
Topologically, $L^+$ is just the boundary of a sufficiently small
regular neighborhood of $ \cup_{j=1}^\infty \beta_j$ in $\mrA$.
Note that $h_\#(L^+)$ is the unique geodesic line
in $\mrA_X$ that is properly embedded in $\mrA$ and such that one of
the components of $\mrA \setminus L^+$ contains $ \cup_{j=2}^\infty
\beta_j$ and intersects $X$ exactly in $ \cup_{j=2}^\infty h_\#^j(x)$.
In particular $h_\#(L^+) \subset V^+$ and $h_\#(V^+) \subset
V^+$.  
Let $\cl_A (h^j_\#(V^+)) $ be the closure of $h^j_\#(V^+) $ in $A$.
If both  ends of  each $h_\#^j(L^+)$ converge to $\omega(h, x)$ and if  $\cap_{j=0}^\infty \cl_A(h^j_\#(V^+)) = \omega(h,x)$ then we say
that $B^+$ {\em has a forward translation neighborhoood} and that
$V^+$ is the {\em forward translation neighborhood} determined by
$\beta$.

\bigskip

\includegraphics[width=5.5in]{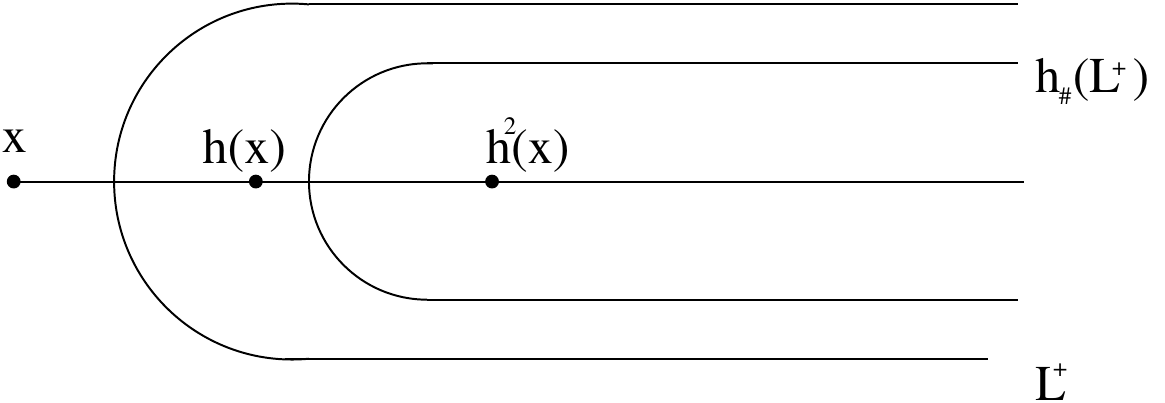}

\bigskip

{\em Backward proper} homotopy translation arcs, {\em backward homotopy streamlines} $ B^- = \cup_{j=0}^\infty h_\#^{-j} (\beta)$ and  {\em backward translation neighborhoods}  $V^-$ with boundary $L^-$ are defined similarly using $h$ instead of $h^{-1}$.
\end{defn}


\begin{lemma}   \label{proper rays} Assume that $h : A \to A$, $  x_1,\ldots,  
x_r$, $X$  and $\mrA_X$ are as in Notation~\ref{notn: annulus}.  The  hyperbolic metric on $\mrA_X =\mrA\setminus X$  can be chosen so that  for each $1 \le i \le r$
 there  are translation arc geodesics
 $ \beta_i^+$  and  $ \beta_i^-$ for some points, $x_i^+$ and $x_i^-,$ in the
 orbit of $ x_i$ such that  
 \begin{enumerate}
 \item   $\beta^+_i$ is forward proper and the forward homotopy streamline $B_i^+$ has forward translation neighborhood $V^+_i$.   
  \item   $\beta^-_i$ is backward proper and the backward homotopy streamline $B_i^-$ has backward  translation neighborhood $V^-_i$. 
  \item The $B_i^\pm$'s, and hence the $ V_i^{\pm}$'s,
 are all disjoint.
\end{enumerate}
\end{lemma}

\proof      Lemma~10.6 of \cite{fh:periodic}    states that  there  are forward proper translation arc geodesics
 $ \beta_i^+$  and backward proper translation arc geodesics $ \beta_i^-$  such that the $ B_i^{\pm}$'s are all disjoint.   There are two issues that must be discussed before quoting that lemma.  The first is that in the context of \cite{fh:periodic}, $\Per(h) = \emptyset$ and $\cR = \emptyset$.  In proving Lemma~10.6 of \cite{fh:periodic}, the former is used only to conclude that   $\Fix(h) = \emptyset$  and the latter is  not used at all.      Since  our $h$ satisfies $\Fix(h) = \emptyset$ by hypothesis,  we are not quoting out of context. The second issue   is that  the  role of the hyperbolic metric was not explicitly mentioned in either the statement or proof of Lemma~10.6 of \cite{fh:periodic}.  We attend to that now.  

The accumulation set $Y \subset \partial A$ of $X$ is the union of the $\alpha$ and $\omega$ limit sets of the $x_i$'s.  For each $y \in Y$, choose a decreasing sequence of  closed  half disk neighborhoods $U_i(y)$ of $y$ whose intersection is the single point $y$ and whose frontier  $\partial U_i(y)$ in $\mrA$ is disjoint from $X$.    We may assume that  if $y \ne y' \in Y$ then the $\partial U_i(y)$'s are all disjoint from the $\partial U_j(y')$'s and all these lines determine distinct proper isotopy classes in $\mrA_X$.  By Lemma~\ref{simultaneous isotopy} (1), we can simultaneously isotope all of the $\partial U_i(y)$'s to their associated geodesics.  We may therefore modify (see Remark~\ref{metric modification})  the given hyperbolic metric  so that all of the    $\partial U_i(y)$'s  are geodesic.    In particular, if a translation arc for an element of $X$ is contained in the interior of some $U_i(y)$ then the corresponding   translation arc geodesic is also contained in the interior of $U_i(y)$. 

Having chosen the metric with the above properties on translation arc
geodesics, the proof of Lemma~10.6 of \cite{fh:periodic} can be
applied.

We must now arrange that each $B_i^+$ has forward translation
neighborhoods and that each $B_i^-$ has backward translation
neighborhoods.  This will require a further modification of the
metric.  For each $J \ge 0$, choose a smooth properly embedded line
$\sigma^+_{i,J}$ 
such that $\sigma^+_{i,J} \cap B_i^+$ is a single point contained in $
h_\#^{J}(\beta^+_i)$ and such that one of the two complementary
components  $V'^+_J$ of $\sigma^+_{i,J}$ contains $ \cup_{j=J+1}^\infty
h_\#^j(\beta^+_i)$ and intersects $X$ exactly in $ \cup_{j=J+1}^\infty
h^j(x_i)$.   
Assume further that both ends of each $ \sigma_{i,J}^+$ converge to $\omega(h,x)$ and that $\cap_{J=0}^\infty \cl_A( V'^+_J) = \omega(h,x)$.
Define $\sigma^-_{i,J}$ similarly.  We may assume
that all of the $\sigma^\pm_{i,J}$'s are disjoint.  By
Lemma~\ref{simultaneous isotopy} (2), there is an isotopy of $\mrA_X$
that preserves each $B_i^{\pm}$ and that moves each $\sigma^\pm_{i,J}$
to the unique geodesic $L^\pm_{i,J}$ in its proper isotopy class.  We
may therefore change the metric so that each $ \sigma^\pm_{i,J}$ is a
geodesic 
 while maintaining the property that $B_i^{\pm}$ is geodesic.
This completes the proof of the lemma.
\endproof

 
 
 Further details on the constructions in the next definition can be found in section 10 of \cite{fh:periodic}
 
 \begin{defn}   \label{brouwer subsurface}Assume the metric on $\mrA_X$ has been chosen as in Lemma~\ref{proper rays} and assume the notation of that lemma.  
 The subsurface $W = \mrA \setminus (X \cup (\bigcup_{i=1}^r V^\pm_i))$ is finitely punctured.   We write $\partial W = \partial_+W \cup \partial_- W$ where $\partial_{\pm}W = \cup_{i=1}^r \partial V_i^\pm$.  Then $h_\#(\partial_+W) \cap W = \emptyset$ and  $\partial_-W \cap h_\#(W) = \emptyset$.  We say that $W$ is the {\em Brouwer subsurface determined by the $ \beta_i^\pm$'s}.

Let $\RH$ be the set of non-trivial relative homotopy classes $[\tau]$ determined by embedded arcs $(\tau, \partial \tau) \subset (W, \partial_+W)$.   Denote $\tau$ with its orientation reversed by $-\tau$ and
 $[-\tau]$ by $-[\tau]$.  By a {\em mulitset} $\cT$ in $ \RH$ we mean a   set, each element of which is a copy of an element of $\RH$.  The {\em multiplicity} of an element of $\RH$ in $\cT$ is the number of copies of that element that appear in $\cT$.  An important tool in our analysis is   a map that assigns to each  finite  multiset ${\cal T}$ in $\RH$    another  finite multiset $h_\#({\cal T} ) \cap W$ in $\RH$.   

Choose a homeomorphism  $g :\mrA \to \mrA$ that is isotopic to $h$ rel $ X$   such that $g(L)  =h_\#(L)$ for each component $L$ of $\partial W$.    For any arc $\tau \subset W$ with endpoints on $\partial_+W$, $  g(\tau)$ is an arc in $  g(W) = h_\#(W)$ with endpoints on $ h_\#(\partial_+W)$; in particular, $  g(\tau) \cap \partial_-W = \emptyset$ and $\partial  g(\tau) \cap W = \emptyset$.  Let $ h_\#(\tau) \subset  h_\#(W)$ be the geodesic arc that is isotopic rel endpoints to $ g(\tau)$.  The components $\tau_1,\ldots,\tau_r$ of $  h_\#(\tau) \cap W$ are arcs in $W$ with endpoints in $\partial_+W$.  Define $ h_\#([\tau]) \cap W =  \{[\tau_1],\ldots,[\tau_r]\}$. It is shown in \cite{han:fpt} (see pages 249 - 250) that $h_\#([\tau]) \cap W$ is well defined.

More generally if ${\cal T} $ is multiset in 
$\RH$ then we define $ h_\#({\cal T}) \cap W = \cup_{[\tau]
\in {\cal T} }h_\#([\tau]) \cap W$.  Note that $
h_\#(\cdot )\cap W$ can be iterated.  Recursively define $
(h_\#)^n([\tau]) \cap W =(h_\#)^{n-1}(
h_\#([\tau]) \cap W) \cap W$.

A finite multiset $\cT$    in $\RH$ is a {\em fitted family} if 
\begin{enumerate}
\item the elements of $\cT$ are represented by   disjoint simple arcs.
\item no element of $\RH$ has multiplicity greater than one in $\cT$.
\item if $[\tau]$ has multiplicity one in $\cT$ then $-[\tau]$ has multiplicity zero in $\cT$.
\item for all $n \ge 0$ and all $t \in \cT$,  each element of $h^n_\#(t) \cap W$ is, up to a change of orientation, a copy of some element of $\cT$.
\end{enumerate}   
\end{defn}

The next lemma states that  one gets the same answer by either  iterating the intersection operator or by first   iterating $h$ and then applying the intersection operator once.  Following this lemma, we will write $h^n_\#(\tau) \cap W$ for  $(h_\#)^n([\tau]) \cap W=(h^n)_\#([\tau]) \cap W$.

\begin{lemma}\label{iteration is well defined}  For all $\tau \in \RH$, $(h_\#)^n([\tau]) \cap W=(h^n)_\#([\tau]) \cap W$.
\end{lemma}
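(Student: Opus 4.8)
The plan is to prove the identity by induction on $n$, using the two complementary ways in which the operator $\fsigmasharp(\cdot)\cap W$ interacts with the genuine dynamics $f_\sigma$. The case $n=1$ is essentially the definition: one fixes a homeomorphism $h$ isotopic to $f_\sigma$ rel $\hat X$ with $h(L)=\fsigmasharp(L)$ on each component $L$ of $\partial W$, and observes that for an embedded arc $\tau \subset W$ with endpoints on $\partial_+ W$, the arc $h(\tau)$ lies in $\fsigmasharp(W)$ with endpoints on $\fsigmasharp(\partial_+ W)$ and is isotopic rel endpoints to the geodesic representative $\fsigmasharp(\tau)$; cutting $\fsigmasharp(\tau)$ along $W$ gives exactly the components whose classes define $\fsigmasharp([\tau])\cap W$. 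On the other hand $(f_\sigma)_\#([\tau])$ is by definition the geodesic rel $\hat X$ isotopic to $f_\sigma(\tau)$, so $(f_\sigma)_\#([\tau])\cap W$ is obtained by the same cutting procedure once one knows $f_\sigma(\tau)$ and $h(\tau)$ are isotopic rel $\hat X$ (which holds since $h \simeq f_\sigma$ rel $\hat X$) and that isotoping $\partial_+ W$ to $\fsigmasharp(\partial_+ W)$ does not change the resulting set of relative classes of the cut pieces. This last point is exactly what is cited from \cite{han:fpt}, pages 249--250, as the well-definedness of $\fsigmasharp([\tau])\cap W$; I would invoke it rather than reprove it.

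For the inductive step, assume $(\fsigmasharp)^{n-1}([\tau])\cap W = (f^{n-1}_\sigma)_\#([\tau])\cap W$ for every $\tau \in \RH$. By the recursive definition, $(\fsigmasharp)^n([\tau])\cap W = (\fsigmasharp)^{n-1}\bigl(\fsigmasharp([\tau])\cap W\bigr)\cap W$, and since $\fsigmasharp([\tau])\cap W$ is a finite collection of elements of $\RH$, the inductive hypothesis (applied termwise) rewrites this as $(f^{n-1}_\sigma)_\#\bigl(\fsigmasharp([\tau])\cap W\bigr)\cap W$. So it remains to show
\[
(f^{n-1}_\sigma)_\#\bigl(\fsigmasharp([\tau])\cap W\bigr)\cap W = (f^n_\sigma)_\#([\tau])\cap W.
\]
The geometric content here is that applying $(f_\sigma)_\#$ once on all of $N$ and then cutting along $W$, followed by pushing forward by $(f^{n-1}_\sigma)_\#$ and cutting again, must agree with pushing forward by $(f^n_\sigma)_\#$ and cutting once. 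The key observation is that $(f^{n-1}_\sigma)_\#$, acting on geodesics rel $\hat X$, sends the geodesic representative of $f_\sigma(\tau)$ to the geodesic representative of $f^{n-1}_\sigma(f_\sigma(\tau)) = f^n_\sigma(\tau)$ — this is just functoriality of the $\#$-operator, which follows from $f_\sigma$ being a homeomorphism and geodesic representatives being unique in their isotopy class rel $\hat X$. The subtlety is that $(f^{n-1}_\sigma)_\#\bigl(\fsigmasharp([\tau])\cap W\bigr)$ is computed by first cutting $(f_\sigma)_\#(\tau)$ along $W$ and then pushing forward each piece, whereas $(f^n_\sigma)_\#([\tau])\cap W$ cuts $(f^n_\sigma)_\#(\tau)$ along $W$ all at once; one must verify these two orders of "cut then push" versus "push then cut" give the same family of relative homotopy classes in $W$. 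This is true because the defining homeomorphism $h$ for the operator satisfies $h(\partial W) = \fsigmasharp(\partial W)$, so the $W$-cutting pattern of $h^{n-1}\bigl(h(\tau) \cap W\bigr)$ matches that of $h^n(\tau)$ after the isotopy that straightens boundary components, exactly as in the $n=1$ analysis.

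I expect the main obstacle to be precisely this commutation of "cutting along $W$" with "pushing forward by an iterate," because the cut pieces of $(f_\sigma)_\#(\tau)$ in $W$ need not be geodesics in the metric on which $(f^{n-1}_\sigma)_\#$ is defined before one takes geodesic representatives, and one must be careful that passing to geodesic representatives commutes with the cutting operation up to isotopy. The cleanest way around this is to avoid the metric entirely in the inductive step: work with the fixed model homeomorphism $h$ (and its iterates $h^k$, suitably adjusted so that $h^k(\partial W) = (\fsigmasharp)^k(\partial W)$ on the nose), track isotopy classes of arcs rather than geodesics throughout, and only invoke "geodesic representative" at the very end to land back in $\RH$. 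Under this bookkeeping the identity becomes a statement purely about how the intersection-with-$W$ operation composes with the mapping-class action of $f_\sigma$, which is formal once the $n=1$ case and the well-definedness from \cite{han:fpt} are in hand. I would also note, as a sanity check, that both sides are manifestly finite subsets of $\RH$ (the right side because the $(f^n_\sigma)_\#(\hat\beta_i^-)$'s are disjoint simple arcs and $W$ is finitely punctured, the left side by iterating the already-established finiteness for $n=1$), so no convergence or well-definedness issue arises in passing to the limit later.
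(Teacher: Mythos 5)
Your proposal takes a genuinely different route from the paper: the paper simply cites Lemma~5.4 of \cite{han:fpt}, where this is proved, whereas you attempt a self-contained induction. Your reduction of the inductive step to the identity
\[
(f^{n-1}_\sigma)_\#\bigl(\fsigmasharp([\tau])\cap W\bigr)\cap W = (f^n_\sigma)_\#([\tau])\cap W
\]
correctly isolates the crux, and your instinct to work with a model homeomorphism $h$ rather than geodesics during the induction is sound. However, there is a genuine gap in the justification of this identity, and the claim that it becomes ``formal'' is not correct.

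The issue is the following. Writing things in terms of $h$, the left side computes the $W$-pieces of $h^{n-1}\bigl(h(\tau)\cap W\bigr)$, while the right side computes the $W$-pieces of $h^n(\tau)$. These differ a priori: $h^n(\tau)$ also has pieces coming from $h^{n-1}$ applied to the parts of $h(\tau)$ that lie \emph{outside} $W$, and if any of those pieces re-entered $W$ under $h^{n-1}$, the two sides would disagree. You assert the cutting patterns ``match\ldots exactly as in the $n=1$ analysis,'' but the $n=1$ case has no such re-entry question, so nothing there can be invoked. What actually prevents re-entry is the nested structure of the Brouwer subsurface: $h(W)$ lies in $W \cup \bigcup_i V_i^+$ (because $\fsigmasharp(\partial_+W)\cap W = \emptyset$ and $\partial_-W \cap \fsigmasharp(W) = \emptyset$), so the discarded pieces of $h(\tau)$ lie in the translation neighborhoods $V_i^+$, and the chain $V_i^+ \supset \fsigmasharp(V_i^+) \supset \fsigmasharp^2(V_i^+) \supset \cdots$ guarantees that $h^k$ of those pieces stays in $V_i^+$ and hence disjoint from $W$ for all $k\ge 0$. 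This is the essential geometric input, it is special to Brouwer subsurfaces, and your argument does not mention it. Without it, ``cut then push'' need not equal ``push then cut,'' and the inductive step fails. Adding one or two sentences invoking the nesting of the $V_i^+$'s under $\fsigmasharp$ (already recorded in the definition of translation neighborhood) would close the gap and make the induction complete.
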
 
 
\proof  The statement of this lemma is the same as that of  Lemma 5.4 of \cite{han:fpt}.  Although the setting there is slightly different, the proof given there works here as well.
\endproof

\begin{notn} \label{notn: fitted} Assume the notation of
Lemma~\ref{proper rays} and Definition~\ref{brouwer subsurface}.  Let
${\cal T}_i \subset \RH$ consist of one representative ($[\tau]$ or
$-[\tau]$) of each unoriented homotopy class that is represented by a
component of ${h^n}_\#( \beta_i^-) \cap W$ for some $n> 0$.  The
elements of ${\cal T}_i$ are represented by disjoint arcs.  For any
(not necessarily distinct) components $L_1$ and $L_2$ of $\partial W$,
the number of elements of ${\cal T}_i$ with one endpoint on $L_1$ and
the other on $L_2$ is therefore at most two plus the number of
punctures in $W$.  This is because a region bounded by two such
elements and segments of $L_1$ and $L_2$ must contain at least one
puncture. Thus ${\cal T}_i$ is finite.  Since the fourth item
in the definition of fitted family is satisfied by construction,
$\cT_i$ is a fitted family.  We say that ${\cal T}_i$ is the {\em
  fitted family determined by $ \beta_i^-$}.  The fitted family
determined by $\beta_i^+$ is defined similarly.
\end{notn}

  In Section~\ref{sec: normal form} we used the assumption that $F:S^2 \to S^2$ is smooth and has zero entropy  to conclude that   $f: M \to M$ is isotopic to a composition of Dehn twists along disjoint simple closed curves.   Lemma~\ref{no doubling} below  (c.f. Theorem 5.5(b) of \cite{han:fpt}) is the only other place in which smoothness and the entropy zero hypothesis are applied.

\begin{notn}  We say that an element $[\tau] 
\in \RH$ \emph{eventually doubles} if there exists $n > 0$ so that $ h_\#^n([\tau]) \cap W $ contains $\ [\tau]$ with multiplicity at least two.  
\end{notn}

For the rest of the section we will assume that no element of $\RH$
eventually doubles.  Before deducing implications of this assumption
we show that it is satisfied by our primary examples. 
 
   Recall that
$F\in \newDiff$ has entropy zero, that $M$ is a component of
$S^2 \setminus \Fix(F)$ and that $f = F|_M$.  Recall also that $F' :N \to N$ is a $C^\infty$ diffeomorphism of a closed genus zero surface, that $\pi_P :N \to S^2$ collapses components of $\partial N$ to points in $P$ and that $\pi_P F' = F \pi_P$.  In particular $F'$ has zero entropy.    
If $\sigma$ is a
horocycle or   closed geodesic that is either equal to an element of $\cR$ or
disjoint from every element of $\cR$ then $f_\sigma :A_\sigma \to
A_\sigma$ [resp. $f_\sigma :A^c_\sigma \to A^c_\sigma$] is the
homeomorphism of the closed annulus given in Definition~\ref{annulus
  cover}.

\begin{lemma} \label{no doubling} Assume that  $h = f_\sigma: A_\sigma \to A_\sigma$  [resp. $f_\sigma :A^c_\sigma \to A^c_\sigma$] is as in Definition~\ref{annulus cover} and  that $W$  is as in Definition~\ref{brouwer subsurface}.  Then no element of $\RH$ eventually doubles. 
\end{lemma}

Before proving the lemma we state a special case of a theorem of Yomdin \cite{yomdin}.   Suppose that $h : N \to N$ is a  $C^\infty$ diffeomorphism  of a compact surface and   $\nu \subset N$ is a smooth path. 
Let $|\nu|_N$ be the length of $\nu$ in $N$ with respect to some smooth metric on $N$ and define the {\em growth rate for the length of $\nu$ with respect to $h$}   to be
$$gr(\nu,h) = \limsup_{n \to \infty} \frac{\log|{h}^n(\nu)|_N}{n}.$$

\begin{thm}[Yomdin \cite{yomdin}, Theorem 1.4] \label{growth rate} For any $C^\infty$ diffeomorphism $h :N \to N$ of a compact surface and for any smooth path $\nu \subset N$,  $ gr(\nu,h) \le$ entropy($h)$.  
\end{thm}
 
\noindent{\em Proof of Lemma~\ref{no doubling}}\  \  We assume to the contrary that there exist $[\tau] \in \RH$ and $n \ge 1$ such $[\tau]$ has multiplicity at least $2$  in $ ({f_\sigma ^n})_\#([\tau]) \cap W$  and argue to a contradiction. The obvious induction argument on $k$ implies that  $[\tau]$ has multiplicity at least $2^k$ in $ ({f_\sigma ^{kn}})_\#([\tau]) \cap W$.    

 We denote $A_\sigma$ or $A^c_\sigma$ by $A$.  By construction,  the universal covering projection $\tiM \to M$ factors through a covering projection  $\pi_\sigma : \Int A \to M$.   Each smooth path $\nu\subset \Int A$ projects to a smooth path $\pi_\sigma(\mu) \subset M$ whose  length, with respect to   the hyperbolic metric on   $M$, is denoted $|\pi_\sigma(\mu) |_M$.  We will prove that there is a compact set $M_0 \subset M$ and $\epsilon> 0$ so that for all $k \ge 1$ there are at least
$2^k$ disjoint subpaths $\mu_j$ of $ f^{kn}_\sigma (\tau)$ such that $ \pi_\sigma(\mu_j) \subset M_0$ and    $|\pi_\sigma(\mu_j)|_M \ge \epsilon$.  The paths $\pi_{\sigma}(\tau),  \pi_\sigma(f^{kn}_\sigma(\tau))$ and $\pi_\sigma(\mu_j)$ lift  via $\pi_P$ to smooth paths $\tau', {F'}^{kn}(\tau')$ and $\mu'_j$ respectively where the $\mu'_j$'s are disjoint subpaths of ${F'}^{kn}(\tau')$.  Since the $\mu'_j$'s are contained in a compact subset of $\Int N$ there exists $\epsilon' > 0$ so that each $|\mu'_j|_N \ge \epsilon'$.    It follows that the growth rate for the length of $\tau'$ with respect to $F'$ is at least $\log(2) \epsilon'$ contradicting Theorem~\ref{growth rate} and the assumption that $F'$ has entropy zero.

It remains to prove the existence of $M_0$ and $\epsilon$.
 Assume the notation of Definition~\ref{brouwer subsurface}.  Let $L_1$
and $L_2$ be the components of $\partial_+W$ that contain the
endpoints of $\tau$.   Recall that the ends of $L_i$ converges to a single point in $\partial A_\sigma$.

As a first case suppose that $L_1 = L_2$ and that $\tau$ and the
interval in $L_1$ connecting the endpoints of $\tau$ bound a disk $D$
in $\mrA_\sigma$.  Choose an element $x \in X \cap D$ and a compact
essential subannulus $A_1 \subset \mrA_\sigma$   that separates $ x$ from
$L_1$.  There are at least $2^k$ subpaths $\mu_j$ of $
f_\sigma^{kn} (\tau)$ that cross $A_1$.  In this case we let $M_0 =  \pi_\sigma(A_1)$; the existence of a uniform lower bound for  $|\pi_\sigma(\mu_j)|_M$ comes from the compactness of $A_1$, which implies that  there is a uniform lower bound to   $|\mu_j|_A$  and that the restriction of $\pi_\sigma$ to $A_1$ is bi-Lipschitz.

\includegraphics[width = 4in]{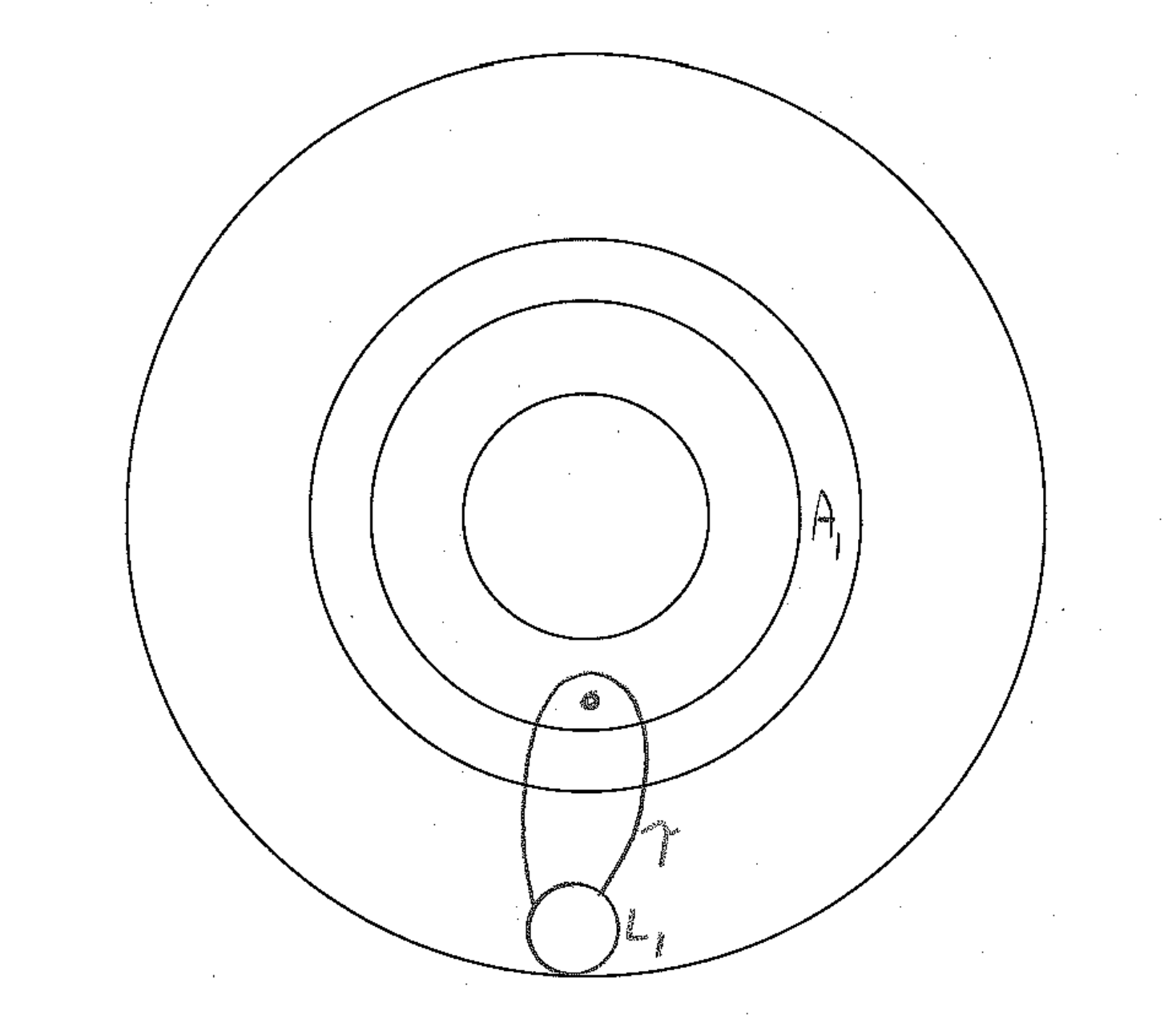}

In the case that  the ends of $L_1$ are in one component
of $\partial A_{\sigma}$ and the ends of $L_2$ are in the other, the same argument works with respect to  a compact essential subannulus $A \subset \mrA_\sigma$ that separates $L_1$ and $L_2$.    

The third case is that  $L_1 = L_2$ and that $\tau$    and the interval in $L_1$ connecting the endpoints of $\tau$ define a simple closed curve that is essential in $A_\sigma$.   Choose a compact essential annulus $A_3\subset \mrA_{\sigma}$ that is disjoint from $L_1 \cup \tau$.  Choose disjoint half-disks  $D_1,D_2$  whose frontiers consist of   intervals $I_1$ and $I_2$ in the component of $\partial A_\sigma$ that contains the endpoint of $L_1$    and   half-circles $\ti \rho_1$ and $\ti \rho_2$ that project to the same simple closed curve $\rho \subset M$.   Assume further that   the closure of $L_1$ is disjoint from $D_1$ and $D_2$. Choose   thickened  arcs  $J_1$ and $J_2$ connecting $\ti \rho_1$  and $\ti \rho_2$ to the far component of  $\partial A_3$.   Thus  $J_1$ and $J_2$ overlap with $A_3$ in rectangles that cross $A_3$. 

\includegraphics[width = 4in]{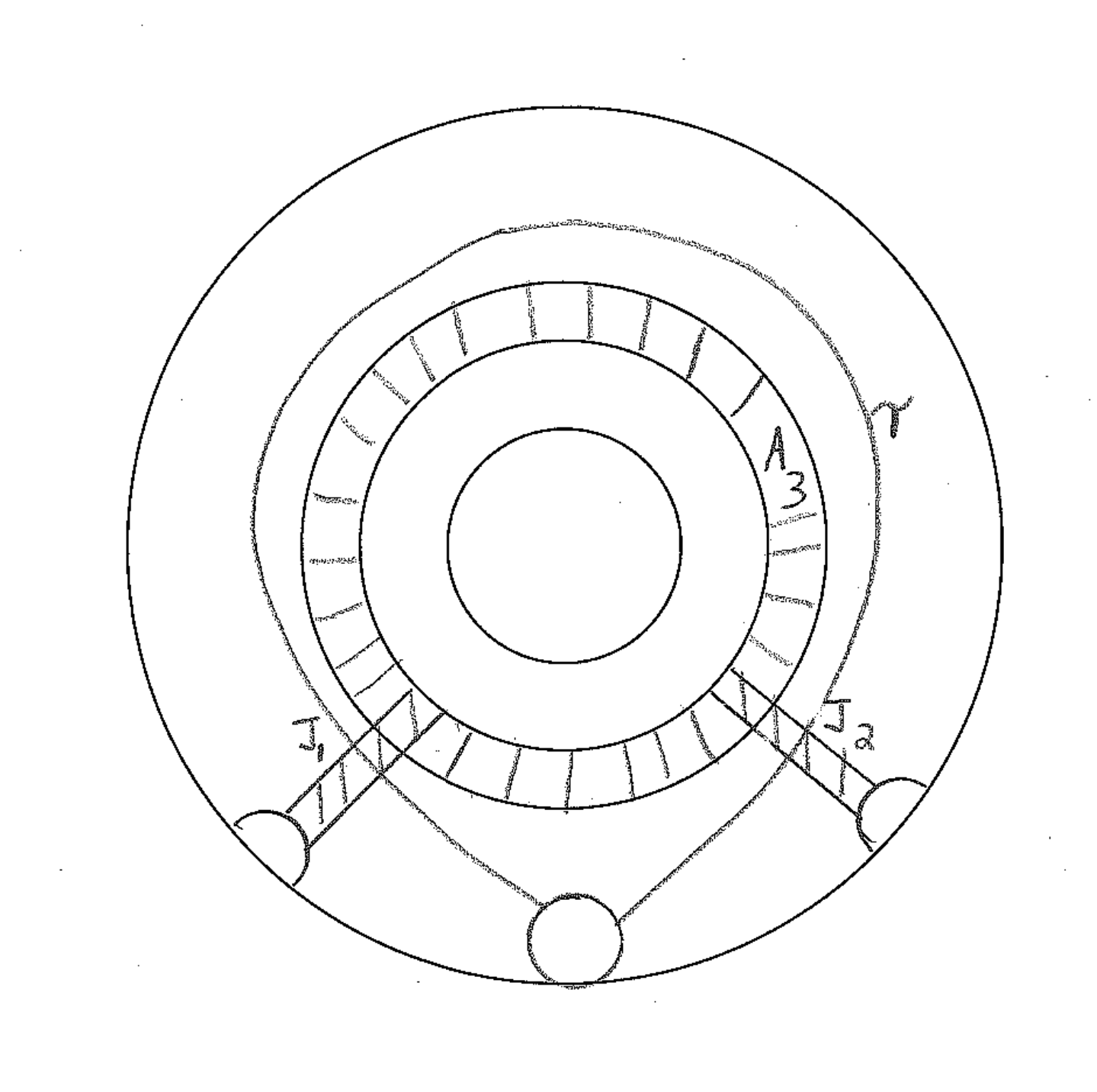}

 There are at least
$2^k$ subpaths $\mu_j$ of $ ({f^{kn}_\sigma}) (\tau)$ such that each
$\mu_j$ either crosses $J_1$, crosses $J_2$, crosses $A_3$ or is an arc
with one endpoint on $\ti \rho_1$ and the other on $\ti \rho_2$.    In this case we let $M_0$ be the union of $\pi_\sigma(J_1 \cup J_2 \cup A_3)$ and   a compact $\delta$-neighborhood of $\rho$ where $\delta> 0$ is so small that this neighborhood is a closed annulus.   The existence of a uniform lower bound for  $|\pi_\sigma(\mu_j)|_M$ comes from the compactness of $J_1 \cup J_2 \cup A_3$ and the fact that  if $\mu_j$ has one endpoint on $\ti \rho_1$ and the other on $\ti \rho_2$ then $|\pi_\sigma(\mu_j)|_M$ has endpoints in $\rho$ but is not contained in the $\delta$-neighborhood of $\rho$.

 The fourth and final case is that  $L_1 \ne L_2$ have endpoints in the same component of $\partial A_\sigma$.  The obvious modification of the argument from the third case applies here.  
  \qed
  
 \vspace{.1in}

   We now return to the more general context of this section.   
       
    Suppose that $W$ is a Brouwer subsurface and that $\tau \in \RH$.    We say that  
  {\em $[\tau] $  disappears under iteration} if $ h_\#^n([\tau]) \cap W  = \emptyset$ for some $n > 0$ and   that {\em  ${\cal T}$ disappears under iteration} if each element in  ${\cal T}$ does.  If  both endpoints of $\tau $ are contained in a single  component $L$ of $\partial_+W$ and the simple closed curve that is the union of $\tau$ with an interval in $L$ does not bound a disk in $A$ then we  say that  $\tau$ is {\em essential}. 
  
  The next lemma is  based on Theorem~5.5(c) of \cite{han:fpt}.    See also Lemma~10.8 of \cite{fh:periodic}.

\begin{lemma} \label{lemma:fitted family}  Suppose  that   $W$ is a Brouwer subsurface   and $\cT$ is a fitted family   as in Definition~\ref{brouwer subsurface}. Suppose further that $\cT$ does not disappear under
iteration and that no element of $\cT$ eventually doubles.  Then there exists $[\tau] \in {\cal T}$ such that
$h_\#([\tau])\cap W$ contains $[\tau]$ with multiplicity one and does not contain $-[\tau]$; moreover,    every other element of $h_\#([\tau])\cap W$ disappears under iteration.    If $\tau$ has both endpoints on the same
component $L$ of $\partial_+W$ then $[\tau]$ is essential.
 \end{lemma}
 
\proof Let $\Gamma$ be the directed graph with one vertex $v_i$ for each
element   ${[\tau_i] \in \cal T}$ and with the number of oriented edges from $v_i$ to $v_j$ equal to the sum of the multiplicities of $[\tau_j]$ and of $-[\tau_j]$ in  $h_\#([\tau_i]) \cap W$.
 By Lemma~\ref{iteration is well defined},  there is a natural bijection between the elements of  $h^n_\#([\tau_i]) \cap W$ and the set of 
  oriented paths in $\Gamma$ that have length $n$ and begin at $v_i$.

Since no $[\tau_i]$ eventually doubles, each $v_i$  is contained in at most one
non-repeating oriented closed path in $\Gamma$.  The set $\V_0$ of vertices of $\Gamma$ that are contained in at least one 
oriented closed path is non-empty because $\cT$ does not disappear under iteration.   There is a partial
order on the vertices of $\Gamma$ defined by $v_1 > v_2$ if there is
an oriented path in $\Gamma$ from $v_1$ to $v_2$ but no oriented path
from $v_2$ to $v_1$.  Choose $v_p \in \V_0$ so that $v_p> v_q$ implies that $v_q\not \in \V_0$.   Note that if $v_p > v_q$ then  $[\tau_q] $ disappears under iteration.  Indeed, if it does not then there there are arbitrarily long oriented paths in $\Gamma$ beginning with  $v_q$ so there must be an element $v_r \in \V_0$ such that $v_q>v_r$; this contradicts $v_p > v_q$ and the choice of $v_p$.   Note also that if there is an oriented path  from $v_p$ to $v_q$ but $v_p \not > v_q$ then $v_q$ is on the oriented cycle through $v_p$ and so is uniquely determined by the length of the path from $v_p$ to $v_q$.

Let $n$ be
the length of the unique oriented non-repeating closed path $\rho$
through $v_p$.  Then $h^n _\#([\tau_p])\cap W$ contains exactly one element that does not disappear under iteration and it is $ \epsilon[\tau_p] $ where $\epsilon=\pm 1$.  To complete the proof of the lemma, it remains to show that $n=1$ and $\epsilon = 1$. 

Let $v_s$ be the endpoint of the unique edge in $\Gamma$ that begins
at $v_p$ and is on the unique oriented closed path through
$v_p$.  It is possible that $v_s= v_p$.  Thus, either $[\tau_s]$ or $-[\tau_s]$
 is the unique element of $h_\#([\tau_p])\cap W$ that
does not disappear under iteration.

  For the remainder of the proof we make use of the end  (pages 253 - 254) of the proof of Theorem~5.5 of \cite{han:fpt}.    The case that $\tau_p$  has endpoints on distinct components of $\partial_+W$ is treated in the last two paragraphs of that proof.   (The labels $\tau_i^k$ in the diagram on the bottom of page 253 are incorrect; they should be $\tau_i^\ast$.) The argument given there applies without change in our present context  so we may assume that   $\tau_p$  has both endpoints on the same component, say $L$,  of   $\partial_+W$.  
  
     The endpoints of $h_\#(\tau_p)$ are contained in the component of the  complement  of $ W$ bounded by $L$.  If $h_\#(\tau_p)$ intersects any other  component of the  complement  of $W$ then  at least two elements of $h_\#([\tau_p]) \cap W$ would be represented by paths with endpoints on distinct components of $\partial_+W$.    Since no such paths disappear under iteration, this can not happen and we conclude that each element of  $h_\#([\tau_p]) \cap W$, and in particular $\tau_s$, has both endpoints on $L$.  
    
    Both ends of $L$ converge to the same component of $\partial A$.   The argument given in   the first and second paragraphs  on page 253 of \cite{han:fpt} (which is a proof by contradiction) carries over without change to this context and proves that  $\tau_p$ is essential.   By symmetry, $\tau_s$  is also essential.  If $[\tau_p] \ne  [\tau_s]$  then   either the interval of $L$ bounded by the endpoints of $\tau_p$ contains the interval of $L$ bounded by the endpoints of $\tau_s$ or vice-versa.   In either case, there is a  rectangle   $D \subset W$  bounded by  $\tau_p, \tau_s$ and intervals in $L$.  It contains finitely many punctures, each of which is mapped to the complement of $W$ by all sufficiently high iterates of $h$.  
 Thus, for all sufficiently large $k$,   \ $h^{kn}(D)$ does not contain any punctures in $W$.     It follows that either $h^{kn}_\#([\tau_p])\cap W =  h^{kn}_\#([\tau_s])\cap W $ or $h^{kn}_\#([\tau_p])\cap W =  h^{kn}_\#([-\tau_s])\cap W $. But   $\epsilon^k [\tau_p]$ [resp. $[\epsilon^k \tau_s$]\ ] is the unique element of $h^{kn}_\#([\tau_p])\cap W $ [resp. $h^{kn}_\#([\tau_s])\cap W $] that does not disappear under iteration.  This contradicts the assumption that $[\tau_p] \ne  [\tau_s]$.  We conclude that $[\tau_p] =   [\tau_s]$ and hence that $n =1$.   Since $h$ is orientation preserving and $h_\#(L)$ is parallel to $L$, it follows that  $\epsilon = 1$.

\endproof

\begin{defn} \label{disappears}  Suppose that $W$ is a Brouwer subsurface, that ${\cal T}$ is a fitted family and that $[\tau] \in {\cal T}$.     Let  $L_1$ and $L_2$ be the components of $\partial_+W$ that contain the initial  and terminal endpoints $w_1$ and $w_2$ of $\tau$ respectively.  We say that $[\tau]$ is {\em peripheral} if one (and hence all) of the following equivalent conditions are satisfied.
\begin{enumerate}
\item Some component of the complement of $L_1 \cup L_2 \cup \tau$ is contractible in $\mrA_X$.
\item There are rays $R_1 \subset L_1$ and $ R_2 \subset L_2$ whose initial points are $w_1$ and $w_2$     such that   the    line $  R_1^{-1}\tau R_2$  can be  properly isotoped rel $X$ into arbitrarily small neighborhoods of some end of $\mrA$. 
\item If $\ti \tau$ is a lift of $\tau$ to $\tiM$ and $\ti L_1$ and $\ti L_2$ are the lifts of $L_1$ and $L_2$ that contain the endpoints of $\ti \tau$ then $\ti L_1$ and $\ti L_2$ have a common endpoint.
\end{enumerate}
\end{defn}


Our next result  refines Lemma~\ref{lemma:fitted family}.  It is based on Lemma~6.4 of \cite{han:fpt}.

 \begin{lemma}  \label{reducing line}   Suppose  that  $W$, ${\cal T}$ and $[\tau]$ are as in the statement of Lemma~\ref{lemma:fitted family}.    Let $L_1 =\partial V_1$ and $L_2 =\partial V_2$ be the  (possibly equal)  components of $\partial_+W$ that contain the  initial and terminal endpoints  $v_1$ and $v_2$ of $\tau$.  Then
  \begin{enumerate}
  \item  $h_\#([\tau])\cap W = \{[\tau]\}$. 
  \item    If  $[\tau]$ is not peripheral then there are rays $R_1 \subset L_1$ and $R_2 \subset L_2$ such that $R_1^{-1}\tau R_2$  is isotopic to an $h_\#$-invariant geodesic line $\mu$.  
\end{enumerate}
\end{lemma}
       
\proof  To prove (1) we assume that $\left (h_\#([\tau])\cap W \right ) \setminus  \{[\tau]\} =\{s_1,\dots,s_m\}$ is not empty  and argue to a  contradiction.     Lemma~\ref{lemma:fitted family} implies that each  $s_i$ disappears under iteration and so, in particular,   is represented by a path   with both endpoints on $L_1$ or both endpoints on $L_2$.   
  As there is no loss in replacing $h$ by an iterate, we may assume  that  each $h_\#([s_i])\cap W = \emptyset$. 


We recall the alternate definition of $h_\#([\tau])\cap W$  given on page 50 of \cite{han:fpt}.      Choose a  lift $\ti h:  \tiM \cup \sinfty \to \tiM \cup \sinfty$ to the compactified universal cover of   $\mrA_X$, choose a lift  $  \ti \tau$ of $\tau$  and for $j =1,2$,  let $\ti L_j$     be  the lift  of $L_j$   that contains  the endpoint $\ti v_j$  of $\ti \tau$.   
The lines $\ti h_\#(\ti L_j)$ are disjoint from the full pre-image $\ti W$ of $W$.  There are finitely many components $\ti W_l$ of $\ti W$ that separate $\ti h_\#(\ti L_1)$ from $\ti h_\#(\ti L_2)$.  Any geodesic path connecting $\ti h_\#(\ti L_1)$ to  $\ti h_\#(\ti L_2)$ crosses through    $\ti W_l$ in a geodesic arc;  the projection of this arc to $W$ determines a well defined element   of $\RH$ and the multi-set of  these elements, obtained by varying $l$, is exactly  $h_\#([\tau])\cap W$.

Since $[\tau] \in h_\#([\tau])\cap W$, we may choose $\ti h$ so that   $\ti h_\#(\ti \tau)$ crosses   $\ti L_1$  and $\ti L_2$  in that order.  For future reference, note that $\ti h^2_\#(\ti \tau)$ crosses   $\ti h  _\#(\ti L_1),  \ti L_1, \ti L_2,$  and $\ti h  _\#(\ti L_2)$  in that order.  

  Intersection of $\ti h_\#(\ti \tau)$ with $\ti W$ decomposes     $h_\#( \ti \tau)$ into an alternating concatenation of supbaths  $\ti \mu_k$ whose projections   $\mu_k \subset \mrA$   represent elements of $h_\#([\tau])\cap W$ and  subpaths $\ti \nu_k$ whose projections $\nu_k$  are contained in $V_j \setminus h_\#(V_j)$  for $j = 1$ or $2$.  We assume without loss that some $s_i$, say $s_1$, has both endpoints in $L_1$ and hence that some $\mu_k$, say $\mu_1$ is  a path in $W$ that represents $s_1$ and in particular has  both endpoints in $L_1$.  It follows that at least one of the  $\nu_k$'s    is contained in   $V_1 \setminus h_\#(V_1)$  and   has both endpoints in $L_1$.  Note that this is true not only for $\ti h_\#(\ti \tau)$ but also for   any geodesic path that connects $\ti h_\#(\ti L_1)$ to $\ti h_\#(\ti L_2)$.  In particular, $h^2_\#(\tau)$ contains a subpath in $V_1 \setminus h_\#(V_1)$    with both endpoints in $L_1$.

Let $\ti L_1'$ and $\ti L_1''$ be the  lifts of $L_1$ that contain the endpoints of $\ti \mu_1$.  Since $h_\#([s_1])\cap W = \emptyset$,  any geodesic path connecting $\ti h_\#(\ti L_1')$ to  $\ti h_\#(\ti L_1'')$ projects to a path   in $V_1 \setminus h_\#(V_1)$ with both endpoints in  $h_\#(V_1)$.  Since  $\ti h_\#(\ti L_1')$ and  $\ti h_\#(\ti L_1'')$  separate $\ti h^2_\#(\ti L_1)$ from $\ti h^2_\#(\ti L_2)$,    $h^2_\#([\tau])$ contains a subpath  in $V_1 \setminus h_\#(V_1)$ with both endpoints in  $h_\#(V_1)$.  But $S_1 = V_1\setminus h_\#(V_1)$ is a once punctured strip.  A   geodesic arc in  $S_1$ with endpoints in $L_1$ can not be disjoint from a geodesic arc in $S_1$ with endpoints in $h_\#(L_1)$.    Since $h^2_\#(\tau)$ is an embedded geodesic arc, we have reached the desired contradiction and so have proved (1). 

Let $\ti \sigma$ be the subpath of $\ti h_\#(\ti \tau)$ that connects $\ti h_\#(\ti L_1)$ to $\ti L_1$ and let $\sigma$ be its image in $\mrA$.  We now know that $\sigma$ is an arc in $S_1$    with one endpoint in $L_1$ and the other in $h_\#(L_1)$.  Since $S_1$ is a once punctured strip, one of the complementary components of $\sigma$ in $S$ is
unpunctured. There are rays $R' \subset L_1$ and $R'' \subset h_\#(L_1)$  so that ${R'}^{-1} \sigma R''$ is peripheral. Lifting this back to the universal cover, we have that  $\ti L_1$ and $\ti h_\#(\ti L_1)$ are asymptotic.  Their common endpoint $P_1$ is a fixed point for $\ti h|_{\sinfty}$.   Symmetrically, one of the endpoints $P_2$ of $\ti L_2$ is fixed by $\ti h$  and we let $\ti \mu$ be the geodesic connecting $P_1$ to $P_2$ (which are distinct points because $[\tau]$ is not peripheral). This completes the proof of (2).
\endproof

\begin{defn}  An embedded arc $\rho \subset A$ that is disjoint from $X$ and that has   endpoints in $\Fix(h|_{\partial A}) $ is a {\em reducing arc} for $h$ rel $ X$ if  it is $h$-invariant up to isotopy rel  $  X$ and  rel its endpoints  and is non-peripheral in the sense that it is not homotopic rel endpoints and rel $  X$ into $\partial A$.   
\end{defn}


The following lemma is similar to    Proposition 10.10 of \cite{fh:periodic}.    The conclusions of the lemma are more  detailed than those of that proposition and   apply to $h$ and not just some iterate of $h$.     Lemma~\ref{twist or not}  implies that  condition (b)  below is satisfied    in the special case that $h=f_\sigma$ 
for $\sigma \in \cR$.
 Note also that if (b)  is satisfied then all reducing arcs have their endpoints on the same component of $\partial A$.

 \begin{lemma} \label{reducible}    Assume that the hyperbolic metric on $\mrA_X$ has been chosen as in Lemma~\ref{proper rays} and that  $  \beta^\pm_i$ and  $V^\pm_i$ are as in that lemma.  Let $W$ be the associated Brouwer subsurface and assume that no element of $\RH$ eventually doubles.  Let $\alpha =  \cup_{i=1}^r  \alpha(h, x_i) $ and $\omega  =  \cup_{i=1}^r  \omega(h, x_i) $.      Assume    that  for each component $\partial_lA$ of $\partial A$,   $\alpha_l = \alpha \cap \partial_lA$ and $\omega_l = \omega \cap \partial_lA$ have the same cardinality $c_l$ and    that if $c_l > 1$ then the elements of $\alpha_l$ and  $\omega_l$ alternate around $\partial_lA$.  Then
 \begin{enumerate}
  \item There is a reducing arc $\rho$ for $h$ with respect to $X$. 
 \end{enumerate}  
 If  either one of the following conditions is satisfied
 \begin{description}
 \item [(a)]  $r =1$
 \item [(b)]   If $x$ and $y$ are fixed points in different components of $\partial A$ then $h$ is isotopic rel $\{x,y\}$ to a non-trivial Dehn twist.  
 \end{description}  then
    \begin{enumeratecontinue} 
\item For each $1 \le i \le r$,  $\alpha(h, x_i)$ and $\omega(h,x_i)$ belong to the same component of $\partial A$.
\item     For each $1 \le i \le r$, there is a reducing arc $\rho_i$ whose endpoints are $\alpha(h, x_i)$ and $\omega(h, x_i)$.
\item      For each $1 \le i \le r$,  there is a   translation arc geodesic $  \beta_i$ for $  x_i$  such that $  B_i = \cup_{j=-\infty}^\infty h^j_\#( \beta _i)$   is a properly embedded line  whose initial end  converges  to  $\alpha(h,  x_i)$ and whose terminal end  converges  to    $\omega(h, x_i)$. 
\item The  $B_i$'s are disjoint.   
 \end{enumeratecontinue}  
  If $r =1$
  \begin{enumeratecontinue} 
\item There is a unique   translation arc geodesic for $ x_1$.  
\end{enumeratecontinue}  
  \end{lemma}
  
 \proof     
For each $i$,  let ${\cT}_i$ be the   fitted family  (Notation~\ref{notn: fitted}) determined by $\beta_i^-$.  
 
 To prove (1), it suffices to show that  there is a properly  embedded non-peripheral  line $\ell \subset \mrA_X$,  whose initial and terminal ends   converge  to  elements of $\Fix(h|_{\partial A}) $ and  such that $h(\ell)$ is properly isotopic in $\mrA_X$  to $\ell$.   The proper isotopy can be chosen so that it extends by the identity on $\partial A$ so we can take $\rho$ to be the closure of $\ell$ in $A$.

If  ${\cal T}_i$ disappears under iteration then  the homotopy streamline $  B_i  = \cup_{n=-\infty}^{\infty} h^n_\#(  \beta_i^-)$ is a properly embedded $h_\#$-invariant line whose ends converge to $\alpha(h,   x_i)$  and $\omega(h, x_i)$ and we let $\ell$  be the line   obtained by pushing  $ B_i$
off of itself; there is always at least one direction to push that results in a non-peripheral line.
If ${\cal T}_i$ does not disappear under iteration, let  $[\tau ]\in {\cal T}_i$  satisfy  the conclusions of   Lemma~\ref{lemma:fitted family} and let $L_p$ and $L_q$ be the components of $\partial_+W$ containing the initial   and terminal  endpoint    of $\tau$ respectively.  We claim that $[\tau] $ is not peripheral  (Definition~\ref{disappears}).      This is obvious if $\omega(h,x_p)$ and $\omega(h,x_q)$ belong to distinct components of $\partial A$.   If $L_p = L_q$ this follows from   Lemma~\ref{lemma:fitted family},  which asserts that $[\tau]$ is essential, and the assumption that the component of $\partial A$ that contains $\omega(h,x_p) = \omega(h,x_q)$ intersects $\alpha$ non-trivially.  In the  final case,  $\omega(h,x_p) \ne \omega(h,x_q)$ belong to the same component of $\partial A$ and so  are separated in that boundary component by elements of $\alpha$;    again $\tau$ is not peripheral.   Lemma~\ref{reducing line}  implies that there  are rays $R_p \subset L_p$ and $R_q \subset L_q$ such that $\ell = R_p^{-1}\tau R_q$, whose ends converge to $\omega(h,x_p)$ and $ \omega(h,x_q)$,  has the desired properties.

We now turn to the proof of (4).    
   It suffices to show that each $\cT_i$ disappears under iteration.  We   assume that some $\cT_i$  does not disappear under iteration and, continuing  with the above notation,  argue to a contradiction.    Note that $\omega(h,x_p)$ and $\omega(h,x_q)$ lie on the same component, say $\partial_0A$,  of $\partial A$.   This is obvious for (a)  and holds for (b) because  $\rho$ has endpoints   $\omega(h,x_p)$ and $\omega(h,x_q)$ and  is isotopic to $h(\rho$) rel endpoints. Denote the other component of $\partial A$ by $\partial_1A$.

Let $\mu$ be the geodesic determined by $\ell$, let $Y$ be the component of $\mrA_X \setminus \mu$ whose closure
contains $\partial_1A$ and let $Z$ be the other component of $\mrA_X
\setminus \mu$.  Since $Z$ is not contractible, 
it contains at least one orbit of $X$. 

We claim that $Y$ also intersects, and hence contains, an orbit of $X$.  If $p \ne q$ this follows from the fact that the endpoints $\omega(h,x_p)$and $\omega(h,x_q)$ of $\ell$  separate $\alpha \cap \partial_0A$.    Suppose then that $p =q$.   Since $\mu$ is $h_\#$-invariant and disjoint from $B^-_i$, each element of $\cT_i$ is represented by an arc   that is disjoint from $\mu$.  Since $\ell$ can be isotoped to be disjoint from $\partial V^+_q$ but cannot be isotoped into $V^+_q$, $\mu$ is disjoint from $ V^+_q$ and hence disjoint from $\tau \cup  V^+_q$.   Since $\tau$ is essential  (Lemma~\ref{lemma:fitted family})   $Y$ contains $V^+_q$ and hence the orbit of $x_q$.    This completes the proof that both $Y$ and $Z$ contain an orbit of $X$.

   
If $r=1$ then we have reached the desired contradiction and so have proved (4) in this case.   Arguing by induction on $r$, we may assume that $r > 1$ and that 
if one works relative to $X \cap Y$ or relative to $ X \cap Z$ then ${\cal T}_i$ disappears under iteration. In other words, if $ x_i \in Y$ (the argument for
$  x_i\in Z$ is symmetric) then for all sufficiently large $n$,
$h^n_\#( \beta_i^-)$ is isotopic rel $ X \cap Y$ to
an arc $\gamma_{i,n} \subset V_i \subset Y$.  Since $\mu$ is $h_\#$-invariant, $h^n_\#( \beta_i^-) \subset Y$.  It is a
standard fact that the isotopy rel $ X \cap Y$ of
$h^n_\#( \beta_i^-)$ to $\gamma_{i,n}$ can be taken
with support in $Y$.  It follows that this isotopy is rel $X$
which implies that $h^n_\#(\beta_i^-) \subset V_i$ in
contradiction to the assumption that ${\cal T}_i$ does not disappear
under iteration.   This completes the proof of (4).

  Items (3)  and (5) follow from (4).    If $r=1$ then (2) follows from our
assumption that $\alpha_l$ and $\omega_l$ have the same cardinality. If (b)  is satisfied then (2) follows the fact that  $\alpha(h,x_i)$ and $\omega(h,x_i)$ bound a reducing curve.     Thus (2) is satisfied.

 To verify (6), let $B_1$ and $\beta_1$ be as in (4) and denote $h^j_\#(\beta_1)$ by $\beta_{1,j}$.  Thus  $B_1  = \cup_{n=-\infty}^{\infty}  \beta_{1,j}$ and $h_\#(\beta_{1,j}) = \beta_{i,j+1}$.   We assume that there is a translation arc geodesic $  \delta \ne \beta_{1,0}$ for $ x$ and
argue to a contradiction.  Let $ \eta$ be the maximum initial
segment of $ \delta$ whose interior is disjoint from $  B_1$ and
let   $y$ be the terminal endpoint of $\eta$.  Let $\nu$
be the maximum initial segment of $h _\#(  \delta)$
whose interior is disjoint from $B_1$ and let $z$ be the
terminal endpoint of  $\nu$.  If $ y \in X$ then $z
= h(y)$; otherwise   $y$ is in the interior of some $ 
\beta_{1,m}$ and $z$ is in the interior of $\beta_{1,m+1}$.
  
If $y \not \in \beta_{1,-1} \cup \beta_{1,0}$ then the endpoints of
$\eta$ and $ \nu$ are linked in $ B_1$ in contradiction to the fact
that the interiors of $\eta$ and $ \nu$ are disjoint and lie on the
same side of $B_1$.  We may therefore assume that $ y \in \beta_{1,-1}
\cup \beta_{1,0}$.  In this case, the endpoints of $ \eta$ and $ \nu$
bound intervals $I_\eta$ and $I_\nu$ in $ B_1$ that meet in at most one
point.  It follows that either the simple closed curve $ \eta \cup
I_\eta$ or the simple closed curve $ \nu \cup I_\nu$ is inessential in
$A$ and so bounds a disk that is disjoint from $ X$ in contradiction
to the fact that these simple closed curves are composed of two
geodesic segments.  This completes the proof that $ \beta_1$ is the
unique translation arc geodesic and hence the proof of (6).
\endproof

We conclude this section by applying Lemma~\ref{reducible} to the
specific class of annulus homeomorphisms that concern us in this
paper.  Note that the statements are purely topological and so are
independent of hyperbolic metrics used in their proofs.
   

\begin{cor} \label{consistent crossing} Suppose that $\sigma \in
\cR$ and that $f_\sigma : A_\sigma \to A_\sigma$ is as in
Definition~\ref{annulus cover}.  Let ${\mrA}_\sigma =
\Int(A_{\sigma})$.  Then there do not exist $\hat x_1, \hat x_2 \in
{\mrA}_{\sigma}$ such that $\alpha(f_\sigma, \hat x_1)$ and
$\omega( f_\sigma, \hat x_2)$ are contained in one component of
$\partial A_\sigma$ and $\alpha( f_\sigma, \hat x_2)$ and $\omega(
f_\sigma, \hat x_1)$ are contained in the other component of
$\partial A_\sigma$.
\end{cor}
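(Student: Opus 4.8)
The plan is to argue by contradiction and to reduce the statement directly to conclusion~(2) of Lemma~\ref{reducible}. Suppose such points $\hat x_1,\hat x_2 \in A_\sigma$ exist, and label the two components of $\partial A_\sigma$ as $\partial_0 A_\sigma$ and $\partial_1 A_\sigma$ so that $\alpha(f_\sigma,\hat x_1)$ and $\omega(f_\sigma,\hat x_2)$ lie in $\partial_0 A_\sigma$ while $\alpha(f_\sigma,\hat x_2)$ and $\omega(f_\sigma,\hat x_1)$ lie in $\partial_1 A_\sigma$. In particular $\alpha(f_\sigma,\hat x_1)$ and $\omega(f_\sigma,\hat x_1)$ lie in distinct components of $\partial A_\sigma$, and likewise for $\hat x_2$. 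Since $f_\sigma$ is orientation preserving it preserves each of the two boundary circles of $A_\sigma$, so this arrangement also forces $\hat x_1,\hat x_2$ into the interior of $A_\sigma$.

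First I would normalize so that the four limit sets are single points, i.e.\ so that we are literally in the setting of Notation~\ref{notation}. This is the case the hypothesis refers to, and in the annular cover it amounts, via Definitions~\ref{annulus cover}, to none of the lifts $\alpha(\ti f_{\ti C},\ti x_i)$, $\omega(\ti f_{\ti C},\ti x_i)$ being an endpoint of $\ti\sigma$; the crossing hypothesis is exactly what rules out the degenerate alternative. Taking $r=2$ with $\hat x_1,\hat x_2$ as the orbit representatives, the points $\alpha(f_\sigma,\hat x_1)$ and $\alpha(f_\sigma,\hat x_2)$ are distinct since they lie in different components of $\partial A_\sigma$, and likewise $\omega(f_\sigma,\hat x_1)\ne\omega(f_\sigma,\hat x_2)$, so Notation~\ref{notation} genuinely applies.

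Next I would check the hypotheses of Lemma~\ref{reducible}: for each component $\partial_l A_\sigma$ exactly one of $\alpha(f_\sigma,\hat x_1),\alpha(f_\sigma,\hat x_2)$ lies in it and exactly one of $\omega(f_\sigma,\hat x_1),\omega(f_\sigma,\hat x_2)$ does, so $\alpha_l$ and $\omega_l$ each have cardinality $c_l=1$. Since no $c_l$ exceeds $1$, the alternation requirement in Lemma~\ref{reducible} is vacuous, and all of its hypotheses hold. Because $\sigma\in\cR$, conclusion~(2) of Lemma~\ref{reducible} then asserts that $\alpha(f_\sigma,\hat x_i)$ and $\omega(f_\sigma,\hat x_i)$ lie in the same component of $\partial A_\sigma$ for each $i$; taking $i=1$ contradicts $\alpha(f_\sigma,\hat x_1)\subset\partial_0 A_\sigma$ and $\omega(f_\sigma,\hat x_1)\subset\partial_1 A_\sigma$, which finishes the proof. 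The genuine content here is not in the corollary but inside Lemma~\ref{reducible}(2), whose proof already invokes Lemma~\ref{twist or not} to rule out a reducing arc joining the two ends of $A_\sigma$ when $\sigma\in\cR$; the only points that require a little care are the reduction to single point limit sets and the observation that the alternation hypothesis is vacuous once every $c_l=1$, and granting those the corollary is immediate.
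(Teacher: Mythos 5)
Your argument is correct and is essentially identical to the paper's own proof, which simply states that the corollary follows from item~(2) of Lemma~\ref{reducible} applied with $r=2$ and $c_0=c_1=1$. You have filled in the routine verifications of the hypotheses of Notation~\ref{notation} and Lemma~\ref{reducible} that the paper leaves implicit.
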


\proof   Let $\hat X \subset {\mrA}_\sigma$ be the union of the $f_\sigma$-orbits of $\hat x_1$ and $\hat x_2$  and assume that  ${\mrA}_\sigma  \setminus \hat X$  is equipped with a complete hyperbolic structure as in section~\ref{hyperbolic}.   Let $W$ be a Brouwer subsurface as in Definition~\ref{brouwer subsurface}. Lemma~\ref{no doubling} implies that no element of $\RH$ eventually doubles.  If there exist  $\hat x_1, \hat x_2 \in A_{\sigma}$  such that $\alpha(f_\sigma, \hat x_1)$ and $\omega( f_\sigma, \hat x_2)$ are contained in one component of $\partial A_\sigma$ and $\alpha( f_\sigma, \hat x_2)$ and $\omega( f_\sigma, \hat x_1)$ are contained in the other component of $\partial A_\sigma$, then the hypotheses of Lemma~\ref{reducible}  are satisfied with $r =2$ and $c_0 =c_1 = 1$.      Lemma~\ref{twist or not} implies that condition (b) of Lemma~\ref{reducible} is satisfied and hence by item (2) of Lemma~\ref{reducible}   that  for $i=1,2$, \  $\alpha(h, x_i)$ and $\omega(h,x_i)$ belong to the same component of $\partial A_\sigma$. This contradiction completes the proof.
\endproof

 The next corollary states that if there is twisting across an annular cover then orbits that start and end on one boundary component can not get to close to the other boundary component.

\begin{cor}  \label{PB}  Suppose that $h: A \to A$   is either 
\begin{enumerate}
\item $f_\sigma:A_\sigma \to A_\sigma$ for some $\sigma \in \cR$.
\end{enumerate} 
or 
\begin{enumeratecontinue} 
\item $f_\sigma: A^c_\sigma \to A^c_\sigma$ for some horocycle $\sigma$ corresponding to an isolated end of $M$. 
\end{enumeratecontinue}
Let $\partial_0 A$ and $\partial_1 A$ be the components of $\partial
A$.  In case (2) assume that $\partial _0 A$ is the unique component
of $\partial A$ and that if $\Fix(f_{\sigma}|_{\partial_1 A})
\ne \emptyset$ then $f_\sigma$ is not isotopic to the identity rel
$\Fix( f_{\sigma}|_{\partial A})$.  Then there is a neighborhood of
$\partial_1 A $ that is disjoint from the $ h$-orbit of any $  \hat x \in A$
for which both $\alpha( h,  \hat x)$ and $\omega(h, \hat  x)$ are
contained in $\partial_0 A$.
\end{cor}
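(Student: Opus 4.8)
The plan is to argue by contradiction using a Poincaré–Bendixson type obstruction coming from the twisting. Suppose no such neighborhood of $\partial_1 A$ exists. Then there is a sequence of points $\hat x_n$, each with $\alpha(h,\hat x_n)$ and $\omega(h,\hat x_n)$ in $\partial_0 A$, whose orbits come arbitrarily close to $\partial_1 A$. The first step is to extract, by a compactness/limiting argument on the endpoint maps $\ti\alpha$ and $\ti\omega$ (which are single points in $\partial A$ by the analysis in Section~\ref{sec: endpoints}, together with the annular-cover setup of Definition~\ref{annulus cover}), a single point $\hat x$ whose orbit still has both $\alpha(h,\hat x)$ and $\omega(h,\hat x)$ on $\partial_0 A$ but whose forward or backward orbit has a limit point on $\partial_1 A$ — or, more robustly, to produce for each $n$ a translation arc geodesic and its associated $\fsigmasharp$-invariant line $\hat B$ reaching deep into $A$. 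Either way we want to land in the hypotheses of Lemma~\ref{reducible} with $r=1$.

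The heart of the argument: apply Lemma~\ref{reducible}(3)–(4) with $r=1$ and $\hat x_1 = \hat x$. Since $\alpha(h,\hat x)$ and $\omega(h,\hat x)$ lie in the same boundary component $\partial_0 A$, the lemma produces a reducing arc $\rho$ with endpoints $\alpha(h,\hat x)$, $\omega(h,\hat x)$, and a properly embedded $h_\#$-invariant line $\hat B$ with the same endpoints, disjoint from nearby translates. This line $\hat B$ together with $\partial_0 A$ bounds a half-open region $D$ of $A$ not containing $\partial_1 A$, and the reducing arc $\rho$ (pushed off $\hat B$) is non-peripheral and $f_\sigma$-invariant up to isotopy. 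Now invoke the structure of $h$ on $A$: in case (1), $\sigma \in \cR$, so by Lemma~\ref{twist or not} $f_\sigma$ is isotopic rel $\Fix(f_\sigma|_{\partial A_\sigma})$ to a nontrivial Dehn twist; in case (2), by hypothesis $f_\sigma$ is not isotopic rel $\Fix(f_\sigma|_{\partial A})$ to the identity, and combined with Lemma~\ref{twist or not}(2) this again forces nontrivial twisting. A nontrivial Dehn twist cannot fix (up to isotopy) a non-peripheral arc with both endpoints on the boundary — equivalently, the image under the twist of the essential arc $\rho$ is no longer isotopic rel endpoints to $\rho$. This contradicts $\rho$ being a reducing arc. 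Hence the orbit of $\hat x$ (and for large $n$, of $\hat x_n$) cannot approach $\partial_1 A$.

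To make the limiting step clean I would instead argue directly: fix any $\hat x$ with $\alpha(h,\hat x),\omega(h,\hat x) \in \partial_0 A$; Lemma~\ref{reducible} gives the invariant line $\hat B(\hat x)$ cutting off a region $D(\hat x)$ disjoint from $\partial_1 A$, and $\orb(h,\hat x) \subset D(\hat x)$ since $\hat B$ carries the orbit and is $h_\#$-invariant. The remaining point is a \emph{uniformity} statement: the regions $D(\hat x)$ stay uniformly away from $\partial_1 A$. This is where the twisting enters quantitatively — I would lift to the universal cover $H$, use that $\ti h_{\ti C}$ commutes with $T_{\ti\sigma}$ and twists by a fixed nonzero index $k$, and show the $T_{\ti\sigma}$-translates of the lift $\ti B$ of $\hat B$ are ordered so that $\ti B$ stays in a $T_{\ti\sigma}$-invariant strip bounded away from the lift of $\partial_1 A$ by a margin depending only on $k$ and the geometry of $A_\sigma$, not on $\hat x$; here one uses that the endpoints of $\ti B$ both lie in the interval of $\sinfty$ cut off by $T_{\ti\sigma}^{\pm}$ on the $\partial_0$ side and that consecutive translates $T_{\ti\sigma}^{j}(\ti B)$ are disjoint (from Lemma~\ref{reducible}(5)) hence linearly ordered.

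The main obstacle I expect is precisely this uniformity/quantitative step — passing from the qualitative "each orbit is trapped in some region avoiding $\partial_1 A$" to "there is a single neighborhood of $\partial_1 A$ avoided by all such orbits." The soft part (existence of the trapping region for a fixed orbit via Lemma~\ref{reducible}) is essentially already done; the real work is controlling how far the trapping region can spread as $\hat x$ varies, and this is where one must genuinely exploit that the twist index is a fixed nonzero integer rather than allowing the prime-end/boundary dynamics to be arbitrarily close to a rotation with a periodic orbit near $\partial_1 A$. If a direct uniform estimate proves awkward, the fallback is the contradiction argument of the previous paragraph: take a sequence $\hat x_n$ violating the conclusion, extract a limiting endpoint configuration, and derive a non-peripheral $f_\sigma$-invariant reducing arc incompatible with a nontrivial Dehn twist.
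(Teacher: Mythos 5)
Your central step --- that a nontrivial Dehn twist cannot preserve, up to isotopy rel $\hat X$ and rel endpoints, a non-peripheral arc with both endpoints on the single boundary component $\partial_0 A$ --- is false, and both your main argument and your stated fallback rest on it. An arc $\rho$ whose endpoints both lie in $\partial_0 A$ does not carry the core of the annulus, so a twist along the core has no mapping-class obstruction to fixing it; indeed Lemma~\ref{reducible}-(3) \emph{always} produces such an invariant reducing arc under the relevant hypotheses, including precisely when $\sigma \in \cR$ and $f_\sigma$ is a nontrivial twist. Moreover the complementary region $D$ that $\rho$ cuts off with a piece of $\partial_0 A$ need not avoid any prescribed neighborhood of $\partial_1 A$: the interior of $\rho$ can run arbitrarily close to $\partial_1 A$ even though its endpoints are on $\partial_0 A$. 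So neither the bare existence of $\rho$ nor the trapping region $D$ yields a contradiction, and your lifted ``strip'' argument inherits the same defect --- the $T_{\ti\sigma}$-translates of $\ti B$ are disjoint and linearly ordered, but disjointness of lines whose \emph{endpoints} sit on the $\partial_0$ side says nothing about how close their \emph{interiors} come to the $\partial_1$ side.

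The paper's contradiction is extracted from a finer comparison that you do not use. Taking $\hat x_t \to \hat P \in \partial_1 A$, one first replaces $h$ by an iterate so the rotation number on $\partial_1 A$ is less than $1/4$, and fixes a short subinterval $J_1$ of $\partial_1 A$ from $\hat P$ to $h(\hat P)$. The twisting hypothesis (via Lemma~\ref{twist or not}) then says that for a path $\eta$ joining a fixed point of $h$ on $\partial_0 A$ to $\hat P$, the image $h(\eta)$ is \emph{not} homotopic rel endpoints to $\eta J_1$. One then builds $\hat\eta$ from an initial segment of the streamline $\hat B_1$ (from Lemma~\ref{reducible} with $r=1$) plus a short path $\hat\nu$ inside a small contractible neighborhood $\hat U_1$ of $J_1$; a translation arc $\hat\delta \subset \hat U_2$ for $\hat x$ (from Lemma~4.1 of \cite{han:fpt}), combined with the \emph{uniqueness} of the translation arc geodesic in Lemma~\ref{reducible}-(6), shows that $\hat\eta^{-1}h(\hat\eta)$ is homotopic rel endpoints to the short path $\hat\nu^{-1}\hat\delta\, h(\hat\nu)$ contained in $\hat U_2$, and hence that $h(\hat\eta)$ \emph{is} homotopic to $\hat\eta J_1$. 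The rotation-number control near $\partial_1 A$, the locality of the translation arc near $\hat P$, and the uniqueness statement are exactly what let the global twist be measured against the local dynamics; none of these appear in your proposal, and without them the twist cannot be seen from $\hat x$'s viewpoint.
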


\proof If the corollary fails then there exist $\hat x_t \to \hat P
\in \partial_1 A$ with $\alpha(h, \hat x_t), \omega(h, \hat x_t)
\in \partial_0 A$.  After replacing $h$ by some $h^m$ we may assume
that the rotation number of $h|_{\partial_1A}$ is less than
$\frac{1}{4}$.   In particular there are intervals $J_1 \subset J_2 \subset J_3$ in $\partial_1 A$ connecting $\hat P$ to $h(\hat P), h^2(\hat P)$ and $h^3(\hat P)$  respectively.  These intervals will be trivial if $\hat P$ is fixed by $h$ and non-trivial otherwise. 
 Additionally, after possibly increasing $m$ further, we may assume
(Lemma~\ref{twist or not}) that if $\eta$ is a path connecting a fixed
point in $\partial_0A$ to $\hat P$ then $h(\eta)$ is not homotopic rel
endpoints to the path obtained by concatenating $\eta$ with $J_1$. 

Choose contractible neighborhoods $\hat U_i$ of $J_i$ in $A$ such that
$\hat U_1 \subset \hat U_2 \subset \hat U_3$ and such that $ h(\hat
U_i) \subset \hat U_{i+1}$. Choose lifts $P \in \ti U_1 \subset \ti
U_2 \subset \ti U_3$ in $\tiM \cup \sinfty$ and let $\ti h $ be the
lift of $h$ such that $\ti h(P) \in \ti U_1$.  After passing to a
subsequence, $\hat x_t \to \hat P$ lifts to a sequence $\ti x_t \to P$
such that $\ti x_t, \ti h(\ti x_t) \in \ti U_1$ for all $t$.  Recall
that a translation arc for $\ti x_t$ is a path from $\ti x_t$ to $\ti
h(\ti x_t)$ that intersects its $\ti h$-image only in $\ti h(\ti
x_t)$.  By Lemma 4.1 of \cite{han:fpt}, there is a translation arc
$\ti \delta_t \subset \ti U_2$ for $\ti x_t$.  Let $\hat \delta_t
\subset \hat U_2$ be the projected image of $\ti \delta_t$.  Since
$\ti h(\ti \delta_t) \cup \ti \delta_t \subset \ti U_3$, $h(\hat
\delta_t) \cap \hat \delta_t$ is the projected image of $\ti h(\ti
\delta_t) \cap \ti \delta_t$.  Thus $\hat \delta_t \subset \hat U_2$
is a translation arc for $\hat x_t$.  We now fix such a $\hat x_t$ and
drop the $t$ subscript.

Assume the notation of Lemma~\ref{reducible} applied with $r=1$, 
$  x_1 = \hat x$, $c_0 =1$ and $c_1 = 0$.   Lemma~\ref{no doubling} implies that the hypothesis of  Lemma~\ref{reducible}  are satisfied. The   homotopy streamline $B_1$ produced by item (4) of  Lemma~\ref{reducible} can be
thought of as an arc $\hat \mu$ with initial endpoint $\alpha(\hat
h, \hat x)$ and terminal endpoint $\omega(\hat h, \hat
x)$.  Let $\hat \mu_0$ be the initial subpath of $\hat \mu$ that ends
with $\hat x_1$ and let $\hat \nu \subset \hat U_1$ be a path
connecting $\hat x_1$ to $\hat P$.  The path $\hat \eta = \hat
\mu_0\hat \nu$ connects  $\alpha(\hat
h, \hat x) \in \partial_0 A$ to $\hat P \in \partial_1 A$.
 By the uniqueness part of
Lemma~\ref{reducible} (6), $\hat \delta$ is isotopic rel $\hat X$ to
the subpath of $\hat \mu$ connecting $\hat x$ to $h(\hat x)$.
It follows that the  path $\hat \eta^{-1}h(\hat \eta)$ connecting $P$ to $h(P)$ is
homotopic rel endpoints  to $\hat \nu^{-1}\hat \delta h(\hat \nu) \subset
\hat U_2$. Hence  
$h(\hat \eta) $   is homotopic rel endpoints to $\hat \eta J_1$. This contradiction completes the proof .
 \endproof

\section{ $\omega$-lifts} \label{sec: omega lifts}

We assume throughout this section that $\cR \ne \emptyset$.    Recall from Section~\ref{sec: endpoints}  that the closure of a component of  $\tiM \setminus \ti \cR$ in $\tiM$ is called a domain.
We will assign a domain or a pair of domains to each $\ti x \in \tiM$ based on  its forward $\ti f$-orbit. 
 By symmetry, we can assign a domain or a pair of domains to each $\ti x \in \tiM$ based on its backward $\ti f$-orbit.
  In the next section  (Corollary~\ref{pretracking})  we show that these two methods give the same  domain or pair of domains when $x$ is birecurrent.

Suppose that $\ti C$ is a domain and that $\ti \sigma \in \ti\cR$ is a frontier component of $\ti C$.  Let $I_{\ti \sigma}$ be the component of $\sinfty \setminus \Fix(\ti f_{\ti C})$ bounded by the endpoints of $\ti \sigma$.  We write $\ti \sigma_{\ti C}$ for $\ti \sigma$ equipped with the orientation which makes every point in $ I_{\ti \sigma}$ move  away  from   the backward endpoint of $\ti \sigma$ toward the forward endpoint of $\ti \sigma$ under the action of $\ti f_{\ti C}$.   Equivalently, the orientation on $\ti \sigma$ is chosen so that a turn from inside $\ti C$ along $\ti \sigma$ in the direction (left or right) of the Dehn twist of $f$ across $\sigma$ has one moving toward the forward end of $\ti \sigma_{\ti C}$.  

We say that a pair of disjoint oriented distinct geodesics   in $H$ are {\em anti-parallel} if either of the following conditions are satisfied.
\begin{itemize}
\item The four endpoints  in $S_\infty$ are distinct with the pair of initial endpoints  separating  the pair of terminal endpoints.
\item The initial endpoint of one of the geodesics equals the terminal endpoint of the other.
\end{itemize}
  
 \begin{lemma} \label{orientations on sigma} The orientations on $\ti \sigma$ induced from the two domains that contain it are opposite.  
 \end{lemma}

\proof  This follows  from the fact that left [or right]  turns from the two domains containing $\ti \sigma$ result in motion in different directions along $\ti \sigma$.  
\endproof

Recall from Lemma~\ref{alpha and omega are points} that for all lifts $\ti f$ and all $\ti x \in \tiM$, $\alpha(\ti f,\ti x)$ and $\omega(\ti f, \ti x)$ are single points in $\sinfty \cap \Fix(\ti f)$.

\begin{lemma}  \label{moving on} Suppose that $\ti C_1$ and $\ti C_2$ are  domains with intersection $\ti \sigma \subset \ti \cR$,  that $\ti f_i =\ti f_{\ti C_i}$  and that $\ti x \in \tiM$.     If $\omega(\ti f_1,\ti x) \ne \ti \sigma_{\ti C_1}^+$   then   $ \omega(\ti f_2,\ti x) = \ti \sigma_{\ti C_2}^+ = \ti \sigma_{\ti C_1}^-$.     Symmetrically, if $\alpha (\ti f_1,\ti x) \ne \ti \sigma_{\ti C_1}^-$   then   $\alpha(\ti f_2,\ti x) = \ti \sigma_{\ti C_2}^- = \ti \sigma_{\ti C_1}^+$.
\end{lemma}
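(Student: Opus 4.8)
The plan is to observe that the two lifts $\ti f_1:=\ti f_{\ti C_1}$ and $\ti f_2:=\ti f_{\ti C_2}$ differ by a non-trivial hyperbolic covering translation whose axis is $\ti \sigma$, and then to run a north--south dynamics argument with a moving basepoint. First I would record the algebra. Both $\ti f_1$ and $\ti f_2$ fix the two endpoints of $\ti \sigma$ in $\sinfty$, since these endpoints lie in $\overline{\ti C_i}\cap\sinfty=\Fix(\ti f_i|_{\sinfty})$. Hence $R:=\ti f_2\circ\ti f_1^{-1}$ is a covering translation fixing both endpoints of the geodesic $\ti \sigma$, so $R$ is either the identity or a hyperbolic covering translation with axis $\ti \sigma$. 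Because $\ti \sigma$ projects to an element of $\cR$, along which $f$ twists non-trivially, Lemma~\ref{twist or not} gives $\ti f_2=T_{\ti \sigma}^{\,m}\ti f_1$ with $m\neq 0$; thus $R=T_{\ti \sigma}^{\,m}$ is genuinely hyperbolic with axis $\ti \sigma$. Write $R^{+}$ for the sink and $R^{-}$ for the source of $R$ on $\sinfty$, so that $\{R^{+},R^{-}\}$ is the endpoint set of $\ti \sigma$. Since $\ti f_1$ commutes with $T_{\ti \sigma}$ (Lemma~\ref{basic lemma 2}) it commutes with $R$, and therefore $\ti f_2^{\,n}=R^{n}\ti f_1^{\,n}$ for every $n\in\Z$.

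The key bookkeeping step, which I expect to be the only delicate point, is to match the orientation convention to $R$. Unwinding the definition of the orientation $\ti \sigma_{\ti C}$ induced by the twisting direction of $f$ along $\sigma$ together with the choice of domain (the discussion preceding Lemma~\ref{orientations on sigma}), I want $\ti \sigma_{\ti C_1}$ to be oriented with terminal endpoint $\ti \sigma_{\ti C_1}^{+}=R^{-}$ and initial endpoint $\ti \sigma_{\ti C_1}^{-}=R^{+}$. The cleanest way to check this is to pass to the annular cover $A_\sigma$, where by Lemma~\ref{twist or not} the map induced by $\ti f_i$ is a model Dehn twist of index equal to $m$ and the identification of the two boundary circles with $R^{\pm}$ is explicit. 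Granting this, Lemma~\ref{orientations on sigma} (the orientations on $\ti \sigma$ coming from $\ti C_1$ and $\ti C_2$ are opposite) also gives $\ti \sigma_{\ti C_2}^{+}=R^{+}$ and $\ti \sigma_{\ti C_2}^{-}=R^{-}$.

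For the dynamics, recall from the first lemma of Section~\ref{sec: endpoints} that $\ti \omega(\ti f_1,\ti x)$ is a single point $P\in\sinfty$ and $\ti f_1^{\,n}(\ti x)\to P$; the hypothesis says $P\neq \ti \sigma_{\ti C_1}^{+}=R^{-}$. Then $K=\{\,\ti f_1^{\,n}(\ti x):n\ge 1\,\}\cup\{P\}$ is a compact subset of $H\cup\sinfty$ whose only point of $\sinfty$ is $P$, so $K$ misses $R^{-}$. A hyperbolic isometry of $H$, extended to the closed disk $H\cup\sinfty$, has north--south dynamics toward its two fixed points, so for every neighborhood $V$ of $R^{+}$ there is $N$ with $R^{n}(K)\subset V$ for all $n\ge N$; hence $\ti f_2^{\,n}(\ti x)=R^{n}(\ti f_1^{\,n}(\ti x))\in V$ for $n\ge N$, and therefore $\ti f_2^{\,n}(\ti x)\to R^{+}$. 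Thus $\ti \omega(\ti f_2,\ti x)=R^{+}=\ti \sigma_{\ti C_2}^{+}=\ti \sigma_{\ti C_1}^{-}$, which is the first assertion.

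Finally, the statement about $\alpha$ follows by the identical argument applied to $\ti f_i^{-1}$, which are the canonical lifts of $f^{-1}$ associated to $\ti C_i$ since they have the same fixed sets on $\sinfty$: from $\ti f_2^{-1}=R^{-1}\ti f_1^{-1}$ we get $\ti f_2^{-n}(\ti x)=R^{-n}\ti f_1^{-n}(\ti x)$, so if $\ti \alpha(\ti f_1,\ti x)\neq R^{+}=\ti \sigma_{\ti C_1}^{-}$, the north--south argument for $R^{-1}$, whose sink is $R^{-}$, gives $\ti \alpha(\ti f_2,\ti x)=R^{-}=\ti \sigma_{\ti C_2}^{-}=\ti \sigma_{\ti C_1}^{+}$. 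The main obstacle, as noted, is fixing the sign in the orientation bookkeeping; the dynamical core is routine.
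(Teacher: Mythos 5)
Your proof is correct and is essentially the paper's argument: the paper also writes $\ti f_2^{\,n}=T_{\ti\sigma}^{\,dn}\ti f_1^{\,n}$ (with $T_{\ti\sigma}$ oriented by $\ti C_2$ and $d>0$ the twisting degree), appeals to Lemma~\ref{orientations on sigma} for the sign convention, and concludes by the same north--south dynamics observation that $T_{\ti\sigma}^{\,dn}\ti f_1^{\,n}(\ti x)\to T_{\ti\sigma}^+$ whenever $\omega(\ti f_1,\ti x)\neq T_{\ti\sigma}^-$. The orientation bookkeeping you flag as the delicate point is exactly what the paper's convention for $\ti\sigma_{\ti C}$ is designed to make true, so your ``granting this'' is the same move the paper makes implicitly; otherwise your compactness phrasing of the north--south step is just a slightly more explicit version of the paper's one-line conclusion.
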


\proof Let $ T_{\ti \sigma}$ be the root free covering translation with axis $\ti \sigma$ and orientation induced by $\ti C_2$. Then  $\ti f_2^n = T_{\ti \sigma}^{dn} \ti f_1^n$ where   $d>0$ is the degree of Dehn twisting about $\cR$.    By hypothesis and by  Lemma~\ref{orientations on sigma},  $\omega (\ti f_1, \ti x) \ne   \ti \sigma_{\ti C_1}^+=  \ti \sigma_{\ti C_2}^- =T_{\ti \sigma}^{-} $.     Since  $\ti f_1^n(\ti x)$ converges to  $\omega(\ti f_1, \ti x)$ it follows that $T_{\ti \sigma}^{dn} \ti f_1^n(\ti x) \to T^+_{\ti \sigma}$.  This proves that $\omega (\ti f_2, \ti x) =  \ti \sigma_{\ti C_2}^+$.   
\endproof

\begin{lemma} \label{bounded distance}  There is a constant $D_1>0$ so that for all domains $\ti C$ and all $\ti x \in \tiM$ such that   $\dist(\ti x,\ti C) > D_1$,      at least one of $  \alpha(\ti f_{\ti C}, \ti x)$ and $  \omega(\ti f_{\ti C}, \ti x)$ is an endpoint of the component  $\ti \sigma$  of $\partial \ti C$ that is  closest to $\ti x$.
\end{lemma} 

\proof   Up to the action of covering translations there are   only finitely many elements of $\ti R$.  Thus, if the lemma is false  there exists a domain $\ti C$ and a frontier component $\ti \sigma$ of $\ti C$ and a sequence $\ti x_k \in \tiM$  such
  that
\begin{itemize}
\item  $\ti \sigma$ is the component of $\partial \ti C$ closest to $\ti x_k$.
\item Neither $\alpha(\ti f_{\ti C} , \ti x_k)$ nor $\omega(\ti f_{\ti C}, \ti x_k)$ is an endpoint of $\ti \sigma.$
\item $\dist(\ti x_k,\ti C) \to \infty.$
\end{itemize}

Consider the annular cover $A_\sigma$ and the induced map $f_\sigma :
A_\sigma \to A_\sigma$.  Let $\hat x_k$ be the image of $\ti x_k$ in
$A_\sigma$.  By the second item, $\alpha(\ti f_{\ti C} , \ti x_k),
\omega(\ti f_{\ti C}, \ti x_k)\in \Fix(\ti f_{\ti C}) \cap (\sinfty
\setminus T^\pm_{\ti \sigma})$; in particular $\alpha(\ti f_{\ti C} ,
\ti x_k)$ and $\omega(\ti f_{\ti C}, \ti x_k)$ belong to the same
component of $\sinfty \setminus T^\pm_{\ti \sigma}$ because
$\Fix(\hat f_{\ti C}) \cap \sinfty$ consists of  ends of $\ti C$ in $\sinfty$
and $\ti C$ lies on one side of $\sigma.$
It follows that $\alpha( f_\sigma, \hat x_k)$ and
$\omega( f_\sigma, \hat x_k)$ belong to the same component of
$\partial A_\sigma$.   From the first and third items we conclude
that every neighborhood of the other component of $\partial A_\sigma$
contains $\hat x_k$'s for all sufficiently large $k$ in contradiction
to Corollary~\ref{PB}.
\endproof



For $\ti C$ a domain and $D > 0$ we let $N_D(\ti C)$ be the set of points in $\tiM$ whose distance from $\ti C$ is less than or equal to $D$.

\begin{cor}  \label{detecting home domain} Suppose that $D_1$ is the constant of Lemma~\ref{bounded distance}, that $\ti C$ is a domain and that $\ti x \in \tiM$.    If neither  $  \alpha(\ti f_{\ti C}, \ti x)$ nor $  \omega(\ti f_{\ti C}, \ti x)$ is an endpoint of  a component     of $\partial \ti C$  then $\ti f_{\ti C}(\ti x) \in N_{D_1}(\ti C)$.
\end{cor}

\proof  This is an immediate consequence of  Lemma~\ref{bounded distance}.
\endproof

 \begin{cor} \label{canonical lift} For all $\ti x \in \tiM$ either 
 \begin{enumerate}
 \item There is a   domain $\ti C$ such that $ \omega(\ti f_{ \ti C}, \ti x)$ is not an endpoint of a  component of $\partial  \ti C$.
 \end{enumerate}
 or
 \begin{enumeratecontinue} \item There is a   component $\ti \sigma$ of $\ti \cR$ such that both $ \omega(\ti f_{\ti  C_1}, \ti x)$ and $\omega(\ti f_{\ti  C_2}, \ti x)$ are endpoints of $\ti \sigma$    where $\ti  C_1$ and $\ti  C_2$ are the two domains that contain $\ti \sigma$ in their boundaries.
 \end{enumeratecontinue}
 Morever,  if (1) is satisfied then $\ti C$ is unique and (2) is not satisfied and if (2) is satisfied then $\ti \sigma$ is unique and (1) is not satisfied.
  \end{cor}
  
  \begin{remark}   Suppose that $A$ is a closed  $F$-invariant annulus in $S^2$ such that $\Fix(F)$ is disjoint from the interior $\mrA$ of $A$  but intersects both components of $\partial A$.  If $F|_A$ is isotopic to the identity rel $\Fix(F|_A)$ then the core curve $\sigma$ of $A$ is not an element of $\cR$ and  item (1) of Corollary~\ref{canonical lift} is satisfied for each $\ti x \in \tiM$ that projects into $\mrA \subset M$.  In the remaining case, $F|_A$ is isotopic    rel $\Fix(F|_A)$ to a non-trivial Dehn twist,       $\sigma \in \cR$ and item (2) of Corollary~\ref{canonical lift} is satisfied for each such $\ti x $.    
\end{remark}

 \begin{remark} In case (2),  we expect (but have not proven) that $ \omega(\ti f_{\ti C_1}, \ti x)$ and
$\omega(\ti f_{\ti C_2}, \ti x)$ are distinct endpoints of $\ti
\sigma$.
\end{remark}
 
 \vspace{.1in}
\noindent {\em Proof of Corollary~\ref{canonical lift}} \ \ The moreover part of Corollary~\ref{canonical lift} follows from Lemma~\ref{moving on} and the obvious induction argument .   It therefore suffices to find $ \ti C$ satisfying (1) or $\ti \sigma$ satisfying (2).  
 
 Choose a domain $\ti C_1'$.   If $  \omega(\ti f _{\ti C_1'}, \ti x)$ is not an endpoint of a  component of $\partial  \ti C_1'$ we are done.  Otherwise,  $  \omega(\ti f _{\ti C_1'}, \ti x)$ is an endpoint of a component  $\ti \sigma_1$ of  $\partial \ti C_1$ and we let $\ti C_2'$ be the domain whose intersection with $\ti C_1'$  is   $\ti \sigma_1$.  If $\ti \omega(\ti   C_2', \ti x)$ is either not the endpoint  of a  component of $\partial  \ti C_2'$ or is an endpoint of $\ti \sigma_1$  we are done.  Otherwise,  let $\ti C_3'$ be the domain whose intersection with $\ti C_2'$  is the component  $\ti \sigma_2$ of $\partial \ti C'_2$ whose endpoint set   contains $  \omega(\ti f _{\ti C_2'}, \ti x)$.   Iterating this procedure we either reach the desired conclusion or produce distinct domains $\ti C_k'$  such that   $\omega(\ti f _{\ti C_k'}, \ti x)$  is an endpoint of  $\ti \sigma_{k} = \ti C_k' \cap \ti C'_{k+1}$.   By Lemma~\ref{bounded distance}, $\alpha(\ti f _{\ti C_k'}, \ti x)$ is an endpoint of  $\ti \sigma_{k-1}$ for all sufficiently large $k$.  
 
 Let $ f_k : A_k \to  A_k$ be the homeomorphism of the annular cover  determined by $\ti \sigma_k$, let $\ti f_k =  \ti f _{\ti C_k'}$ and  let $\partial_- A_k$ and $\partial_+ A_k$ be the components of $\partial A_k$ that contain points that lift into the closure of $\ti C'_k$ and $\ti C'_{k+1}$ respectively.  As usual, $\hat x \in A_k$ is the image of $\ti x \in \tiM$.  Then  $\alpha(f_k, \hat x) \in \partial_- A_k$ and $\omega(f_k, \hat x) \in \partial_+ A_k$.  The former follows from the fact that  $\alpha(\ti f _k, \ti x) )\in \Fix(\ti f _k) \cap (\sinfty \setminus T^\pm_{\ti \sigma_k})$ and the latter from the fact that  $\omega(\ti f _{k+1} , \ti x) )\in \Fix(\ti f_{k+1}) \cap (\sinfty \setminus T^\pm_{\ti \sigma_k})$. 
 
     Choose $j < l$ so that $\ti \sigma_j$ and $\ti \sigma_l$ project to the same element   $\sigma \in \cR$ but  $\ti \sigma_k$  projects to a different element of $\cR$ for all $j < k < l$.   
 Choose an   arc $\ti \tau \subset \tiM$ with one endpoint on $\ti \sigma_j$,    the other on $\ti \sigma_l$ and with interior disjoint from $\ti \sigma_j \cup \ti \sigma_l$.  Then $\ti \tau$ projects to a path $\tau \subset M$ with endpoints in $\sigma$ and with interior disjoint from $\sigma$.  Since $\sigma$ disconnects $S^2$, both ends  of $\tau$ belong to the same component  $X$ of $S^2 \setminus \cR$.  Let $Y \ne X$ be the other component of  $S^2 \setminus \cR$ that contains $\sigma$ in its closure.  The interiors of the domains $\ti C_{j+1}$ and $\ti C_l$ both project to $X$ and the interiors of $\ti C_{j }$ and $\ti C_{l+1}$ both project to   $Y$.  A covering translation $T$  satisfying $T(\sigma_j) = \sigma_l$ also satisfies $T(\ti C_{j+1}) = \ti C_{l}$ and  $T(\ti C_{j}) = \ti C_{l+1}$.  It follows that    $T \ti f_{j+1}T^{-1} = \ti f_{l}$ and $T \ti f_{j} T^{-1} = \ti f_{l+1}$.  Letting $\ti y = T(\ti x)$, we have 
 $\omega(\ti f_l , \ti y)  = T\omega(\ti f_{j+1},\ti x) )\in \Fix(\ti f_l) \cap (\sinfty \setminus T^\pm_{\ti \sigma_l})$ and 
 $\alpha(\ti f_{l+1} , \ti y)  = T\alpha(\ti f_{j},\ti x) )\in \Fix(\ti f_{l+1}) \cap (\sinfty \setminus T^\pm_{\ti \sigma_l})$.  Thus  $\omega(f_l, \hat y) \in \partial_- A_l$ and $\alpha(f_k, \hat x) \in \partial_+ A_l$, which contradicts Corollary~\ref{consistent crossing} and the fact that $\alpha(f_l, \hat x) \in \partial_- A_l$ and $\omega(f_l, \hat x) \in \partial_+ A_l$.  
 
  The process therefore terminates after finitely many steps.   \qed

\begin{defn} If Corollary~\ref{canonical lift} (1) is satisfied then we say that  $\ti C$ is the {\em $\omega$-domain for $\ti x$} and  $\ti f_{\ti C}$ is the {\em $\omega$-lift for $\ti x$}.     Otherwise, Corollary~\ref{canonical lift} (2) is satisfied and  we say that $\ti C_1$ and $\ti C_2$ are the {\em $\omega$-domains for $\ti x$} and 
 $\ti f_{\ti C_1}$ and $\ti f_{\ti C_2}$ are the {\em $\omega$-lifts for $\ti x$}. 
 \end{defn}

\begin{cor}   \label{stays close}  Let $D_1$ be the constant of Lemma~\ref{bounded distance}.
\begin{enumerate}
\item   If   $ \ti C$ is the unique $\omega$-domain for $\ti x$ then   $\ti f_{\ti C}^n(\ti x) \in N_{D_1}(\ti C)$ for all sufficiently  large $n$. 
\item   If     $\ti C_1$ and $\ti  C_2$  are $\omega$-domains for   $\ti x$ with intersection $\ti \sigma \in \ti \cR$ then $\ti f_{\ti C_i}^n(\ti x) \in N_{D_1}(\ti C_1 \cup \ti C_2)$ for $i=1,2$ and all sufficiently  large $n$. 
\end{enumerate}
\end{cor}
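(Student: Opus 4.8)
The plan is to prove both parts by contradiction, the main tool being Lemma~\ref{bounded distance} together with the fact, established earlier in this section, that $\omega$-limit sets of lift-orbits are single points of $\sinfty$ lying in $\Fix(\ti f_{\ti C}|_{\sinfty}) = \overline{\ti C}\cap\sinfty$. I will also repeatedly use the elementary consequence of discreteness of $\pi_1(M)$ that a point of $\sinfty$ is an endpoint of at most one component of $\ti\cR$ (two hyperbolic covering translations with a common fixed point share their axis); this lets the nearest/separating boundary component along a subsequence be taken constant. Throughout, note that if a point lies outside $N_{D_1}(\ti C)$ then the component of $\partial\ti C$ nearest it is the one separating it from $\ti C$, and the distance to $\ti C$ is realized there.

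\emph{Part (1).} Suppose $\ti y_n := \ti f_{\ti C}^n(\ti x)\notin N_{D_1}(\ti C)$ for infinitely many $n$. Since $\ti y_n$ lies on the $\ti f_{\ti C}$-orbit of $\ti x$ it has the same $\alpha$- and $\omega$-limits. Let $\ti\sigma_n\subset\partial\ti C$ be the component separating $\ti y_n$ from $\ti C$; then $\dist(\ti y_n,\ti\sigma_n)=\dist(\ti y_n,\ti C)>D_1$, so by Lemma~\ref{bounded distance} one of $\alpha(\ti f_{\ti C},\ti x)$, $\omega(\ti f_{\ti C},\ti x)$ is an endpoint of $\ti\sigma_n$. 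The second is excluded because $\ti C$ is the $\omega$-domain for $\ti x$, so $\ti\sigma_n=\ti\sigma$ is the unique component of $\partial\ti C$ having $\alpha(\ti f_{\ti C},\ti x)$ as an endpoint, independent of $n$. But then $\ti y_n$ lies in the closed region $R\subset H$ cut off by $\ti\sigma$ on the side away from $\ti C$, while $\ti y_n\to\omega(\ti f_{\ti C},\ti x)\in\overline{\ti C}\cap\sinfty$. Since $\overline R\cap\sinfty$ is the far-side closed boundary arc of $\ti\sigma$, and this meets $\overline{\ti C}\cap\sinfty$ only in the two endpoints of $\ti\sigma$, the point $\omega(\ti f_{\ti C},\ti x)$ must be an endpoint of $\ti\sigma\subset\partial\ti C$ — contradicting the definition of $\omega$-domain.

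\emph{Part (2).} Put $\ti\sigma=\ti C_1\cap\ti C_2$ and $T=T_{\ti\sigma}$; recall from Lemma~\ref{moving on} and its proof that $\ti f_{\ti C_2}^n=T^{dn}\ti f_{\ti C_1}^n$ for a fixed $d\neq0$, and from Corollary~\ref{canonical lift} that $\omega(\ti f_{\ti C_1},\ti x)$ and $\omega(\ti f_{\ti C_2},\ti x)$ are the two endpoints of $\ti\sigma$ and hence are fixed by $T$. Fix $i=1$ (the case $i=2$ is symmetric) and suppose $\ti y_n:=\ti f_{\ti C_1}^n(\ti x)\notin N_{D_1}(\ti C_1\cup\ti C_2)$ for infinitely many $n$. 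Running the argument of Part (1) for $\ti C_1$, after passing to a subsequence so the component of $\partial\ti C_1$ nearest $\ti y_n$ is a constant $\ti\sigma'$, forces $\ti\sigma'$ to have $\omega(\ti f_{\ti C_1},\ti x)$ as an endpoint and hence $\ti\sigma'=\ti\sigma$; so $\ti\sigma$ separates $\ti y_n$ from $\ti C_1$, meaning $\ti y_n$ lies on the $\ti C_2$-side of $\ti\sigma$, and since $\ti y_n\notin N_{D_1}(\ti C_1\cup\ti C_2)$ it lies outside $\ti C_2$ as well. Let $\ti\tau_n\neq\ti\sigma$ be the component of $\partial\ti C_2$ separating $\ti y_n$ from $\ti C_2$, so $\dist(\ti y_n,\ti\tau_n)>D_1$. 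Because $\ti f_{\ti C_2}$ commutes with $T$ and $\ti y_n=T^{-dn}\ti f_{\ti C_2}^n(\ti x)$, the $\ti f_{\ti C_2}$-orbit of $\ti y_n$ is the $T^{-dn}$-translate of that of $\ti x$, so $\omega(\ti f_{\ti C_2},\ti y_n)=T^{-dn}(\omega(\ti f_{\ti C_2},\ti x))=\omega(\ti f_{\ti C_2},\ti x)$ — an endpoint of $\ti\sigma$, hence not of $\ti\tau_n$. Lemma~\ref{bounded distance} then forces $\alpha(\ti f_{\ti C_2},\ti y_n)=T^{-dn}(\alpha(\ti f_{\ti C_2},\ti x))$ to be an endpoint of $\ti\tau_n$. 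As $T\in\Stab(\ti C_2)$ permutes the components of $\partial\ti C_2$, the component $T^{dn}(\ti\tau_n)$ has $\alpha(\ti f_{\ti C_2},\ti x)$ as an endpoint, so it is a fixed component $\ti\rho$ and $\ti\tau_n=T^{-dn}(\ti\rho)$. Applying the isometry $T^{dn}$ to ``$\ti y_n$ lies at distance $>D_1$ from $\ti\tau_n$ on the side away from $\ti C_2$'' shows that $\ti f_{\ti C_2}^n(\ti x)=T^{dn}(\ti y_n)$ lies at distance $>D_1$ from $\ti\rho$ in the fixed closed region $R_0$ on the side of $\ti\rho$ away from $\ti C_2$. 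But $\ti f_{\ti C_2}^n(\ti x)\to\omega(\ti f_{\ti C_2},\ti x)$, which lies in $\overline{\ti C_2}\cap\sinfty$ on the $\ti C_2$-side of $\ti\rho$ and, since $\ti\rho\neq\ti\sigma$, is not an endpoint of $\ti\rho$; hence it is not in $\overline{R_0}$, the desired contradiction.

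The step I expect to be the main obstacle is the bookkeeping in Part (2): one must track simultaneously that the $\ti f_{\ti C_1}$- and $\ti f_{\ti C_2}$-orbits of $\ti x$ differ by the linearly growing deck translation $T^{dn}$, that $T$ fixes the endpoints of $\ti\sigma$ (so $\omega$-limits under $\ti f_{\ti C_2}$ are insensitive to such translation), and that $T^{dn}$ carries the escaping orbit $\ti y_n$ back onto a single fixed region $R_0$ to which the ``closed arc on $\sinfty$'' argument of Part (1) applies. Everything else reduces to that same planar-separation argument.
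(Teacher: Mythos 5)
Your proof is correct and takes essentially the same approach as the paper's: argue by contradiction, apply Lemma~\ref{bounded distance} to force the nearest boundary component of the relevant domain to have $\alpha$ or $\omega$ as an endpoint, and in Part (2) exploit the fact that $\ti f_{\ti C_1}^n$ and $\ti f_{\ti C_2}^n$ differ by $T_{\ti\sigma}^{dn}$, which preserves $\ti C_1$, $\ti C_2$ and $\ti \sigma$. You spell out two steps the paper leaves implicit --- that the nearest boundary component eventually stabilizes (via the observation that a point of $\sinfty$ is an endpoint of at most one component of $\ti\cR$) and the final closed-arc-at-infinity argument that produces the contradiction --- but the substance and the chain of lemmas used are the same.
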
 

\proof  If  $ \ti C$ is the unique $\omega$-domain for $\ti x$ and (1) fails then there exist  arbitrarily large $n$  such that  $\ti f_{\ti C}^n(\ti x) \not \in N_{D_1}(\ti C)$.   The component $\ti \sigma_n$ of $\partial \ti C$ that is closest to  $\ti f_{\ti C}^n(\ti x)$ takes on infinitely many values as $n \to \infty$.   By restricting to large $n$, we may  assume that $\alpha(\ti f_{\ti C}, \ti x)$ is not an endpoint of $\ti \sigma_n$.  By hypothesis,  $\omega(\ti f_{\ti C}, \ti x)$ is not an endpoint of $\ti \sigma_n$. This contradiction to   Lemma~\ref{bounded distance}  completes the proof of (1).

Suppose now that  (2) fails.   Since $\ti f_{\ti C_1}$ and $\ti f_{\ti C_2}$ differ by an iterate of $T_{\ti \sigma}$ and since  $T_{\ti \sigma}$ preserves both $\ti C_1$ and $\ti C_2$,  it follows that $\ti f_{\ti C_1}^n(\ti x) \not \in N_{D_1}(\ti C_1 \cup \ti C_2)$ if and only if  $\ti f_{\ti C_2}^n(\ti x) \not \in N_{D_1}(\ti C_1 \cup \ti C_2)$.   We may therefore assume that   there exist  arbitrarily large $n$  such that    $\ti f_{\ti C_1}^n(\ti x) \not \in N_{D_1}(\ti C_1 \cup \ti C_2)$ and such that the component $\ti \sigma_n$ of $\partial \ti C$ that is closest to  $\ti f_{\ti C_1}^n(\ti x)$ is not $\ti \sigma$.  Since $\ti f_{\ti C_1}^n(\ti x)$ converges to an endpoint of $\ti \sigma$, $\ti \sigma_n$  takes on infinitely many values as $n \to \infty$.   By restricting to large $n$, we may  assume that $\alpha(\ti f_{\ti C_1}, \ti x)$ is not an endpoint of $\ti \sigma_n$.  This contradicts Lemma~\ref{bounded distance} and the assumption that $\omega(\ti f_{\ti C_1}, \ti x)$ is   an endpoint of $\ti \sigma \ne \ti \sigma_n$.   
\endproof




We record the following observation for easy reference.

\begin{lemma} \label{omega condition}  If $\ti f_{\ti C}^{k_i}(\ti x) \in N_D(\ti C)$  for some $D > 0$ and some $k_i \to \infty$  then $\ti C$ is an $\omega$-domain for $\ti x$.
\end{lemma}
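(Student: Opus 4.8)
The plan is to prove the contrapositive: if $\ti C$ is \emph{not} an $\omega$-domain for $\ti x$, then $\dist(\ti f_{\ti C}^n(\ti x),\ti C)\to\infty$, so the forward $\ti f_{\ti C}$-orbit of $\ti x$ cannot eventually lie in any $N_D(\ti C)$. Assume $\ti C$ is not an $\omega$-domain. By Corollary~\ref{canonical lift} there is a domain $\ti C_\omega\neq\ti C$ that is an $\omega$-domain for $\ti x$, and $\omega(\ti f_{\ti C_\omega},\ti x)$ is an ideal point of $\ti C_\omega$ that is either not an endpoint of any component of $\partial\ti C_\omega$ (case (1)) or is an endpoint of the component $\ti\sigma'\subset\ti\cR$ of $\partial\ti C_\omega$ pointing toward the other $\omega$-domain (case (2), where I take $\ti C_\omega$ to be the one of the two not separated from $\ti C$ by the other). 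Working in the dual tree of $\tiM$ (vertices the domains, an edge for each component of $\ti\cR$), let $\ti C=\ti D_0,\ti D_1,\dots,\ti D_m=\ti C_\omega$ ($m\ge1$) be the non-backtracking edge path from $\ti C$ to $\ti C_\omega$, and set $\ti\sigma_j=\ti D_{j-1}\cap\ti D_j\subset\ti\cR$. I will use throughout that distinct boundary geodesics of a common domain are never asymptotic (they are axes of hyperbolic covering translations, and in a discrete group two such axes sharing one endpoint share both); in particular $\ti\sigma_1$ shares no endpoint with $\ti\sigma_2$, nor with $\ti\sigma'$.

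The first step is to locate $\omega(\ti f_{\ti D_1},\ti x)$: I claim it lies in the open arc of $\sinfty$ that $\ti\sigma_1$ cuts off on the side of $\ti D_1$, and in particular is not an endpoint of $\ti\sigma_1$. If $m=1$ this is immediate: $\omega(\ti f_{\ti C_\omega},\ti x)$ is an ideal point of $\ti C_\omega$, hence lies in the $\ti D_1$-side arc, and it is not an endpoint of $\ti\sigma_1$ (it is either not an endpoint of $\partial\ti C_\omega$ at all, or is an endpoint of $\ti\sigma'\neq\ti\sigma_1$, and $\ti\sigma',\ti\sigma_1$ share no endpoint). If $m\ge2$ I will induct downward from $\ti D_m$ using Lemma~\ref{moving on}: $\omega(\ti f_{\ti D_m},\ti x)$ is not an endpoint of $\ti\sigma_m$ (as in the $m=1$ case with $\ti\sigma_m$ in place of $\ti\sigma_1$), so it is $\neq(\ti\sigma_m)^+_{\ti D_m}$, and Lemma~\ref{moving on} gives $\omega(\ti f_{\ti D_{m-1}},\ti x)=(\ti\sigma_m)^+_{\ti D_{m-1}}$, an endpoint of $\ti\sigma_m$. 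Since $\ti\sigma_{m-1}\neq\ti\sigma_m$ are boundary components of $\ti D_{m-1}$, this point is not an endpoint of $\ti\sigma_{m-1}$, so the induction feeds itself and yields $\omega(\ti f_{\ti D_j},\ti x)=(\ti\sigma_{j+1})^+_{\ti D_j}$ for all $j$. Taking $j=1$, $\omega(\ti f_{\ti D_1},\ti x)$ is an endpoint of $\ti\sigma_2\subset\partial\ti D_1$, and since $\ti\sigma_2$ lies on the $\ti D_1$ side of $\ti\sigma_1$ and shares no endpoint with it, that endpoint lies in the open $\ti D_1$-side arc of $\ti\sigma_1$, as claimed.

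The second step converts this into the distance estimate. Let $S$ be the root-free covering translation with axis $\ti\sigma_1$ oriented by $\ti C$. Since $\ti f_{\ti C}$ and $\ti f_{\ti D_1}$ both fix the endpoints of $\ti\sigma_1$ they commute with $S$ (Lemma~\ref{basic lemma 2}), and the computation in the proof of Lemma~\ref{moving on} gives $\ti f_{\ti C}^n=S^{dn}\ti f_{\ti D_1}^n$ for a fixed $d\ge1$. Because $S$ preserves $\ti\sigma_1$ and each of its complementary half-planes, $\dist(\ti f_{\ti C}^n(\ti x),\ti\sigma_1)=\dist(\ti f_{\ti D_1}^n(\ti x),\ti\sigma_1)$ and $\ti f_{\ti C}^n(\ti x)$ lies on the same side of $\ti\sigma_1$ as $\ti f_{\ti D_1}^n(\ti x)$. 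By Step one, $\ti f_{\ti D_1}^n(\ti x)\to\omega(\ti f_{\ti D_1},\ti x)$, a point of $\sinfty$ in the open $\ti D_1$-side arc of $\ti\sigma_1$; hence for large $n$ it is strictly on the $\ti D_1$ side with distance to $\ti\sigma_1$ tending to $\infty$, and therefore so is $\ti f_{\ti C}^n(\ti x)$. As $\ti C$ is contained in the closed half-plane bounded by $\ti\sigma_1$ on its own side, $\dist(\ti f_{\ti C}^n(\ti x),\ti C)\ge\dist(\ti f_{\ti C}^n(\ti x),\ti\sigma_1)\to\infty$, contradicting the hypothesis that the orbit eventually lies in $N_D(\ti C)$.

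I expect the fiddly part to be Step one: keeping the orientation conventions $(\ti\sigma_j)^{\pm}_{\ti D}$ and the sign built into Lemma~\ref{moving on} consistent while inducting along the tree, and correctly separating the two forms of the conclusion of Corollary~\ref{canonical lift} together with the degenerate $m=1$ case. Step two is a routine hyperbolic-geometry estimate once the target $\omega(\ti f_{\ti D_1},\ti x)$ has been placed off $\ti\sigma_1$; the only auxiliary point to record carefully there is that $\ti f_{\ti C}^n=S^{dn}\ti f_{\ti D_1}^n$ with $S$ fixing $\ti\sigma_1$, which is exactly the relation used in Lemma~\ref{moving on}.
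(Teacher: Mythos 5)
Your argument is correct, but it takes a genuinely different route from the paper's, which is considerably shorter and more direct. The paper does not pass to the contrapositive or invoke the dual tree at all. Instead it observes that the lemma reduces to a single local step: if $\omega(\ti f_{\ti C},\ti x)$ is an endpoint of some $\ti\sigma\in\ti\cR\cap\partial\ti C$ and $\ti C'$ is the neighboring domain across $\ti\sigma$, then one must show $\omega(\ti f_{\ti C'},\ti x)$ is \emph{also} an endpoint of $\ti\sigma$, which places $\ti x$ in case~(2) of Corollary~\ref{canonical lift} with $\{\ti C,\ti C'\}$ the $\omega$-domains. The paper gets this almost for free: $T_{\ti\sigma}$ preserves $N_D(\ti C)$, and since $\ti f_{\ti C'}^k$ and $\ti f_{\ti C}^k$ differ by an iterate of $T_{\ti\sigma}$, the hypothesis $\ti f_{\ti C}^k(\ti x)\in N_D(\ti C)$ immediately yields $\ti f_{\ti C'}^k(\ti x)\in N_D(\ti C)$ for large $k$; hence $\omega(\ti f_{\ti C'},\ti x)$ lies in $\cl(\ti C)\cap\cl(\ti C')\cap\sinfty=\{\ti\sigma^{\pm}\}$, and one is done. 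Your proof instead establishes the stronger-looking statement that whenever $\ti C$ is not an $\omega$-domain the orbit of $\ti f_{\ti C}$ actually escapes every bounded neighborhood of $\ti C$; this requires the downward induction along the tree geodesic using Lemma~\ref{moving on} to pin down $\omega(\ti f_{\ti D_1},\ti x)$ and then a hyperbolic distance estimate exploiting the conjugating power of $T_{\ti\sigma_1}$. Both arguments rely on the same pair of tools --- Corollary~\ref{canonical lift} and the commutation relation $\ti f_{\ti C}^n = T_{\ti\sigma}^{dn}\ti f_{\ti C'}^n$ from Lemma~\ref{moving on} --- but the paper applies them once across one edge, while you propagate them along a whole path. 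Your route is sound and yields a more vivid geometric picture (the orbit genuinely drifts to infinity on the wrong side of $\ti\sigma_1$), at the cost of substantially more bookkeeping (orientation conventions, the non-backtracking condition, the ``distinct boundary geodesics share no endpoint'' fact, and two subcases of Corollary~\ref{canonical lift}). It is worth noticing that the paper's proof only needs the observation that $N_D(\ti C)$ is $T_{\ti\sigma}$-invariant, which makes the reduction to one edge essentially automatic and renders the tree induction unnecessary.
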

 
\proof It suffices to show that if $\omega(\ti f_{\ti C},\ti x)$ is an
endpoint of $\ti \sigma \in \ti \cR$ and $C'$ is the other domain
whose frontier contains $\ti \sigma$ then $\omega(\ti f_{\ti C'},\ti
x)$ is an endpoint of $\ti \sigma$.  The covering translation $T_{\ti
  \sigma}$ preserves $N_D(\ti C)$.  Since $\ti f_{\ti C'}^{k_i}$ and
$\ti f_{\ti C}^{k_i}$ differ by an iterate of $T_{\ti \sigma}$, it
follows that $\ti f_{\ti C'}^{k_i}(\ti x) \in N_D(\ti C)$ and hence
that $\omega(\ti f_{\ti C'},\ti x)$ lies in the Cantor set of
ends of $\ti C$ and in the ends of $\ti C'$.  Since the ends of $\it \sigma$
are the only points in the intersection of these Cantor sets, 
$\omega(\ti f_{\ti C'},\ti x)$ is an  endpoint of $\ti \sigma$.
\endproof

\section{Domain Covers} \label{sec:domain covers}


Let $\ti C$ be a domain and let $C$ be its image in $S$.    Recall that  $\Stab(\ti C)$ is the subgroup of covering
translations that preserve $\ti C$ and  that elements of 
$\Stab(\ti C)$ commute with $\ti f_{\ti C}$.  We cannot
restrict $f$ to $C$ because $C$ is not $f$-invariant and we can not
replace $f$ by an isotopic map that preserves $C$ because we might
lose the entropy zero property.  Instead we lift to the $\pi_1(C)$
cover $\bar C$ of $S$.  More precisely we make the following definitions.

\begin{defns} \label{bar construction}
Define $  \bar C$ to be the quotient space
of $ \tiM$ by the action of $\Stab(\ti C)$ and  $ \bar f_C : \bar C
\to \bar C$ to be the homeomorphism induced by $\ti f_{\ti C}$.   
Up to conjugacy,   $ \bar f_C : \bar C \to \bar C$ is independent of
the choice of lift $\ti C$ of $C$.   Define $\bar C_{\core} \subset \bar C$ to be the quotient space
of $\ti C \subset \tiM$ by the action of $\Stab(\ti C)$. 
\end{defns}

\begin{snotn}  Our convention will be that if
$\ti x \in \ti C$ then its image in $M$ is $x$ and its image in $\bar
C$ is $\bar x$.  
\end{snotn}

Note that $\bar C_{\core}$ is homeomorphic to $C$ and that if $\cR \ne \emptyset$ then  (topologically) $\bar C$ is obtained from $ \bar C_{\core}$ by adding collar neighborhoods to  each component of $\partial \bar C_{\core}$.  
Note also  that $\bar f_C$ is isotopic to the identity.  
\vspace{.1in}

If $\ti C$ is both an $\alpha$-domain and an $\omega$-domain for $\ti x$ then we say that $\ti C$ is a {\em home domain} for $\ti x$.   Denote the set of birecurrent points for $f$ and $\bar f_C$ by $\B (f)$ and $\B(\bar f_C)$ respectively.  Denote  the  full pre-image in $\tiM$  of $\B(f)$ by  $\ti \B(f)$.
The following proposition, whose proof is delayed until  the end of the section, is the main result of this section.

 \begin{prop}   \label{home lift}   If  $\ti C$ is an $\omega$-domain for $\ti x \in  \ti \B(f)$  then  $\bar x \in \B(\bar f_C)$ and $\ti C$ is a home  domain for $\ti x$.  Moreover if $\ti \omega(\ti f_{\ti C}, \ti x)$ is an endpoint of $\ti \sigma \in \ti \cR$ then $\ti \alpha(\ti f_{\ti C}, \ti x)$ is also an endpoint of $\ti \sigma$.
\end{prop}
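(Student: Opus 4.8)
The plan is to exploit the dichotomy from Corollary~\ref{canonical lift}: either $\ti C$ is the unique $\omega$-domain for $\ti x$, or $\omega(\ti f_{\ti C}, \ti x)$ is an endpoint of some $\ti \sigma \in \ti \cR$ and the two domains $\ti C_1, \ti C_2$ abutting $\ti \sigma$ are both $\omega$-domains. Since $\ti x \in \ti \B(f)$, the point $x$ is birecurrent for $f$, so there is a sequence $n_k \to +\infty$ with $f^{n_k}(x) \to x$ and a sequence $m_k \to +\infty$ with $f^{-m_k}(x) \to x$. Lifting, for each $k$ there are covering translations $S_k$ (resp. $S_k'$) with $\ti f^{n_k}(\ti x)$ close to $S_k(\ti x)$ (resp. $\ti f^{-m_k}(\ti x)$ close to $S_k'(\ti x)$), where $\ti f = \ti f_{\ti C}$. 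The first task is to show these $S_k, S_k'$ can be taken to lie in $\Stab(\ti C)$; this is where Corollary~\ref{stays close} does the work — it pins the forward orbit of $\ti x$ under $\ti f_{\ti C}$ into the bounded neighborhood $N_{D_1}(\ti C)$ (or $N_{D_1}(\ti C_1 \cup \ti C_2)$) for all large $n$, so the recurrence of $x$ forces the returning translate $S_k(\ti x)$ to land back near $\ti C$, and $\ti C$ being a domain with $\Stab(\ti C)$ acting cocompactly on it means $S_k$ must move $\ti C$ to something overlapping $N_{D_1}(\ti C)$; there are only finitely many cosets of $\Stab(\ti C)$ doing that, and an argument using that the $S_k$ must actually fix $\ti C$ (because otherwise $\ti f_{\ti C}^{n_k}(\ti x)$ would have to have left $N_{D_1}(\ti C)$) forces $S_k \in \Stab(\ti C)$.

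Once the recurrence happens within $\Stab(\ti C)$, it descends: projecting to $\bar C = \ti C / \!\sim$ under the $\Stab(\ti C)$-action, the relation $\ti f_{\ti C}^{n_k}(\ti x) \approx S_k(\ti x)$ with $S_k \in \Stab(\ti C)$ becomes $\bar f_C^{n_k}(\bar x) \to \bar x$ in $\bar C$, and similarly $\bar f_C^{-m_k}(\bar x) \to \bar x$; hence $\bar x \in \B(\bar f_C)$. This also shows $\ti C$ is an $\alpha$-domain for $\ti x$: the backward orbit $\ti f_{\ti C}^{-m}(\ti x)$ stays in a bounded neighborhood of $\ti C$ by the same recurrence-within-$\Stab(\ti C)$ argument, so by Lemma~\ref{omega condition} applied to $\ti f_{\ti C}^{-1}$ (i.e. the $\alpha$-analogue of that lemma, which holds by symmetry) $\ti C$ is an $\alpha$-domain; thus $\ti C$ is a home domain for $\ti x$.

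For the last sentence: suppose $\ti \omega(\ti f_{\ti C}, \ti x) = \ti \sigma^+$ is an endpoint of $\ti \sigma \in \ti \cR$. Let $\ti C'$ be the other domain containing $\ti \sigma$ in its frontier. By Corollary~\ref{canonical lift}(2), $\ti C'$ is also an $\omega$-domain for $\ti x$, with $\omega(\ti f_{\ti C'}, \ti x) = \ti \sigma^-$. Running the home-domain argument above with $\ti C'$ in place of $\ti C$ shows $\ti C'$ is a home domain for $\ti x$, so $\alpha(\ti f_{\ti C'}, \ti x)$ is an endpoint of a component of $\partial \ti C'$. But the backward $\ti f_{\ti C'}$-orbit of $\ti x$ is confined to $N_{D_1}(\ti C \cup \ti C')$ by Corollary~\ref{stays close}(2), and since $\ti f_{\ti C}$ and $\ti f_{\ti C'}$ differ by an iterate of $T_{\ti \sigma}$ (which preserves both $\ti C$ and $\ti C'$), the backward $\ti f_{\ti C}$-orbit is likewise confined; then Lemma~\ref{bounded distance} forces $\alpha(\ti f_{\ti C}, \ti x)$ to be an endpoint of $\ti \sigma$, because $\ti \sigma$ is the frontier component of $\ti C$ nearest the backward orbit and the other candidate endpoints lie too far away. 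Combining with $\alpha(\ti f_{\ti C'},\ti x) \in \partial \ti C'$ and Lemma~\ref{crossing arc}/Lemma~\ref{moving on} rules out $\alpha(\ti f_{\ti C},\ti x) = \ti \sigma^+$ if there is genuine twisting, leaving $\alpha(\ti f_{\ti C}, \ti x) = \ti \sigma^-$, which is the claim.

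\medskip

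\noindent\emph{Main obstacle.} The delicate point is the first step: showing that the birecurrence of $x$ lifts to birecurrence of $\ti x$ under $\ti f_{\ti C}$ \emph{within} $\Stab(\ti C)$ — i.e. that the returning covering translations lie in $\Stab(\ti C)$ and not merely in the full deck group. This requires carefully combining the ``orbit stays near $\ti C$'' conclusions of Corollary~\ref{stays close} with the fact that a domain $\ti C$ has $\Stab(\ti C)$ acting cocompactly (so bounded neighborhoods of $\ti C$ are ``thin'' transverse to $\ti C$), and handling the two-domain case where the confinement is only to $N_{D_1}(\ti C_1 \cup \ti C_2)$; one must check the ambiguity between $\ti C_1$ and $\ti C_2$ does not spoil the descent, which is exactly why the auxiliary ``$\ti \alpha$ is also an endpoint of $\ti \sigma$'' conclusion is needed and proved simultaneously.
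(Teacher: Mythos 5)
Your high-level scaffolding matches the paper's: you correctly invoke the dichotomy of Corollary~\ref{canonical lift}, Corollary~\ref{stays close} to confine the forward $\ti f_{\ti C}$-orbit to $N_{D_1}(\ti C)$ (or $N_{D_1}(\ti C_1 \cup \ti C_2)$), and Lemma~\ref{omega condition} applied to $\ti f_{\ti C}^{-1}$ to upgrade from $\omega$-domain to home domain. You also correctly identify the delicate point as the descent of birecurrence to $\bar C$. But the argument you give for that descent has a genuine gap.

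The claim that the returning covering translations $S_k$ ``must actually fix $\ti C$ (because otherwise $\ti f_{\ti C}^{n_k}(\ti x)$ would have to have left $N_{D_1}(\ti C)$)'' is not correct. The orbit staying in $N_{D_1}(\ti C)$ is entirely compatible with $S_k \notin \Stab(\ti C)$: it only forces $S_k(\ti C)$ to meet $N_{D_1}(\ti C)$, which constrains $S_k$ to lie in one of \emph{finitely many} right cosets of $\Stab(\ti C)$, not in $\Stab(\ti C)$ itself. The paper does not claim, and does not need, that the original returning translations lie in $\Stab(\ti C)$. Instead it proves that the set of lifts of the free disk $U$ visited by the forward orbit $\{\ti f_{\ti C}^k(\ti x): k \ge 0\}$ is finite up to the $\Stab(\ti C)$-action, and then invokes Lemma~\ref{recurrence condition} (which you never cite), whose proof is a pigeonhole argument among the finitely many lifts of $U$ in $\bar C$ that the orbit visits. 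That lemma produces new returning times for $\bar x$ in $\bar C$; it does not assert the original $S_k$ are in $\Stab(\ti C)$.

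The two-domain case is where the gap is most acute. When $\omega(\ti f_{\ti C},\ti x)$ is an endpoint of $\ti \sigma \in \ti \cR$, the confinement is only to $N_{D_1}(\ti C_1 \cup \ti C_2)$, which is \emph{not} a finite union of $\Stab(\ti C_2)$-orbits of compact sets, so the finiteness-of-cosets argument does not go through directly for $\Stab(\ti C_2)$. The paper's resolution is the near-cycle mechanism: Lemma~\ref{near cycle} forces every $\ti f_1$-near cycle in $\Stab(\ti C_1)$ for a point in the orbit to be an iterate of $T_{\ti \sigma}$, and this is used to show that two $\ti f_1$-near cycles in the same right coset of $\Stab(\ti C_1)$ also lie in the same right coset of $\Stab(\ti C_2)$. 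That coset-transfer, plus Remark~\ref{cosets}, is what makes Lemma~\ref{recurrence condition} applicable to $\bar f_{C_2}$. You flag this as ``one must check the ambiguity ... does not spoil the descent,'' but that check is the heart of the proof, and you do not supply it. Similarly, your concluding argument that $\alpha(\ti f_{\ti C},\ti x)$ is an endpoint of $\ti \sigma$ ultimately rests on the same near-cycle fact (near cycles of $\ti f^{-1}$ give $\alpha$ information, and their inverses are near cycles of $\ti f$ for other orbit points, hence iterates of $T_{\ti \sigma}$), rather than on a distance argument via Lemma~\ref{bounded distance}.
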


As an immediate corollary we have

\begin{cor} \label{pretracking} For each  $\ti x \in \ti B(f)$ one of the following is satisfied.
\begin{enumerate}
\item There is a unique home domain $\ti C$ for $\ti x$; neither $\ti \alpha(\ti f_{\ti C}, \ti x)$  nor $\ti \omega(\ti f_{\ti C}, \ti x)$ is the endpoint of a component of $\partial \ti C$.
\item There are two home domains $\ti C_1$ and $\ti C_2$ for $\ti x$.  The intersection $\ti C_1 \cap \ti C_2$ is a component $\ti \sigma$ of $\ti \cR$ and for $i=1,2$, both $\ti \alpha(\ti f_{\ti C_i}, \ti x)$  and $\ti \omega(\ti f_{\ti C_i}, \ti x)$ are endpoints of $\ti \sigma$.
\end{enumerate}
\end{cor}

The following definition is   key to  the proof of Proposition~\ref{home lift}.

 \begin{defn}  \label{defn:near cycle} A covering translation $T : \tiM \to \tiM$ is a {\em near-cycle of period $m$ for $\ti x \in \tiM$ with respect to $\ti f_{\ti C}$} if there is    a free disk $U$ for $f$   and a lift $\ti U$ that contains $\ti x$ such that $\ti f_{\ti C}^m(\ti x) \in T(\ti U)$.  If  $m$ is  irrelevant then we simply say that {\em $T$ is a near-cycle  for $\ti x \in \tiM$ with respect to $\ti f_{\ti C}$}. 
\end{defn}

\begin{remark} \label{near cycle is open} It is an immediate
consequence of the definitions that if $T : \tiM \to \tiM$ is a
near-cycle of period $m > 0$ with respect to $\ti f_{\ti C}$ for $\ti x$
then it is also a near-cycle of period $m$ with respect to $\ti f_{\ti C}$ 
for all points in a neighborhood of $\ti x$.  Moreover, it is clear
that by shrinking the free disk $U$ slightly to $U_0,$ 
we may assume that $cl(U_0)$ is contained in a free disk and we still
have $\ti f_{\ti C}^m(\ti x) \in T(\ti U_0).$
\end{remark}

\begin{remark} A point $\ti x \in \tiM$ has at least one near cycle with respect to $\ti f_{\ti C}$ if and only if its image $x\in M$ is free disk recurrent. 
\end{remark}

\begin{remark} The only near-cycles for $\ti x \in \tiM$ with respect
to $\ti f_{\ti C}$ that we make use of are those that are contained in
$\Stab(\ti C)$. 
\end{remark}

The following lemma is essentially the same as Lemma~10.5 of \cite{fh:periodic}.  We reprove it here because our assumptions have changed.

\begin{lemma} \label{near cycle} If $T \in \Stab(\ti C)$ is a near-cycle  for $\ti x \in \tiM$ with respect to $\ti f_{\ti C}$     then $\alpha(\ti f_{\ti C}, \ti x)$ and $ \omega(\ti f_{\ti C}, \ti x)$  can not both lie in the same component of $S_{\infty} \setminus \{T^+,T^-\}$.
\end{lemma}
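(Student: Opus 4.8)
The plan is to argue by contradiction. Suppose $T \in \Stab(\ti C)$ is a near-cycle for $\ti x$ with respect to $\ti f_{\ti C}$ but both $\alpha(\ti f_{\ti C}, \ti x)$ and $\omega(\ti f_{\ti C}, \ti x)$ lie in the same component $J$ of $S_\infty \setminus \{T^+, T^-\}$. Since $T \in \Stab(\ti C)$, it commutes with $\ti f_{\ti C}$, so $T$ preserves $\Fix(\ti f_{\ti C}|_{S_\infty})$, which is the intersection of $\cl(\ti C)$ with $S_\infty$; in particular $\ti \sigma := $ the axis (or, in the parabolic case, an invariant horocycle) of $T$ satisfies the hypotheses of Definition~\ref{annulus cover}, and we may form the annular cover $A_\sigma$ with induced map $f_\sigma : A_\sigma \to A_\sigma$ and projection $\ti C \cup S_\infty \to A_\sigma$. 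Because $\alpha(\ti f_{\ti C}, \ti x)$ and $\omega(\ti f_{\ti C}, \ti x)$ lie in the same component of $S_\infty \setminus \{T^\pm\}$, their images $\alpha(f_\sigma, \hat x)$ and $\omega(f_\sigma, \hat x)$ lie in the same boundary component $\partial_0 A_\sigma$ of $A_\sigma$ (here I use that a near-cycle $T$ means $T$ is nontrivial and hyperbolic-or-parabolic, so these limit points are genuinely interior to $J$ and not equal to $T^\pm$; if one of them were an endpoint of $\ti \sigma$ a separate, easier argument applies).

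Next I would extract a contradiction from the displacement. The key point is that a near-cycle says $\ti x$ and $\ti f_{\ti C}^m(\ti x)$ both lie in a single lift $T(\ti U)$ of a free disk $U$, i.e. $T^{-1}\ti f_{\ti C}^m(\ti x) \in \ti U$ and $\ti x \in \ti U$. Passing to $A_\sigma$, the projection $\hat U \subset A_\sigma$ is a free disk for $f_\sigma$ meeting the $f_\sigma$-orbit of $\hat x$ in at least two points, \emph{but} the homological displacement of that return (measured by the power of $T$) is nonzero: $T^{-1}$ applied to $\ti f_{\ti C}^m(\ti x)$ lands back in the \emph{same} lift $\ti U$, so the return of $\hat x$ to $\hat U$ under $f_\sigma^m$ has homological displacement exactly $+1$ (or $-1$, depending on orientation conventions) in $A_\sigma$. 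On the other hand, if both $\alpha(f_\sigma, \hat x)$ and $\omega(f_\sigma, \hat x)$ lie in $\partial_0 A_\sigma$, then by Proposition~\ref{prop: limsup} applied to $f_\sigma$ (or to its extension to the compactification) the translation number of $\hat x$ must be squeezed to $0$, or more precisely the forward and backward orbits of $\hat x$ both accumulate on the same boundary circle, which forces the homological displacement of any free-disk return to vanish. More directly, I would invoke Corollary~\ref{PB} (when $\sigma \in \cR$, which is the case $T^+ \ne T^-$ encountered here) or the Poincar\'e–Birkhoff-type analysis behind Lemma~\ref{reducible}: an orbit that begins and ends on $\partial_0 A_\sigma$ cannot have a free-disk return of nonzero homological displacement, since such a return would, via the reducing-arc/translation-arc machinery, either produce a fixed point of $f_\sigma$ (contradicting $\Fix(f) = \emptyset$ on $M$) or force $\alpha(f_\sigma, \hat x)$ and $\omega(f_\sigma, \hat x)$ to be separated.

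The cleanest route, and the one I would write up, is the following: because $\alpha(f_\sigma, \hat x) = \omega(f_\sigma, \hat x)$'s component of $\partial A_\sigma$ is $\partial_0 A_\sigma$, Lemma~\ref{reducible}(1) (applied with $r = 1$, $\hat x_1 = \hat x$) produces an $f_\sigma$-invariant properly embedded line $\hat B_1 \subset \Int(A_\sigma)$ containing the $f_\sigma$-orbit of $\hat x$, whose two ends both converge to points of $\partial_0 A_\sigma$. Lift $\hat B_1$ to the line $\ti B_1 \subset \ti C$ containing $\ti x$; its endpoint set is $\{\alpha(\ti f_{\ti C}, \ti x), \omega(\ti f_{\ti C}, \ti x)\}$, which lies in $J$. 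Since $\ti B_1$ carries the whole $\ti f_{\ti C}$-orbit of $\ti x$, and in particular $\ti f_{\ti C}^m(\ti x) \in T(\ti U)$, while also $\ti x \in \ti U$ and $\ti f_{\ti C}^m(\ti x) \in \ti B_1$, the point $T^{-1}\ti f_{\ti C}^m(\ti x)$ lies in $\ti U \cap T^{-1}(\ti B_1)$. But $\ti U$ is a free disk, so $\ti U$ and $\ti f_{\ti C}(\ti U), \dots, \ti f_{\ti C}^m(\ti U)$ interlace with $\ti B_1$ in a way that forces $T^{-1}(\ti B_1)$ to cross $\ti B_1$ transversally — contradicting that $\hat B_1$ is embedded (since $T^{-1}(\ti B_1)$ is another lift of the same embedded line $\hat B_1$, and two lifts of an embedded line cannot cross). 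Making this crossing argument precise is the main obstacle: I need to check that "both $\alpha$ and $\omega$ on the same side of $\{T^\pm\}$" together with "a positive-displacement free-disk return" really does link $\ti B_1$ with $T^{\pm 1}(\ti B_1)$ on $S_\infty$, which amounts to showing $T^{\pm}$ separates the endpoints of $\ti B_1$ from the endpoints of $T(\ti B_1)$ — and that in turn follows because $T$ moves points of $J$ toward $T^+$ (resp. away from $T^-$), so $T$ applied to a pair of points in $J$ produces a pair linked with the original pair whenever the original pair does not already "straddle" a fundamental-domain boundary; the near-cycle hypothesis is exactly what guarantees the straddling. I expect this linking verification, rather than the invocation of Lemma~\ref{reducible}, to be where the real work lies.
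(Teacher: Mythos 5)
Your most developed route reaches exactly the setup the paper uses: form $A_\sigma$ from the axis (or an invariant horocycle) of $T$, apply Lemma~\ref{reducible} with $r=1$ to produce the embedded $f_\sigma$-invariant streamline $\hat B_1$ containing the orbit of $\hat x$, and lift to $\ti B_1$ through $\ti x$, whose ideal endpoints are $\alpha(\ti f_{\ti C},\ti x)$ and $\omega(\ti f_{\ti C},\ti x)$. The gap is in the step you flag as ``where the real work lies,'' and the crossing argument cannot be made to work. Two lifts $\ti B_1$ and $T^{\pm 1}(\ti B_1)$ of the embedded line $\hat B_1$ are \emph{always} pairwise disjoint --- disjointness of the lifts is precisely what embeddedness of $\hat B_1$ in $A_\sigma$ means --- irrespective of where $\alpha$ and $\omega$ fall in $S_\infty$ or of the translation length of $T$. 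There is nothing to cross, and indeed the paper's own proof states the disjointness outright. Your closing sentence already exhibits the tension: you say linking occurs when the endpoint pair does \emph{not} straddle a fundamental domain, and then say the near-cycle guarantees straddling, which by your own criterion gives non-linking.

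What the hypotheses actually control is the \emph{relative orientation} of $\ti B_1$ and $T(\ti B_1)$, not an intersection. If $\alpha$ and $\omega$ both lie in a single component $J$ of $S_\infty\setminus\{T^\pm\}$, the disjoint lines have their oriented endpoints in cyclic order $\alpha,\omega,T\alpha,T\omega$ and so are \emph{anti-parallel} in the sense of section~\ref{sec: omega lifts} (initial points separated by terminal points); if $\alpha$ and $\omega$ lie in different components, the cyclic order is $\alpha,T\alpha,T\omega,\omega$ and the lines are parallel. The paper closes the argument by invoking Lemma~8.7(2) of \cite{fh:periodic}: the free disk $\ti U$ (which meets $\ti B_1$ at $\ti x$ and, after translating by $T^{-1}$, meets $T^{-1}(\ti B_1)$ at $T^{-1}\ti f_{\ti C}^m(\ti x)$) forces the two streamlines to be parallel. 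That parallel-versus-anti-parallel conflict is the contradiction; a homological displacement count, Proposition~\ref{prop: limsup}, or Corollary~\ref{PB} does not supply it, and your write-up never invokes an orientation lemma. This is the missing ingredient.
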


\proof If $T$ is parabolic let $\ti \sigma$ be a horocycle preserved
by $T$; otherwise let $\ti \sigma$ be the axis of $T$.  From $T \in
\Stab(\ti C)$ it follows that $\ti \sigma$ is either an element of
$\ti \cR$ or disjoint from $\ti \cR$.  Let $f_\sigma :A_\sigma \to
A_\sigma$ be as in Definition~\ref{annulus cover}.  We assume
the result is false and argue to a contradiction.  By Lemma~\ref{no doubling}, we may apply 
Lemma~\ref{reducible} with $h = f_\sigma$, $r=1$ and $\hat x_1$ the
image of $\ti x$ in $A_\sigma$.  Assume   the notation of that
lemma. The lifts $\ti B_1$ and $\ti B_1'$ of $\hat B_1$ that
contain $\ti x$ and $T(\ti x)$ respectively are disjoint and $\ti
f$-invariant up to isotopy rel the orbits of $\ti x$ and $T(\ti x)$.
Lemma~8.7 (2) of \cite{fh:periodic} implies that $\ti B_1$ and $\ti
B_1'$ have parallel orientations.  But it follows from the fact
that the endpoints of $\ti B_1$ are $\alpha(\ti f_{\ti C}, \ti x)$ and
$ \omega(\ti f_{\ti C}, \ti x)$ and the endpoints of $\ti B_1' $ are
$T\alpha(\ti f_{\ti C}, \ti x)$ and $T \omega(\ti f_{\ti C}, \ti
x)$, that these four points must occur in a configuration in $\sinfty$
which implies that $\ti B_1$ and $\ti B_1'$ have anti-parallel orientations.
This contradiction completes the proof.  \endproof


\begin{remark} \label{parabolic near cycle} In the case that the covering translation $T$  is parabolic,   Lemma~\ref{near cycle} asserts that at least one of $\alpha(\ti f_{\ti C}, \ti x)$ and $ \omega(\ti f_{\ti C}, \ti x)$ must equal $T^\pm$.
\end{remark}
\begin{lemma}  \label{peripheral curve}Suppose  $\ti C$ is an $\omega$-domain for $\ti x $,  that $\omega(\ti f_{\ti C},\ti x)$ is an endpoint of $\ti \sigma \in \ti \cR$ and  that   $\bar x$ is $\bar f_C$-recurrent.       Then every near cycle $T \in \Stab(\ti C)$  for a point in the $\ti f_{\ti C}$-orbit of $\ti x$ is hyperbolic with axis $\ti \sigma$. \end{lemma}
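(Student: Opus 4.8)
The plan is to fix an arbitrary near cycle $T \in \Stab(\ti C)$ for a point $\ti y = \ti f_{\ti C}^{j}(\ti x)$ in the $\ti f_{\ti C}$-orbit of $\ti x$ and show that $T$ is hyperbolic with axis $\ti \sigma$. Since $\alpha(\ti f_{\ti C}, \ti y) = \alpha(\ti f_{\ti C}, \ti x)$ and $\omega(\ti f_{\ti C}, \ti y) = \omega(\ti f_{\ti C}, \ti x)$, all relevant limit points are those of $\ti x$. The first observation is that $\ti \sigma$ is a component of $\partial \ti C$: the point $\omega(\ti f_{\ti C}, \ti x)$ is an endpoint of $\ti \sigma$ and lies in $\Fix(\ti f_{\ti C}|_{S_\infty}) = \cl(\ti C) \cap S_\infty$, and since $\ti \sigma \in \ti \cR$ separates its two bounding domains, $\ti C$ must be one of them. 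Write the endpoints of $\ti \sigma$ as $\ti \sigma^{\pm}$ with $\omega(\ti f_{\ti C}, \ti x) = \ti \sigma^+$. The two points $\ti \sigma^{\pm}$ cut $S_\infty$ into the open arc $I$ lying behind $\ti \sigma$, which is disjoint from $\cl(\ti C)$, and the complementary open arc $J$, which contains $(\cl(\ti C) \cap S_\infty) \setminus \{\ti \sigma^{\pm}\}$; in particular $T^+, T^- \in \cl(\ti C) \cap S_\infty$ both avoid $I$.

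The key step is to locate $\alpha(\ti f_{\ti C}, \ti x)$, and for this I would use the hypothesis that $\bar x$ is $\bar f_C$-recurrent. Since $\omega(\ti f_{\ti C}, \ti x)$ is an endpoint of an element of $\ti \cR$, Corollary~\ref{canonical lift} puts us in case (2): both domains $\ti C_1, \ti C_2$ bounded by $\ti \sigma$ are $\omega$-domains for $\ti x$, and $\ti C$ is one of them. By Corollary~\ref{stays close}(2), the forward orbit $\{\ti f_{\ti C}^{n}(\ti x) : n \ge 0\}$ is eventually contained in $N_{D_1}(\ti C_1 \cup \ti C_2)$; since it also converges to $\ti \sigma^+$, an endpoint of the axis of $T_{\ti \sigma}$, and since Corollary~\ref{PB} (applied to $f_\sigma$) prevents an orbit from drifting along a boundary geodesic of $\ti C$ other than $\ti \sigma$, the projected forward orbit of $\bar x$ eventually enters and remains in a fixed compact annular neighborhood $\bar A$ of $\bar \sigma$ in $\bar C$. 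Recurrence then pulls the backward orbit into $\bar A$ as well: if $\bar f_C^{n_i}(\bar x) \to \bar x$ with $n_i \to \infty$, then for each fixed $m \ge 0$ the points $\bar f_C^{n_i - m}(\bar x) = \bar f_C^{-m}(\bar f_C^{n_i}(\bar x))$ lie in $\bar A$ for large $i$ and converge to $\bar f_C^{-m}(\bar x)$, forcing $\bar f_C^{-m}(\bar x) \in \bar A$. Thus the whole $\bar f_C$-orbit of $\bar x$ lies in $\bar A$. Passing to the annular cover $A_\sigma$, the backward orbit of the image of $\ti x$ must then remain near the core circle, which is possible only if $\alpha(\ti f_{\ti C}, \ti x)$ is an endpoint of $\ti \sigma$; hence $\alpha(\ti f_{\ti C}, \ti x) \in \{\ti \sigma^+, \ti \sigma^-\}$.

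To finish, I would apply Lemma~\ref{near cycle}: since $T \in \Stab(\ti C)$ is a near cycle for $\ti y$, the points $\alpha(\ti f_{\ti C}, \ti x)$ and $\ti \sigma^+$ do not both lie in a single component of $S_\infty \setminus \{T^+, T^-\}$. If $\{T^+, T^-\}$ were disjoint from $\{\ti \sigma^+, \ti \sigma^-\}$, then both $T^+$ and $T^-$ would lie in the open arc $J$ (being in $\cl(\ti C) \cap S_\infty$, they avoid $I$), whose endpoints are $\ti \sigma^{\pm}$; but then $\ti \sigma^+$ and $\ti \sigma^-$ would both lie in the component of $S_\infty \setminus \{T^+, T^-\}$ containing $I$ — which is all of $S_\infty \setminus \{T^+\}$ when $T$ is parabolic — contradicting Lemma~\ref{near cycle} (and similarly contradicting it in the degenerate case $\alpha(\ti f_{\ti C}, \ti x) = \ti \sigma^+$). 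Hence $\{T^+, T^-\}$ meets $\{\ti \sigma^+, \ti \sigma^-\}$. Since $\ti \sigma^{\pm}$ are hyperbolic fixed points and a parabolic fixed point can never equal a hyperbolic one, $T$ cannot be parabolic; so $T$ is hyperbolic and shares a fixed point with $T_{\ti \sigma}$. Because distinct hyperbolic elements of a Fuchsian group cannot share exactly one fixed point, the axis of $T$ equals that of $T_{\ti \sigma}$, namely $\ti \sigma$, and $T$ is hyperbolic with axis $\ti \sigma$ as required.

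The main obstacle is the middle step — deducing from "$\bar x$ is $\bar f_C$-recurrent" that $\alpha(\ti f_{\ti C}, \ti x)$ is an endpoint of $\ti \sigma$ itself (and not merely of some $\Stab(\ti C)$-translate of $\ti \sigma$). The subtlety is that the image in $\bar C$ of $N_{D_1}(\ti C_1 \cup \ti C_2)$ need not be compact when $C$ has additional ends, so one must exploit that the forward orbit converges specifically to $\ti \sigma^+$, together with Corollary~\ref{PB}, to trap the orbit in a genuinely compact annular neighborhood of the closed geodesic $\bar \sigma$ before invoking recurrence; the remaining two steps are then routine applications of Lemma~\ref{near cycle} and standard Fuchsian-group facts.
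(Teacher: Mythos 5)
Your overall strategy is genuinely different from the paper's. You try to establish directly that $\alpha(\ti f_{\ti C}, \ti x)$ is an endpoint of $\ti\sigma$, and then apply Lemma~\ref{near cycle} together with standard Fuchsian-group facts. The paper instead branches: if $\alpha(\ti f_{\ti C},\ti x)$ is already an endpoint of $\ti\sigma$, Lemma~\ref{near cycle} finishes immediately; if not, it uses recurrence to manufacture a sequence of conjugated near cycles $T_i = S_iTS_i^{-1}$ and derives a contradiction by analyzing the limiting behaviour of their axes $A_i = S_i(A_T)$ (parabolic case, constant-axis case, and converging-axis case in turn). That route never needs to control where the orbit sits in $\bar C$.

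The gap in your version is exactly the step you flag as subtle, and your proposed fix does not close it. You want the forward orbit of $\bar x$ to eventually stay inside a compact annular neighborhood $\bar A$ of $\bar\sigma$. Corollary~\ref{stays close}(2) gives $\ti f_{\ti C}^n(\ti x)\in N_{D_1}(\ti C_1\cup\ti C_2)$ for large $n$, but the image of $N_{D_1}(\ti C_2)$ in $\bar C$ (under the quotient by $\Stab(\ti C)=\Stab(\ti C_1)$) is a noncompact end attached along $\bar\sigma$, because $\Stab(\ti C_1)\cap\Stab(\ti C_2)=\langle T_{\ti\sigma}\rangle$. So the orbit could in principle make arbitrarily deep excursions into that end and still be recurrent in $\bar C$; recurrence alone does not confine a point to a compact set. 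Your appeal to Corollary~\ref{PB} does not repair this: that corollary requires both $\alpha(f_\sigma,\hat x)$ and $\omega(f_\sigma,\hat x)$ to be single points in one boundary component $\partial_0 A_\sigma$, but here $\omega(\ti f_{\ti C},\ti x)=\ti\sigma^+$ is an endpoint of $\ti\sigma$, so $\omega(f_\sigma,\hat x)$ is not a single point in $\partial A_\sigma$ and the hypothesis of Corollary~\ref{PB} is not met for $\hat x$ with respect to $\sigma$. (It could in principle be invoked at a \emph{different} boundary geodesic $\ti\tau$ of $\ti C$, but only if neither $\alpha(\ti f_{\ti C},\ti x)$ nor $\omega(\ti f_{\ti C},\ti x)$ is an endpoint of $\ti\tau$ — and you cannot rule out a priori that $\alpha$ is an endpoint of some other $\ti\tau$, which is exactly what you are trying to prove it isn't.) Even if confinement in a compact $\bar A$ were granted, the final inference that "the backward orbit must then remain near the core circle, which is possible only if $\alpha(\ti f_{\ti C},\ti x)$ is an endpoint of $\ti\sigma$'' needs more care, since the preimage of $\bar A$ in $\tiM$ is a $\Stab(\ti C)$-orbit of annular strips, not a single neighborhood of $\ti\sigma$, and the orbit a priori only accumulates on the limit set of that whole union. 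The opening reductions (that $\ti\sigma\subset\partial\ti C$ and that Corollary~\ref{canonical lift}(2) applies), and the closing step once $\alpha,\omega\in\{\ti\sigma^\pm\}$ is in hand, are both correct; it is the middle step that is unsubstantiated.
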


\proof   To simplify notation we write $\ti f = \ti f_{\ti C}$.  There is no loss in assuming that $T$ is a near cycle for $\ti x$.  Let $U$ be the free disk with respect to which $T$ is defined,   let $\ti U$ be the lift of $U$ containing $\ti x$ and let $n$ satisfy $\ti f^n(\ti x) \in T(\ti U)$.    There is a neighborhood $x \in V \subset U$ such that $f^n(V) \subset U$.  Let $\ti V$ be the lift of $V$ contained in $\ti U$. By Remark~\ref{near cycle is open} we may assume that the diameter of $\ti U$ in the hyperbolic metric is finite.

If $\alpha(\ti f, \ti x)$ is an endpoint of $\ti \sigma$ then  Lemma~\ref{near cycle} and the fact that $\ti \sigma \subset \partial \ti C$ complete the proof.    Suppose then that $\alpha(\ti f, \ti x)$ is not an endpoint of $\ti \sigma$ and in particular,   $\alpha(\ti f, \ti x) \ne \omega(\ti f, \ti x)$.

Since $\bar x$ is $\bar f$-recurrent, there exist $n_i \to \infty$ and
$S_i \in \Stab(\ti C)$ such that $\ti f^{n_i}(\ti x) \in S_i(\ti V)
\subset S_i(\ti U)$.  From $\ti f^n(S_i \ti V) = S_i \ti f^n(\ti V)
\subset S_iT(\ti U)$ we see that $\ti f^n(\ti f^{n_i}(\ti x)) \in
S_iTS_i^{-1}(S_i(\ti U))$ and hence that $T_i = S_iTS_i^{-1}$ is a
near cycle for $\ti f^{n_i}(\ti x)$.  Note also that both $\ti
f^{n+n_i}(\ti x)$ and $T_i\ti f^{n_i}(\ti x)$ are contained in
$T_i(S_i(\ti U))$ and so $\dist(\ti f^{n+n_i}(\ti x),T_i\ti f^n(\ti x))$ 
is bounded independently of $n_i$.
 
If $T$, and hence each $T_i$,  is parabolic then $\omega(\ti f_{\ti C},\ti x) \ne T_i^\pm$ because $\omega(\ti f_{\ti C},\ti x)$ is an endpoint of the axis $\ti \sigma$ of a hyperbolic covering translation.     Lemma~\ref{near cycle}  (see also Remark~\ref{parabolic near cycle}) therefore implies that each  $T_i^{\pm} = \alpha(  \ti f, \ti x)$.     In this case the $T_i$'s are  iterates of a single parabolic covering translation and there is a neighborhood of $\omega(\ti f, \ti x)$ that is moved off of itself by every $T_i$.  This contradicts  $\lim \ti f^{n_i}(\ti x)  = \omega(\ti f, \ti x)$  and   $\lim T_i(\ti f^{n_i}(\ti x))  = \lim (\ti f^{n+n_i}(\ti x)) = \omega(\ti f, \ti x)$.
    We conclude that $T$ and   each  $T_i$ are hyperbolic.   Let $A_T$ be the axis of $T$ and $A_i = S_i(A_T)$  the axis of $T_i$.    
 
To complete the proof we assume that $A_T \ne \ti \sigma$ and argue to a contradiction.
   
   We claim that $A_i \ne   \ti \sigma$.   This is obvious   if  $A_T$ is not an element of $\ti \cR$ so we assume that   $A_T$ is an element of $\ti \cR$ and that $\ti \sigma = A_i = S_i(A_T)$ for some $S_i \in \Stab(\ti C)$ 
 and argue to a contradiction.   Keeping in mind that $\ti \sigma$ and $A_T$ are distinct components of the frontier of $\ti C$,   Lemma~\ref{near cycle} implies that $\alpha(\ti f, \ti x)$, which by Lemma~\ref{alpha and omega are points} is a single point in the intersection of $\sinfty$ with the closure of $\ti C$, is an endpoint of $A_T$.  The axis of $S_i$ is contained in $\ti C$ and is not $\ti \sigma$ or $A_T$.  It follows that the axis of $S_i$  is disjoint from $A_T$ and $\ti \sigma$ and  has no endpoints in common with either.  Since    $\ti \sigma = S_i(A_T)$,  the axis of $S_i$ does not separate $A_T$ from $\ti \sigma$ and so does not separate  $\alpha(\ti f, \ti x)$ from $\omega(\ti f_{\ti C},\ti x)$.   This contradicts Lemma~\ref{near cycle} applied to the near cycle $S_i$ and so completes the proof that $A_i \ne \ti \sigma$.        
   
   After passing to a subsequence we may assume that either the $A_i$'s are all the same or all different.   In the former case, $T_i$ is independent of $i$ and there is a neighborhood of $\omega(\ti f, \ti x)$ that is moved off of itself by each $T_i$.   As above this contradicts the fact that $T_i(\ti f^{n_i}(\ti x)) \to \omega(\ti f, \ti x)$.    We may therefore assume that the $A_i$'s are distinct lifts of a closed curve in $M$ and hence, after passing to a subsequence, converge to some point $Q \in \sinfty$.    If $Q \ne \omega(\ti f, \ti x)$ then there is a neighborhood of $\omega(\ti f, \ti x)$ that is moved off of itself by each $T_i$ and we have a contradiction.    Thus $Q = \omega(\ti f, \ti x)$.
   
    For sufficiently large $i$   the endpoints of $A_i$ are contained in a neighborhood  of $\omega(\ti f, \ti x)$ that does not contain $\alpha(\ti f, \ti x)$  and does not contain the other endpoint of $\ti \sigma$.  Since $A_i $ is disjoint from $\ti \sigma  $,   it does not separate $\alpha(\ti f, \ti x)$  from  $\omega(\ti f, \ti x)$.  This contradicts Lemma~\ref{near cycle} applied to $T_i$ since    neither $\alpha(\ti f, \ti x)$ nor  $\omega(\ti f, \ti x)$ is an endpoint of $A_i$.
     \endproof
     
\begin{lemma}  \label{recurrence condition} Suppose that $U$ is a free disk, that $x \in U$ is  recurrent [birecurrent] with respect to $f$ and that the set of lifts of $  U$ to $\tiM$ that intersect $\{\ti f_{\ti C}^k(\ti x): k \ge 0\}$  is finite up to the action of $\Stab(\ti C)$.  Then   $\bar x \in \bar C$   is  recurrent [birecurrent] with respect to $\bar f : \bar C \to \bar C$. 
\end{lemma}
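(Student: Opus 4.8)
The plan is to transfer the dynamics to the intermediate cover $\bar C$ and combine recurrence of $x$ downstairs with the finiteness hypothesis. Let $\ti U$ be the lift of $U$ containing $\ti x$. Since $U$ is a disk, hence evenly covered in $M$, $\ti U$ is disjoint from each of its $\pi_1(M)$-translates, so its image $\bar U$ in $\bar C$ is a disk mapping homeomorphically onto $U$ under $\bar C\to M$, and $\bar x\in\bar U$. From $f(U)\cap U=\emptyset$ it follows that no lift of $U$ meets the $\ti f_{\ti C}$-image of a lift of $U$, so $\bar f_C(\bar V)\cap\bar V'=\emptyset$ for any two lifts $\bar V,\bar V'$ of $U$ in $\bar C$; in particular $\bar U$ is a free disk for $\bar f_C$. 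Lifts of $U$ in $\bar C$ correspond bijectively to $\Stab(\ti C)$-orbits of lifts of $U$ in $\tiM$, so the hypothesis says exactly that the forward $\bar f_C$-orbit of $\bar x$ meets only finitely many lifts $\bar U_1=\bar U,\bar U_2,\dots,\bar U_k$ of $U$ in $\bar C$; write $\bar y_j$ for the unique point of $\bar U_j$ over $x$, so that $\bar y_1=\bar x$. A point of $\omega(\bar f_C,\bar x)$ lying in some lift of $U$ would force the forward orbit of $\bar x$ to enter that open lift, so the closed $\bar f_C$-invariant set $\omega(\bar f_C,\bar x)$ meets no lift of $U$ other than $\bar U_1,\dots,\bar U_k$; in the birecurrent case the same applies to $\alpha(\bar f_C,\bar x)$, once one also knows (which I would check directly in the situation at hand, using a bounded-distance estimate as in Lemma~\ref{bounded distance}) that the backward orbit of $\bar x$ meets only finitely many lifts of $U$.

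Next I would use recurrence of $x$. Let $0=n_0<n_1<\cdots$ be the successive return times of the $f$-orbit of $x$ to $U$, and fix a subsequence along which $f^{n_l}(x)\to x$. For each $l$ the point $\bar f_C^{n_l}(\bar x)$ lies in a unique $\bar U_{j(l)}$, with $j(0)=1$. Along the chosen subsequence $\bar f_C^{n_l}(\bar x)$ projects to $f^{n_l}(x)\to x$ and lies in one of the finitely many disks $\bar U_1,\dots,\bar U_k$, each of which maps homeomorphically onto $U$; so after passing to a further subsequence on which $j(l)$ equals a constant $j_0$, we get $\bar f_C^{n_l}(\bar x)\to\bar y_{j_0}$. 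Hence some lift $\bar y_{j_0}$ of $x$ lies in $\omega(\bar f_C,\bar x)$, and symmetrically some lift lies in $\alpha(\bar f_C,\bar x)$ in the birecurrent case. (A parallel remark: $\bar U$ is a free disk, so one may equally well read this through the first return map of $\bar f_C$ to $\bar U_1\cup\cdots\cup\bar U_k$, which on the orbit of $\bar x$ is a lift, via the finite covering $\bigsqcup_j\bar U_j\to U$, of the first return map of $f$ to $U$.)

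The remaining step — promoting this to the statement that the basepoint $\bar y_1=\bar x$ itself lies in $\omega(\bar f_C,\bar x)$, and in $\alpha(\bar f_C,\bar x)$ in the birecurrent case — is the one I expect to be the real obstacle, the point being that the hypothesis controls which lifts of $U$ are visited but not that $\bar U$ among them is revisited. I would argue combinatorially on the finite index set $\{1,\dots,k\}$: whenever $\bar y_j\in\omega(\bar f_C,\bar x)$, the full $\bar f_C$-orbit of $\bar y_j$ lies in the closed invariant set $\omega(\bar f_C,\bar x)$, hence meets only $\bar U_1,\dots,\bar U_k$, and $\bar y_j$ projects to the recurrent (resp. birecurrent) point $x$, so the argument of the second paragraph applies with $\bar y_j$ in place of $\bar x$ and yields some $\bar y_{j'}\in\omega(\bar f_C,\bar y_j)\subseteq\omega(\bar f_C,\bar x)$, and likewise some $\bar y_{j''}\in\alpha(\bar f_C,\bar y_j)\subseteq\omega(\bar f_C,\bar x)$. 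Recording these as edges of a finite directed graph on $\{1,\dots,k\}$, one sees that the vertex $1$ has an out-neighbour and that following out-edges from $1$ eventually enters a cycle; chasing the resulting chain of inclusions $\omega(\bar f_C,\bar y_{j_{t+1}})\subseteq\omega(\bar f_C,\bar y_{j_t})$ around the cycle exhibits a lift of $x$ that is recurrent for $\bar f_C$. The crux is then to force that cycle through the vertex $1$; I would do this by feeding the walk back into itself, using birecurrence of $x$ to make the $\omega$- and $\alpha$-relations on $\{1,\dots,k\}$ reverse one another, so that the edge $1\to j_0$ yields a $2$-cycle through $1$ and hence $\bar x\in\omega(\bar f_C,\bar x)\cap\alpha(\bar f_C,\bar x)$; the plain recurrent case follows from the forward half of this reasoning. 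Making the successive passages to subsequences and the partial return maps rigorous, and closing this combinatorial loop at the basepoint, is where the genuine work lies.
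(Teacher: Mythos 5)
You correctly identify where the real work is — showing that the walk $1 \to j_0 \to j_1 \to \cdots$ on the index set $\{1,\dots,m\}$ returns to the vertex $1$ itself, not merely to \emph{some} cycle — but your proposed closure of that loop does not hold up, and it is also unnecessary.

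The flaw is in the final step. Knowing $\bar y_{j_0}\in\omega(\bar f_C,\bar x)$ via $\bar f_C^{n_l}(\bar x)\to\bar y_{j_0}$ does not give you $\bar x\in\alpha(\bar f_C,\bar y_{j_0})$: you would need $\bar f_C^{-n_l}(\bar y_{j_0})\to\bar x$, and since $\bar f_C^{-n_l}$ is not equicontinuous you cannot pass the limit through. So birecurrence of $x$ does not make the $\omega$- and $\alpha$-edges on $\{1,\dots,m\}$ reverse one another, and the ``$2$-cycle through $1$'' never materialises. Moreover, your assertion that the plain recurrent case ``follows from the forward half of this reasoning'' cannot be right as written, because the forward half, as you set it up, only locates a cycle \emph{somewhere} in the graph, i.e.\ it proves some lift of $x$ is recurrent, which is weaker than the claim.

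The missing idea, which the paper uses, is that the out-map of the walk is \emph{injective} when you run all the base points $\bar x_1,\dots,\bar x_m$ along the \emph{same} sequence of return times $n_i$ (passing to a common subsequence). For a fixed $i$ the points $\bar f_C^{n_i}(\bar x_1),\bar f_C^{n_i}(\bar x_2),\dots$ are pairwise distinct and all project to $f^{n_i}(x)\in U$; since each lift of $U$ contains exactly one point over $f^{n_i}(x)$, these points lie in pairwise distinct lifts $\bar U_j$. Hence the assignment $j\mapsto\phi(j)$ (where $\bar f_C^{n_i}(\bar x_j)\to\bar x_{\phi(j)}$) is injective, and an injective self-map followed from $1$ on a set of at most $m$ indices must return to $1$ — at which point $\bar x_1\in\omega(\bar f_C,\bar x_1)$. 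This already gives plain recurrence without any appeal to birecurrence. The birecurrent case then comes for free: once $\bar x$ is forward recurrent, a lift of $U$ meets $\{\bar f_C^k(\bar x):k\ge 0\}$ iff it meets the full orbit $\{\bar f_C^k(\bar x):k\in\Z\}$, so the finiteness hypothesis transfers to the backward orbit and the same argument applies to $\bar f_C^{-1}$. Your suggestion to invoke a bounded-distance estimate from Lemma~\ref{bounded distance} for the backward finiteness is therefore a detour and, worse, would tie this purely topological lemma to the hyperbolic-geometry machinery that it is meant to sit below.
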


\proof  The set of lifts of $  U$ to $\tiM$ that intersect $\{\ti f_{\ti C}^k(\ti x): k \ge 0\}$  is finite up to the action of $\Stab(\ti C)$  if and only if the set of lifts of $U$ to $\bar C$ that intersect $\{\bar f_C^k(\bar x): k \ge 0\}$  is finite.   We may therefore replace the former with the latter in the hypotheses of this lemma.

Suppose that  $x$ is recurrent.  We must prove that   $\bar x$ is recurrent and that if $x$ is recurrent with respect to $f^{-1}$ then $\bar x$ is recurrent with respect to $\bar f^{-1}$.
 
Let $\bar U_1, \ldots, \bar U_m$ be the lifts of $U$ to $\bar C$ that
intersect $\{\bar f_{C}^k(\bar x): k \ge 0\}$ and let $\bar x_j \in
\bar U_j$ be the corresponding lifts of $x$.  We may assume that $\bar
x_1 = \bar x$.  Choose a sequence $n_i \to \infty$ such that $f^{n_i}(
x) \to x $ and such that each $f^{n_i}( x) \in U$.  After passing to a
subsequence we may assume that $\bar f^{n_i}_C(\bar x_1) \in \bar U_s$
where $s$ is independent of $i$.  Then $\bar f^{n_i}_C(\bar x_1) \to
\bar x_s$ and we are done if $s=1$.  Otherwise by renumbering
we may assume that
$s=2$.  Since $\bar x_2$ is in the $\omega$-limit set of $\bar x_1$,
each point in $\{\bar f_{C}^k(\bar x_2): k \ge 0\}$ that projects to
$U$ is contained in some $\bar U_j$.  We may therefore apply the
previous argument with $\bar x_2$ in place of $\bar x_1$.  After
passing to a further subsequence we may assume that $\bar
f^{n_i}_C(\bar x_2) \to \bar x_t$ where $t \ne 2$ because $\bar
f_C^{n_i}(\bar x_1)$ is the unique point in $\bar U_2$ that projects
to $f^{n_i}(x)$.  If $t=1$ then $\bar x_1$ is in the $\omega$-limit
set of $\bar x_1$ and we are done.  Otherwise we may assume $t=3$.
After iterating this argument at most $m$ times, we have shown that
$\bar x$ is recurrent.
  
  From the recurrence of  $\bar x$, it follows that a lift of $U$ to $\bar C$ intersects $\{\bar f_C^k(\bar x): k \ge 0\}$ if and only if it intersects $\{\bar f_C^k(\bar x): k \in \Z\}$.   In particular, the set of lifts of $U$ to $\bar C$ that intersect $\{\bar f_C^{-k}(\bar x): k \ge 0\}$  is finite.   If  $x$ is recurrent with respect to $f^{-1}$ then by the above argument   $\bar x \in \bar C$   is  recurrent   with respect to $\bar f^{-1} : \bar C \to \bar C$ as desired. 
\endproof

 \begin{remark}\label{cosets}  If $\ti U$ is a lift of a disk $U$ and  $T_1,T_2$ are covering translations then   $T_1(\ti U)$ and $\ti T_2(\ti U)$  are in the same $\Stab(\ti C)$-orbit if and only if $T_2 T_1^{-1} \in \Stab(\ti C)$.  Thus a collection of lifts $\{T_m(\ti U)\}$ of $U$ is finite up to the action of $\Stab(\ti C)$  if and only if the $T_m$'s determine   only finitely many right cosets of $\Stab(\ti C)$.
 \end{remark}

\noindent{\bf Proof of Proposition~\ref{home lift}}    Let $U$ be a free disk of bounded diameter that contains $x$ and let $\ti U$ be the lift that contains $\ti x$.  

As a first case suppose that $\ti \omega(\ti f_{\ti C}, \ti x)$ is not
an endpoint of an element of $\ti \cR$. Corollary~\ref{stays close} (1)
implies that for some $D$ and all $k \ge 0$, $\ti f_{\ti C}^k(\ti x)
\in N_D(\ti C)$ or equivalently, $\bar f_{C}^k(\bar x) \in N_D(\bar
C_{\core})$.  Since $N_D(\bar C_{\core})$ is compact, it follows that $\{\bar
f_{C}^k(\bar x) \ |\ k\ge 0\}$ intersects only finitely many lifts of $U$.
Equivalently, $\{\ti f_{\ti C}^k(\ti x): k \ge 0\}$
intersects only finitely many lifts of $ U$ to $\tiM$ up to the action
of $\Stab(\ti C)$. Lemma~\ref{recurrence condition} implies that $\bar
x $, and hence $\bar f_C^k(\bar x)$ for all $k$, is   recurrent under $\bar f_C$. Since the forward $\bar f_C$-orbit of $\bar f_C^k(\bar x)$ is eventually contained in $N_D(\bar C_{\core})$, it follows that $\bar
f_{C}^k(\bar x) \in N_D(\bar C_{\core})$ for all $k$ and hence that
$\ti f_{\ti C}^k(\ti x) \in N_D(\ti C)$ for all $k$.  Lemma~\ref{omega
  condition} applied to $\ti f_{\ti C}^{-1}$ implies that $\ti C$ is an $\alpha$-domain for $\ti x$ and hence a  home domain for $\ti x$.

We assume now that $\omega(\ti f_{\ti C}, \ti x)$ is an endpoint of $\ti
\sigma \in \ti \cR$ and that $\ti C_1$ and $\ti C_2$ are the two
domains that contain $\ti \sigma$ in their frontier.  We will treat
$\ti C_1$ and $\ti C_2$ symmetrically and prove that the proposition
holds for $\ti C = \ti C_1$ and $\ti C = \ti C_2$.    Denote $\ti f_{\ti
  C_1}$ by $\ti f_1$ and $\ti f_{\ti C_2}$ by $\ti f_2$.  When near
cycles are defined with respect to $\ti f_i$ we refer to them as $\ti
f_i$-near cycles.  Let $S$ be a root-free covering translation with
axis $\ti \sigma$.  Corollary~\ref{stays close} (2) implies that $\ti
f_1^k(\ti x), \ti f_2^k(\ti x) \in N_{D}(\ti C_1 \cup \ti C_2)$ for
some $D$ and all $k \ge 0$.  We may assume without loss that $\ti U
\subset N_{D}(\ti C_1) \cap N_{D}(\ti C_2)$.

After interchanging  $\ti C_1$ with $\ti C_2$ if necessary, we may assume by Lemma~\ref{moving on} that  $\alpha(\ti f_1, \ti x)$ is  an endpoint of  $\ti \sigma$.  Lemma~\ref{near cycle}  implies that every $\ti f_1$-near cycle  $T \in \Stab(\ti C_1)$ for a point in the $\ti f_1$-orbit of $\ti x$  is an iterate of $S$.      We will apply this as follows.  If $T_1$ and $T_2$ are $\ti f_1$-near cycles for $\ti x$ and if $T_1 T_2^{-1}$ (which is a near cycle for a point in the $\ti f_1$-orbit of $\ti x$)   is an element of $\Stab(\ti C_1)$ then  $T_1 T_2^{-1}$ is an iterate of $S$.  In particular, if   $T_1$ and $T_2$ determine the same right coset of $\Stab(\ti C_1)$ then they also determine the same right coset of $\Stab(\ti C_2)$,  



Let $\U_i$ be the set of lifts  of $U$ that intersect  $N_{D}( \ti C_i)$ and contain $\ti f_2^k(\ti x) $ for some $k \ge 0$.    To prove that $\bar x$ is $\bar f_2$-birecurrent it suffices by Lemma~\ref{recurrence condition} to prove that $\U_1 \cup  \U_2$ is finite up to the action of $\Stab(\ti C_2)$.    As above, the compactness of    $N_D(\bar C_{i_{\core}})$ implies that   $\U_i$ is finite up to the action of $\Stab(\ti C_i)$.   

Each element of $\U_i$ has the form $T(\ti U)$ for some covering translation $T$; let  $\cT_i$ be the set of all such $T$.      Each $T_m \in \cT_1$ is an $\ti f_2$-near cycle for $\ti x$.   Since $\ti f_2$ and $\ti f_1$ differ by an iterate of $S$,  there exists $j_m$ such that $S^{j_m}T_m$ is an $\ti f_1$-near cycle for $\ti x$.   Since $\U_1 =\{T_m(\ti U)\}$    is finite up to the action of $\Stab(\ti C_1)$, Remark~\ref{cosets} implies that $\{ T_m\}$, and hence $\{S^{j_m}T_m\}$,  determine only finitely many right cosets of $\Stab(\ti C_1)$.   As observed above, this implies that  $\{S^{j_m}T_m\}$, and hence  $\{T_m\}$,  determine  only finitely many right cosets of $\Stab(\ti C_2)$.     Lemma~\ref{recurrence condition} and a second application of Remark~\ref{cosets}  complete the proof that   $\bar x$ is $\bar f_2$-birecurrent.    



Having established that $\bar x$ is recurrent for $\bar f_2^{-1}$,  there exists $m_j \to \infty$ and $T'_j \in \Stab(\ti C_2)$ such that $\ti f_2^{-m_j}(\ti x) \in T_j'(\ti U)$.   Since $\ti U$ has bounded diameter, the distance between   $\ti f_2^{-m_j}(\ti x)$ and $T'_j(\ti x)$ is  bounded independently of $j$.  It follows that $T'_j(\ti x) \to \alpha(\ti f_2, \ti x)$.  Lemma~\ref{peripheral curve} implies that each $T'_j$     is  an iterate of $S$.  We conclude that $\alpha(\ti f_2, \ti x) $ is  an endpoint of the axis  $\ti \sigma$ of $S$. This completes the proof for $\ti C_2$. 


 Now that we have established that  $\alpha(\ti f_2, \ti x)$ is  an endpoint of  $\ti \sigma$, this same argument can be applied to $\ti C_1$. 
 \qed

 \begin{lemma}\label{not home} \begin{enumerate} \item If $\ti C$ is not a home domain for $\ti y \in \ti B(f)$ then $\alpha(\ti f_{\ti C}, \ti y)$ and $\omega(\ti f_{\ti C}, \ti y)$ are both endpoints of the component of $\partial \ti C$ that is closest to the home domain for $\ti y$. 
\item  If $\ti y \in \ti B(f)$, $\ti C$ is any domain  and    either $\alpha(\ti f_{\ti C} \ti y)$ or $\omega(\ti f_{\ti C}, \ti y)$ is an endpoint of a frontier component $\ti \sigma$ of $\ti C$ then both $\alpha(\ti f_{\ti C} \ti y)$ and $\omega(\ti f_{\ti C}, \ti y)$ are  endpoints of   $\ti \sigma$. \end{enumerate}
\end{lemma}

\proof Item  (1) follows from  the existence of a home domain for $\ti y$,    Lemma~\ref{moving on}  and  the obvious induction argument on the number of domains that separate $\ti C$ from a home domain for $\ti y$.  Item (2) follows from (1) if $\ti C$ is not a home domain for $\ti y,$ and from  Proposition~\ref{home lift} otherwise.
\endproof

We conclude this section by strengthening Corollary~\ref{stays close}.   
 
\begin{cor}   \label{symmetric  close}  Suppose that $\ti x \in \B(\ti f)$ and that  $D_1$ is the constant of Lemma~\ref{bounded distance}.
\begin{enumerate}
\item   If $ \ti C$ is the unique home domain for $\ti x$  then    $\ti f_{\ti C}^n(\ti x) \in N_{D_1}(\ti C)$ for all  $n$. 
\item  If   $\ti C_1$ and $\ti  C_2$  are home domains for   $\ti x$ with intersection $\ti \sigma \in \ti \cR$
 then $\ti f_{\ti C}^n(\ti x) \in N_{D_1}(\ti C_1 \cup \ti C_2)$ for all  $n$. 
 \item \label{item:near the punctures}  If  $\dist(\ti x, \ti \cR) >D_1$ then the domain that contains $\ti x$ is a home domain for $\ti x$.  
\end{enumerate}
\end{cor} 

\proof  Suppose  that $\ti C$ is the unique home domain for $\ti x$ and that  $\ti f_{\ti C}^n(\ti x) \not \in N_{D_1}(\ti C)$.      Choose $\epsilon$ less than the distance from  $\ti f_{\ti C}^n(\ti x) $ to $N_{D_1}(\ti C)$.  Proposition~\ref{home lift} implies that $\bar x \in \B(\bar f_{C})$ and hence that there exist arbitrarily large $k$ and $S_k \in \Stab(\ti C)$ such that the distance from $\ti f_{\ti C}^k(\ti x)$   to $S_k\ti f_{\ti C}^n(\ti x)$ is less than $\epsilon$.  Since $S_k$ preserves distance to $\ti C$,    $\ti f_{\ti C}^k(\ti x) \not \in N_{D_1}(\ti C)$.  This contradicts Corollary~\ref{stays close} and so completes the proof of (1).

Assuming the notation of (2), suppose that  $\ti f_{\ti C}^n(\ti x) \not \in N_{D_1}(\ti C_1 \cup \ti C_2)$.   There is no loss in assuming  that $\ti f_{\ti C}^n(\ti x)$ is closer to $\ti C_1$ than $\ti C_2$.    If $S_k \in \Stab(\ti C_1)$ then $S_k\ti f_{\ti C}^n(\ti x)$ is closer to $\ti C_1$ than $\ti C_2$ and has distance greater than $D_1$ from $\ti C_1$.  The argument given for (1) therefore applies in this context as well.

If $\dist(\ti x, \ti \cR) >D_1$ and $\ti C$ is a domain that does not contain $\ti x$ then $\ti x \not \in  N_{D_1}(\ti C)$.  Item (3) therefore follows from items (1) and (2).
\endproof
 
\section{Some Results when  $\cR= \emptyset$}


We say that a  point $P \in \sinfty$ {\em projects to} a  puncture $c$ in $M$ if  some  (and hence every) ray in $\tiM$ that converges to $P$ projects to a ray in $M$ that converges to $c$.  Note that if $P$ is the  fixed point of a parabolic covering translation then $P$ projects to an isolated puncture in $M$.

\begin{defn}  \label{defn:iterates about 2} Suppose that $\ti C$ is a home domain for  a lift $\ti x$ of $x \in M$ and that $\alpha(\ti f_{\ti C}, \ti x) =  \omega(\ti f_{\ti C}, \ti x) =P$.  If there is a parabolic covering translation $T_P$ that fixes $P$  such that  every near cycle $S \in \Stab(\ti C)$ for  every  $\ti f_{\ti C}^k(\ti x)$ is a positive iterate of $T_P$  then we  say that {\em $\ti x$ tracks $P$}.   If $c$ is the isolated puncture in $M$ to which $P$ projects, then we also say that   {\em $x$ rotates about $c$}.  (The latter is well defined because (Corollary~\ref{canonical lift})  $\ti C$ is the unique home domain for $\ti x$.)
\end{defn}

 \begin{defn}      If $\ti C$ is a home domain for   $\ti x \in \tiM$  and $\alpha(\ti f_{\ti C}, \ti x)\ \ne \omega(\ti f_{\ti C}, \ti x)$ let $\ti \gamma(\ti x)$  be the oriented geodesic  with endpoints  $\alpha(\ti f_{\ti C}, \ti x)$ and $\omega(\ti f_{\ti C}, \ti x)$.  Corollary~\ref{pretracking} implies that $\ti \gamma(\ti x)$ is independent of the choice of $\ti C$  in the case that $\ti x$ has two home domains.   Let   $\gamma(x) \subset M$ be the unoriented geodesic that is the projected image   of $\ti \gamma(\ti x)$.  We say that   {\em $\ti x$ tracks $\ti \gamma(\ti x)$ 
 } and that    {\em $x$ tracks $\gamma(x)$}.    Note that $\gamma(x)$ is independent of the choice of lift $\ti x$ and the choice of home domain for $\ti x
$; the latter would not be true if we imposed an orientation on $\gamma(x)$.      
 \end{defn}

If   $f$ is isotopic to the identity then $\cR = \emptyset$, $\tiM$  is the only   domain  and there is a lift $\ti f_{\tiM}$  that commutes with all covering translations and fixes every point in $\sinfty$.  In the notation of Defintion~\ref{bar construction},   $\bar M = M$   and    $\bar f: \bar M \to \bar M$  is just  $f : M \to M$.  We sometimes refer to  $\ti f_{\tiM}$ as {\em the preferred lift} of $f$ and sometimes drop the $\tiM$ subscript.

In this section we import some results from \cite{fh:periodic} that apply to the case that $f$   is isotopic to the identity.

 \begin{lemma}  \label{import1}  Assume that $f$ is isotopic to the identity and periodic point free.  Suppose that $x \in \B(f)$,  that $\ti f :\tiM \to \tiM$ is the preferred lift to the universal cover, that   $\ti x$ is a lift of $x$   and that $\alpha(\ti f, \ti x)\ = \omega(\ti f, \ti x) =P$.    Then $P$ projects to an isolated puncture $c$ and $x$ rotates about $c$. 
\end{lemma}

\proof  This is Lemma 11.2 of  \cite{fh:periodic}.
\endproof

   \begin{lemma}  \label{import2}  Assume that $f$ is isotopic to the identity and periodic point free.   If  $x \in \B(f)$ tracks $\gamma(x)$ then  $\gamma(x)$ is   a simple closed curve.     If in addition $y \in \B(f)$ tracks $\gamma(y)$  then $\gamma(x)$ and $\gamma(y)$ are either disjoint or equal.
\end{lemma}

\proof  All references in this proof are to  \cite{fh:periodic}.  By  Lemma 10.2(1)  and Lemma 11.6(2),   $\gamma(x)$ is simple and birecurrent.   If $\gamma(x)$ is not a closed curve then by Lemma 11.6(3) there is a simple closed geodesic $\alpha$ such that $\alpha$ and $\gamma(x)$ intersect transversely and non-trivially  and such that with respect to given orientations on $\alpha$ and $\gamma(x)$ all intersections have the same intersection number.  This can not happen on a genus zero surface since $\alpha$ must separate.  Thus $\gamma(x)$ is a simple closed curve.  The second assertion of the lemma follows from  Lemma 10.2(2).
\endproof

 To make use of these lemmas in our present context we use the following consequence of  Lemmas~\ref{no doubling}, \ref{lemma:fitted family} and  \ref{reducing line}.
 
 \begin{lemma}  \label{replacement} Assume that  $h = f_\sigma: A_\sigma \to A_\sigma$  [resp. $f_\sigma :A^c_\sigma \to A^c_\sigma$] is as in Defnition~\ref{annulus cover} and  that $W$  is as in Definition~\ref{brouwer subsurface}.      If $\cT$   is a fitted family that does not disappear under iteration then there exists       an 
  element  $[\tau] \in \cT$ such that   $h_\#([\tau])\cap W = \{[\tau]\}$. 
\end{lemma}
 
The proofs of Lemmas~\ref{import1} and \ref{import2} in
\cite{fh:periodic} quote Lemmas 10.2, 11.2 and 11.6 of
\cite{fh:periodic}. The hypothesis that $f$ is periodic point
free is only directly applied in the proofs of those three lemmas to
prove Lemma~10.8 of \cite{fh:periodic}, whose conclusion is a
weaker version of the conclusion of Lemma~(\ref{replacement})
above.  Thus in each place that Lemma~10.8 of
\cite{fh:periodic} is applied in proving Lemmas~(\ref{import1}) and
(\ref{import2}) above we can replace it with
Lemma~(\ref{replacement}).  This justifies the following lemma.
 
 \begin{lemma}\label{still true}  Lemmas~\ref{import1} and \ref{import2} remain true if the hypothesis that $f$ is periodic point free is replaced by the hypothesis that the topological entropy of $F$ is zero.
\end{lemma}

\begin{remark}  The proof of Lemma~10.8 of \cite{fh:periodic} is a pointer to the proof of Theorem 5.5 of \cite{han:fpt}.    That theorem has three parts.  The first two state that no element of $\RH$ doubles.  The third uses the first two to prove the existence of $[\tau]$ as in Lemma~\ref{replacement}.  Thus our dividing the argument into  Lemmas~\ref{no doubling} and  \ref{lemma:fitted family} follows the original proof.
\end{remark}

\section{Two compactifications}\label{sec: fh-periodic}

We now return to the general case, allowing the possibility that $\cR = \emptyset$.  Our goal in this section is to extend Lemma~\ref{still true} to the case that $\cR \ne \emptyset$.  Our strategy is to apply  Lemma~\ref{still true} to $\bar f_{\bar C}$ which is isotopic to the identity.  Before doing so, we must address the fact that  if $\cR \ne \emptyset$ then two different compactifications of the universal cover of $\bar C$ are being used.

In the {\em extrinsic compactification}, the universal cover of $\bar C$ is  metrically  identified with the universal cover $\ti M$  of $M$, which is metrically identified with $\tiM$   and is compactified by    $\sinfty$.  The covering translations of the universal cover of $\bar C$ are identified with the subgroup $\Stab(\ti C)$ of covering translations of the universal cover of $M$;  the closure in $\sinfty$ of the fixed points of the elements of $\Stab(\ti C)$  is a Cantor set $K$ whose 
convex hull projects to $C_{core} \subset \bar C$. 
 
   In the {\em intrinsic  compactification}, $\bar C$ is viewed without regard to $M$ and is equipped with a hyperbolic structure in which the ends   corresponding to the components of $\partial C$ are  cusps.  The universal cover of $\bar C$ is then metrically identified with $\tiM$ and compactified with $\sinfty$.  In this case,   the set of fixed points of  covering translations is dense in $\sinfty$.    Topologically the intrinsic  compactification of the universal cover is obtained from the extrinsic compactification by collapsing the closure of  each   component  of $\sinfty \setminus K$ to  a point.  
   
   We have defined $\bar C$ using the extrinsic metric so that  geodesics in $\bar C_{\core}$ correspond exactly to geodesics in $C \subset M$. If  one considers $\bar f: \bar C \to \bar C$ as a homeomorphism of a punctured surface without reference to $M$, as one should do when applying   results from \cite{fh:periodic}, then the intrinsic metric   is used.  To help separate the two,   write  $g: N \to N$ for  $\bar f: \bar C \to \bar C$ when $\bar C$ has the intrinsic metric.   Since $g$ is isotopic to the identity there is a preferred lift 
$\ti g : \ti N \to \ti N$ to the universal cover that commutes with all covering translations.  The \lq identity map\rq\   $p: \ti M \to \ti N$ conjugates   $\ti f_{\ti C} : \ti M \to \ti M$ to   $\ti g : \ti N \to \ti N$.      The homeomorphism $p$, which is not an isometry, extends over the compactifying circles but not by a homeomorphism; it collapses the closure of  each   component  of $\sinfty \setminus K$ to  a point.   In particular, $p|_K$   identifies a pair of points if and only if they bound a  component of $\partial \ti C$.    

Let   $T(\ti N)$ be the group of covering translations of  $\ti N$  and let $B : \Stab(\ti C) \to T(\ti N)$ be the bijection induced by  $p$.   The  following properties are satisfied by $S,S' \in  \Stab(\ti C)$.  
\begin{description}
\item [(a)]  If $S $ is parabolic then $B(S)$ is parabolic.
\item  [(b)]   If $S $ is hyperbolic then $B(S)$ is hyperbolic unless the axis of $S$ is a component of $\partial \ti C$, in which case it is parabolic.
\item  [(c)]   If  $S^\pm= \{\alpha(\ti f_{\ti C}, \ti x), \omega(\ti f_{\ti C}, \ti x)\}$ 
 then $B(S)^\pm = \{\alpha(\ti g, \ti x), \omega(\ti g, \ti x)\}$.
 \item  [(d)]   If $S$ and $B(S)$ are hyperbolic then the axis of $S$ projects to a simple closed curve  if and only if  the axis of $B(S)$ projects to a simple closed curve.
 \item  [(e)]    If $S,S',B(S)$ and $B(S')$ are hyperbolic then the axes of $S$ and $S'$ are equal or disjoint if and only if the axes of $B(S)$ and $B(S')$ are equal or disjoint. 
\end{description}



 \begin{lemma}  \label{isolated puncture}Suppose that $x \in \B(f)$,  that $\ti C$ is a home domain for  a lift $\ti x$  and that $\alpha(\ti f_{\ti C}, \ti x)\ = \omega(\ti f_{\ti C}, \ti x) =P$.    Then $P$ projects to an isolated puncture $c$ and $x$ rotates about $c$. 
\end{lemma}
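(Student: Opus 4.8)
The plan is to analyze the near cycles of $\ti x$ with respect to $\ti f_{\ti C}$ and show they are forced to be parabolic with a common fixed point $P$. First I would recall that since $x \in \B(f)$ is in particular free disk recurrent, $\ti x$ has at least one near cycle $T \in \Stab(\ti C)$; by Lemma~\ref{near cycle}, $\alpha(\ti f_{\ti C}, \ti x)$ and $\omega(\ti f_{\ti C}, \ti x)$ cannot both lie in the same component of $\sinfty \setminus \{T^+, T^-\}$. Since here $\alpha(\ti f_{\ti C}, \ti x) = \omega(\ti f_{\ti C}, \ti x) = P$, this is only possible if $P \in \{T^+, T^-\}$. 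The first main step is to rule out the possibility that $T$ is hyperbolic. If $T$ were hyperbolic with axis $\ti \tau$ and $P = T^+$ (say), then — as in the proof of Lemma~\ref{peripheral curve} — using that $\bar x$ is $\bar f_C$-recurrent (which holds by Proposition~\ref{home lift}, since $\ti C$ is the home domain of $\ti x \in \ti\B(f)$) one produces conjugates $T_i = S_i T S_i^{-1}$ that are also near cycles for points $\ti f_{\ti C}^{n_i}(\ti x) \to P$, and one derives a contradiction from the fact that, since $P = \alpha(\ti f_{\ti C},\ti x) = \omega(\ti f_{\ti C},\ti x)$, the point $P$ is approached by both the forward and backward orbit; the axes $A_i$ must then accumulate only at $P$, which (being disjoint from or equal to $\ti\tau$, and with $P$ not an endpoint of the relevant geodesics) contradicts Lemma~\ref{near cycle} exactly as in Lemma~\ref{peripheral curve}. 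So $T$ must be parabolic with unique fixed point $P$.

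Next I would show $P$ projects to an isolated puncture. Since $T$ is parabolic with fixed point $P$, $T$ is an iterate of a root-free parabolic $T_P$, and $P$ is the unique fixed point of $T_P$. A root-free parabolic covering translation in a surface group corresponds to a loop around a puncture, so any ray in $\ti M$ converging to $P$ projects to a ray converging to a puncture $c$ of $M$; the fact that $T_P$ is root-free in $\Stab(\ti C)$ (using Lemma~\ref{basic lemma 2} to see $\ti f_{\ti C}$ commutes with $T_P$) identifies $c$ as an isolated end, and by the discussion preceding this lemma about the identification of $\sinfty$ with ends, $P$ projects to the isolated puncture $c$.

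Finally, to see that $x$ rotates about $c$ in the sense of Definition~\ref{defn:iterates about 2}, I must check that \emph{every} near cycle $S \in \Stab(\ti C)$ for every $\ti f_{\ti C}^k(\ti x)$ is a positive iterate of $T_P$. By the argument above applied to the point $\ti f_{\ti C}^k(\ti x)$ in place of $\ti x$ (which has the same $\alpha$ and $\omega$ limits, namely $P$), every such $S$ is parabolic with fixed point $P$, hence an integer power of $T_P$. That the power is positive is where the direction of the argument matters: using $\lim \ti f_{\ti C}^{k+m}(\ti x) = \omega(\ti f_{\ti C},\ti x) = P$ and $\ti f_{\ti C}^m(\ti x) \in S(\ti U)$ with $\ti U$ a lift of a free disk, one compares the "side" of $P$ from which $S(\ti U)$ approaches to the side the orbit approaches and concludes $S = T_P^j$ with $j > 0$ (consistency of the sign across all near cycles is forced because two near cycles for the orbit differing in sign would, via $T_1 T_2^{-1}$, give a near cycle whose powers of $T_P$ cancel or reinforce in a way incompatible with both halves of the orbit converging to $P$).

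The main obstacle I expect is the hyperbolic-case exclusion: ruling out $T$ hyperbolic requires carefully adapting the accumulation argument of Lemma~\ref{peripheral curve} to the degenerate situation $\alpha = \omega = P$, where the geometric picture (axes $A_i$ of conjugate near cycles accumulating at $P$) must be reconciled with Lemma~\ref{near cycle} even though $P$ is now a single point rather than one of two distinct endpoints; one has to be careful that "$P$ is not an endpoint of $A_i$" for large $i$ and that $A_i$ fails to separate $\alpha(\ti f_{\ti C},\ti x)$ from $\omega(\ti f_{\ti C},\ti x)$ — which is vacuous since they coincide, so the contradiction must instead be extracted from the displacement of a neighborhood of $P$, exactly the mechanism used repeatedly in Lemma~\ref{peripheral curve}.
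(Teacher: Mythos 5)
Your approach is genuinely different from the paper's. The paper reduces immediately to Lemma 11.2 of \cite{fh:periodic}: when $\cR = \emptyset$ this is a direct citation, and when $\cR \ne \emptyset$ it passes through the intrinsic compactification of Section~\ref{sec: fh-periodic}, applies that lemma to $\ti g : \ti N \to \ti N$ to conclude that every near cycle for every point in the orbit is a positive iterate of a single root-free element, and then derives the contradiction in the boundary-puncture case by comparing forward near cycles ($S^+_i(\ti x) \to P$) with backward near cycles ($S^-_i(\ti x) \to P$), which would have to be positive and negative iterates of a hyperbolic $T$ simultaneously. You instead try to rebuild this classification from scratch in the extrinsic picture by adapting the accumulation-of-axes argument from Lemma~\ref{peripheral curve}.

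There is a genuine gap in your exclusion of the hyperbolic case, and you flag the obstacle without actually resolving it. Suppose $T$ is a hyperbolic near cycle with $P = T^+$, and pass to the conjugates $T_i = S_iTS_i^{-1}$ with $S_i(\ti x) \to P$. After a subsequence the axes $A_i = S_i(A_T)$ are either all distinct or all equal. The all-distinct subcase is fine: the $A_i$ accumulate at some $Q$, and if $Q \ne P$ a neighborhood of $P$ is displaced, while if $Q = P$ then for large $i$ the point $P = \alpha = \omega$ is not an endpoint of $A_i$, so $\alpha$ and $\omega$ trivially lie in the same component of $\sinfty \setminus \{A_i^\pm\}$, contradicting Lemma~\ref{near cycle}. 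But if the $A_i$ are all equal to $A_T$, then each $S_i$ lies in the centralizer of $T$ in $\Stab(\ti C)$, hence $S_i \in \langle T\rangle$ and $T_i = T$ for every $i$. In this subcase the displacement mechanism collapses completely: $T$ fixes $P$, so no neighborhood of $P$ is moved off itself, and your final claim that "the contradiction must instead be extracted from the displacement of a neighborhood of $P$" is precisely the mechanism that is unavailable here. (This subcase never arises in the proof of Lemma~\ref{peripheral curve} itself, because there $\alpha \ne \omega$ and $\omega$ is an endpoint of $\ti\sigma \ne A_T$, hence not a fixed point of $T$.) Closing this hole requires the sign information on near cycles that Lemma 11.2 of \cite{fh:periodic} supplies — namely that all near cycles along the orbit are \emph{positive} iterates of one root-free element — so that $S^+_i = T^{l_i}$ with $l_i \to +\infty$ forces $P = T^+$ while $(S^-_i)^{-1}$ being a near cycle forces $S^-_i = T^{-k_i}$ with $k_i \to +\infty$ and hence $P = T^-$, the desired contradiction. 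Your proposal asserts the positive-iterate conclusion only in its last paragraph, after the hyperbolic case has supposedly been excluded, so the argument as written is circular at exactly the step where the content lies.
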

      
\proof 


Since $\alpha(\ti f_{\ti C}, \ti x)\ = \omega(\ti f_{\ti C}, \ti x)
=P$, it follows that $\alpha(\ti g, p(\ti x))= \omega(\ti g, p(\ti x))
=p(P)$.  By Lemma~\ref{still true}, \ $p(P)$ projects to an isolated
puncture $c'$ in $\bar C$ and there is a parabolic covering
translation $T'$ that fixes $p(P)$ such every near cycle for every
point in the orbit of $p(\ti x)$ is a positive iterate of $T'$.
 
If $T \in \Stab(\ti C)$ is the covering translation  corresponding to $T'$ then every near cycle in $\Stab(\ti C)$  for every point in the orbit of $\ti x$ is a positive iterate of $T$.  It suffices to show that $T$ is parabolic.  Let  $U$ be a free disk for $x$ with compact closure and let $\ti U$ be the lift that contains $\ti x$.   Since (Proposition~\ref{home lift}) $\bar x \in \B(\bar f)$, there exist $n_i, a_i,m_j,b_j \to \infty$ such that $\ti f_{\ti C}^{n_i}(\ti x) \in T^{a_i}(\ti U)$ and $\ti f_{\ti C}^{-m_j}(\ti x) \in T^{-b_j}(\ti U)$.  It follows that  $P = \alpha(\ti f_{\ti C}, \ti x) =T^-$ and $P = \omega(\ti f_{\ti C}, \ti x) =T^+$.  Thus   $T$ is parabolic and we are done.  
\endproof

   \begin{lemma}  \label{disjoint scc}  If  $x \in \B(f)$ tracks $\gamma(x)$ then  $\gamma(x)$ is   a simple closed curve.     If in addition $y \in \B(f)$ tracks $\gamma(y)$  then $\gamma(x)$ and $\gamma(y)$ are either disjoint or equal.
\end{lemma}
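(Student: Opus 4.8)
The plan is to reduce the lemma to the corresponding statement for $\bar f_C:\bar C\to\bar C$ — which is isotopic to the identity and to which, by Section~\ref{sec: fh-periodic}, the results of \cite{fh:periodic} apply — and then transfer the conclusions back to $M$ through the map $p:\ti M\to\ti N$ of Section~\ref{sec: fh-periodic}. Throughout I would fix a lift $\ti x$ of $x$ and a home domain $\ti C$, set $\alpha=\alpha(\ti f_{\ti C},\ti x)$ and $\omega=\omega(\ti f_{\ti C},\ti x)$ (so $\alpha\neq\omega$, since $x$ tracks $\gamma(x)$), and recall that $\bar x\in\B(\bar f_C)$ by Proposition~\ref{home lift}.

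For the first assertion I would split into two cases. If $\alpha$ and $\omega$ are the two endpoints of a component $\ti\sigma$ of $\partial\ti C$, then $\ti\gamma(\ti x,\ti C)$, being the geodesic with those endpoints, equals $\ti\sigma$, so $\gamma(x)$ is the element of $\cR$ covered by $\ti\sigma$ — in particular a simple closed curve. Otherwise Proposition~\ref{home lift} forces neither $\alpha$ nor $\omega$ to be an endpoint of a component of $\partial\ti C$, hence neither lies on a closed arc collapsed by $p|_{\sinfty}$; consequently $p(\alpha)=\alpha(\ti g,p\ti x)\neq\omega(\ti g,p\ti x)=p(\omega)$, so $p(\bar x)\in\B(\bar f_C)$ tracks an honest geodesic for $g$ in $N$. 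By the results of \cite{fh:periodic} quoted in Section~\ref{sec: fh-periodic}, this geodesic is a simple closed geodesic $\delta\subset N$; its primitive class is a hyperbolic covering translation of $\ti N$ whose axis has endpoints $p(\alpha),p(\omega)$. Let $T\in\Stab(\ti C)$ be the corresponding covering translation of $\ti M$. Since $p$ conjugates the $\sinfty$-actions and $p^{-1}(p(\alpha))=\{\alpha\}$, $p^{-1}(p(\omega))=\{\omega\}$, the element $T$ fixes $\alpha$ and $\omega$; being a covering translation of $\ti M$ with two distinct fixed points on $\sinfty$, it is hyperbolic with axis the geodesic $\alpha\omega=\ti\gamma(\ti x,\ti C)$. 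Hence $\gamma(x)$ is a closed geodesic, and it is simple by Corollary~\ref{simple}.

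For the second assertion I would argue by contradiction: suppose $\gamma(x)\neq\gamma(y)$ are simple closed geodesics in $M$ that meet; then they meet transversally, so some lift $\ti\gamma(\ti x',\ti C_{x'})$ crosses some lift $\ti\gamma(\ti y',\ti C_{y'})$ in $\ti M$ (every lift of $\gamma(z)$ has this form, since $\ti\gamma(T\ti z,T\ti C)=T(\ti\gamma(\ti z,\ti C))$). Each lift has both endpoints in the closure of the convex domain containing it. If $\gamma(x)$ or $\gamma(y)$ is an element of $\cR$, its lifts lie in $\ti\cR$ and so are either boundary components of, or disjoint from, the other domain, while the other lift lies in the \emph{interior} of its domain (its endpoints are not endpoints of any $\partial\ti C$-component); so they cannot cross — contradiction. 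Hence both $\gamma(x),\gamma(y)$ come from the second case above. If $\ti C_{x'}\neq\ti C_{y'}$, the two lifts lie in distinct components of $\ti M\setminus\ti\cR$ and any crossing would force one of them to meet the boundary of its domain transversally, which is impossible; so $\ti C_{x'}=\ti C_{y'}$. Applying $p$: none of the four endpoints lies on a collapsed arc, so their images are four distinct points, and since $p|_{\sinfty}$ is monotone it preserves linking; thus the simple closed geodesics tracked by $p(\ti x')$ and $p(\ti y')$ in $N$ cross, contradicting the disjointness clause of the cited results of \cite{fh:periodic}.

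The step I expect to be the main obstacle is managing the passage between the extrinsic compactification (in which $\gamma(x)$ and the endpoint data $\alpha,\omega$ live) and the intrinsic one (in which the \cite{fh:periodic} structure results are stated): one must check that the degenerate configurations of $\alpha,\omega$ correspond exactly to $\gamma(x)\in\cR$, that $p|_{\sinfty}$ neither collapses nor identifies the non-degenerate endpoints (so that "simple closed geodesic in $N$" pulls back to "closed geodesic in $M$" and "crossing" pulls back to "crossing"), and that convexity of the domains prevents a $\ti\gamma(\ti z,\ti C)$ of the non-degenerate type from meeting $\partial\ti C$. Once these are in place, the rest is bookkeeping with Proposition~\ref{home lift}, Corollary~\ref{simple}, and the conjugacy $p$.
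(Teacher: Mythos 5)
Your proof is correct and takes essentially the same route as the paper's: split off the degenerate case in which $\alpha(\ti f_{\ti C},\ti x)$ and $\omega(\ti f_{\ti C},\ti x)$ bound a component of $\partial\ti C$ (so $\gamma(x)\in\cR$), and in the non-degenerate case transfer via the compactification-change map $p$ to the isotopic-to-identity map $\bar f_C$ on $N$, where Lemmas~10.2 and 11.6 of \cite{fh:periodic} apply. The paper compresses the reduction to the one-line assertion that one may take $\ti C$ to be a home domain for lifts of both $x$ and $y$ and then simply cites \cite{fh:periodic}; your case analysis (ruling out crossings between lifts with different home domains, checking that $p$ is injective on the non-degenerate endpoints so that linking is preserved, and recovering closedness of $\gamma(x)$ from the induced hyperbolic translation together with Corollary~\ref{simple}) supplies exactly the bookkeeping that the paper's terse WLOG elides.
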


\proof  
We may assume without loss that the axes of $\gamma(x)$ and $\gamma(y)$ are not components of $\partial   C$ because such curves are simple and do not transversely intersect any other geodesics in $C$.
 Lemma~\ref{still true} implies that the lemma holds with $\ti f_{\ti C}$ and $\ti x$ replaced by $\ti g$ and $p(\ti x)$.   Items (b), (d) and (e) above therefore complete the proof.
 \endproof



The following corollary generalizes Lemma~\ref{peripheral curve} which only applies when $\ti \gamma$ is a component of $\partial \ti C$.

\begin{cor} \label{limited near cycles} Suppose that $x\in \B(f)$,  that $\ti C$ is a home domain for  a lift $\ti x$  and that $\ti x$   tracks $\ti \gamma$.     Then   every $\ti f_{\ti C}$-near cycle $S \in \Stab(\ti C)$ for a point in the orbit of $\ti x$   is an iterate of $T_{\ti \gamma}$.  
\end{cor}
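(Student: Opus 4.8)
The plan is to reduce to two situations that are already understood: the case where $\gamma(x)$ is a reducing curve, which is essentially Lemma~\ref{peripheral curve}, and the case where it is not, which reduces to the $\cR=\emptyset$ situation for the intrinsic model $\bar f_C$. First I would record the setup. By Lemma~\ref{disjoint scc}, $\gamma(x)$ is a simple closed geodesic, so the lift $\ti\gamma=\ti\gamma(\ti x,\ti C)\subset\cl(\ti C)$ is the axis of the root-free covering translation $T_{\ti\gamma}$, and its endpoints are $\alpha(\ti f_{\ti C},\ti x)$ and $\omega(\ti f_{\ti C},\ti x)$, which lie in $\Fix(\ti f_{\ti C}|_{\sinfty})=\cl(\ti C)\cap\sinfty$. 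Since $\ti f_{\ti C}$ therefore fixes $T_{\ti\gamma}^{\pm}$, Lemma~\ref{basic lemma 2} shows $\ti f_{\ti C}$ commutes with $T_{\ti\gamma}$, and the characterization of $\ti f_{\ti C}$ by its fixed set on $\sinfty$ then gives $T_{\ti\gamma}\in\Stab(\ti C)$. If $\cR=\emptyset$ the assertion is the corresponding statement for a homeomorphism isotopic to the identity, which is contained in \cite{fh:periodic}; so assume $\cR\neq\emptyset$ and split on whether $\gamma(x)\in\cR$.

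Suppose first that $\gamma(x)\in\cR$. Then $\ti\gamma\in\ti\cR$, so $\omega(\ti f_{\ti C},\ti x)$ is an endpoint of the element $\ti\sigma:=\ti\gamma$ of $\ti\cR$; moreover $\bar x\in\B(\bar f_C)$ by Proposition~\ref{home lift}, hence $\bar x$ is $\bar f_C$-recurrent. These are exactly the hypotheses of Lemma~\ref{peripheral curve}, whose conclusion is that every near cycle $S\in\Stab(\ti C)$ for a point in the $\ti f_{\ti C}$-orbit of $\ti x$ is hyperbolic with axis $\ti\gamma$. Since $T_{\ti\gamma}$ is root-free with axis $\ti\gamma$, any such $S$ is an iterate of $T_{\ti\gamma}$, which is what we want. (Note that in this case Corollary~\ref{canonical lift} and Proposition~\ref{home lift} guarantee that $\omega(\ti f_{\ti C},\ti x)$ is an endpoint of no element of $\ti\cR$ other than $\ti\gamma$, so there is no ambiguity in the conclusion.)

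Now suppose $\gamma(x)\notin\cR$, and pass to the intrinsic model: let $g=\bar f_C:N\to N$, let $\ti g:\ti N\to\ti N$ be the lift commuting with all covering translations, and let $p:\tiM\to\ti N$ be the homeomorphism conjugating $\ti f_{\ti C}$ to $\ti g$ and carrying $\Stab(\ti C)$ onto the covering group of $\ti N$, as at the beginning of this section. Because $p$ is a homeomorphism sending free disks to free disks, and because (by Corollary~\ref{symmetric close}) the $\ti f_{\ti C}$-orbit of $\ti x$ stays in $N_{D_1}(\ti C)$ — so the free disks witnessing near cycles for points of that orbit descend to genuine free disks of $f$ in $M$ — a covering translation $S\in\Stab(\ti C)$ is a near cycle for a point in the $\ti f_{\ti C}$-orbit of $\ti x$ exactly when it is a near cycle for the corresponding point in the $\ti g$-orbit of $p\ti x$; and the property of being an iterate of $T_{\ti\gamma}$ is intrinsic to the common group. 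Since $\gamma(x)\notin\cR$, the map $p$ does not collapse the two endpoints of $\ti\gamma$ together, so $\alpha(\ti g,p\ti x)\neq\omega(\ti g,p\ti x)$ and $p\ti x$ tracks the $\ti g$-geodesic $\ti\gamma'$ joining these points; by Proposition~\ref{home lift}, $p\ti x$ lies over a birecurrent point of $g$, and $g$ is isotopic to the identity and satisfies the conclusions of Lemma~\ref{lemma:fitted family}, so the $\cR=\emptyset$ case applies to $g$ and shows that every near cycle for a point in the $\ti g$-orbit of $p\ti x$ is an iterate of $T_{\ti\gamma'}=T_{\ti\gamma}$. Transporting this back through $p$ finishes the proof. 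The only real work here is in making the near-cycle dictionary of this last paragraph fully precise (matching free disks of $f$ in $M$ with free disks of $g$ in $\bar C$ along the orbit, via Corollary~\ref{symmetric close}); the content-bearing steps are the appeal to Lemma~\ref{peripheral curve} in the first case and the appeal to \cite{fh:periodic} for $\bar f_C$ in the second.
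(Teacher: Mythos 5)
The paper proves this corollary by a single direct argument that makes no case distinction: given a near cycle $S\in\Stab(\ti C)$ for $\ti x$, it observes that $S$ commutes with $\ti f_{\ti C}$, so $S(\ti\gamma)$ is the oriented geodesic tracked by $\ti y = S(\ti x)$; Lemma~\ref{disjoint scc} forces $\ti\gamma$ and $S(\ti\gamma)$ to be disjoint or equal; if equal one is done, and if disjoint it combines Lemma~8.7(2) of \cite{fh:periodic} (parallelism from a shared free disk) with the genus-zero fact that a separating anti-parallel translate $S'(\ti\gamma)$ exists, contradicting Lemmas~8.9 and 8.10 of \cite{fh:periodic}. Your proposal instead splits into three cases and tries to \emph{reduce} to a citation rather than argue directly.

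The case $\gamma(x)\in\cR$ is handled correctly: $\omega(\ti f_{\ti C},\ti x)$ is then an endpoint of $\ti\gamma\in\ti\cR$, and Proposition~\ref{home lift} gives $\bar x\in\B(\bar f_C)$, so Lemma~\ref{peripheral curve} applies and yields exactly the conclusion. That part is a genuine alternative route to a special case.

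The gap is in the other two cases. Both the $\cR=\emptyset$ case and the $\gamma(x)\notin\cR$ case (after passing to the intrinsic model $g=\bar f_C$, which is isotopic to the identity with empty reducing system) hinge on the unverified assertion that the statement of this corollary in the isotopic-to-identity setting is ``contained in \cite{fh:periodic}.'' That is not what the paper cites. For the two neighboring results whose $\cR=\emptyset$ versions \emph{do} live in \cite{fh:periodic}, the paper says so explicitly (Lemma~\ref{isolated puncture} cites Lemma~11.2 of \cite{fh:periodic}; Lemma~\ref{disjoint scc} cites Lemmas~10.2 and 11.6). For Corollary~\ref{limited near cycles} the paper cites only the ingredients 8.7(2), 8.9, 8.10 and supplies the argument itself, uniformly in all cases. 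So your reduction step rests on a claimed citation that is not in evidence; to fill it you would still need to carry out essentially the argument the paper gives, at which point the case split and the passage to the intrinsic model are unnecessary detours. A smaller point: your ``dictionary'' between near cycles for $\ti f_{\ti C}$ and for $\ti g$ is stated as an equivalence, but only one direction is clear (a free disk for $f$ in $M$ lifts to a free disk for $g$ in $\bar C$; the converse fails in general). That direction happens to be the one you need, so this is a wording issue rather than a fatal error, but it should be stated as an implication.
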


\proof     We make use of the following consequences of Lemma~\ref{disjoint scc} above and Lemmas  8.7(2), 8.9 and 8.10 of \cite{fh:periodic}.
\begin{enumerate}
\item Suppose that $\ti x, \ti z$ have $\ti C$ as  a home domain and that $\ti \gamma(\ti x)$ and $\ti \gamma(\ti y)$  are disjoint and anti-parallel.  Then $\ti x$ and $\ti z$ are not contained in any free disk for $\ti f_{\ti C}$.
\item    Suppose that $\ti x,\ti y,  \ti z$ have $\ti C$ as  a home domain, that  $\ti \gamma(\ti y)$ separates $\ti \gamma(\ti x)$  and $\ti \gamma(\ti z)$ and is anti-parallel to both lines.   Then $\ti x$ and $\ti z$ are not contained in any free disk for $\ti f_{\ti C}$.
\end{enumerate}

 We may assume without loss that $S$ is a near cycle for $\ti x$.   There exist $m > 0$ and  a lift $\ti U$ of a free disk $U \subset M$
such that $\ti x \in \ti U$ and  $\ti f_{\ti C}^m(\ti x) \in S(\ti U)$.  Let $\ti z = S(\ti x)$.  Since $S \in \Stab(\ti C)$, $S$ commutes with $\ti f_{\ti C}$.   Thus  $\ti C$ is a home domain for $\ti z$ and $S (\ti \gamma) = \ti \gamma(\ti z) \subset \ti C$.    Lemma~\ref{disjoint scc} implies that $\ti \gamma$ and $S(\ti \gamma)$ are disjoint or equal (up to perhaps a change of orientation).   In the latter case we are done so we  assume the former and argue to a contradiction.  By (1),  $\ti \gamma$ and $S(\ti \gamma)$ are parallel.  Since $M$ has genus zero there is an anti-parallel translate $S'(\ti \gamma)$ that separates $\ti \gamma$ and $S(\ti \gamma)$.    Let $\ti y = S'(\ti x)$.   We have $S' \in \Stab(\ti C)$ because  $S'(\ti \gamma) \subset \ti C$.  Thus  $S' (\ti \gamma) = \ti \gamma(\ti y)$ in contradiction to  (2).
\endproof

 \section{The Set of Annuli $\cA$}  \label{sec: annuli}

\begin{defns}\label{defn: A1} 
Let $\Gamma$ be the set of simple closed curves that are
tracked by at least one element of $\B(f)$.  For each lift
$\ti \gamma$ of $\gamma \in \Gamma$, choose a domain $\ti C$
that contains $\ti \gamma$ and let $\ti U(\ti \gamma)$ be
the set of points in $\tiM$ which have a neighborhood $\ti
V$ such that every point in $\ti V \cap \ti \B(f)$ tracks
$\ti \gamma$.  We say that $\ti C$ is a {\em home domain}
for $\ti U(\ti \gamma)$, that $\ti \gamma$ is the {\em
  defining parameter} of $ \ti U(\ti \gamma)$ and that
$T_{\ti \gamma}$ is {\em the covering translation associated
  to $\ti U(\ti \gamma)$}.

For each $\gamma \in \Gamma$ define $U(\gamma)$ to be the projected image of $\ti U(\ti \gamma)$ for any lift $\ti \gamma$.  We say that $C$  is a {\em home domain} for $U(\gamma)$ and that  $\gamma$  is the {\em defining parameter} of  $U(\gamma)$.
\end{defns} 

We show in Lemma~\ref{U covers} that $U(\gamma) \ne \emptyset$.

\begin{remark}   As the notation suggests, $\ti U(\ti \gamma)$ depends only on $\ti \gamma$ and not on the choice of $\ti C$.  Indeed, if $\ti C$ is not unique then $\ti \gamma \in \ti \cR$ and (Corollary~\ref{pretracking}) every element of  $\ti V \cap \ti \B(f)$ has exactly two home domains $\ti C$ and $\ti C'$ (where  $\ti C'$  is the other domain that contains $\ti \gamma$)  and  both $\{\alpha(\ti f_{\ti C}, \ti z), \omega(\ti f_{\ti C}, \ti z)\}$ and $ \{\alpha(\ti f_{\ti C'}, \ti z), \omega(\ti f_{\ti C'}, \ti z)\}$ are contained in $\{\ti \gamma^\pm\}$.   $U(\gamma)$ is well defined because $\ti U(S(\ti \gamma)) = S\ti U(\ti \gamma)$ for any covering translation $S$.  
\end{remark}
   
 \begin{defns}\label{defn: A2} 
Let $\C$ be the set of   isolated punctures $c$ in 
$M$ for which there is at least
one element of $\B(f)$ that rotates about $c$.  For each $P \in
\sinfty$ that projects to $c \in \C$, let $\ti C$ be the unique domain whose closure contains
$P$ and let $\ti U(P)$ be the set of points in $\tiM$ for which
there is a neighborhood $\ti V$ such that every point in $\ti V \cap
\ti \B(f)$ tracks $P$.  We say that $\ti C$ is the {\em home
  domain} for $\ti U(P)$, that $P$ is the {\em defining parameter} of
$\ti U = \ti U(P)$ and that $T_{P}$ is {\em the covering translation
associated to $\ti U(P)$}.
  
  For each $c \in \C$ define $U(c)$ to be the projected image of $\ti U(P)$ for any puncture $P$ that projects to $c$.   We say that $C$  is the {\em home domain} for $U(c)$ and that  $c$  is the {\em defining parameter} of  $U(\gamma)$.    As in the previous remark,  $U(c)$ is well defined.   We show in Lemma~\ref{U covers} that $U(c) \ne \emptyset$.
  
 Let $\ti \cA$ be the set of all $\ti U(\ti \gamma)$'s and $\ti U(P)$'s
  and let 
\[
\ti \U = \bigcup_{\ti \gamma} \ti U(\ti \gamma) \cup \bigcup_P \ti U(P).  
\]
Let $\cA$ be the set of all $U(\gamma)$'s and $U(c)$'s and let
$\U$ be the projection of $\ti \U$ into $M$.

\end{defns}  
\bigskip

\begin{lemma} \label{U tilde properties} 
\begin{enumerate}
\item  Each $\ti U \in \ti \cA$ is open   and invariant by both $T$ and $\ti f_{\ti C}$ where   $\ti C$ is a home domain for $\ti U$ and $T$ is the covering translation associated to $\ti U$.   
\item If  $\ti U, \ti U' \in \ti \cA$  have different  defining parameters  then   $\ti U \cap \ti U' = \emptyset$.
\item  If $\ti U  \in\ti  \cA$  and $S$ is a covering translation then $S(\ti U) \cap \ti U \ne \emptyset$ if and only if  $S$ is an iterate of the covering translation associated to $\ti U$.    
\item Each $U \in \cA$ is open and $f$-invariant;  if $U_1$ and $U_2$ have different defining parameters then $U_1 \cap U_2 = \emptyset$.

\end{enumerate}
\end{lemma}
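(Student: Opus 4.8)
The plan is to verify the four items in order, each reducing to facts about near cycles and home domains established in the previous sections. For item (1), openness of $\ti U = \ti U(\ti \gamma)$ (resp. $\ti U(P)$) is immediate from the definition, since the defining condition (every point of $\ti V \cap \ti \B(f)$ tracks $\ti \gamma$, resp. rotates about $P$) is phrased in terms of the existence of a neighborhood, so the set of such points is a union of open sets. For $\ti f_{\ti C}$-invariance: if $\ti x$ has a neighborhood $\ti V$ with $\ti V \cap \ti\B(f)$ tracking $\ti\gamma$, then $\ti f_{\ti C}(\ti V)$ is a neighborhood of $\ti f_{\ti C}(\ti x)$; since $\ti\B(f)$ is $\ti f_{\ti C}$-invariant and the property of tracking $\ti\gamma$ is $\ti f_{\ti C}$-invariant (the limit sets $\alpha(\ti f_{\ti C},\cdot)$, $\omega(\ti f_{\ti C},\cdot)$ are unchanged along an orbit, and $\ti\gamma$ is a fixed geodesic in $\ti C$), every point of $\ti f_{\ti C}(\ti V) \cap \ti\B(f)$ tracks $\ti\gamma$. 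For $T = T_{\ti\gamma}$-invariance: $T$ commutes with $\ti f_{\ti C}$ since $T \in \Stab(\ti C)$ fixes the endpoints of $\ti\gamma$, so $T$ carries tracking-$\ti\gamma$ points to tracking-$\ti\gamma$ points, and $T$ preserves $\ti\B(f)$. The argument for $\ti U(P)$ is identical, using that $T_P$ fixes $P$ and commutes with $\ti f_{\ti C}$, and that the notion of rotating about $P$ (Definition~\ref{defn:iterates about 2}) is orbit-invariant.

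For item (2), suppose $\ti z \in \ti U \cap \ti U'$ with $\ti z \in \ti\B(f)$ — actually one takes $\ti z$ in the overlap and uses density of $\ti\B(f)$: every neighborhood of $\ti z$ meets $\ti\B(f)$, so pick $\ti z' \in \ti\B(f)$ close enough to lie in both defining neighborhoods. Then $\ti z'$ tracks (or rotates about) the defining parameter of $\ti U$ and also that of $\ti U'$. But each birecurrent point has a well-defined behavior: by Lemma~\ref{isolated puncture} and the discussion preceding it, $\ti z'$ either tracks a unique geodesic $\ti\gamma(\ti z', \ti C)$ (when $\alpha \ne \omega$) or rotates about a unique puncture $P$ (when $\alpha = \omega$); and the endpoint pair $\{\alpha(\ti f_{\ti C}, \ti z'), \omega(\ti f_{\ti C}, \ti z')\}$ is determined by $\ti z'$ up to the choice of home domain, with the caveat handled in the Remark after Definitions~\ref{defn: A} that even when there are two home domains the endpoint pair is still $\{\ti\gamma^\pm\}$. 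Hence the defining parameters must coincide, a contradiction. The key point to be careful about is the case $\ti\gamma \in \ti\cR$, where the home domain is not unique; but the cited Remark already records that $\ti U(\ti\gamma)$ depends only on $\ti\gamma$, so this causes no trouble.

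For item (3), one direction is item (1): if $S$ is an iterate of $T_{\ti\gamma}$ then $S(\ti U) = \ti U$, so certainly $S(\ti U) \cap \ti U \ne \emptyset$. Conversely, suppose $S(\ti U) \cap \ti U \ne \emptyset$; pick $\ti z \in \ti\B(f)$ in the overlap (again using density). Then $\ti z$ tracks $\ti\gamma$, and also $\ti z \in S(\ti U(\ti\gamma)) = \ti U(S\ti\gamma)$ so $\ti z$ tracks $S\ti\gamma$. By the uniqueness in item (2)'s argument, $\ti\gamma = S\ti\gamma$ (as unoriented geodesics, hence $S$ permutes $\{\ti\gamma^\pm\}$), which forces $S$ to lie in the group generated by $T_{\ti\gamma}$ together with possibly an element swapping the two endpoints — but since $M$ has genus zero and the relevant covering translations are hyperbolic with axis $\ti\gamma$ (or parabolic fixing $P$), and all are orientation-preserving, no covering translation swaps the endpoints; thus $S$ is an iterate of $T_{\ti\gamma}$. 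For the $\ti U(P)$ case, $S$ fixes $P$, and the stabilizer of a parabolic fixed point in a surface group is infinite cyclic, generated by $T_P$, so $S$ is an iterate of $T_P$. The main obstacle here, and the place I would be most careful, is justifying that $\ti z$ being in the overlap forces it to track the same geodesic for \emph{the same} home domain — i.e., correctly threading the two-home-domain subtlety through the argument; the Remark following the defining Definitions is exactly what licenses this.

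Finally, item (4) is the projection of items (1)--(3). Openness of $U = U(\gamma)$ follows since the covering projection $\tiM \to M$ is open and $U$ is the image of the open set $\ti U$. Invariance under $f$: $f$ lifts to $\ti f_{\ti C}$, and $\ti f_{\ti C}(\ti U) = \ti U$ by item (1), so $f(U) = U$. Disjointness: if $U_1 = U(\gamma_1)$ and $U_2 = U(\gamma_2)$ have $\gamma_1 \ne \gamma_2$ and $U_1 \cap U_2 \ne \emptyset$, lift a point of the intersection to get $\ti z \in \ti U(\ti\gamma_1) \cap S(\ti U(\ti\gamma_2))$ for some covering translation $S$; since $S\ti\gamma_2$ is still a lift of $\gamma_2 \ne \gamma_1$, it has a different defining parameter than $\ti\gamma_1$, contradicting item (2). (If instead one of the $U_i$ is a $U(c)$, the same argument applies since a lift of a puncture is never a lift of a geodesic.) This is entirely routine once (1)--(3) are in hand, so I do not expect any difficulty in item (4); the substance of the lemma is in the density-of-birecurrent-points argument used repeatedly in (2) and (3) together with the uniqueness statements from Section~\ref{sec: fh-periodic}.
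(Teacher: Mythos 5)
Your proposal is correct and follows essentially the same route as the paper, which simply records that (1) and (2) are immediate from the definitions, that (3) follows from (2) plus the observation that $S$ carries the defining parameter of $\ti U$ to that of $S(\ti U)$, and that (4) follows from (1)--(3). You have usefully filled in the density-of-birecurrent-points argument and the elementary Fuchsian-group fact that a torsion-free, orientation-preserving covering translation preserving $\ti\gamma$ (or fixing $P$) must be a power of $T_{\ti\gamma}$ (or $T_P$), but these are exactly the steps the paper leaves implicit, so the substance is the same.
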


\proof  (1) and (2) are immediate from the definitions.    (3)  follows from (2)  and the fact that $S$ maps the   defining parameter for $\ti U$ to the defining parameter for $S(\ti U)$.    (4) follows from (1) -  (3).   \endproof

\begin{cor}  \label{second centralizer invariance}  If $h:M \to M$
commutes with $f$ then $h$ permutes the elements of $\cA$. 
\end{cor}

\proof Since  $h_\#(\cR)$ is   a reducing set for $hfh^{-1} = f$ and since reducing sets are unique, $\cR$ is  $h_\#$-invariant.  It follows that  both $\ti \cR$ and the set of domains for $f$ are $\ti h_\#$-invariant for any lift  $\ti h :\tiM \to \tiM$ of $h$.

If $\ti C$ is a home domain for $\ti x \in \tiM$  and $\ti x$ tracks $\ti \gamma$ [resp. $P$] under iteration by $\ti f_{\ti C}$ then
 $\ti h \ti f_{\ti
  C} \ti h^{-1} = \ti f_{\ti C'}$ for some domain $\ti C'$ that is a
home domain for $\ti h(\ti x)$ and $\ti h(\ti x)$ tracks $\ti
h_{\#}(\ti \gamma)$ [resp. $\ti h(P)$] under iteration by $\ti f_{\ti C'}$.  This proves
that $h(U(\gamma)) = U(h_\#(\gamma))$. 
\endproof

As a special case, our next lemma   shows that $\B(f) \subset \U$.

\begin{lemma}  \label{U covers} If either the $\alpha$-limit set  $\alpha(f,y)$ or the $\omega$-limit set   $\omega(f,y)$ of the $f$-orbit of $y$ is non-empty,  then $y$ is contained in an element $U$ of $\cA$.    In particular, each $y \in \B(f)$ is contained in some $U \in\cA$.
\end{lemma}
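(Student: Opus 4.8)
The plan is to work in the hyperbolic universal cover $\tiM$, show that every birecurrent point tracks a definite simple closed geodesic or rotates about a definite puncture, and then upgrade this to the statement that an entire neighborhood of $y$ lies inside a single $\ti U(\ti\gamma)$ or $\ti U(P)$ by transporting a near-cycle around with Corollary~\ref{limited near cycles}. It is enough to treat the case $\omega(f,y)\neq\emptyset$, the case $\alpha(f,y)\neq\emptyset$ being symmetric; the second sentence is the special case $y\in\B(f)$, which I will settle along the way. For the reduction, if $\omega(f,y)\neq\emptyset$ then $y$ is free disk recurrent, so choosing a free disk about a point of $\omega(f,y)$ produces times $0<n_1<n_2<\dots$ with $f^{n_i}(y)$ in that disk; since every element of $\cA$ is $f$-invariant (Lemma~\ref{U tilde properties}) I may replace $y$ by $f^{n_1}(y)$, and after shrinking (Remark~\ref{near cycle is open}) I fix a free disk $B$ with $cl(B)$ inside a free disk, a neighborhood $B'\ni y$, and infinitely many $m>0$ with $f^m(B')\subset B$. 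Because $\B(f)$ has full measure it is dense (Remark~\ref{rem: free disk}), so I pick $\ti z\in\ti B'\cap\ti\B(f)$ and let $\ti C$ be a home domain for $\ti z$ (Corollary~\ref{canonical lift}, Proposition~\ref{home lift}); then $\bar z\in\B(\bar f_C)$ and, by Proposition~\ref{home lift} and Lemma~\ref{isolated puncture}, $\ti z$ either tracks $\ti\gamma=\ti\gamma(\ti z,\ti C)$ or rotates about a puncture $P$.

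The engine of the proof (which already settles the case $y\in\B(f)$, taking $\ti z=\ti y$) is as follows. Since $\bar z$ is recurrent in $\bar C$ and its $\bar f_C$-orbit stays in the compact set $N_{D_1}(\bar C_{\core})$ (Corollary~\ref{symmetric  close}), the orbit returns infinitely often to the lift of $B$ containing $\bar z$, so pulling back to $\tiM$ yields a nontrivial $R\in\Stab(\ti C)$ that is a near-cycle for $\ti z$ with respect to $\ti f_{\ti C}$ via $B$. By Corollary~\ref{limited near cycles} in the tracking case $R=T_{\ti\gamma}^{\,k}$ with $k\neq0$, while by Definition~\ref{defn:iterates about 2} in the rotating case $R$ is a positive iterate of the parabolic $T_P$. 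By Remark~\ref{near cycle is open}, $R$ is a near-cycle with respect to $\ti f_{\ti C}$ for every point of a neighborhood $\ti V\subset\ti B'$ of $\ti z$. For arbitrary $\ti z'\in\ti V\cap\ti\B(f)$ I then argue: (a) $\ti C$ is a home domain for $\ti z'$ — otherwise Lemma~\ref{not home}(1) makes $\alpha(\ti f_{\ti C},\ti z')$ and $\omega(\ti f_{\ti C},\ti z')$ the two endpoints of a frontier component $\ti\sigma'\in\ti\cR$ of $\ti C$, which Lemma~\ref{near cycle} forces to be separated by $\ti\gamma^{\pm}$ (resp.\ by $P$), impossible because $\ti\gamma\subset\ti C$ does not cross $\ti\sigma'\subset\partial\ti C$ (resp.\ because $P$ is parabolic while endpoints of elements of $\ti\cR$ are hyperbolic), unless $\ti\gamma=\ti\sigma'$, in which case $\ti C$ is already an $\omega$-domain for $\ti z'$ hence a home domain by Proposition~\ref{home lift}, a contradiction; (b) with $\ti C$ a home domain for $\ti z'$, Corollary~\ref{limited near cycles} (resp.\ Definition~\ref{defn:iterates about 2}) applied to $R\in\Stab(\ti C)$ forces $\ti z'$ to track $\ti\gamma$ (resp.\ rotate about $P$), since root-free hyperbolic (resp.\ parabolic) covering translations with a common power share an axis (resp.\ fixed point). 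Hence $\ti V\cap\ti\B(f)$ consists of points tracking $\ti\gamma$ (resp.\ rotating about $P$), so $\ti V\subset\ti U(\ti\gamma)$ (resp.\ $\ti U(P)$); in particular $\ti z\in\ti U(\ti\gamma)\in\ti\cA$, giving $z\in U(\gamma)\in\cA$ and the $\B(f)$ assertion.

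For general $y$ the only remaining point is to get $\ti y$ itself, not just the nearby $\ti z$, into such a set, which needs the propagated near-cycle to be a near-cycle for $\ti y$. For this I would use instead the common near-cycle $S=S_{m,\ti C}$ defined by $\ti f_{\ti C}^{\,m}(\ti B')\subset S(\ti B)$, which is automatically a near-cycle for every point of $\ti B'$ — in particular for $\ti y$ — and is nontrivial once $m$ is large (because $\ti f_{\ti C}^{\,m}(\ti y)\to\omega(\ti f_{\ti C},\ti y)\in\sinfty$ by the Brouwer translation theorem); granted $S\in\Stab(\ti C)$, the argument of the previous paragraph applies verbatim to give $\ti y\in\ti B'\subset\ti U(\ti\gamma)$ (resp.\ $\ti U(P)$), hence $y\in U(\gamma)$ (resp.\ $U(c)$), and undoing the replacement of $y$ by a forward iterate completes the proof. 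The step I expect to be the main obstacle is precisely verifying $S_{m,\ti C}\in\Stab(\ti C)$ — that the near-cycle furnished by the free disk $B$ is a deck transformation of the domain cover $\bar C$ — since everything afterward is a formal consequence of Corollary~\ref{limited near cycles}, Lemma~\ref{near cycle}, and Lemma~\ref{not home}; this should follow by combining the recurrence of $\bar z$ in $\bar C$ with the fact (Corollary~\ref{symmetric  close}) that the orbit meets only finitely many lifts of $B$ up to $\Stab(\ti C)$, together with a pigeonhole choice of $m$ among the common return times.
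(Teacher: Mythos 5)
Your argument for the birecurrent case ($y = z \in \B(f)$) is sound and follows the paper's strategy: find a near cycle $R \in \Stab(\ti C)$ for $\ti z$, conclude via Corollary~\ref{limited near cycles}/Lemma~\ref{isolated puncture} that $R$ is an iterate of $T_{\ti\gamma}$ or $T_P$, and propagate to a neighborhood with Lemma~\ref{near cycle}, Lemma~\ref{not home}, and Remark~\ref{near cycle is open}. (Your sub-case $\ti\gamma = \ti\sigma'$ in step (a) asserts $\ti C$ is ``already an $\omega$-domain'' for $\ti z'$; this needs a line of justification — you need $\omega(\ti f_{\ti C_2},\ti z')$ to be the other endpoint of $\ti\gamma$, which does follow from Lemma~\ref{moving on} but isn't automatic — though the paper's route through the adjacent domain $\ti C'$ patches the same point more carefully.)

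For general $y$ the gap you flag is genuine, and your proposed repair doesn't close it. First, the setup ``fix $B' \ni y$ with $f^m(B') \subset B$ for infinitely many $m$'' is not achievable: although $f^{m_i}(y) \to z$ along a subsequence, the continuity radius $\epsilon_i$ with $f^{m_i}(B_{\epsilon_i}(y)) \subset B$ may shrink to $0$, so no single $B'$ works for infinitely many $m_i$. Second, and more importantly, the pigeonhole you invoke cannot produce a single $m$ with $S_{m,\ti C} \in \Stab(\ti C)$; it produces $j < k$ with $S_{m_k}S_{m_j}^{-1} \in \Stab(\ti C)$, and that composite is a near cycle for $\ti f_{\ti C}^{m_j}(\ti y)$, not for $\ti y$. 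The paper accepts this and works with the forward iterate $\ti f_{\ti C}^{m_j}(\ti y)$ throughout, using the $f$-invariance of the elements of $\cA$ to carry the conclusion back to $\ti y$ at the very end. The paper also needs one more ingredient your proposal lacks: to run the pigeonhole it first shows $\ti f_{\ti C}^{m_i}(\ti y)$ has uniformly bounded distance from $\ti C$. This is not automatic (since $\ti y$ has no home domain a priori); it is obtained by choosing a sequence $\ti x_l \to \ti y$ of birecurrent points with a \emph{common} home domain $\ti C$ and then applying Corollary~\ref{symmetric  close} plus continuity. Your single $\ti z$ near $\ti y$ gives recurrence of $\bar z$ in $\bar C$ but controls only the orbit of $\ti z$, not the orbit of $\ti y$; without the uniform bound on $\dist(\ti f_{\ti C}^{m_i}(\ti y), \ti C)$ there is no finiteness of cosets to pigeonhole.
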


\proof   The two cases are symmetric so we may assume that $\omega(f,y) \ne \emptyset$.  Choose  $z \in \omega(f,y)$ and  a free disk neighborhood $V$ of $z$ with compact closure. 
  After replacing $y$ by some $f^k(y)$, we may assume that   $y \in V$.    Since $z \in \omega(f,y)$ there exist $m_i \to \infty$ such that  $f^{m_i}(y) \to z$ and such that  each $f^{m_i}(y) \in V$.  Choose a lift $\ti V$ of $V$ and let $   \ti y, \ti z \in \ti V$ be  lifts of $y$ and $z$. 

 By Corollary~\ref{symmetric close}, the distance between a point in $ \ti \B( f)$ and a home domain for that point is uniformly bounded.     It follows that there are only finitely many home domains for elements   $\ti x_l \in \ti\B(f) \cap \ti V$  and so we may choose a sequence $\ti x_l \to \ti y$ all of which have the same home domain(s) $\ti C$ and $\ti C'$, where we allow the possibility that $\ti C  = \ti C'$.  By Corollary~\ref{symmetric close}   the distance between $\ti f_{\ti C}^{m_i}(\ti x_l)$ and $\ti C \cup \ti C'$ is uniformly bounded.    It follows that the distance between $\ti f_{\ti C}^{m_i}(\ti y)$ and $\ti C \cup \ti C'$ is uniformly bounded.  After  passing to a subsequence of the $m_i$'s and   interchanging $\ti C$ and $\ti C'$ if necessary,   we may assume that the  distance between $\ti f_{\ti C}^{m_i}(\ti y)$ and $\ti C$ is uniformly bounded.  
 
      Let $S_i$ be the covering translation such that $\ti f_{\ti C}^{m_i}(\ti y) \in S_i(\ti V)$ and note that the  distance between $S_i(\ti z)$ and $\ti C$ is uniformly bounded.       Up to the action of $\Stab(\ti C)$,  the number of translates  of $\ti z$ that have uniformly bounded distance from $\ti C$ is finite.  We may therefore choose $k>j$ such that $S = S_kS_j ^{-1}\in \Stab(\ti C)$.   Let $\ti W = S_j(\ti V)$ and let $\ti W '\subset \ti W$ be a neighborhood of $\ti f_{\ti C}^{m_j}(\ti y)$ such that $\ti f^{m_k-m_j}(\ti W')  \subset S(\ti W)$.  Then $S$ is  a $\ti f_{\ti C}$-near cycle  for every point in $\ti W'$ and in particular for $\ti f_{\ti C}^{m_j}(\ti x_l) $ for all sufficiently large $l$.  Choose such an $\ti f_{\ti C}^{m_j}(\ti x_l) $  and denote it simply by $\ti x$.

      To prove that $\ti f_{\ti C}^{m_j}(\ti y)$, and hence $\ti y$,  is contained in an element of $\ti \U$ with home domain $\ti C$ it suffices to show that if $\ti w \in \ti \B(f) \cap \ti W'$ then $\ti C$ is a home domain for $\ti w$ and  $\{\alpha(\ti f_{\ti C}, \ti x),\omega(\ti f_{\ti C}, \ti x) \}  = \{\alpha(\ti f_{\ti C}, \ti w),\omega(\ti f_{\ti C}, \ti w)\}  $.
      
         We proceed with a case analysis.     As a first case suppose that $\ti x$ tracks a geodesic $\ti \gamma(\ti x)$. Corollary~\ref{limited near cycles} implies that $S$ is an iterate of $T_{\ti \gamma(\ti x)}$.      As a first subcase suppose that $\ti C$ is a home  domain for $\ti w$.  Since $S \in \Stab(\ti C)$ is a near cycle for $\ti w$,  Lemma~\ref{isolated puncture} implies that $\alpha(\ti f_{\ti C}, \ti w) \ne \omega(\ti f_{\ti C}, \ti w)$ and  Corollary~\ref{limited near cycles} implies that $\ti w$ tracks $\ti \gamma(\ti x)$.
         
      
      The remaining subcase is that $\ti C$ is not a home  domain for $\ti w$.  Lemma~\ref{not home} implies that $\{\alpha(\ti f_{\ti C}, \ti w),\omega(\ti f_{\ti C}, \ti w)\}$ is contained in the set of endpoints for some $\ti \sigma$ in the frontier of $\ti C$.  Lemma~\ref{near cycle} then implies that $\ti \sigma = \ti \gamma(\ti x)$.  Let $\ti C'$ be the other domain that contains  $\ti \gamma(\ti x)$.  Since some iterate of $T_{\ti \gamma(\ti x)}$ is a near cycle for $\ti w$ with respect to $\ti f_{\ti C}$, the same is true with respect to $\ti f_{\ti C'}$.   Lemma~\ref{near cycle}  implies that  $\{\alpha(\ti f_{\ti C'}, \ti w),\omega(\ti f_{\ti C'}, \ti w) \} \cap \{\ti \gamma^\pm(\ti x)\} \ne \emptyset$ and Lemma~\ref{not home}  implies that  both $\alpha(\ti f_{\ti C'}, \ti w)$ and $\omega(\ti f_{\ti C'}, \ti w) $ are endpoints of $\ti \gamma(\ti x)$.  This contradicts the assumption that  $\ti C$ is not home domain for $\ti w$ and so proves that the second subcase never occurs.

By Lemma~\ref{isolated puncture}, the only remaining case is that $\alpha(\ti f_{\ti C}, \ti x) = \omega(\ti f_{\ti C}, \ti x) =P$ and that $S$ is an   iterate of $T_P$. Lemma~\ref{near cycle} implies that $P \in \{\alpha(\ti f_{\ti C'}, \ti w),\omega(\ti f_{\ti C'}, \ti w) \}$,  Lemma~\ref{not home} implies that $\ti C$ is a home domain for $\ti w$ and  Lemma~\ref{disjoint scc} implies that  $\alpha(\ti f_{\ti C'}, \ti w) = \omega(\ti f_{\ti C'}, \ti w) = P$. \endproof
           
\begin{cor} \label{interior of closure} Each $\ti U \in \ti \cA$ is the interior of its closure in $\ti M$.
\end{cor}

\proof   Since $\ti U$ is obviously contained in the interior of its closure, it  suffices to show that  if $\ti y$ is in the interior of the closure of $\ti U$ then $\ti y \in \ti U$.   Choose a neighborhood $\ti V $ of $\ti y$ that is contained in the closure of $\ti U$.    Since the elements of $\cA$ are open and either disjoint or equal and since  each $\ti z \in \ti B(f) \cap \ti V$ is contained in some element of $\cA$, it follows that $\ti B(f) \cap \ti V \subset \ti U$.   If $\ti \gamma$ [resp. $P$] is the defining parameter for $\ti U$ then each element of $\ti B(f) \cap \ti V$ tracks $\ti \gamma$ [resp. $P$].   By definition, $y \in \ti U$.
\endproof

\begin{lemma} \label{more home domains}    Let $Y = M \setminus \U$ and let $\ti Y \subset \tiM$ be the full pre-image of $Y$.\begin{enumerate}
\item For each $\ti y \in \ti Y $ there is a domain $\ti C$ that is the unique $\alpha$-domain, unique $\omega$-domain and   unique home domain for $\ti y$; both  $\alpha(\ti f_{\ti C}, \ti y)$ and $\omega(\ti f_{\ti C}, \ti y)  $ project to punctures in $M$. 
   Moreover,   $\ti y$ has a neighborhood $\ti W$ so that $\ti C$ is a home domain for   all points in $\ti W \cap \ti B(f)$.
   \item If $\ti C$ is the home domain for $\ti y \in \ti Y$ then $\ti y$ has no $\ti f_{\ti C}$-near cycles in $\Stab(\ti C)$.
\item   For any compact subset $X \subset M$ there is a constant $K_X$ such that for each $y \in Y$,  $ f^i(y) \in X$ for at most $K_X$ values of $i$.
 \item There exists $\epsilon > 0$ so that if  $\ti y_1, \ti y_2 \in \ti Y $ and $\dist(\ti y_1, \ti y_2) < \epsilon$ then $\ti y_1$ and $\ti y_2$ have the same home domain.  As a consequence, points in the same   component of $\ti Y$ have the same   home domain.
  \end{enumerate}
\end{lemma}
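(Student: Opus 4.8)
The plan is to deduce everything from two facts about a point $y\in Y$. First, by the contrapositive of Lemma~\ref{U covers}, $\alpha(f,y)=\omega(f,y)=\emptyset$, so the forward and backward $f$-orbits of $y$ leave every compact subset of $M$. Second, $\U$ is open and $f$-invariant — it is the union of the $U(\gamma)$'s and $U(c)$'s, each of which is open and $f$-invariant by Lemma~\ref{U tilde properties} — so $Y$ is closed and $f$-invariant. Now fix $\ti y\in\ti Y$. Corollary~\ref{canonical lift} (and its mirror obtained by replacing $f$ with $f^{-1}$) provides $\omega$- and $\alpha$-domains for $\ti y$, and the heart of (1) is to show that a single domain $\ti C$ serves as both and that it is unique. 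If $\ti y$ were in case (2) of Corollary~\ref{canonical lift} for $\omega$, with $\omega$-domains $\ti C_1,\ti C_2$ meeting along $\ti\sigma\in\ti\cR$, then $\omega(\ti f_{\ti C_1},\ti y)$ is an endpoint of $\ti\sigma$ and, by part (2) of Corollary~\ref{stays close}, the orbit $\ti f_{\ti C_1}^{\,n}(\ti y)$ stays in $N_{D_1}(\ti C_1\cup\ti C_2)$ while converging to that endpoint, hence eventually stays within a bounded distance of $\ti\sigma$; projecting, $f^{\,n}(y)$ stays near the compact geodesic $\sigma\in\cR$, so $\omega(f,y)\neq\emptyset$, a contradiction. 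Thus $\ti y$ has a unique $\omega$-domain $\ti C$ with $\omega(\ti f_{\ti C},\ti y)$ not an endpoint of $\partial\ti C$, and likewise a unique $\alpha$-domain $\ti D$. If $\ti C\neq\ti D$, take a chain $\ti C=\ti C_0,\dots,\ti C_k=\ti D$ of domains meeting consecutively along elements of $\ti\cR$ and apply Lemma~\ref{moving on} back along it from $\ti C_k$: since $\alpha(\ti f_{\ti C_k},\ti y)$ is not an endpoint of $\partial\ti C_k$, one gets inductively that $\alpha(\ti f_{\ti C_j},\ti y)$ is an endpoint of $\ti C_j\cap\ti C_{j+1}$ for every $j<k$, so $\alpha(\ti f_{\ti C},\ti y)$ is an endpoint of the $\ti\cR$-line $\ti\sigma_1=\ti C_0\cap\ti C_1\subset\partial\ti C$; Lemma~\ref{bounded distance} applied to $f^{-1}$ then forces $\ti f_{\ti C}^{\,-n}(\ti y)$ (which converges to that endpoint) to stay near $\ti\sigma_1$, so $f^{\,-n}(y)$ stays near the compact geodesic $\sigma_1$ and $\alpha(f,y)\neq\emptyset$, again a contradiction. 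Hence $\ti C=\ti D$ is the unique home domain, and since $\omega(\ti f_{\ti C},\ti y)$ and $\alpha(\ti f_{\ti C},\ti y)$ are endpoints of no component of $\partial\ti C$, Lemma~\ref{bounded distance} now shows $\ti f_{\ti C}^{\,n}(\ti y)\in N_{D_1}(\ti C)$ for \emph{all} $n$ (an excursion beyond $N_{D_1}(\ti C)$ would make one of $\alpha,\omega$ an endpoint of the nearest component of $\partial\ti C$).

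The neighborhood clause of (1) and, uniformly, item (4) I would get from the previous paragraph together with the fact (Corollary~\ref{simple}, whose proof does not use birecurrence) that $\ti y$ tracks a geodesic $\ti\gamma(\ti y)$ lying in the interior of $\ti C$ and projecting to a simple curve in $M$, combined with the uniform tracking estimates of \cite{fh:periodic} underlying Lemma~\ref{disjoint scc}, which keep the whole orbit within a uniform distance of $\ti\gamma(\ti y)$: two points of $\ti Y$ that are sufficiently close then track the same lift of the same simple geodesic, which lies in a single domain, so they share their home domain.

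For (2), suppose $T\in\Stab(\ti C)$ were a near cycle for $\ti y$ with respect to $\ti f_{\ti C}$. By Remark~\ref{near cycle is open}, $T$ is a near cycle (for the same lift $\ti f_{\ti C}$) for every point of a neighborhood of $\ti y$; since $F$ preserves $\mu$, $\ti\B(f)$ is dense, so this neighborhood contains birecurrent points $\ti w$. Each such $\ti w$ has nonempty $\omega$-limit set, so the analysis of Lemma~\ref{U covers} applies at $\ti w$: using Lemma~\ref{near cycle}, Corollary~\ref{limited near cycles}, Lemma~\ref{isolated puncture}, and Lemma~\ref{not home}, it shows that $\ti w$ tracks the axis of $T$, or rotates about the fixed point of $T$ if $T$ is parabolic. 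Since $T$ is fixed, the parameter is the same for all these $\ti w$, so a neighborhood of $\ti y$ lies entirely in one $\ti U(\ti\gamma)$ or one $\ti U(P)$; hence $\ti y\in\ti\U$, contradicting $\ti y\in\ti Y$.

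For (3), let $X\subset M$ be compact. Up to covering translations there are only finitely many domains (cutting $M$ along the finite set $\cR$ produces finitely many pieces), so it suffices to bound the count for each home domain $\ti C$. Since $\ti C$ is the universal cover of $C$ with deck group $\Stab(\ti C)$, the core $\bar C_{\core}$ is the embedded copy of $C$ in $\bar C$ and $q\colon\bar C\to M$ is injective on it, so $q^{-1}(X)\cap N_{D_1}(\bar C_{\core})$ is compact: its part in $\bar C_{\core}$ is $X\cap C$, and its part in each of the finitely many collars escapes up the collar. By (1), $\bar f_C^{\,n}(\bar y)\in N_{D_1}(\bar C_{\core})$ for all $n$, and since $q(\bar f_C^{\,n}(\bar y))=f^{\,n}(y)$ leaves every compact subset of $M$, $\bar f_C^{\,n}(\bar y)$ leaves every compact subset of $\bar C$ as $n\to\pm\infty$; hence $\{\,i : f^{\,i}(y)\in X\,\}=\{\,i : \bar f_C^{\,i}(\bar y)\in q^{-1}(X)\cap N_{D_1}(\bar C_{\core})\,\}$ is finite, and uniformity of the bound follows, as in (1), from the tracking estimate, which forces the orbit of $\ti y$ through any fixed compact set in a bounded number of steps. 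I expect (1) to be the crux: the machinery of Section~\ref{sec:domain covers}, and Proposition~\ref{home lift} in particular, rests on birecurrence, so the existence and uniqueness of the home domain for a non-birecurrent $\ti y\in\ti Y$ has to be re-derived with ``the orbit of $y$ leaves every compact set'' in place of birecurrence, and the passage to uniform constants in (3) and (4) requires the tracking estimates rather than a soft compactness argument, since $\bar C_{\core}=C$ is noncompact.
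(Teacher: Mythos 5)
Your overall reduction — use $\alpha(f,y)=\omega(f,y)=\emptyset$ from the contrapositive of Lemma~\ref{U covers} as the fundamental fact — is the right starting point, and your treatment of item~(2) is essentially sound (and close to the paper's), provided (1) is already in hand. But item~(1) is where the real work is, and your argument for it has a genuine gap.

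You try to rule out case~(2) of Corollary~\ref{canonical lift} directly by arguing that if $\omega(\ti f_{\ti C_1},\ti y)$ is an endpoint $P$ of $\ti\sigma$ and the orbit stays in $N_{D_1}(\ti C_1\cup\ti C_2)$, then it ``eventually stays within a bounded distance of $\ti\sigma$.'' That implication does not hold. Near $P$ the set $N_{D_1}(\ti C_1\cup\ti C_2)$ is unbounded: the boundary geodesics of $\ti C_1$ (other than $\ti\sigma$) that are close to $P$ are $T_{\ti\sigma}$-translates of finitely many geodesics, all at a fixed positive distance from $\ti\sigma$, and between consecutive translates there are points of $\ti C_1$ arbitrarily far from $\ti\sigma$ that still converge to $P$ in $\overline H$. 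So a sequence can converge to $P$ inside $N_{D_1}(\ti C_1\cup\ti C_2)$ without staying near $\ti\sigma$, and no contradiction with $\omega(f,y)=\emptyset$ follows. The same gap appears in your chain argument ruling out $\ti C\neq\ti D$: knowing $\alpha(\ti f_{\ti C},\ti y)$ is an endpoint of $\ti\sigma_1$ and invoking Lemma~\ref{bounded distance} only tells you that $\ti\sigma_1$ is the \emph{closest} frontier component whenever the orbit exits $N_{D_1}(\ti C)$; it does not bound the distance to $\ti\sigma_1$. The paper instead proves~(1) by a compact exhaustion $M_1\subset M_2\subset\cdots$ with $N_{D_1}\cR\subset M_i$ and $M_i\cup f(M_i)\subset\operatorname{Int}M_{i+1}$: after replacing $y$ by a forward iterate outside $M_2$, the ray argument shows that for the domain $\ti C$ containing $\ti y$, the orbit $\ti f_{\ti C}^{\,n}(\ti y)$ stays in a nested sequence of components $\ti W_k$ of $\tiM\setminus\ti M_k$, hence converges to a \emph{single} $P\in\sinfty$ projecting to a puncture — so $P$ is not an endpoint of any $\cR$-lift and $\ti C$ is automatically the unique $\omega$-domain. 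This is what supplies the geometric control your argument lacks.

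There is a second gap. Your route to the neighborhood clause of~(1) and to~(4) relies on Corollary~\ref{simple} to say ``$\ti y$ tracks a geodesic $\ti\gamma(\ti y)$.'' Corollary~\ref{simple} has the hypothesis $\alpha(\ti f_{\ti C},\ti y)\neq\omega(\ti f_{\ti C},\ti y)$, which is not established for $\ti y\in\ti Y$ and can fail: both limits can be the same parabolic point, as when the forward and backward orbit escape to the same puncture. When $\alpha=\omega$ there is no tracked geodesic, and the ``uniform tracking estimates'' you invoke do not apply. The paper avoids this issue entirely: the exhaustion argument shows $\ti C$ is a home domain for all of $\ti \B(f)\cap\ti W_1$ and then deduces the neighborhood clause and~(4) from it, and~(3) follows from~(2) by counting $\Stab(\ti C)$-orbits of lifts of free disks meeting the compact set $N_{D_1}(\bar C_{\core})$ — the ``no near cycle'' condition of~(2) is what forbids a lifted orbit from visiting the same lift of a free disk twice, and is what gives the uniform bound $K_X=KL$. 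Your uniformity argument in~(3) (``the tracking estimate...forces the orbit through any fixed compact set in a bounded number of steps'') does not supply this mechanism.
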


\proof  Suppose at first that  $\cR = \emptyset$ and hence that there is only one domain.  Items (1) and   (4) are obvious.   
Every neighborhood of $\ti y \in \ti Y$ contains points in $\ti B(f)$ that are contained in different elements of $\cA$.  Lemma ~\ref{isolated puncture} and  Corollary~\ref{limited near cycles}   imply that such points have no common  near cycles.   Item (2) therefore follows from Remark~\ref{near cycle is open}.     Item (3) follows from item (2) and the fact that every compact set has a finite cover by free disks.

We now assume that $\cR \ne \emptyset$.  Write $M$ as an increasing
sequence of compact connected subsurfaces $M_1 \subset M_2 \subset
\ldots$ such that 
$$N_{D_1}\cR \subset M_1 \qquad \text{ and } \qquad M_{i} \subset \Int(M_{i+1})$$
for all $i$  where $D_1$ is the
constant of Lemma~\ref{bounded distance}  and so that every
component of $M\setminus M_i$ contains a puncture.  Moreover we
choose $M_i$ so that for any sequence $\{V_i\}$ of components of
$M\setminus M_i$ satisfying $V_{i+1} \subset V_i$ we have
 $  f(V_{i+1}) \subset V_i.$ We also assume without loss that the frontier
$\partial M_i$ of $M_i$ is a finite union of geodesics and
horocycles.


    Since $y \not \in \U$,  Lemma~\ref{U covers}  implies that $  \omega(f,y) = \emptyset$ and hence that the forward  orbit of $y$ intersects each $M_i$ in a finite set.       
    After replacing $y$ by some point in its forward orbit, we may assume that    $f^j(y) \in    M\setminus M_2$ for     all $j \ge 0$.    Let $W_1$ and $W_2'$ be, respectively, the components of     $ M \setminus M_1$ and $ M \setminus M_2$ that contain $y$ and let $\mu \subset W'_2$ be a ray connecting $y$ to a puncture $c'$.  Note that $f(\mu) \subset W_1$.

Given a lift $\ti y$, let $\ti C $ be the domain that contains $\ti y$
and let $\ti W'_2 \subset \ti W_1$ be the lifts that contain $\ti y$.
Since the distance from a point in $\ti W_1$ to a domain other
than $\ti C$ is greater than $D_1$, Corollary~\ref{symmetric close}
implies that $\ti C$ is a home domain for every point in $\ti B(f)
\cap \ti W_1$.  The lift $\ti \mu$ of $\mu$ that begins at $\ti y$
converges to some $Q \in \sinfty$ that belongs to the closure of $\ti
C$ because $\ti \mu$ does not cross any element of $\ti \cR$.  In
particular, $\ti f_{\ti C}(Q) = Q$.

 If $\partial W_1$ is a horocycle then $\partial \ti W_1$ is a single
lift of $\partial W_1$ with both endpoints at $Q$.  Otherwise
$\partial W_1$ is a single simple closed geodesic, $\partial  \ti W_1$ has
countably many components and the closure of $\partial  \ti W_1$ intersects
$\sinfty$ in a Cantor set that  contains $Q$.  In both cases,
$\ti W_1$ is the only lift of $W_1$ that contains $Q$ in its
closure. It follows that $\ti f_{\ti C}(\ti \mu) \subset \ti W_1$ and
in particular that $\ti f_{\ti C}(\ti y) \in \ti W_1$.

Applying this argument to $\ti f^j$ for $j \ge 2$, perhaps with $W_2'$ replaced by some other component of $M \setminus M_2$ that depends on $j$,  shows that $\ti f_{\ti C}^j(\ti y) \in \ti W_1$ for all $j \ge 0$.  There exists $J_2$ so that $f^j(y)   \in M\setminus
M_3$ for all $j \ge J_2$.  Let $\ti W_2$ be the component of $\tiM
\setminus \ti M_2$ that contains $\ti f_{\ti C}^{J_2}(\ti y)$.  By
the same argument, $\ti f_{\ti C}^j(\ti y) \in \ti W_2$ for all $j
\ge J_2$.  Continuing in this manner, we can choose a decreasing
sequence of components $\ti W_i$ of $\tiM \setminus \ti M_i$ such
that for all $i$, $\ti f_{\ti C}^j(\ti y) \in \ti W_i$ for all
sufficiently large $j$. One may therefore choose a ray $\ti \tau$ that converges to $  \omega(\ti f_{\ti C},\ti y) $ so that the terminal end of the projected ray $\tau \subset M$ lies in the complement of each $M_i$.  Thus  $\tau$  converges to a puncture $c$  which lifts to   $  \omega(\ti f_{\ti C},\ti y) $. 
  It follows (Corollary~\ref{canonical lift}) that
$\ti f_{\ti C}$ is the unique $\omega$-lift for $\ti y$ 
and $\ti C$ is its unique $\omega$-domain.

By the symmetric argument applied to $ f^{-1}$, there is a unique domain $\ti C^\ast$ that is an $\alpha$-domain for $\ti y$; moreover
there is a neighborhood of $\ti y$ such that   $\ti C^\ast$ is a home domain for every
birecurrent point in this neighborhood.  To complete the proof of (1) it suffices to prove that 
$\ti C = \ti C^\ast$.
If $\ti C \ne \ti C^\ast$,  then both $\ti C$ and $\ti C^*$ are home domains for every birecurrent point in a neighborhood of $\ti y$.  But then (Corollary~\ref{pretracking})  $y \in    U(\sigma)$ where $\sigma = \ti C \cap \ti C^\ast$   contradicting the assumption that $y$ is not contained in any $U \in \U$.    This completes the proof of (1).


Every neighborhood of $\ti y$ contains points in $\ti B(f)$ that are contained in different elements of $\U$.  Lemma ~\ref{isolated puncture} and  Lemma~\ref{limited near cycles}   imply that such points have no common $\ti f_{\ti C}$-near cycle in $\Stab(\ti C)$.  Item (2) now follows from Remark~\ref{near cycle is open}.

Any compact $X \subset M$ has a cover by finitely many, say $D$, free
disks with compact closure.  Since $N_{D_1}(\bar C_{\core})$ is a
compact subset of $\bar C$, there is a constant $L$ so that for each of these $D$  free disks $B$, there are at most $L$ disjoint lifts of $B$ to $\bar C$ that intersect  $N_{D_1}(\bar C_{\core})$.  Equivalently, there are at most $L$\ $\Stab(\ti C)$-orbits of lifts of $B$ to $\tiM$ that intersect $N_{D_1}(\ti C)$.  Item (1) and Corollary~\ref{detecting home domain} imply  that $\ti f_{\ti C}^j(\ti y) \in N_{D_1}(\ti C)$ for all $j$.
Item (2) therefore implies that  there are at most $K_X= DL$ values of $j$
such that $f^j(y) \in X$.  This proves (3).
       
It remains to prove (4).    Corollary~\ref{symmetric  close} \pref{item:near the punctures} implies that any  two elements of $\ti B(f)$ in the same component of $\tiM \setminus \ti M_1$  
 have the same home domain.  We may therefore assume that $\ti y_1,\ti y_2$ project into  $M_1$.   Since the forward orbit of $y_1$ intersects $M\setminus M_1$,   there exists $\epsilon(y_1)$ such that $\dist(\ti y_1, \ti y_2) < \epsilon(y_1)$  implies that   $\ti y_1$ and $\ti y_2$ have the same home domain.  Since $M_1$ is compact,  we may choose $\epsilon(y_1)$ independently of $y_1$.  This completes the proof of (4).
\endproof


\begin{cor} \label{frontier home domains} Suppose that  $\ti V$ is a component of $\ti U \in \ti \cA$ and that the union $\ti V'$ of $\ti V$ with all of its bounded complementary components has  finite area.   Then each point  in the frontier $\fr(\ti V)$ of $\ti V$ has the same home domain.
\end{cor}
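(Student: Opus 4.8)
Write $\ti U \in \ti \cA$ for the annulus of which $\ti V$ is a component, let $T$ be the covering translation associated to $\ti U$, and let $\mathcal D$ denote the set of home domains of $\ti U$; this is a single domain except that $\mathcal D = \{\ti C,\ti C'\}$ when $\ti U = \ti U(\ti\gamma)$ with $\gamma\in\cR$, where $\ti\gamma = \ti C\cap\ti C'$. Every birecurrent point of $\ti U$ has $\mathcal D$ as its set of home domains, by the construction of $\ti U$. The plan is to reduce the assertion, by way of planar topology, to the local constancy of the home domain on $\ti Y = \tiM\setminus\ti\U$ (Lemma~\ref{more home domains}(4)), with the finiteness of the area of $\ti V'$ needed only to dispose of the exceptional case $|\mathcal D| = 2$.

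\emph{First} I would check that $\fr(\ti V)\subset \ti Y$. If some $\ti z\in\fr(\ti V)$ lay in an element $\ti U''$ of $\ti\cA$ then, $\ti U''$ being open (Lemma~\ref{U tilde properties}(1)) and $\ti z$ being a limit of points of $\ti V$, a tail of such a sequence would lie in $\ti U''\cap\ti V \subset \ti U''\cap\ti U$; by Lemma~\ref{U tilde properties}(2) this forces $\ti U'' = \ti U$. But then $\ti z\in\ti U$ lies in a component of $\ti U$ distinct from $\ti V$ (as $\ti z\notin\ti V$), and this component is open, disjoint from $\ti V$, yet contains a limit of points of $\ti V$ --- a contradiction. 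So $\fr(\ti V)\subset\ti Y$, and in particular each $\ti z\in\fr(\ti V)$ has a well-defined home domain $\ti C(\ti z)$ (Lemma~\ref{more home domains}(1)).

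\emph{Second} I would show $\ti C(\ti z)\in\mathcal D$ for every $\ti z\in\fr(\ti V)$ and then invoke planar topology. By Lemma~\ref{more home domains}(1) there is a neighborhood $\ti O$ of $\ti z$ of which $\ti C(\ti z)$ is a home domain for every point. As $\ti V$ is open and birecurrent points are dense, choose a birecurrent $\ti w\in\ti O\cap\ti V$; then $\ti w\in\ti U$ has $\mathcal D$ as its set of home domains, and $\ti C(\ti z)$ is one of them, so $\ti C(\ti z)\in\mathcal D$. By Lemma~\ref{planar topology} each component of $\tiM\setminus\ti V$ has connected frontier, and $\fr(\ti V)$ is the union of these frontiers; since the home domain is locally constant on $\ti Y\supset\fr(\ti V)$ (Lemma~\ref{more home domains}(4)), it is constant on each such component of $\fr(\ti V)$. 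When $|\mathcal D| = 1$ that constant is the unique element of $\mathcal D$ on every component, proving the corollary; so there remains only the case $\ti U = \ti U(\ti\gamma)$, $\gamma\in\cR$.

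\emph{In that case} $T = T_{\ti\gamma}$ is hyperbolic with axis the geodesic $\ti\gamma = \ti C\cap\ti C'$, and each $\ti C(\ti z)$ lies in $\{\ti C,\ti C'\}$. Now the finite-area hypothesis enters. If $\ti V$ were $T$-invariant it would, being open and nonempty, contain the full $T$-orbit of a small disk; these translates share a positive area and are pairwise disjoint once their indices differ enough, so $\ti V$, hence $\ti V'\supset\ti V$, would have infinite area --- impossible. Therefore $\Stab(\ti V)$ is trivial, $\ti V$ is disjoint from all of its nontrivial covering translates, and the covering projection $\tiM\to M$ restricts to an embedding of $\ti V$. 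It remains to rule out that $\fr(\ti V)$ meets both the set of points of $\ti Y$ with home domain $\ti C$ and the set with home domain $\ti C'$ (these being disjoint and relatively clopen in $\ti Y$ by Lemma~\ref{more home domains}(4)); this is the step I expect to be the main obstacle. I would attack it using the embeddedness of $\ti V$ together with the ``no straddling'' results of Section~\ref{sec: endpoints} --- Corollary~\ref{consistent crossing} and Lemma~\ref{reducible}, which forbid an orbit in an annular cover over an element of $\cR$ from running from one boundary circle to the other --- together with the finiteness of the area of $\ti V'$, which (as above) prevents $\ti V$ from being $T_{\ti\gamma}$-invariant and should ultimately pin $\ti V$, and with it each connected component of $\fr(\ti V)$, to a single side of $\ti\gamma$, whence $\ti C(\cdot)$ is constant on $\fr(\ti V)$.
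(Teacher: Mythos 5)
Your steps~1--3 are correct, and indeed make explicit some points the paper leaves implicit (that $\fr(\ti V)\subset\ti Y$, and that the home domain of a frontier point lies in the set $\mathcal D$ of home domains of $\ti U$).  But the overall route is genuinely different from the paper's, and there is a real gap in the case $|\mathcal D|=2$, which you flag but do not close.

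The structural difference is this.  You decompose $\fr(\ti V)$ into the frontiers $\fr(Z)$ of the complementary components $Z$ of $\ti V$, use Lemma~\ref{planar topology} to get each $\fr(Z)$ connected, and then need to glue the (constant-per-component) home domain across different $Z$'s.  The paper never does any gluing.  It proves constancy only on $\fr(\ti V')$ --- the frontier of the \emph{unbounded} complementary component --- by showing $N_\epsilon(\fr(\ti V'))$ is connected via an exhaustion of the simply connected finite-area set $\ti V'$ by compact disks $B_i$ with $\partial B_i \subset N_\epsilon(\fr(\ti V'))$; this is exactly where the finite-area hypothesis is used.  It then \emph{propagates dynamically}: since each point of $\fr(\ti V')$ has unique home domain $\ti C$ (Lemma~\ref{more home domains}(1)), Lemma~\ref{bounded distance} gives $\ti f_{\ti C}^i(\fr(\ti V'))\subset N_{D_1}(\ti C)$ for all $i$, hence $\ti f_{\ti C}^i(\ti V')\subset N_{D_1}(\ti C)$ for all $i$, and Lemma~\ref{omega condition} then makes $\ti C$ a home domain for \emph{every} point of $\ti V'$.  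In particular the frontiers $\fr(Z)$ of the bounded components, which live inside $\ti V'$, automatically pick up $\ti C$; the case split on $|\mathcal D|$ never arises.

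The gap in your proposal is the final paragraph.  Finite area prevents $\ti V$ from being $T_{\ti\gamma}$-invariant, but it does not prevent $\ti V$ from straddling $\ti\gamma$: when $\gamma\in\cR$, the annulus $U(\gamma)$ is a neighborhood of $\gamma$ in $M$, so $\ti U(\ti\gamma)$ --- and typically $\ti V$ itself --- meets both sides of $\ti\gamma$.  More to the point, the home domain of a point in $\ti Y$ is a dynamical invariant (determined by the $\alpha$- and $\omega$-limits of the $\ti f_{\ti C}$-orbit), not a record of which side of $\ti\gamma$ the point lies on, so even a successful ``pinning'' of $\ti V$ to one side would not identify the home domain with one of $\ti C,\ti C'$.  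The anti-straddling results (Corollary~\ref{consistent crossing}, Lemma~\ref{reducible}) constrain orbit behavior in annular covers, but I do not see how to turn them into the gluing you need without essentially reproducing the paper's Lemma~\ref{bounded distance}/Lemma~\ref{omega condition} propagation.  The cleanest fix is therefore to drop the component-by-component decomposition in favor of the paper's single connectedness statement for $\fr(\ti V')$ plus the dynamical propagation.
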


\proof Choose $\epsilon > 0$ as in Corollary~\ref{more home
  domains} (4).  It suffices to show that   $\fr(\ti V)$ can not be written as a union of two non-empty sets $X_1$ and $X_2$  whose $\epsilon/2$ neighborhoods are disjoint.   We assume that such $X_1$ and $X_2$ exist and argue to a contradiction.
  
Since $\ti V'$ is simply connected it is the union of
an increasing sequence of compact disks $\{B_i, i=1\ldots \infty\}$.
Since $\ti V'$ has finite area we may assume that each $\partial B_i
\subset N_{\epsilon/2}(\fr (\ti V'))$ and hence that $\partial B_i
\cap N_{\epsilon/2}(X_1)$  and $\partial B_i\cap N_{\epsilon/2}(X_2))$
 is an open cover of $\partial B_i$.   Since $\partial B_i$ is connected one of these sets must be empty.  But this can only happen for all $B_i$ if one of the sets $X_1$ and $X_2$ is empty.


\endproof

Item (4) of  Proposition~\ref{intermediate} asserts that if $f_c : U_c \to U_c$ is the annular compactification (Notation~\ref{notn: annular comp}) of   $U \in \cA$, then a component of $\partial U_c$ corresponding to a non-singular end of $U$ contains fixed points for $f_c$.   We will prove this by viewing $U$ as an essential subannulus of the annular cover    determined by  the defining parameter of $ U  $. 


\begin{defn} \label{annular projection}  If $\ti U  = \ti U(\ti \gamma)$ choose a parameterization of  the annular cover    $A_\gamma$ (see Definition~\ref{annulus cover})  as $S^1 \times [0,1]$ with $S^1$ having circumference one.  Lift this to a parameterization of $(\tiM \cup \sinfty)\setminus \ti \gamma^{\pm}$ as $\R \times [0,1]$ and let $\pi: (\tiM \cup \sinfty) \setminus \ti \gamma^{\pm} \to \R $ be projection onto the $\R$ factor.   (Alternately, one can define this directly as orthogonal  projection onto $\ti \gamma$ parameterized as $\R$ and with fundamental domain having length one.)   If $\ti U  = \ti U(P)$ where $P$ projects to an isolated end $M$ with horocycle $\tau$ define $\pi : (\tiM \cup \sinfty) \setminus P \to \R $  as above using the compactified annular cover $A_P^c = A_\tau^c$.   In both case we say that the $\pi$ is the {\em projection associated to the defining parameter of $\ti U$.}
\end{defn}

\begin{cor}  \label{cor: frontier-bound}   Suppose that $T$ is  the covering translation associated to   $\ti U \in \ti \cA$,  that $\pi$ is  the  projection associated to the defining parameter of  $\ti U$, and that $\ti C$ is a home domain for  $\ti U$.   Given   $p,q>0$ define $\ti g = T^{-p} \ti f_{\ti C}^q$.    Then   there exists $r > 0$ so that  $\pi(\ti g^r(\ti y)) < \pi (\ti y) -1$
 for all $\ti y  \in \fr(\ti U)$ for which $\ti C$ is a home domain.    
\end{cor}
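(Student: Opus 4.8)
The plan is to work in the $\pi$-coordinate and to exploit that $T$ and $\ti f_{\ti C}$ commute, since $T\in\Stab(\ti C)$. Choosing the parameterization so that $T$ is the deck translation $t\mapsto t+1$, we have $\pi\circ T=\pi+1$ on $\tiM\setminus\ti\gamma^{\pm}$ (and similarly for the puncture case $\ti U=\ti U(P)$, working in $A^c_P$). Commutativity gives $\ti g^{r}=T^{-pr}\ti f_{\ti C}^{qr}$, hence
\[
\pi(\ti g^{r}(\ti y))=\pi(\ti f_{\ti C}^{qr}(\ti y))-pr .
\]
So the corollary follows once we produce $r>0$ with $\pi(\ti f_{\ti C}^{qr}(\ti y))-\pi(\ti y)<pr-1$ for every $\ti y$ in the frontier of $\ti V$ with home domain $\ti C$. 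Since $p,q>0$, this in turn follows from a uniform bound on the \emph{forward} $\pi$-drift: there should be a constant $L$, depending only on $\ti U$ and $\ti C$, with $\pi(\ti f_{\ti C}^{n}(\ti y))\le\pi(\ti y)+L$ for all $n\ge 0$ and all such $\ti y$; then any $r>(L+1)/p$ works.

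To prove this bound, let $\Theta$ be the set of points of $\fr(\ti U(\ti\gamma))$ for which $\ti C$ is a home domain. It is invariant under $\ti f_{\ti C}$ and under $T$, and its image in the \emph{compact} annulus $A_\sigma=A_\gamma$ of Definitions~\ref{annulus cover} has compact closure; since $\fr(\ti V)\subseteq\fr(\ti U(\ti\gamma))$, it suffices to bound the forward drift on $\Theta$. Fix $\ti y\in\Theta$. By definition of home domain $\ti C$ is both the $\alpha$-domain and the $\omega$-domain for $\ti y$, so $\omega(\ti f_{\ti C},\ti y)$ and $\alpha(\ti f_{\ti C},\ti y)$ are single points of $\cl(\ti C)\cap\sinfty$. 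If $\omega(\ti f_{\ti C},\ti y)\ne\ti\gamma^{+}$ then, reading the forward orbit in the $\pi$-coordinate, $\pi(\ti f_{\ti C}^{n}(\ti y))$ converges either to a finite value or to $-\infty$, so $\sup_{n\ge 0}\pi(\ti f_{\ti C}^{n}(\ti y))<\infty$. I claim the remaining possibility $\omega(\ti f_{\ti C},\ti y)=\ti\gamma^{+}$ does not occur on $\Theta$: combined with the home-domain hypothesis and Lemma~\ref{not home}, it would force $\alpha(\ti f_{\ti C},\ti y)\in\{\ti\gamma^{-},\ti\gamma^{+}\}$; if $\alpha(\ti f_{\ti C},\ti y)=\ti\gamma^{-}$ then $\ti y$ tracks $\ti\gamma$, and a birecurrent such $\ti y$ would lie in $\ti U(\ti\gamma)$ by Lemmas~\ref{disjoint scc} and~\ref{U covers}, contradicting $\ti y\in\fr(\ti U(\ti\gamma))$, while the non-recurrent heteroclinic case and the case $\alpha=\omega=\ti\gamma^{+}$ are ruled out by Corollary~\ref{PB} applied to $f_\sigma$ (using that $f_\sigma$ twists when $\gamma\in\cR$, respectively the boundary-rotation computation of Lemma~\ref{twist or not} when $\gamma\notin\cR$), since such an orbit would approach the $\ti\gamma^{+}$ end from inside the region swept out by the translation-arc streamlines of Lemma~\ref{reducible}. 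Hence $\sup_{n\ge 0}\pi(\ti f_{\ti C}^{n}(\ti y))<\infty$ for every $\ti y\in\Theta$.

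Finally I would promote this pointwise bound to the uniform constant $L$. By Corollary~\ref{stays close} (and its birecurrent strengthening) the orbits stay in $N_{D_1}(\ti C)$, so their images in $A_\gamma$ stay in a fixed compact subset bounded away from the end corresponding to $\ti\gamma^{+}$; together with the $T$-cocompactness of $\Theta$ (its image in $A_\gamma$ has compact closure) and the fact that the one-step drift $|\pi(\ti f_{\ti C}(\ti z))-\pi(\ti z)|$ is uniformly bounded on $\cl(\Theta)$, a standard compactness argument then bounds $\pi(\ti f_{\ti C}^{n}(\ti z))-\pi(\ti z)$ from above, uniformly in $\ti z\in\cl(\Theta)$ and $n\ge 0$, giving $L$. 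I expect the main obstacle to be exactly this synthesis: excluding positive forward drift for the non-recurrent frontier points (the heteroclinic subcase), where the twisting estimate of Corollary~\ref{PB} and the streamline asymptotics of Lemma~\ref{reducible} must be combined carefully, and then making the bound uniform over the a priori infinite set $\Theta$.
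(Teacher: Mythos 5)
Your opening reduction is correct: since $T$ commutes with $\ti f_{\ti C}$, the corollary would indeed follow from a \emph{uniform} upper bound $L$ on $\pi(\ti f_{\ti C}^{n}(\ti y))-\pi(\ti y)$ over all $n\ge 0$ and all frontier points $\ti y$ with home domain $\ti C$, and then any $r>(L+1)/p$ works. But the argument you give for that bound has two genuine gaps. First, your proof of the pointwise claim $\omega(\ti f_{\ti C},\ti y)\ne\ti\gamma^{+}$ leans on Lemma~\ref{not home}, Lemma~\ref{disjoint scc} and Lemma~\ref{U covers}, all of which are stated for $\ti y\in\ti\B(f)$; frontier points of $\ti V$ are precisely the points outside $\U$, and by Lemma~\ref{U covers} these lie \emph{outside} $\B(f)$, so none of those lemmas apply. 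Your fall-back for the non-recurrent case is Corollary~\ref{PB}, but that corollary requires $\sigma\in\cR$ (or a horocycle with non-trivial boundary behaviour); when $\gamma\notin\cR$, Lemma~\ref{twist or not} says $f_\gamma$ is isotopic to the identity and Corollary~\ref{PB} gives nothing, and this case certainly occurs. Second, even granting the pointwise claim, the promotion to a uniform $L$ is not done: the function $\ti y\mapsto\sup_{n\ge 0}\bigl[\pi(\ti f^{n}(\ti y))-\pi(\ti y)\bigr]$ is not upper-semicontinuous, and the image of $\Theta$ in $A_\gamma$ is \emph{not} contained in a compact subset of the interior of $A_\gamma$ — the frontier of $\ti V$ approaches $\sinfty$, so ``compact closure in $A_\gamma$'' (which is automatic, $A_\gamma$ being compact) includes boundary points where the drift is not controlled. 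The case $\ti U=\ti U(P)$ is also dismissed in a parenthesis but needs separate treatment.

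The paper's proof is structurally different and sidesteps both difficulties. It fixes an exhaustion $M_1\subset M_2\subset M$ by compact subsurfaces chosen so that $M\setminus M_1$ contains no curve isotopic to $\gamma$, which makes every component $\ti Z$ of $\tiM\setminus\ti M_1$ have $\pi$-diameter at most a fixed $L$, and so that each stretch of the orbit through $M\setminus M_2$ lifts into a single such $\ti Z$. Lemma~\ref{more home domains}-(3) — whose hypothesis is exactly that $y\notin\U$, with no recurrence assumption — gives a uniform bound $K$ on the number of times the orbit of a frontier point visits $M_2$, and a coarse one-step bound $B$ handles those visits, giving total forward drift at most $KB+(K+1)L$. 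That is the uniform estimate directly; the pointwise-plus-compactness route does not deliver it. For $\ti U(P)$ the paper then runs separate sub-cases according to the twisting behaviour of $f_\sigma$ on $\partial A_P^{c}$.
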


\proof  
To simplify notation slightly, we let $h = f^q$ and $\ti h = \ti f_{\ti C}^q$.   Increasing $p$ makes the desired inequality easier to satisfy so we may assume that $p=1$ and  $\ti g =  T^{-1}\ti h.$   
 The goal is to prove the existence of $r$ such that
\begin{equation}\label{eqn: frontier-bound}
\pi(\ti h^r(\ti y)) < \pi (\ti y) +r-1
\end{equation}
for all  $\ti y$.

Choose compact subsurfaces $M_1 \subset M_2 \subset M$ such that  $$N_{D_1}\cR  \subset M_1 \qquad \text{ and } \qquad M_1 \subset \Int(M_2)$$ and so that the following hold for each component $W_1$ of $M \setminus M_1$ and each component $W_2$ of $M \setminus M_2$.
\begin{enumerate}
\item  $W_i$   contains  at least one puncture.
\item $\partial W_i$ is connected and is either a geodesic or a horocyle.
\item  $W_2 \subset W_1 \implies   h(W_2) \subset W_1$. 
\end{enumerate}

The existence of $r$ is independent of the exact choice of projection $\pi$ so we may assume
\begin{enumeratecontinue}
\item  If $\ti U = \ti U(\ti \gamma)$  then $\pi$ is orthogonal projection onto $\ti \gamma$; if $\ti U = \ti U(P)$ then there is a  horocycle $\ti \nu$ whose ends converge to $P$ such that the restriction of $\pi$ to the component of $\tiM \setminus  \ti \nu$ whose closure contains  $\sinfty \setminus P$ is orthogonal projection onto  $\ti \nu$.
\end{enumeratecontinue} 
We will eventually add one more property satisfied by $M_1$.  Namely,
\begin{enumeratecontinue}

 \item  For any lift $\ti W_1$ of a component of  $M \setminus  M_1$,  any $\ti y \in \fr(\ti U)$  and for  all $J_1 < J_2$,  $$\ti h^j(\ti y) \in \ti W_1  \text { for all } 
 J_1 \le j \le J_2   \implies \pi(\ti h^{J_2}(\ti y)) - \pi(\ti h^{J_1}(\ti y)) \le 1 +(J_2-J_1)/10  $$

\end{enumeratecontinue}  
Assuming (5) for now, we complete the proof of the corollary. 

Suppose that $ h^j(y) \not \in M_2$ for  some $J_1<J_2$ and all $J_1 \le j \le J_2$.  Let $W_1$ be the component of $M\setminus M_1$ that contains $h^{J_1}(y)$ and let $\ti W_1$ be the lift of $W_1$  that contains $\ti h^{J_1}(\ti y)$.  Arguing exactly as in the proof of Lemma~\ref{more home domains}, we conclude that
$\ti  h^j(\ti y) \in \ti W_1$ for all $J_1 \le j \le J_2$.   By (5)   $$\pi(\ti h^j(\ti y)) \le \pi(\ti h^{J_1}(\ti y) ) +1 +(j-J_1)/10$$ for all $J_1 \le j \le J_2$. 

   By Lemma~\ref{more home domains} (3) and the assumption that $\ti y \in \fr(\ti U) $, there is a constant $K$ such that there are at most $K$ values of $j$ with $ f^j(y)  \in M_2$.  There is a constant $B$ so that $\pi(\ti h(\ti y))  < \pi(\ti y) + B$ for all $\ti y \in \tiM $. Thus
\[
\pi(\ti h^r(\ti y)) < \pi (\ti y) + K B + (K+1)  +  r/10
\]
for all $r.$ A straightforward calculation shows that
inequality~\ref{eqn: frontier-bound} therefore holds for 
\[
r > \frac{10(KB + (K+1) +1)}{9}. 
\]

It remains to verify (5).   If $\ti U = \ti U(\ti \gamma)$ then by enlarging $M_1$ we may assume that  $\gamma \subset \Int M_1$.  
Each component of $\partial \ti W_1$ is disjoint from $\ti \gamma$.  There is a component $\ti \delta$ of $\partial \ti W_1$ that separates $\ti \gamma$ from all other components of $\partial \ti W_1$.    Since $\delta$  is a simple geodesic or horocycle, $\ti \delta \cap  T_{\ti \gamma}(\ti \delta) = \emptyset$.     It follows that $\ti W_1 \cap  T_{\ti \gamma}(\ti W_1) = \emptyset$    and hence that  the diameter of $\pi(\ti W_1)$ is less than one.  (Recall that we have normalized the projection so that a fundamental domain of $\ti \gamma$ has length one.)  This completes the proof of (5) in the $\ti U = \ti U(\ti \gamma)$ case.


Suppose then that $\ti U =\ti U(P)$ and that $\ti \nu$ is as in (4).
Assuming without loss that $\ti \nu$ projects to a simple closed curve
$\nu \subset M_1$, the previous argument applies to all lifts of $W_1$
except the one $\ti W_1$ whose closure contains $P$.  It therefore
suffices to verify (5) for this one lift $\ti W_1$ and for this we are
allowed to enlarge $M_1$ if necessary.

 Let $c$ be the puncture that lifts to $P$. 
 If $U$ contains a neighborhood of $c$ then we may assume that $W_1\subset U$ in which case  $(5)$ is vacuously true.  We may therefore assume that  $U$ does not contain a neighborhood of $c$ and hence that there exist $\ti z_i \in \ti B(f)$ such that $\ti z_i\to P$ and $\ti z_i \not \in \ti U$.   By   Lemma~\ref{U covers}, $\ti z_i$ belongs to some element of $\ti \cA$ and so   $\alpha( \ti f_{\ti C},\ti z_i)$ and $\omega( \ti f_{\ti C},\ti z_i)$ 
are both  unequal to $P$.

Let $f_\nu: A_\nu^c \to A_\nu^c$ be the homeomorphism of the
compactified annular cover $A_\nu^c$ (see Definitions~\ref{annulus  cover}), 
let $\partial_1 A_\nu^c$ be the component of $\partial
A_\nu^c$ that corresponds to $c$
and let $\partial_0A_\nu^c$ be the other component of $\partial
A_\nu^c$.  The projected images $\hat z_i \in  A_\nu^c$ 
  of
$\ti z_i$ satisfy $\alpha( f_\nu, \hat z_i), \omega( f_\nu,\hat z_i)
\in\partial_0A_\nu^c$  and   any given neighborhood of $\partial_1 A_\nu^c$ contains $\hat z_i$ for all sufficiently large $i$.  Corollary~\ref{PB} therefore implies that
$\Fix( f_\nu |_{\partial A_\nu^c}) $ intersects both components of
${\partial A_\nu^c} $ and that $f_\nu$ is isotopic to the identity
relative to $\Fix( f_\nu |_{\partial A_\nu^c}) $.

 Let $\ti f_\nu: \ti A_\nu^c \to \ti A_\nu^c$ be the lift to the universal cover that fixes points in both components of $\partial \ti A_\nu^c$.   Then  $\ti f_\nu|_{\Int(\ti A_\nu^c)}$   is naturally identified with $\ti f_{\ti C}$ by construction and so $\ti h$ is naturally identified with $\ti f^q_\nu|_{\Int(\ti A_\nu^c)}$.   Since $\ti f_\nu|_{\partial_1 \ti A_\nu^c}$ has translation number zero, we can enlarge $M_1$ to arrange that   (5)  is satisfied.
\endproof

\endproof

\begin{lemma} \label{U is an annulus}Suppose that   $U \in \cA$.
\begin{enumerate}
\item  $U $ is an open annulus that is essential in $M$.
\item  If $U =U(\gamma)$ then each simple closed curve in $U$ that is essential in $U$ is isotopic to $\gamma$.  If $U =U(P)$ then each simple closed curve in $U $ that is essential in $U$  is isotopic to a horocycle surrounding the isolated end of $M$ corresponding to $P$.

\item   
If $U =U(P)$ and  $\C$ is the component of $\Fix(F)$ whose corresponding puncture in $M$ lifts to $P$ then $\C$   contains  a component of  the frontier of $U$ in $S^2$.  In other words, $U$ contains a deleted neighborhood of  $\C$. 
 
\item   Each component of $\partial U_c$ corresponding to a non-singular end of $U$  has a fixed point for $f_c$. 
\end{enumerate}
  \end{lemma}

\proof Choose $\ti U \in \ti \cA$ projecting to $U$ and let $T $ be
the covering translation associated to $\ti U$.  We will prove that
$\ti U$ is connected and simply connected.  
 The first and third items of
Lemma~\ref{U tilde properties} then imply that $U$ is an open annulus and that (2) is satisfied.  Since (2) implies that $U$ is essential in $M$, (1) is also proved. 

As part of our proof that  $\ti U$ is simply connected we will show
that each component $\ti V$ of $\ti U$  is : 
\begin{description}
\item [(a)] unbounded
\item [(b)] simply connected
\item [(c)]  $T$-invariant.
\end{description}

We verify (a) by assuming  that $\ti V$ is bounded and arguing to a contradiction. Let
$\ti f = \ti f_{\ti C}$ where (Corollary~\ref{frontier home domains})
$\ti C$ is a home domain for each point in the frontier of $\ti V$.
Since $f$ preserves area there exists $q > 0$ and a covering
translation $S$ so that $\ti f^q(\ti V) \cap S(\ti V) \ne \emptyset$.
Lemma~\ref{U tilde properties} (3) implies that $S = T^p$ for some $p
\in \Z$.  After replacing $T$ with $T^{-1}$ if necessary we may assume
that $p \ge 0$.  From the fact that $\ti f^q(\ti V )$ and $S(\ti V)$ are
both components of $\ti U$, it follows that $\ti f^q(\ti V ) = S(\ti
V) = T^p(\ti V)$.  Thus $\ti V$ is $\ti g$-invariant where $\ti g =
T^{-p}\ti f^q$.  If $p=0$ then $\ti f$ has bounded orbits (since
we are assuming $\ti V$ is bounded) and hence
fixed points by the Brouwer plane translation theorem.  Since $\ti f$
is fixed point free, $p \ne 0$.  This contradicts 
Corollary~\ref{cor: frontier-bound} and so 
completes the proof  of (a).

If (b) fails then some component of the complement of $\ti V$ is
bounded.  Thus there is a closed disk $D$  that is not contained in $\ti U$ but whose boundary is contained in $\ti U$.  By the definition of $\ti U$ there exist $\ti z \in\ti B(f) \cap D$ such that $\ti z \not \in \ti U$.  By Lemma~\ref{U covers} there is $U' \in \cA$ such that $\ti z \in \ti U' $.  But then the component of $\ti U'$ containing $\ti z$ is bounded in contradiction to (a).  This prove (b).

We next  assume that (c) fails and argue to a contradiction.   A closed curve homotopic to an iterate of $\gamma$ contains a closed curve homotopic to $\gamma$. Thus  $T^p(\ti V) \ne \ti V$ for all $p \ne 0$.  Lemma~\ref{U tilde properties}(3) implies that $\ti V$ is moved off itself by every covering translation.  In particular, $\ti V$ has finite area  because the covering projection into $M$ is injective on $\ti V$.     Define $\ti f = \ti f_{\ti C}$  where $\ti C$ is a home domain   for each point in the frontier of $\ti V$.  As in the previous argument, there exists an integer $p$ and a positive integer $q$ so that $\ti f^q(\ti V) = T^p(\ti V)$.   If $p = 0$, then $\ti V $     has recurrent points, and hence fixed points for $\ti f$, which is impossible.  Thus $p \ne 0$ and we assume without loss that $p > 0$.

 Let $\pi $ be the projection associated to the defining parameter of $\ti U$ and let 
 $\ti g = T^{-p}\ti f^q $.  Then    
 $\ti g( \ti V) = \ti V$ and by Corollary~\ref{cor: frontier-bound},  there is an $r>0$ such that $\pi(\ti g^r(\ti y)) < \pi(\ti y)  -1$ for every $\ti y$ in $\partial \ti V.$    The function $\pi \ti g^r  - \pi$ is defined on the universal cover of a compact annulus (either $A_\gamma$ or $A_P^c$ in the notation of Definition~\ref{annular projection}) and is invariant under the cyclic group of covering translations of that covering space.   It follows that   $\pi \ti g^r  - \pi$ is   uniformly continuous.   Consequently, there is $\delta>0$ such that every
$\ti x\in \ti V$ which is within $\delta$ of $\partial \ti V$ satisfies
$\pi(\ti g^r(\ti x)) <  \pi(\ti x) -1.$

Let $\ti V_{n} = \{ \ti x \in \ti V\ |\ \pi(\ti x) < -n\}.$    Then $\{\ti V_n\}_{n \ge 0}$ is a nested family whose intersection
  is empty.  Moreover,  each $\ti V_{n}$ is non-empty because $\ti V$ is $\ti
  g$-invariant and $\lim_{n\to \infty} \pi \ti g^{nr}(\ti y) =
  -\infty$ for all $\ti y \in \partial \ti V$.
Since $\ti V$ has finite area there exists $N>0$ such
  that $\ti V_N$ contains no ball of diameter $\delta$, and hence
  every point of $\ti V_{N}$ must be within $\delta$ of $\partial \ti
  V.$ We conclude the $\ti g^r(\ti V_{N}) \subset \ti
  V_{N+1} \subset V_N$.     But then $\ti g^r(\ti V_{N})$ is a proper open subset of  $V_N$ with the same finite area as $V_N$.  This contradiction  completes the proof of (c).

We have now proved that each component of $U$ contains a simple closed curve that is essential in $M$ and that all such simple closed curves in $U$ are in the same isotopy class. Moreover  if $U' \in \cA$ and $U \ne U'$ then $U$ and $U'$ do not contain isotopic simple closed curves.    If $U$ has more than one component then there is an unpunctured annulus $A$  whose boundary curves are in $U$ and whose interior intersects a component of $\fr(U)$ and hence intersects the interior of some $U' \ne U$.    It follows that $A$ contains a component of $U'$ and hence contains an essential simple closed curve not isotopic to the components of $\partial A$. This contradiction implies that $U$ and hence $\ti U$ is connected.  Item (b) therefore implies that $\ti U$ is simply connected.    This  completes the proof of (1) and (2).    

A similar argument proves (3): If $U$ does not contain a  neighborhood of the puncture $c$ corresponding to $\C$ then the    once punctured disk  neighborhood of $c$  determined by a core curve $\tau $ of $U$  contains some $U' \ne U$ and hence contains an essential simple closed curve that is not isotopic to $\tau$. This contradiction proves (3).

We now consider (4). Suppose that $\partial_0 U_c$ is a component of $\partial U_c$ corresponding to a non-singular    end of $U$, meaning that the   corresponding component $Z$  of the frontier of $U$ in $S^2$ is not a single point.   The compactification of this  end of $U$ is by prime ends.    By  Lemma~\ref{lem: frontier fp}(3) we may assume that $Z \not \subset  \Fix(F)$ or equivalently that $M \cap Z \ne \emptyset.$  Let $\ti f_c : \ti U_c \to \ti U_c$ be the lift to the universal cover such that $\ti f_c|_{\ti U} = \ti f_{\ti C}|_{\ti U}$.   We will prove that $f_c |_{\partial_0U_c}$ has a fixed point by showing that  the translation number $\tau$  for $\ti f_c |_{ \partial_0\ti U_c}$ (see Definition~\ref{translation number}) is zero.    By symmetry, it suffices to  assume that $\tau >0$ and argue to a contradiction.  

Choose  a  degree one closed path  $\mu$  with embedded interior in $U$   and with   both endpoints   at $z \in M \cap Z$.   Let  $\ti \mu_0$ be a lift of the interior of  $\mu$ to $\ti U$.  Since   $\mu$ has degree one, the ends of $\ti \mu_0$ converge to  lifts   $\ti z$ and $T (\ti z)$  of $z$ in  the frontier of $\ti U$ in $\tiM$.     Denote the bounded area   component of $\ti U \setminus \ti \mu_0$  by $D_0$.   For each $k$, let $\ti \mu_k = T^k (\ti \mu_0)$  and $D_k = T^k(D_0)$.
  
 From the point of view of $\ti U_c$, $D_0$ is the interior of a
half-disk $D_0^c$ whose frontier is the union of   $\ti \mu_0$ and an interval $I_0 \subset \partial_0\ti
  U_c$ that is a fundamental domain for the action on
$\partial_0 \ti U_c$ of the covering translation 
$T_c : \ti U_c \to \ti U_c$ corresponding to $T$.  Let $D_k^c = T_c^k(D_0^c)$.
Choose $0 < p/q < \tau$,   let $\ti g =
T^{-p} \ti f_{\ti C}^q $ and let $\ti g_c = T_c^{-p} \ti f_c^q$.   Identify $\partial_0 \ti U_c$ with $\R$. Under the action of $\ti g_c$, points in $\partial_0 \ti U_c$  move in the positive direction at an average rate of $\tau - p/q > 0$.  In particular, 
 given any $\bar z$ in the interior of
$I_0$ and any $L> 0$, there exists $j > 0$ so that for any
sufficiently small half disk neighborhood $B$ of $\bar z$ in $\ti
U_c$, we have $\ti g_c^j(B) \subset D^c_l$ for some $l \ge L$.
  
From the point of view of $\pi$,  $D_0$ is not so small.   The image under $\pi$ of $\ti \mu_0$ is   bounded  so the image under $\pi$ of $\ti \mu_l$ goes to infinity with $l$.   The  frontier
of the set $B$ from the previous paragraph is the union of an interval in $\partial_0 \ti U_c$ with an
open embedded path $\ti \nu_c \subset \Int(\ti U_c)$. We may choose
$B$ so that, under the identification of $\Int(\ti U_c)$ with $\ti U$, $\ti
\nu_c$ corresponds to the interior of a path $\ti \nu$ with endpoints
  in $M \cap Z$. Corollary~\ref{cor: frontier-bound} implies
that the $\pi$-image of the endpoints of $\ti g^j(\nu)$ decrease
linearly in $j$.     Since $L$ can be arbitrarily large,  this proves that there is no uniform bound to the diameter of the  image under $\pi$ of $D_l$.  Since $T(D_l) = D_{l+1}$, this diameter is independent of $l$  and  we conclude that each  $\pi(D_l)$ is not bounded below.

\includegraphics[width = 6in]{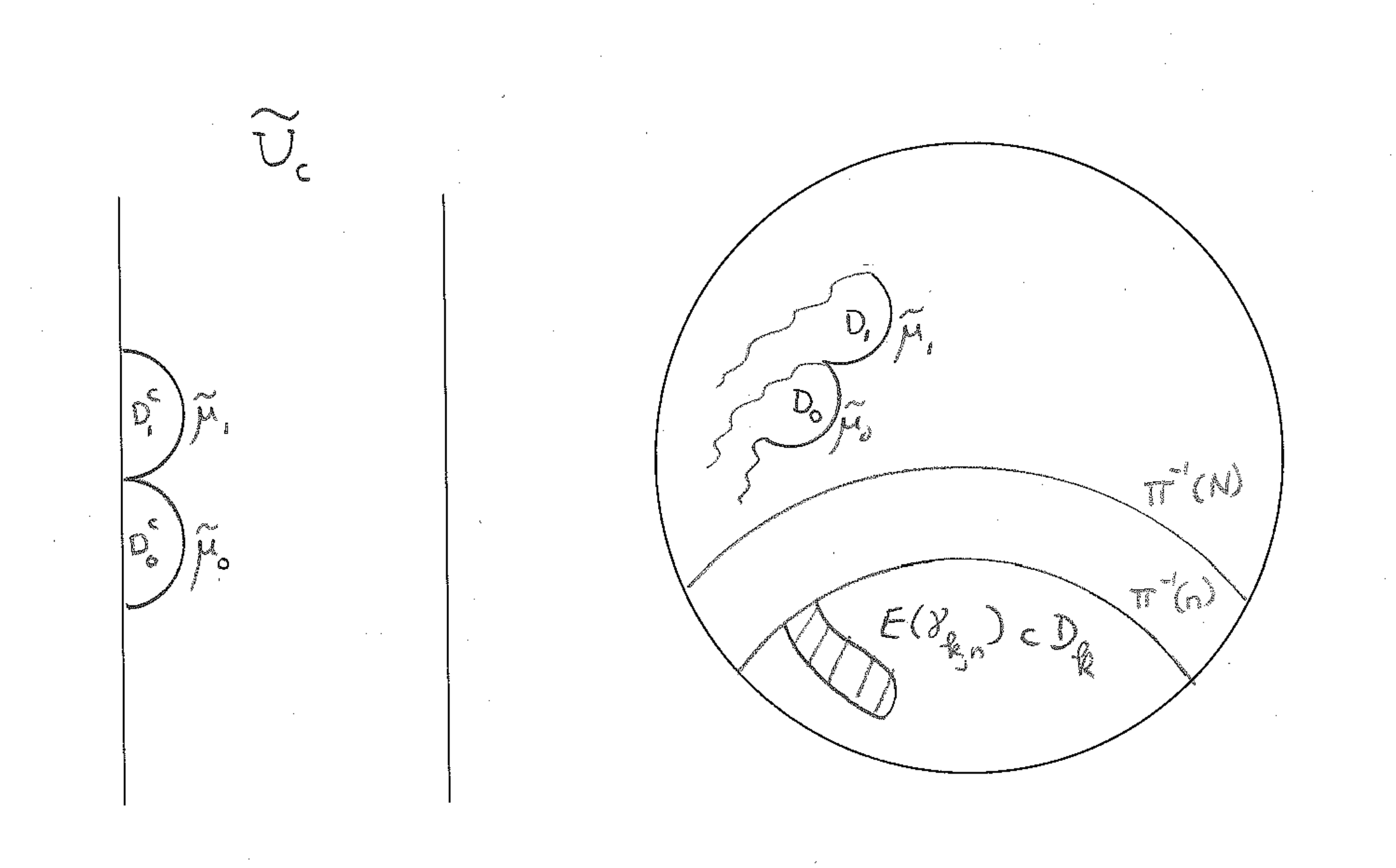}



Choose a positive integer $N$ so that $\pi(\ti \mu_0) > -N$.  For
every fixed $n>N$ and $k>0$ consider all cross cuts
$\gamma_{k,n}\subset D_k$ such that $\pi(\gamma_{k,n}) =-n$.  (In
other words,  $\gamma_{k,n}$ is a non-trivial component of the intersection of $D_k$ with the  properly embedded line  $\pi^{-1}(-n)$.)
Let $E(\gamma_{k,n})$ be the complementary component of $\gamma_{k,n}$
that is contained in $D_k$ and let $ d_{k,n}$ be the maximum area of
all such $E(\gamma_{k,n})$.  To see that this maximum is achieved, it
suffices to show that any ascending chain $E(\gamma^1_{k,n}) \subset
E(\gamma^2_{k,n}) \subset E(\gamma^3_{k,n}) \subset \dots$ is finite.
Suppose not.  Let $E$ be the union of an infinite ascending chain.
Choose $\ti w \in E(\gamma^1_{k,n})$,
choose $\ti w' \in \ti U \setminus E$ and choose
a path $\ti \rho \subset \ti U$ connecting $\ti w$ to $\ti w'$.  Then
$\ti \rho$ intersects $\gamma^i_{k,n}$ for all $i$.  Choose a point
$\ti v_i \in \ti \rho \cap \gamma^i_{k,n}$ for each $i$ and a limit
point $\ti v$ of some subsequence of the $\ti v_i$'s.  Then $\ti v \in
\ti U$ because $\ti \rho \subset \ti U$ is compact.
 However, this is impossible because $\pi^{-1}(-n)$ 
is a properly embedded line so 
the $\ti v_i$'s converge to $\ti v$ in this line and 
$\ti v$ is in one component of the open subset 
$\pi^{-1}(-n) \cap \ti U$ of this line while each $\ti v_i$ is in
a different component. This contradiction shows that $ d_{k,n}$ is well
defined.

We have 
$$
d_{k,n} = d_{k+1,n-1}  \  > \  d_{k+1,n}
$$
The equality follows from the fact that   $\gamma_{k+1,n-1} = T(\gamma_{k,n}) \subset D_{k+1}$ is a cross cut   with $\pi(\gamma_{k+1,n-1}) = -n+1$.  The inequality follows from the fact that each $E(\gamma_{k,n})$ is contained in some $E(\gamma_{k,n-1})$.  

     Fix $k$ and  choose  $\gamma_{k,n}$ so that $d_{k,n} = E(\gamma_{k,n})$.  Since $D_k$ has finite area,  we have $\lim _{n \to \infty} d_{k,n} = 0$.    Arguing as in the proof of (c), there exits $r > 0$ and $N' > N$ such that $\pi(\ti g^r(\gamma_{k,n})) <  -n-1$ for all $n > N'$.    Our choice of $N$ guarantees that $\ti g^r(\gamma_{k,n}) \cap \ti \mu_l = \emptyset$ for   $l \ge k$.  Since  the endpoints of $ \gamma_{k,n}$  move upward under the action of $\ti g^r_c$, 
      it follows that $\ti g^r(E(\gamma_{k,n}))$ is contained in $D_l$ for some $l \ge k$ and hence that   $\ti g^r(E(\gamma_{k,n}))$ is contained in  some $E(\gamma_{l,n+1})$.    This contradicts the fact that   $d_{l,n+1} < d_{k,n}$ for all $l \ge k$.       
\endproof     

\section{Proof of  Proposition~\ref{intermediate} }
 

\begin{lemma}\label{maximal annuli}  $\cA$ is the set of maximal $f$-invariant  open annuli in $M$.
\end{lemma}

\proof By Lemma~\ref{U is an annulus},  the elements
of $\cA$  are disjoint $f$-invariant open annuli.  It therefore  suffices
to show that for every $f$-invariant open annulus $V$ there
exists $U \in \cA$ such that $V \subset
U$. 

If $V $ is inessential  in $M$ then the union of $V$ with one of its
complementary components in $M$ is an $f$-invariant open
disk.  Since $f$ preserves area, the Brouwer plane
translation theorem implies that this open disk contains a
fixed point which is impossible because $M$ is fixed point
free.  We conclude that $V$ is essential in $M$.

Let $\alpha$ be an essential simple closed curve in $V$ and
let $\gamma$ be either a simple closed geodesic or a
horocycle in $M$ that is isotopic to $\alpha$.  Since $V$ is
$f$-invariant, $\gamma$ is isotopic to $f(\gamma)$ and so
does not cross any reducing curves.

Choose a lift  $\ti \gamma \subset \tiM$  of $\gamma$  and let $T $ be a root free covering translation that preserves ${\ti \gamma}$.  The ends of $\ti \gamma$ converge to the (possibly equal) endpoints $T^\pm$ of $T$.
If $\gamma$ is not   a reducing curve then $\ti \gamma$ lies in
a unique domain $\ti C$.  
The lift $\ti f_1 = \ti f_{\ti C}$ of $f$ fixes  $T^\pm$ and  so commutes with $T$ by Lemma~\ref{basic lemma 2}. 
If $\gamma$ is a reducing curve then $\ti \gamma$ is the common frontier of
two domains $\ti C_1$ and $\ti C_2$.  Let $\ti f_j,\ j = 1,2$
be the lift which  fixes the ends of $\ti C_j$.  In this case too $\ti f_j$  fixes   $ \ti \gamma^{\pm}$ and commutes with $T$.

The components of the full pre-image of $V$ are copies of
the universal cover of $V$; we refer to each component as
a lift of $V$.  
There is a compactly supported homotopy from  $\gamma$ to $\alpha$ which lifts to a homotopy between $\ti \gamma$ and a lift $\ti \alpha$ of $\alpha$.  Let $\ti V$ be the lift of $V$ that contains $\ti
\alpha$.
 Since the lifted homotopy moves points a uniformly bounded distance, the ends of $\ti \alpha$ converge
to $T^{\pm} $.    Since this uniquely determines $\ti \alpha$ and since the ends of $T(\ti \alpha)$ converge to $T^{\pm} $, it follows that $T(\ti \alpha) = \ti \alpha$ and hence that $T(\ti V)  = \ti V$.   For the same reason, $\ti f_j(\ti \alpha)$ is the unique lift of $f(\alpha)$ whose ends converge to  $T^{\pm} $.   Since there is  such a lift of $f(\alpha)$ in $\ti V$, it follows that $\ti V$ and  $\ti f_j(\ti V)$ have non-trivial intersection and so, being lifts of $V$,    are equal.


Given $\ti x \in \ti B(f) \cap \ti V$ projecting to  $x \in \B(f) \cap V$, let $W \subset V$ be a
free disk neighborhood of   $x $ with compact closure and
let $\ti W\subset \ti V$ be the lift of $W$ 
that contains $\ti x$.  There exist $k_i \to \infty$ such
that $f^{k_i}(x) \in W$ and covering translations $S_i$
satisfying $\ti {f_j}^{k_i}(\ti x) \in S_i(\ti W)$.
Since $\ti {f_j}^{k_i}(\ti x) \in \ti V$, $S_i$
preserves $\ti V$ and so must be an iterate of $T$.  After
passing to a subsequence and reversing the orientation of
$T$ if necessary, we may assume that $S_i = T^{m_i}$ for
$m_i\to \infty$.  In particular, the distance between $\ti {f_j}^{k_i}(\ti x)$ and $\ti C$  is uniformly bounded.  Lemma~\ref{omega condition} implies that
$\ti C$ is an $\omega$ domain and hence
(Proposition~\ref{home lift}) a home domain for $\ti x$. Lemma~\ref{U covers} imples that $\ti x$ is contained in some $\ti U \in \cA$; 
Lemma~\ref{isolated puncture} and Corollary~\ref{limited
  near cycles}   imply that $T$ is the covering translation associated to $\ti U$.    Since $T$ is independent of the
choice of $\ti x$, $\B(f) \cap \ti V \subset \ti U$.  The interior of the closure of   $\B(f) \cap \ti V $ contains $\ti V$ so  $\ti V \subset \ti U$ by  Corollary~\ref{interior of closure}. This completes the proof.
 \endproof
 
Recall (see Notation~\ref{notn: annular comp}) 
that for any open $f$-invariant annulus $V \subset M$
there is a natural annular compactification of $V$ 
denoted $V_c$ and an extension of $f$ to the closed annulus
$f_c : V_c \to V_c.$   See   Definition~\ref{translation number} for the definition of translation number, translation interval, rotation number and rotation interval.

 \begin{lemma}\label{lem: U rot int}
Suppose that $U \in \cA$  and that $X$ is a component
of $\partial U_c$ corresponding to a
non-singular end.   Then the
translation number $\tau(\ti f_c|_{\ti X})$ of any lift of  $f_c$
restricted to the universal covering space $\ti X$ is an integer
$p.$ Moreover the translation interval $\cT(\ti {f_c})$ is a non-trivial 
interval containing $p$ as an endpoint and having length
at most $1$.
\end{lemma}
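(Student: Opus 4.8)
The plan is to prove the two assertions in turn, relying on the structure of $\partial U_c$ that has already been established. First I would show that $\tau(\ti f_c|_{\ti X})$ is an integer. By Proposition~\ref{intermediate}-(4) (which applies to the component $M$ containing $U$), since $X$ corresponds to a non-singular end of $U$, the rotation number $\rho(X) = \rho_{f_c}(x)$ for $x \in X$ equals $0$; equivalently the boundary circle $X$ contains a fixed point of $f_c$. Lifting to the universal cover $\ti X \cong \R$, a lift $\ti f_c|_{\ti X}$ that fixes a preimage of that fixed point has translation number $0$, and any other lift differs by an integer covering translation, so $\tau(\ti f_c|_{\ti X}) = p \in \Z$. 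This handles the first sentence.

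For the second sentence I would argue as follows. That $\tau(\ti f_c)$ is a closed interval is immediate from Theorem~\ref{thm: irr rot num} applied to the area-preserving map $f_c : U_c \to U_c$ on the closed annulus. It contains $p$ because $p = \tau(\ti f_c|_{\ti X})$ is realized by lifts of points of $X \subset U_c$, so $p$ lies in the set of translation numbers, which by Theorem~\ref{thm: irr rot num} is exactly $\tau(\ti f_c)$. The length-at-most-$1$ claim is the standard fact that the rotation interval of an area-preserving annulus homeomorphism has length $\le 1$: if $\tau(\ti f_c)$ contained two points at distance $> 1$, then by Theorem~\ref{thm: rot num} there would exist periodic points with two distinct rational rotation numbers $p_1/q_1, p_2/q_2$ in the interval whose difference exceeds $1$, forcing (say) two periodic points of period $1$, i.e.\ two fixed points of $\ti f_c$ differing by a nontrivial deck translation—but then, composing with that deck translation, one gets a lift with a fixed point and another lift, a contradiction to $\Fix(\ti f) = \emptyset$ in the interior; more cleanly, one invokes the well-known bound directly. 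Alternatively I would cite Proposition~\ref{prop: limsup}: the $\liminf$ and $\limsup$ of the displacement cocycle at any point both lie in $[r,s]$ where $\tau(\ti f_c) = [r,s]$, and comparing a point on the boundary circle $X$ (displacement $\to p$) with the universal estimate pins things down.

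The nontriviality of the interval—that $\tau(\ti f_c)$ is not the single point $\{p\}$—is the point that needs genuine input, and I expect it to be the main obstacle. The idea is that if $\tau(\ti f_c) = \{p\}$ then every point of $U_c$ has rotation number $0$ for the lift $\ti f_c - p$, so by Proposition~\ref{prop: +meas} (or Proposition~\ref{prop: +ent}/\ref{prop: +meas} applied to $f_c$ composed with the appropriate deck translation) $f_c$ has a fixed point in the interior $U$ of $U_c$. But $U \subset \M$ and $f$ has no fixed points in $\M$ by construction ($\M = S^2 \setminus \Fix(F)$), so $f_c$ has no interior fixed points—a contradiction. Hence $\tau(\ti f_c)$ is a nondegenerate interval. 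Assembling: $\tau(\ti f_c)$ is a closed interval of length in $(0,1]$ having the integer $p$ as an endpoint; whether $p$ is the left or right endpoint depends on orientation conventions, and I would note that the symmetric statement for the other boundary component is obtained by the same argument. The only care needed is to make sure Proposition~\ref{prop: +meas} is applied to the right lift (subtract the integer $p$ first) and to the genuinely area-preserving extension $f_c$, which is legitimate since blowing up a fixed point or taking the prime end compactification of an area-preserving map yields an area-preserving (or at least rotation-number-respecting) boundary extension in the sense needed here.
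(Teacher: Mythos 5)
Your proof of the first sentence (integer translation number on $\ti X$ via Proposition~\ref{intermediate}-(4)) matches the paper, and your non-triviality argument via Proposition~\ref{prop: +meas} is also essentially the paper's. However, there is a genuine gap in the part about $p$ being an endpoint of $\tau(\ti f_c)$.

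You establish (or sketch) that $p \in \tau(\ti f_c)$, that the interval is closed, that its length is at most $1$, and that it is non-degenerate, and then assert that $p$ is an endpoint. But those four facts do not imply it: $[p - \tfrac13,\, p + \tfrac13]$ is a closed non-degenerate interval of length $\le 1$ containing $p$, with $p$ in the interior. What is actually needed is the stronger statement that \emph{no integer lies in the interior of $\tau(\ti f_c)$}; this, together with $p \in \tau(\ti f_c)$, is what forces $p$ to be an endpoint (and also yields length $\le 1$ for free). The paper proves exactly this: if an integer (WLOG $0$) were interior, Theorem~\ref{thm: rot num} gives periodic points with both positive and negative translation numbers, and then Theorem~2.1 of \cite{franks:poincare} (the positive-and-negative recurring disk chain criterion, the same result invoked inside the proof of Proposition~\ref{prop: +meas}) produces a fixed point of $f$ in the open annulus $U$, contradicting $\Fix(f)\cap U = \emptyset$. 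Your sketch of the length bound via ``two periodic points of period $1$'' is not a substitute: the period-$1$ point supplied by Theorem~\ref{thm: rot num} is only known to be somewhere in the closed annulus $U_c$, and it could a priori sit on $\partial U_c$ (indeed $X$ already carries a fixed point), so you cannot directly conclude ``contradiction to $\Fix(\ti f) = \emptyset$ in the interior'' without an argument that the point is interior. The disk-chain theorem is what bridges this, and your proposal never reaches for it.

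Secondary remarks: the phrase ``two fixed points of $\ti f_c$ differing by a nontrivial deck translation --- but then, composing with that deck translation, one gets a lift with a fixed point and another lift'' does not articulate a contradiction; having fixed points for two different lifts of $f_c$ is not by itself impossible. And Proposition~\ref{prop: limsup} is not an alternative route to the length bound --- it only says pointwise displacements stay inside $[r,s]$, not anything about the size of $[r,s]$. Your instinct to subtract $p$ before applying Proposition~\ref{prop: +meas} is correct and is implicitly what the paper does.
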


\begin{proof}
 No integer can be in the interior of the translation
interval $\cT(\ti f_c).$  To see this
we suppose to the contrary that an integer (which without loss we
assume is $0$) is in the interior of $\cT(\ti f_c)$ and show 
this leads to a contradiction.  In this case by Theorem~\ref{thm: translation interval}
there would be periodic points in $U$ with both positive and negative
rotation numbers.  Theorem~(2.1) of \cite{franks:poincare} then implies
that $f$ has a fixed point in the open annulus $U$, which is a contradiction.

By part (4) of 
Lemma~\ref{U is an annulus},   $f_c$ has a fixed point
in $X$.  It follows that the translation number of the lift 
$\ti f_c|_{\ti X} : \ti X \to \ti X$ is an integer, 
say $p.$ Hence $p \in \cT(\ti {f_c}).$      
There is a point in the interior of $U$ with a well defined
non-integer translation number.  This is because almost all points of
$U$ have a well defined translation number by Theorem~\ref{translation numbers exist}
and if these were all integers then Proposition~\ref{prop: +meas}
would imply $U$ contains a fixed point -- a contradiction.
Since $p \in \cT (\ti f_c)$ and no integer can be in its interior, it
follows that $\cT(\ti {f_c})$ is non-trivial, $p$ is one endpoint and
it must be contained in either $[p, p+1]$ or $[p-1, p].$
\end{proof}

Suppose that $\mu_1$ and $\mu_2$ are disjoint non-homotopic essential oriented simple closed curves in $M$ and that $\ti \mu_1$ and $\ti \mu_2$ are lifts to $\tiM$.  The initial and terminal ends of $\ti \mu_i$ converge to the fixed  points  $T_i^- ,T_i^+\in\sinfty$ respectively of some covering translation $T_i$.  If $\mu_1$ and $\mu_2$ are non-peripheral then   $T_1$ and $T_2$ are hyperbolic and  the four endpoints are distinct.  Moreover, $\ti \mu_1$ and $\ti \mu_2$ are anti-parallel if $\{T^-_1,T^-_2\}$ links $\{T^+_1,T^+_2\}$ and  parallel otherwise.  If either $\mu_1$ or $\mu_2$ is peripheral then it requires more care to decide if $\ti \mu_1$ and $\ti \mu_2$ are anti-parallel. 


\begin{defn} \label{oriented mu} Suppose  that $T$ is the covering translation associated to $\ti U \in \ti \cA$ and that $\ti C$ is a home domain for $\ti U$.   Let $\ti f = \ti f_{\ti C}$. 
Identify the annular compactification $U_c$ with $S^1 \times [0, 1]$ and so the  
universal cover of  $U_c$  with   $\R \times [0,1]$.   Let  $p_1 : \R \times [0,1] \to \R$ be projection onto the first coordinate. 
   
Since there are no fixed points for $f_c$ in $U$,
Proposition~\ref{prop: +meas} implies that the set of points in
$U_c$ with zero rotation number has measure zero.  Thus there is a
full measure set $\cP \subset U$ consisting of points in $B(f)$
which have a well defined non-zero rotation number for $f_c:U_c \to
U_c$.  Each lift $\ti x \in \ti U$ of each $x \in \cP$ has a well
defined non-zero translation number with respect to $\ti f_c$.  These
translation numbers must either all be positive or all be negative
since the existence of a point with positive translation number and a
point with negative translation number would imply the existence of
positively and negatively recurring free disks in $U$ and then Theorem
2.1 of \cite{franks:poincare} implies the existence of a fixed point.

Let $\mu \subset U$ be an essential simple closed curve and let
$\ti\mu$ be its lift to $\ti U$.  If all $\ti x$ as above have
positive translation number then we orient $\ti \mu$ so that the $
p_1$-image of its initial end converges to $-\infty$ and the $
p_1$-image of its terminal end converges to $+\infty$.  Otherwise, all
$\ti x$ as above have negative translation number and we orient $\ti
\mu$ so that the $ p_1$-image of its initial end converges to
$+\infty$ and the $ p_1$-image of its terminal end converges to
$-\infty$.  We say that $\ti \mu$ has the {\em orientation determined
  by $\ti C$.}  (If $\ti U$ has two home domains then the orientations
that they induce on $\ti \mu$ are opposite from each other.)
\end{defn}

  For any pair of    disjoint properly embedded oriented lines $\ell_1,\ell_2$  in $\R^2$ there is an ambient isotopy  that moves $\ell_1$ and $\ell_2$ to a pair of oriented  horizontal lines.  If the horizontal lines are both oriented to the right or both oriented to the left then we say that    $\ell_1$ and $\ell_2$ are parallel.  Otherwise, we say that   $\ell_1$ and $\ell_2$ are anti-parallel.     It is easy to check that this is well defined.

 \begin{lemma} \label{free disk means parallel}  Suppose that $\ti U_1$ and $\ti U_2$ are distinct elements of $\ti \cA$ that have a common home domain $\ti C$.  Suppose further that both $\ti U_1$ and $\ti U_2$ intersect a lift   $\ti D \subset \tiM$  of some free disk $D \subset M$.  For $i =1,2$, let   $\mu_i $ be an  essential simple closed curve in $U_i$ and let   $\ti \mu_i \subset \ti U_i$ be its lift  endowed with the orientation determined by $\ti C$.   Then $\ti \mu_1$ and $\ti\mu_2$ are parallel.   
 \end{lemma}
 
 \proof   Following Definition~\ref{oriented mu}, we let $\cP_i$ be the full measure subset of  $U_i$ consisting of points with well defined non-zero rotation number for $f_c : {U_i}_c \to {U_i}_c$.  Since $U_i \cap D$ is an open set we may choose $x_i \in  \cP_i$ and lifts $\ti x_i \in \ti D$.
 
 Let $\ti f = \ti f_{\ti C}$.     By Theorem~2.6 of \cite{han:fpt} there exists an oriented properly embedded line $L_i $ with the following properties.
 \begin{enumerate}
 \item $L_i$   contains the $\ti f_{\ti C}$-orbit of $\ti x_i$.
 \item The initial and terminal ends of $L_i$ converge to $\alpha(\ti f_{\ti C},\ti x_i)$ and $\omega(\ti f_{\ti C}, \ti x_i)$ respectively. 
 \item  If $i < j$ then $ \ti f^i(\ti x_1) < \ti f^j(\ti x_1)$  in the ordering induced on $\ti L$ by its orientation.
 \item $L_i$ is $\ti f$-invariant,   up to isotopy rel the orbit of $\ti x_i$.   
 \end{enumerate}
 
As $L_1$ is only defined up to isotopy rel the orbit of $\ti x_1$, we may assume that $L_1 \subset \ti U_1$.  
The lines $L_1$ and $\ti f(L_1)$ are isotopic rel the orbit of $\ti x_1$.  Since $L_1$ and $\ti f(L_1)$  are both contained in $\ti U_1$ and the orbit of $\ti x_2$ is disjoint from $\ti U_1$, $L_1$ and $\ti f(L_1)$ are isotopic rel the orbits of $\ti x_1$ and $\ti x_2$.  
By symmetry we may assume that $L_2 \subset \ti U_2$ is $\ti f$-invariant up to isotopy rel the orbits of $\ti x_1$ and $\ti x_2$. %
%
 Since $\ti x_1, \ti x_2$ are contained in a   free disk for $\ti f$,   Lemma~8.7(2) of \cite{fh:periodic} implies that $L_1$ and $L_2$ are parallel.   Items (2) and (3) imply that  the orientation on $L_i$ is consistent with one one on  $\ti \mu_i$ determined by $\ti C$ and we conclude that $\ti \mu_1$ and $\ti \mu_2$ are parallel.
 \endproof

 \begin{lemma} \label{ U is interior of closure}   Each $U \in \cA$ is the interior of its closure in $M$.
\end{lemma}

\proof   It is obvious that $U \subset \Int(\cl(U))$ so it  suffices to show that if $x \in \fr(U)$ then every neighborhood   of $x$ intersects some element $U' \ne U$ of $\cA$.

Choose  $\ti U \in \ti \cA$ projecting to $U$ and a lift  $\ti x \in \fr(\ti U)$.   By Lemma~\ref{more home domains} (1) there is a free disk neighborhood $D$ of $x$ lifting to a neighborhood $\ti D$ of  $\ti x$ and there is a domain $\ti C$ that is a home domain for each point in $\ti D \cap \ti \B(f)$ and hence a home domain for every element of $\ti A$ that intersects $\ti D$.      Let $\ti f = \ti f_{\ti C} : \tiM \to \tiM$.  Since $\ti x$ is in the frontier of $\ti U$, $\ti D$ intersects  at least one element $\ti U' \ne \ti U$ of $\ti \cA$.
We must show that for any $\ti D$ there is such a $\ti U'$ whose projection
$U'$ in $\cA$ is not equal to $U.$
Let  $\cS$ be the set of covering translations $S$ such that $\ti U' =S(\ti U)$ intersects $\ti D$ but is not equal to   $\ti U$.  
It suffices to show  that $\cS= \emptyset$, and we do this by    assuming that $\cS $ contains at least one element $S$ and arguing to a contradiction.

     If $\ti C \ne S(\ti C)$ then they must have a common frontier component $\ti \sigma$ because there are points for which they are both home domains.  But $\ti \sigma$ projects to a simple closed curve $\sigma$   that separates $M$ with the interior of $\ti C$ projecting into one side and the interior of $S(\ti C)$ projecting to the other in contradiction to the fact that $S$ is a covering translation.  We conclude that $\ti C = S(\ti C)$.  Thus $S \in \Stab(\ti C)$ and $S$ commutes with $\ti f$.   It follows that if $\ti \mu \subset \ti U$ and $\ti \mu' \subset S(\ti U)$ are lifts of a simple closed curve $\mu \subset U$ equipped with the orientation determined by $\ti C$   as in  Definition~\ref{oriented mu} then $S$ maps $\ti \mu$ to $\ti \mu'$   and preserves orientations.  
     
      If $\mu$ and $\mu'$ are anti-parallel we have contradicted     Lemma~\ref{free disk means parallel}.
  If $\mu$ and $\mu'$ are parallel then there is a covering translation $S'$ such that  $ S'(\ti \mu)$ separates $\ti \mu$ and $\ti \mu'$ and such that  the orientation on $S'(\ti \mu) $  is anti-parallel to that of $\ti \mu$; the existence of $S'$ follows from the fact that   $M$ has genus zero.    We are now reduced to the previous case and so are done.
  \endproof

We are now able to prove Proposition~\ref{intermediate}.
\vspace{.1 in}

\noindent {\bf Proposition~\ref{intermediate}}{\em \ \ Suppose that $F
  \in \newDiff$ has entropy zero, has infinite order and at
least three periodic points.  Suppose that $M$ is a component of $\M
= S^2 \setminus \Fix(F)$ and that $f=F|_M:M \to M$.  Then $\cA$ (see
Definition~\ref{defn: A2}) is a countable collection of pairwise
disjoint essential open $f$-invariant annuli in $M$ such
that
\begin{enumerate}
\item { For each compact set $X \subset M$ there is a constant $K_X$
such that any $f$-orbit that is not contained in some $U \in \cA$
intersects $X$ in at most $K_X$ points.  In particular each  birecurrent point
  is contained in some $U \in \cA$.}
\item {  If $z \in M$ is not contained in any element of $\cA$ then there
are components $F_+(z)$ and $F_-(z)$ of $\Fix(F)$ so that $\omega(F,z)
\subset F_+(z) $ and $\alpha(F,z) \subset F_-(z)$.} 
\item{  For each $U \in \cA$ and each component $C_M$ of the frontier of
$U$ in $M$, $F_+(z)$ and $F_-(z)$ are independent of the choice of $z
\in C_M$. }
\item{   If $U \in \cA$, and $f_c:U_c \to U_c$ is the extension
to the annular compactification (Notation~\ref{notn: annular comp}) of $U$, then each component of $\partial U_c$
corresponding to a non-singular end of $U$ contains a fixed point of $f_c.$}
\item {  $\cA$ is the set of maximal $f$-invariant open annuli in $M$}
\end{enumerate}}



\noindent{\bf Proof of Proposition~\ref{intermediate}}
The case in which $M$ has less than three ends is proved in section~\ref{sec: intermediate} so we may assume that $M$ has at least three ends.   

The elements of $\cA$ are essential 
open annuli by Lemma~\ref{U is an  annulus} (1) 
and are disjoint and $f$-invariant by Lemma~\ref{U tilde properties} (4).  
Lemma~\ref{more home domains} (3) implies
(1) which implies (2).   Item (4) follows from Lemma~\ref{U is an  annulus} (4). 
Item (5) is Lemma~\ref{maximal annuli}.

We now turn to (3).  Let $Z$ be a component of the frontier of $U$ in $M$, let $\ti U$ be a component of the full pre-image of $U$ and let $\ti Z$ be a component of the frontier of $\ti U$ that projects onto $Z$.

    Given $\ti z \in \ti Z$, let $\ti C$ be the unique (Lemma~\ref{more home domains}) home domain for $\ti z$, let $\ti f = \ti f_{\ti C}$ and let $\ti D$ be a neighborhood of $\ti z$ that projects to a  free disk for $f$ and is disjoint from a lift $\ti \mu$  of a simple closed curve $\mu \subset U$ that is essential in $U$.   
 By Lemma~\ref{more home domains} (1), we may assume that $\ti C$ is a home domain for each 
element of $\ti B(f) \cap \ti D$  and hence a home domain for each element of $\ti A$ that intersects $\ti D$.  We claim that $\ti D$ intersects exactly one component $V_{\ti z}$ of  $\tiM \setminus \cl(\ti U)$.   If the claim is false   then there exist $\ti U' , \ti U'' \in \cA$ that intersect $\ti D$ and that are contained in  distinct components of  $\tiM \setminus \cl(\ti U)$.    Let $\ti \mu'\subset \ti U' $and $\ti \mu''\subset \ti U'' $  be  lifts of   essential simple closed curves $\mu' \subset U'$ and $\mu''\subset U''$.    Equip  $\ti \mu, \ti \mu'$ and $\ti \mu''$ with the orientation determined by $\ti C$.  Since  $\ti \mu'$ and $\ti \mu''$ are contained in distinct components of  $\tiM \setminus \cl(\ti U)$ and are contained in the same component of the complement of $\ti\mu$, no one of these three lines separates the other two. It follows that two of these lines are anti-parallel in contradiction to   Lemma~\ref{free disk means parallel}.    This completes the proof of the claim.   We conclude  that each $\ti z \in \ti Z$ has a
neighborhood that intersects exactly one component $V_{\ti z}$ of  $\tiM \setminus \cl(\ti U)$. 


 The next step in the proof of (3) is to show that  the intersection $B$ of $\sinfty$ with the closure of $\ti Z$ cannot have more than  two components.   The open set $V_{\ti z} =
V_{\ti Z}$ depends only on $\ti Z$ and not on $\ti z$.  In particular,
$\ti Z$ is contained in the frontier of $V_{\ti Z}$.
Let $W$ be the component of the complement of $\ti U$ that contains
$V_{\ti Z}$ and so contains $\ti Z$.  Lemma~\ref{planar topology}
implies that the frontier of $W$ is connected and hence is contained
in a component of the frontier of $\ti U$.  Thus $\ti Z$ is the
frontier of $W$. The  argument in the preceding paragraph shows that $W \setminus \ti Z$ is connected.   Lemma~\ref{planar topology} also implies that  the complement of $W$ is connected and hence that  the 
complement of $\ti Z$ in $\tiM$  has exactly two components.  If $B$ has  more than  two components   there would be two components of $\sinfty \setminus B$ with neighborhoods contained in the same component of $\tiM \setminus \ti Z$ and so there would be a line in  $\tiM \setminus \ti Z$ that separates $\ti Z$.  This contradiction completes the second step.    

The third step  is to prove that  each component of $B$ is a single point.  If $\cR \ne \emptyset$ then this follows from the fact (Corollary~\ref{symmetric  close}) that  $\ti U$, and hence $\ti Z$, is contained in a uniformly bounded neighborhood of either one or two domains.  Similarly, we are done if there is an essential non-peripheral simple closed curve $\tau$ in $M$ that is  contained in an element of  $\cA$, for in this case  each interval in $\sinfty$ contains the endpoints of a lift of $\tau$ that is disjoint from $\ti U$.   

 We are now reduced to the case that  that $f$ is isotopic to the identity and that  $U $ is peripheral.  Choose compact subsurfaces $M_1 \subset M_2 \subset M$ 
 so that $M \setminus M_1$ has at least three components and so that the following hold for each component $W$ of $M \setminus M_1$ and each component $V$ of $M \setminus M_2$.
\begin{itemize}
\item $\partial V$  [resp. $\partial W$]  is   connected and is either a geodesic or a horocyle.
\item  $V\subset W \implies   h(V) \subset W$. 
\end{itemize}
 By Lemma~\ref{U is an annulus} (3), $U$ contains a deleted neighborhood of some puncture $p$.  We may   assume without loss that the component $W_p$ of  $M \setminus M_1$ that contains $p$ contains no other puncture and is contained in $U$.    Let $\ti W_P$ be the component of the full pre-image of $W_p$ that is contained in $\ti U$ and let $P \in \sinfty$ be the fixed point of the covering translation $T_P$ corresponding to $\ti U$.  (Thus $P$ projects to $p$.)     
 
 For $\ti \rho$ a path in $\tiM$,  define $d(\ti \rho)$ to be the total length of the maximal subpaths of $\ti \rho$ that are contained in the full pre-image $\ti M_1$ of $M_1$.  Equivalently, project $\ti \rho$ to a path $ \rho \subset M$ and take the total length of $\rho \cap   M_1$.   For all $\ti x \in \tiM$, let $\ti \rho_{\ti x}$ be a path connecting $\ti x$ to $\partial \ti W_P$ such that $d(\ti x) := d(\ti \rho_{\ti x})$ is minimal among all such paths.   Since there is a lower bound to the distance between components of $M\setminus M_1$, $\ti \rho_{\ti x}$ decomposes as a finite alternating concatenation of subpaths in $M \setminus \ti M_1$ and subpaths in $M_1$ with all intersections with $\partial M_1$ being orthogonal.  Note also that $|d(\ti y_1) - d(\ti y_2)| \le \dist(\ti y_1,\ti y_2)$.  If $\tau$ is an essential closed curve in $M_1$ that is non-peripheral  in $M_1$ then an endpoint in $\sinfty$ of any lift of $\tau$  is the limit of points $\ti y_j$ with $d(\ti y_j) \to \infty$.  The third step will therefore be completed once we show that there is a uniform bound to $d(\ti x)$  for  birecurrent $\ti x \in \ti U$ and hence for all $\ti x \in \ti U$. 
 
 
 Let $Q \in \sinfty$ be a translate of $P$, let $\ti W_Q$ be the  horodisk neighborhood of $P$ that is a lift of $W_p$, let   $\ti \mu$ be the geodesic connecting $P$ to $Q$ and let $\ti F$ be the fundamental domain for the action of $T_P$ on $\tiM$ that  is bounded by $\partial \ti F = \ti \mu \cup T(\ti \mu)$.  We may assume without loss that $\ti x\in  \ti F$.  Since $f$ is isotopic to the identity, there exists a constant $C_0$ so that $\dist(\ti y, \ti f(\ti y)) < C_0$ for all $\ti y$.     It follows that if $\ti y \in \ti F$ and $\dist(\ti y, \partial \ti F) > C_0$ then    $\ti f(\ti y) \in \ti F$.    Applying this to the orbit of $\ti x$  we conclude that there exists $m \ge 0$ so that  $\ti f^j(\ti x) \in \ti F$ for all $0 \le j \le m$ and so that $\dist(\ti f^m(\ti x), \partial \ti F) \le  C_0$.  

 Letting $C_1$ be the length of the finite arc $\ti \mu \cap \ti M_1$,  we have  $d(\ti f^j(\ti x)) \le   C_1 + \dist(\ti f^j(\ti x), \partial \ti F)$ for all $j$.    Since $M_2$ is compact, it is covered by finitely many, say $K$, free disks.    If the orbit of $\ti x $ contains more than $K$ points in $\ti F \cap \ti M_2$ then there would be a near cycle $S$ for some points in the orbit of $\ti x$ that was not an iterate of $T_P$ in contradiction to Lemma~\ref{import1}.    Thus the orbit of $\ti x $ intersects $\ti F \cap \ti M_2$ in at most $K$ points.    Suppose that $d(\ti x) > (2K+1) C_0 + C_1$.   If  both  $\ti f^j(\ti x)$ and $\ti f^{j+1}(\ti x)$ are contained in $\tiM \setminus \ti M_2$ then both   $\ti f  ^j(\ti x)$ and $\ti f^{j+1}(\ti x)$ are contained in the same component   of $\tiM \setminus \ti M_1$ and so $d(\ti f^j(\ti x)) =d(\ti f^{j+1}(\ti x))$.   It follows that $d(\ti f^m(\ti x)) \ge d(\ti f(\ti x)) -  2KC_0 > C_0 + C_1$ and hence that $\dist(\ti f^m(\ti x), \partial \ti F) > C_0$.  This contradiction shows that $d(\ti x)$ is bounded above and so completes the proof of step 3.

If $B$ is a single point $P$,  then $P$ is also the intersection of $\sinfty$ with   the closure of one of the complementary components of $\ti Z$.  It follows that   $\alpha(\ti f, \ti y) = \omega(\ti f, \ti y) = P$ for each $\ti y \in  \B(\ti f)$ contained in this component and hence that this component is $\ti U(P)$.  Projecting to $M$, we have by Lemma~\ref{U is an annulus} (3), that $Z$ is disjoint from a neighborhood of  the puncture   to which $P$ projects.   This contradicts (2) and the fact that $\alpha(\ti f, \ti z) = \omega(\ti f, \ti z) = P$ for all $\ti z \in \ti Z$.  
We conclude that the limit set of $\ti Z$ in $\sinfty$ is a pair of points, say $a$ and $b$.  

The final step in the proof of (3) is to show that either $\alpha(\ti f, \ti z) = a$ and $\omega(\ti f, \ti z) = b$ for all $\ti z \in \ti Z$ or $\alpha(\ti f, \ti z) = b$ and $\omega(\ti f, \ti z) = a$ for all $\ti z \in \ti Z$.   Since $\ti Z$ is connected, it suffices to verify this for all $\ti z \in \ti D$.    

Choose $\ti z \in \ti D$.  For $i=1,2$, choose $\ti y_i \in \ti D \cap B(\ti f)$ and $\ti U_i \in \ti A$ such that $\ti y \in \ti U_i$ and such that $\ti U_1$ and $\ti U_2$  are in different components of $\ti M \setminus \ti Z$.
As shown in the proof of Lemma~\ref{free disk means parallel}  there  are  oriented lines   $L_1$ and $L_2$ 
 with the following properties.
 \begin{description}
 \item [(a)] $L_i \subset U_i$   contains the $\ti f$-orbit of $\ti y_i$.
 \item [(b)]  The initial and terminal ends of $L_i$ converge to $\alpha(\ti f,\ti y_i)$ and $\omega(\ti f, \ti y_i)$ respectively. 
 \item  [(c)]$L_i$ is $\ti f$-invariant,   up to isotopy rel the orbits of $\ti y_1$ and $\ti y_2$.   
  \end{description}
 
 The isotopy of (c) between $L_i$ and $\ti f(L_i)$ can be taken with compact support in $\ti U_1 \cup \ti U_2$.   We may therefore assume
\begin{description}
 \item [(d)]  the isotopy of (c) is rel the orbits of $\ti y_1,\ti y_2$ and $\ti z$.  
 \end{description}

 By Theorem~2.2 of \cite{han:fpt}  there is an oriented line $L_3$ satisfying:
  \begin{description} 
 \item [(e)]  $L_3$  contains the $\ti f$-orbit of $\ti z$.
 \item [(f)] The initial and terminal ends of $L_3$ converge to $\alpha(\ti f,\ti z)$ and $\omega(\ti f, \ti z)$ respectively. 
 \item [(g)]$L_3$ is $\ti f$-invariant,   up to isotopy rel the orbit of $\ti z$.   
 \end{description}
 
 As $L_3$ is only defined rel the orbit of $\ti z$ we may assume that 
 $L_3$ is disjoint from $L_1$ and $L_2$.   By (d), $f(L_3)$ is isotopic  rel    the orbits of $\ti y_1,\ti y_2$ and $\ti z$ to a line $L_3'$ that is disjoint  from $L_1$ and $L_2$.   Item (g) implies that  $L_3$ is isotopic to $L_3'$ rel the orbit of $\ti z$.  This isotopy can be chosen to leave $L_1$ and $L_2$ invariant so $L_3$ is isotopic to $L_3'$ rel the orbits of $\ti y_1,\ti y_2$ and $\ti z$. In other words $L_3$  is $\ti f$-invariant rel the orbits of $\ti y_1,\ti y_2$ and $\ti z$.   
 
Lemma~8.7(2) of \cite{fh:periodic} implies that $L_3$ is parallel to $L_1$ and $L_2$.   It follows that the ends of $L_3$ converge to distinct points and that the orientation on $L_3$ is independent of $\ti z \in \ti D$.  
\endproof 
 

\section{Renormalization.}\label{sec: renormalization}

In this section we study the finer structure of $f|_U$, the restriction
of $f$ to one of the annuli $U \in \cA$.




 For each $q \ge
1$ let $\M_q = S^2 \setminus \Fix(F^q) \subset S^2 \setminus
\Fix(F) = \M$.  Recall that by the main theorem of
\cite{brnkist}, each component $M$ of $\M$ is $F$-invariant
and similarly each component $M_q$ of $\M_q$ is
$F^q$-invariant. Let $\cA(q)$ be the family of open
$F^q$-invariant annuli obtained by applying  Definition~\ref{defn: A2}
to the restriction of $F^q$
to a component $M_q$ of $\M_q$ that is contained in the
component $M$ of $\M$.  See Proposition~\ref{intermediate} 
for several useful properties of $\cA(q)$.

\begin{lemma}  \label{permutes}$f$ permutes the elements of $\cA(q)$.
\end{lemma}

\proof Since $f$ commutes with $f^q$ this follows from Corollary~\ref{second centralizer invariance} applied to $\cA(q)$.
\endproof

\begin{lemma}\label{lem: ess => inv}
If $V \in \cA(q)$ is essential in $M$ then $V$ is $f$-invariant.
\end{lemma}
\begin{proof} Lemma~\ref{permutes} implies that $f(V)$ is an element of $\cA(q)$ and hence that 
$f(V)$ is either equal to
or disjoint from $V$.  Since $V$ is essential in $M$, $f(V)$ is
essential in $M$.  If $f(V)$ is disjoint from $V$ then 
$f$ maps one component of the complement of $V$ to a
proper subset of itself because every component of the complement
of $V$ in $S^2$ contains fixed points of $F$.  This
contradicts the fact that $f$ preserves area.  
\end{proof}

The following proposition   shows that elements
of the family $\cA(q)$ refine the elements of $\cA$.
\begin{prop}\label{prop: V in U}
Each $V \in \cA(q)$ is a subset of some $U \in \cA.$
\end{prop}

\proof The case that $V$ is essential follows from Lemma~\ref{lem: ess => inv} and  Lemma~\ref{maximal annuli} so we may assume that   $V  $ is inessential in $M$.
An essential closed curve in $V$ bounds a closed disk in $M$ and we
let $W$ be the open disk that is the union of $V$ and this disk.   Since $f$ preserves area and $W$ is open and
invariant under $f^q$ there is a periodic point $p \in W \cap
\Fix(f^q)$ by the Brouwer plane translation theorem.  

Let $\ti p  \in \ti W \subset \tiM$ be   lifts of $p\in W$,   let $\ti C$ be a home domain for $\ti p$ and let $\ti U$ be the element of $\ti \cA$ that contains $\ti p$.  We will  show that   $\ti W \cap \ti B(f^q) \subset \ti U$.  Corollary~\ref{interior of closure} then implies that $\ti W \subset \ti U$ and hence that $V \subset W \subset U$.

 Given $ z \in  W \cap   B(f^q)$, choose $k_i \to \infty$ such that 
 each  $f^{qk_i}(z)$ is connected to $z$ by a path in $W$ of length less than $1$.  Also, choose $d$ so that $z$ is connected to $p$ by a path in $W$ of length less than $d$.   If $\ti z$ is the lift of $z$ into $\ti W$ then $\dist(\ti f_{\ti C}^{qk_i}(\ti z), \ti f_{\ti C}^{qk_i}(\ti p)) < d+1$.  It follows that $ \omega(\ti f_{\ti C}, \ti z) = \omega(\ti f_{\ti C}, \ti p) $.   If $\ti C$ is not the unique home domain for $\ti p$ then the  equality of $\omega$ limit sets  holds for the other home domain as well.  Thus   $\ti C$ is an $\omega$ domain,  and hence a home domain (Proposition~\ref{home lift}) for $\ti z$.     Since the element of $\ti \cA$ that contains $\ti z$ is determined by $\omega(\ti f_{\ti C}, \ti z) = \omega(\ti f_{\ti C}, \ti p)$,  we have $\ti z \in \ti U$ as desired. 
\endproof



\begin{remark}   $V \in \cA(q)$ is essential in $\M$ if and only if it is
essential in the unique $U \in \cA$ containing it,  since
Proposition~\ref{intermediate} asserts $U$ is essential in $M$.
In this case we   will  simply say  that $V$ is essential.
\end{remark}

The next  lemmas   provide information about
the translation and rotation intervals of the extension  $f_c: U_c \to U_c$   of $f$ to the
annular compactification of $U$.

\begin{lemma} \label{lem: not in V}   
Suppose that $q>1$  and that $x \in U$ is not contained
in any $V \in \cA(q)$.  Then $\omega(f_c, x)$ is either contained in
 a component of  $\partial U_c \cup\Fix(f_c^q).$
 Moreover,
\begin{enumerate}
\item The forward rotation number, $\rho^+_{f_c}(x),$
with respect to $f_c$ is well defined.
\item If $\omega(f_c, x) $ contains a point of
$U$ then   $\rho^+_{f_c}(x) = p/q$ for some $0 < p < q$. 
\item  If $\omega(f_c, x)$ is contained in a component of $\partial U_c$ corresponding to a non-singular end of $U$ then  $\rho^+_{f_c}(x) = 0$.
\item   If $\omega(f_c, x)$ is contained  in  a component $B$ of $\partial U_c$ corresponding to a  singular end of $U$ then   $\rho^+_{f_c}(x) = \rho(B)$.
\end{enumerate}
An analogous statement holds for backward rotation number.
\end{lemma}

\begin{proof} Lemma~\ref{U covers} implies that $\omega(f^q,x) \cap \M_q = \emptyset$.  Thus  $\omega(f^q,x) \subset \Fix(f^q)$ and 
$\omega(f_c^q,x) \subset \Fix(f_c^q) \cup \partial U_c$.  Since each component of $\Fix(f_c^q) \cup \partial U_c$ is $f_c^q$-invariant,  $\omega(f_c^q,x)$   is contained in   a component  $K$ of $\Fix(f_c^q) \cup \partial U_c$.  

The rotation number $\rho_{f_c}$ is well defined and
constant on each component of $\partial U_c$.  It is also
well defined and locally constant on $\Fix(f_c^q)$.  Since
both sets are closed, $\rho_{f_c}$ is locally constant on
their union and hence constant on $K$, say $\rho(f_c|_K)
=\rho_K$.  It follows that $\rho(f_c^q|_K) = q \rho_K.$

In fact, more is true.  There is a lift $\ti f_c : \ti U_c
\to \ti U_c$ of $f$ such that $\tau_{\ti f_c}(\ti y) =
\rho_K$ for each $\ti y$ that projects into $K$.  Let $p_1 :
\ti U_c \to \R$ be the projection used to define $\tau_{\ti f_c}$.
Then for any $k \in \Z$
$$
|p_1\ti f^{kq}(\ti y) - p_1\ti y - kq\rho_K| < 1
$$
for  any   $\ti y$ that projects into $K$.  For any fixed $k$,  this inequality holds for any point $\ti z$ that projects  into a neighborhood, say $W_k$, of $K$.   Suppose that $z$ and the forward $f^q$-orbit of $z$ is contained in $W_k$.   Then by applying the above inequality with $\ti y$ equal, in order, \ to $\ti z,\ti  f^{kq}_c( \ti z), \ti  f^{2kq}_c( \ti z), \ldots, \ti  f^{(j-1)kq}_c( \ti z)$ 
 and summing, we obtain
$$
|p_1\ti f^{qjk}(\ti z) - p_1\ti z - qjk\rho_K| < j
$$
for all $j$.   Setting $n=jk$ and dividing by $n$ we obtain
\[
\Big |\frac{p_1\ti f^{nq}(\ti z) - p_1\ti z}{n} - q\rho_K \Big | < \frac{1}{k}
\]
for all $n$ which are multiples of $k.$ An easy computation for $n$ which
are not multiples of $k$
proves that   
$$
q\rho_K -1/k \le  \liminf_{n \to \infty} \frac{p_1(\ti f^{nq}(\ti z)) - p_1(\ti z)}{n} \le  
 \limsup_{n \to \infty} \frac{p_1(\ti f^{nq}(\ti z)) - p_1(\ti z)}{n}
\le  q\rho_K + 1/k
$$
for all $z$ with $\omega(f_c^q,z) \subset W_k.$ 
Since $\omega(f_c^q,x) \subset W_k$ for all $k$ it follows that  
$\rho^+_{f_c^q}(x) = q\rho_K$ and hence that   
$\rho^+_{f_c}(x) = \rho_K.$   This completes the proof of (1).

If $K$ contains a point $y$ in the interior of $U_c$ then $y \in
\Fix(f_c^q)$ and $\rho(f_c|_K) =p/q$ for some $0 \le p < q$.  If $p =
0$ then there is a lift $\ti f : \ti U \to \ti U$ and a lift $\ti y
\in \Fix(\ti f^q)$ in contradiction to the Brouwer translation theorem
applied to $\ti f$ and the fact that $\Fix(\ti f) = \emptyset$.  Thus
$0 < p < q.$ 
     
If $K$ is a component of $\partial U_c$ corresponding to a
non-singular end then $\rho_K = 0$ by
Proposition~\ref{intermediate} (4). This proves (3) and (4) is
clear.  The analogous result for backward rotation numbers comes
from considering $f^{-1}.$
\end{proof}

\begin{cor} \label{cor: V(q)}
Suppose  $q>1$ and $V \in \cA(q)$  is essential in $U \in \cA$ and 
$U$ has a non-singular end. If $f_c : V_c \to V_c$
is the extension to its annular compactification
then for any lift $\ti f_c$ to the universal covering space $\ti V_c,$
there is $p \in \Z$ such that the translation interval
$\cT(\ti f_c)$ is a non-trivial 
subinterval of  $[p/q, (p+1)/q]$ and contains at least one of its endpoints.
\end{cor}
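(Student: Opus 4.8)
The plan is to reduce the statement to Lemma~\ref{lem: U rot int}, applied not to $F$ but to the entropy‑zero area‑preserving diffeomorphism $F^q$, for which $\cA(q)$ plays the role of $\cA$ and $M_q$ the role of $M$.

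First I would check that $V$ inherits a non‑singular end from $U$. Since $V$ is $f$‑invariant, Lemma~\ref{lem: ess => inv}(2) shows $V$ is essential in $M$; as $U\subseteq M$ is essential in $M$ and $V\subseteq U$, a null‑homotopy of the core of $V$ inside $U$ would be one inside $M$, so $V$ is also essential in $U$. Hence $V$ separates the two ends of $U$, and therefore the two complementary components $E_1,E_2$ of $V$ in $S^2$ contain, one each, the two complementary components $F_1,F_2$ of $U$ in $S^2$. Taking $F_1$ to be the complementary component at the non‑singular end of $U$, $F_1$ is not a point, so $E_1\supseteq F_1$ is not a point, i.e.\ $V$ has a non‑singular end; let $X\subset\partial V_c$ be the corresponding boundary circle. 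For this end the annular compactification is the prime‑end compactification, which is purely topological, so $X$ and the restriction of $f_c$ to it are the same whether $V$ is regarded inside $M$ (and compactified relative to $F$) or inside the $F^q$‑invariant component $M_q\supseteq V$ of $\M_q$; in particular $(f^q)_c|_X=(f_c|_X)^q$.

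Now I would apply Lemma~\ref{lem: U rot int} with $F^q$ in place of $F$: $V\in\cA(q)$ and $X$ corresponds to a non‑singular end of $V$, so, writing $g=(\ti f_c)^q$ (a lift to the universal cover of $(f^q)_c=(f_c)^q:V_c\to V_c$), the lemma gives that $\tau(g|_{\ti X})$ is an integer $p$ and that $\tau(g)$ is a non‑trivial interval of length at most $1$ having $p$ as an endpoint. Thus $\tau(g)$ is a non‑trivial subinterval of either $[p,p+1]$ or $[p-1,p]$; after renaming we may write $\tau(g)\subseteq[p,p+1]$ with $\tau(g)$ non‑trivial and containing $p$ or $p+1$. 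Finally, since $\tau_{(\ti f_c)^q}(\ti y)=q\,\tau_{\ti f_c}(\ti y)$ for every $\ti y$, the translation intervals satisfy $\tau(g)=q\,\tau(\ti f_c)$; dividing the inclusion by $q$ gives that $\tau(\ti f_c)$ is a non‑trivial subinterval of $[p/q,(p+1)/q]$ containing one of its endpoints, which is the asserted conclusion.

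Everything after the first paragraph is formal manipulation of translation intervals together with a citation of Lemma~\ref{lem: U rot int} for $F^q$; the only point needing care — and the main obstacle — is the first step, namely verifying that $V$ genuinely has a non‑singular end and that the prime‑end circle $X$ and the map $(f_c)^q|_X$ are exactly the data to which Lemma~\ref{lem: U rot int} (applied to $F^q$) refers.
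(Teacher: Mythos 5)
Your proof is correct and takes essentially the same route as the paper: establish that $V$ inherits a non-singular end from $U$, apply Lemma~\ref{lem: U rot int} to $F^q$ (for which $V\in\cA(q)$ and $V_c$ is the relevant annular compactification), and then divide by $q$ using $\tau(\ti f_c)=\tau(\ti f_c^{\,q})/q$. The paper treats the first step — that $U$ having a non-singular end forces the same for $V$ — as immediate; your elaboration via Lemma~\ref{lem: ess => inv}(2) and the nesting of complementary components is a correct justification, and the observation that $(f^q)_c=(f_c)^q$ because the annular compactification of each end is determined by $V$ alone is exactly the compatibility the paper implicitly uses.
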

\begin{proof}
Lemma~\ref{lem: ess => inv} says $V$ is $f$-invariant.
The fact that $U$ has a non-singular end implies that $V$ 
does also. The result now 
follows from Lemma~\ref{lem: U rot int} since
\[
\cT(\ti f_c) = \frac{\cT(\ti f_c^q)}{q}.
\]
\end{proof}

\begin{lemma}\label{lem: cA_q}
If $V \in \cA(q)$ is a  proper subset of $U \in \cA,$ then there is a full
measure subset $W \subset V,$ containing $V \cap \Per(f),$ with the following properties.
\begin{enumerate}
\item If $V$ is essential in $U$, then every $x \in W$
has the same well defined rotation number in $V_c$
as in $U_c$.
\item The annulus $V$ is inessential in $U$ if and only if 
there is $p \in \Z$ with $0< p  < q,$ such that
every $x \in W$ has rotation number $p/q$ in $U_c$.
\end{enumerate}
\end{lemma}


\begin{proof}  Let $W$ be the  full measure subset  of $V$ which consists
of birecurrent points which have a well defined rotation numbers
in both $U_c$ and $V_c$.  Suppose that $x \in W$.

If $V$ is essential in $U$ then it is $f$-invariant by
Lemma~\ref{lem: ess => inv}. The inclusion of $V$ in $U$
induces an isomorphism on the fundamental group.
 The rotation numbers in the two annuli
can be computed along a subsequence of iterates which recurs.
More precisely we may join $f^{n_i}(x)$ to $x$ by arcs $\alpha_i$ in $V$
whose lengths are bounded uniformly in $i.$  If $\ti f$ is a lift
to the universal cover $\ti V$ then we can join a lift $\ti x$ of $x$
to $\ti f^{n_i}(\ti x)$ by an arc $\ti \beta_i$ in $\ti V$.  The projection
$\beta_i$ of $\ti \beta_i$ in $V$ concatenated with $\alpha_i$
forms a closed loop in $V$ and the rotation number is the limit of $1/n_i$ times
the homology class of this loop as $n_i$ goes to infinity.  This is easily 
seen to be independent of the choices of $\ti f$ and $\ti x.$  This
homology class is the same in $U$ and $V$ so (1) follows.

If $V$ is inessential in $U$ there is a component $X$ of its
complement in $S^2$ contained in $U$.  The set $Q = V \cup X$ is an
open disk invariant under $f^q$. 
Since $f$ is area preserving and $f^q(Q) = Q,$ by the Brouwer plane
translation theorem there is a point $x_0 \in Q \cap \Fix(f^q).$ 
 Since $x_0 \in \Fix(f^q)$ it is not in any $V \in \cA(q)$, so
by Lemma~\ref{lem: not in V} (2) the
rotation number of $x_0$ in $U$ is $p/q$ for some $0 < p < q$. 
Let $N$ be a compact disk neighborhood
of $x$ in $V$ and let $\{n_i = k_iq\}$ be a sequence such that
$f^{n_i}(x) \in N$.  Choose a path $\sigma \subset Q$ connecting $x_0$
to $x$.  Since $Q$ is contractible, $ f^{n_i}(\sigma)$ is homotopic
rel endpoints to the concatenation of $\sigma$ with a path in $N$.
Choose a lift $\ti f_c : \ti U_c \to\ti U_c$ and a lift $\ti N$ of
$N$.  Let $\ti x, \ti x_0 \in \ti N$ be lifts of $x,x_0$ and let $p_1
: \ti U_c \to \R$ be the projection used to define $\tau_{\ti f_c}$.
 The $p_1$-image of $\ti N$ is a bounded subset of $\R.$ It follows that
$p_1(\ti f_c^{n_i}(\ti x) )- p_1(\ti f_c^{n_i}(\ti x_0))$ is 
bounded uniformly in $i$ and hence that $x$ and $x_0$ have the same rotation
number in $U$, namely $p/q.$  

This proves that if $V$ is
inessential in $U$ then all the points of $W \cap V$ have the same
rotation number $p/q.$ To show the converse observe that since $V$
is a proper subset of $U$ it has a non-singular end.  If $V$ is
essential then Lemma~\ref{lem: U rot int} applied to $f^q|_V$  
asserts that $f|_V$ has a non-trivial rotation interval and, in
particular, by Theorem~\ref{thm: translation interval}, there are periodic
points in $V$ with infinitely many distinct rotation numbers.

\end{proof}

 \begin{cor} \label{cor: ess V}  If  $x \in U$ and
$\omega(f_c,x)$ is not contained in $\partial U_c$, 
then there is a positive integer $r = r(x)$
so that if $q > r$ and either $r=1$ or $q$ is relatively prime 
to $r$, then there is $V \in \cA(q)$ which is essential in $U$
and contains $x.$
\end{cor}

\begin{proof}  If there is  a positive integer $n$
such that $\omega(f_c,x) \subset \Fix(f_c^n) \cup \partial U_c$,  let $r_0 = r_0(x)$ be the smallest such $n$ and note that $r_0> 1$  since $\Fix(f) \cap U = \emptyset$  and $\omega(f_c,x)$ is not contained in $\partial U_c$.   If there is no such $n$, let $r_0 = 1$.  Note  that if $k \ge 1$ and $\omega(f_c,x) \subset \Fix(f_c^k) \cup \partial U_c$ then $r_0 >1$ and $k$ is a multiple of $r_0$.   

   Suppose now that $r =r_1r_0$ for some positive integer $r_1$ (to be chosen below),   that $q > r$ and that either $r=1$ or $q$ is relatively prime to $r$.    By the above observation,   $\omega(f_c,x) \not \subset \Fix(f_c^q) \cup \partial U_c$ so   
   Lemma~\ref{lem: not in V} implies that   $x$ is contained in some $V_q \in \cA_q$.  It remains to show that if $r_1$ is properly chosen then  
$V_q$   is essential.

If there is a positive integer $m$ such that $x$ is contained in an
inessential element of $ \cA(m)$,  note that $m \ge 2$ since
$U$ is essential and let $r_1$ be the smallest such $m$.  If there
is no such $m$, let $r_1 = 1$.  In this case we are done so
suppose that $r_1 > 1$ and hence that there is an inessential $V_{r_1}
\in \cA(r_1)$ with $x \in V_{r_1}$.  We complete the proof by assuming
that $V_q$ is inessential and arguing to a contradiction.  By
Lemma~\ref{lem: cA_q} (2), a full measure subset of the non-empty
open set $V_q \cap V_{r_1}$ consists of points with the same rotation
number in $U$ .  Moreover by the same result this number must have the
form $p/q$ and $p'/{r_1}$ with $p,p' \ne 0$.  Since $q$ is relatively
prime to $r_1$ this is impossible and we have reached the desired
contradiction.\end{proof}

 Recall (see Notation~\ref{notn: annular comp}) that to simplify notation we denote the rotation interval $\cR(f_c)$ by $\rho(U)$ when there is no ambiguity about the choice of diffeomorphism $f$ but various annuli $U$ are under consideration.

\begin{lemma}\label{lem: one V}
 Suppose  $U \in \cA$ has a non-trivial rotation interval $\rho(U).$
For any sufficiently large prime $q$ there is 
$V \in \cA(q)$ which is essential in $U$
and satisfies $\cl(V) \subset  U.$
\end{lemma}

\begin{proof}
Since $\rho(U)$ is non-trivial, by  Theorem~\ref{thm: translation interval} we may choose three periodic points $\{x_i\},\ i=1,2,3$ in $U$ whose rotation numbers 
$\{p_i/q_i\}$ have distinct denominators and are contained in the interior of $\rho(U).$
Choose a prime $q,$  larger than each $q_i$ and
sufficiently large that any three
intervals of length $1/q$ containing the three numbers
$\{p_i/q_i\}$, must be pairwise disjoint and must
lie in the interior of $\rho(U).$ The points $\{x_i\}$ 
lie in  elements of $\cA(q)$ by  Lemma~\ref{lem: not in V}
and these elements must be distinct by Corollary~\ref{cor: V(q)}.
They are essential in $U$ by Lemma~\ref{lem: cA_q}.
At least one of these annuli, say $V$,  must be separated by the other two from
the components of the complement of $U$.  Hence $\cl(V) \subset U.$ 
\end{proof}

We will write $|\rho(V)|$ for the 
length of the interval $\rho(V)$. 

\begin{lemma}\label{lem: monotonic}  Suppose that $Y$ is a component of the frontier of $U$ in $S^2$ and that 
 $\{V_i\} $ is an infinite  sequence of distinct essential elements 
of $\cA(q)$ such that $V_{i+1}$ separates $V_i$ from $Y$.  Then
\[
\lim_{i \to \infty} |\rho(V_i)| = 0.
\]
\end{lemma}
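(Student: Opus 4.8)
The plan is to exploit the nested structure of the $V_i$ inside the fixed annulus $U$ together with the area-preserving hypothesis. First I would set up coordinates: since each $V_i \in \cA(q)$ is essential in $U$ (hence $f$-invariant by Lemma~\ref{lem: ess => inv}), each $V_i$ separates $U$ into two subannuli; let $W_i$ be the one bounded by (a frontier component of) $V_i$ and the component $Y$ of $\fr(U)$. The hypothesis that $V_{i+1}$ separates $V_i$ from $Y$ gives a strictly decreasing nested sequence $W_1 \supset W_2 \supset \cdots$ of open subannuli of $U$, all with finite area since they sit inside $U \subset S^2$. Pass to the annular compactification $U_c$ and a fixed lift $\ti f_c : \ti U_c \to \ti U_c$; let $p_1$ be the usual projection. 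By Corollary~\ref{cor: V(q)}, there are integers $p_i$ with $\rho(V_i) \subset [p_i/q, (p_i+1)/q]$, so automatically $|\rho(V_i)| \le 1/q$; the point is to push this to $0$.

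The key step is a renormalization/area argument. Suppose for contradiction that $|\rho(V_i)| \ge \delta > 0$ along a subsequence. Then each such $V_i$ contains (lifts of) points realizing two translation numbers whose difference is at least $\delta$; by Theorem~\ref{thm: rot num} applied to $f_c^q$ restricted to the compactification of $V_i$, and rescaling by $q$ as in Corollary~\ref{cor: V(q)}, $V_i$ carries periodic points of two distinct rational rotation numbers $a_i < b_i$ in $\rho(V_i)$ with $b_i - a_i \ge \delta/2$, say. Now I would choose, for each $i$, a fixed rational $r = p/q' \in (a_i,b_i)$ that is \emph{common} to infinitely many of the intervals $\rho(V_i)$ — this is possible because the intervals all have length $\ge \delta$ and are nested in their separation order, so they must eventually all contain some common rational $r$ lying strictly between $\rho(\partial U_c)$-values and accumulating appropriately. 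For that common $r$, Theorem~\ref{thm: rot num} produces a periodic orbit of rotation number $r$ and fixed period in \emph{each} $V_i$; but the $V_i$ are pairwise disjoint, so this yields infinitely many disjoint periodic orbits of the same period $q'$ all inside $U$. Each such orbit, together with the fact (Lemma~\ref{prop: cA_q}) that rotation number is monotone across essential subannuli, forces a uniform lower bound on the area of $W_i \setminus W_{i+1}$: the region between consecutive $V_i$'s must contain a definite amount of "twisting" and hence (via the $\pi$-displacement estimates of Corollary~\ref{cor: frontier-bound} and the area-preservation argument already used in Lemma~\ref{U is an annulus}) a definite amount of area. Summing, $\sum_i \mathrm{area}(W_i \setminus W_{i+1}) = \infty$, contradicting $\mathrm{area}(W_1) < \infty$.

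Concretely, the cleanest route for the area estimate is: fix a lift and the projection $\pi = p_1$; if $|\rho(V_i)| \ge \delta$ then comparing with the rotation numbers $r_0 \in \rho(\partial U_c)$ associated to $Y$ (which is disjoint from $\rho(V_i)$ for $i$ large, as in the proof of Lemma~\ref{lem: cl(V) in U}), one gets that a crosscut near the $Y$-side frontier of $V_i$ is displaced in the $\pi$-direction by an amount bounded below by a constant times $\delta$ under a suitable $\ti g = T^{-p}\ti f_c^q$, while a crosscut near the other frontier of $V_i$ is displaced by a different definite amount; the strip between them, being $\ti g$-invariant up to the nesting and having its two ends moved by different linear rates, must have area bounded below by a constant depending only on $\delta$ (and the geometry of $U$), by the same finite-area / strictly-decreasing-area contradiction used in Lemma~\ref{U is an annulus}(b). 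Since the strips for distinct $i$ are disjoint and all contained in the finite-area annulus $U$, only finitely many can have area $\ge c(\delta) > 0$, so $|\rho(V_i)| \to 0$.

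The main obstacle I anticipate is making the "definite area between consecutive $V_i$" claim precise: one must rule out the degenerate possibility that $V_{i+1}$ is extremely thin and sits immediately against $V_i$ with negligible area in between, yet still $|\rho(V_i)|$ stays large. The resolution is that it is not the geometric area of $V_{i+1}$ itself but the area of the region \emph{swept} by the $\ti g$-dynamics — the union of backward iterates of a crosscut in $W_{i+1}$ — that must be bounded below, and this swept region exits $W_{i+1}$ only by passing through $V_{i+1}$, so its area is genuinely charged against $W_{i+1} \setminus W_{i+2}$ (or more carefully, against a region that is disjoint for different $i$ once we work with a fixed $q$ and alternate crosscuts). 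Getting the bookkeeping of which iterates land in which $W_j$ exactly right, so that the areas genuinely add up, is the delicate part; everything else is an assembly of Theorem~\ref{thm: rot num}, Corollary~\ref{cor: V(q)}, Lemma~\ref{prop: cA_q}, and the area argument of Lemma~\ref{U is an annulus}.
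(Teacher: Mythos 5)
Your route is genuinely different from the paper's and has a gap that I do not think can be filled the way you suggest. The paper's proof is short and direct: form $W_i$ = the open annulus consisting of $V_1$, $V_i$, and the compact annulus between them, set $W = \bigcup_i W_i$, and observe that $W$ is an open $f$-invariant annulus whose annular compactification $W_c$ has a boundary circle $B$ corresponding to the end approached by the $V_i$. Since each $V_i$ is $f_c$-invariant and the $V_i$ shrink towards $B$, they furnish arbitrarily small $f_c$-invariant neighborhoods of $B$; hence if $x_i \in V_i$ is periodic then $\rho_f(x_i) \to \rho(f_c|_B)$. Theorem~\ref{thm: rot num} then says the endpoints of $\rho(V_i)$ are realized by periodic points in $V_i$, so $\rho(V_i) \to \{\rho(f_c|_B)\}$.

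The gap in your argument: the assertion that the intervals $\rho(V_i)$ ``are nested in their separation order, so they must eventually all contain some common rational $r$'' is not justified. The $V_i$ are nested in $U$, but the rotation intervals $\rho(V_i)$ are subintervals of $\R/\Z$ attached to pairwise disjoint annuli, and nothing forces them to be nested or even to overlap; $\rho(V_1)$ could equal $[0.1,0.2]$ and $\rho(V_2)$ could equal $[0.5,0.6]$. You can rescue existence of a common rational along a subsequence by a Hausdorff-metric compactness argument (assuming $|\rho(V_i)| \ge \delta$ for contradiction), but the more serious problem is the area estimate that follows. Having a periodic orbit of the same fixed period $q'$ in every $V_i$ does not force the strips between consecutive $V_i$'s to have uniformly positive area: an essential subannulus of $U$ can be arbitrarily thin and still carry a full period-$q'$ orbit, and the ``different linear rates of $\pi$-displacement'' at the two frontier components of $V_i$ produce shear, not expansion. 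The strictly-decreasing-area contradiction in the proof of Lemma~\ref{U is an annulus}(b) works because the \emph{entire} frontier of $\ti V$ is displaced in the same direction; here the two sides of $V_i$ are displaced at different rates, which is exactly the shearing situation in which area is conserved with no lower bound. You flag this as ``the delicate part,'' and I believe it is not merely delicate but false as a strategy; the intended argument is the one via the limit annulus $W$ and prime-end/blow-up boundary circle $B$, not an area-summation argument.
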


\begin{proof}
Let $W_i$ be the open annulus that is the union of $V_1, V_i$  and a closed annulus bounded by an essential curve in $V_1$ and an essential curve in $V_i$.   The complementary components of $W_i$ in $S^2$ are the component of  $S^2 \setminus V_i$ that contains $Y$ and the   component of $S^2 \setminus V_1$ that contains the other component of the frontier of $U$. In particular, these complementary components are compact  and connected.    Let 
$W \subset U$ be the union $\cup_i W_i.$   Since the nested intersection of compact connected sets is compact and connected,  $W$ is open and has two
complementary components in $S^2$ so it  must be an open annulus.  

Let $B$ be the 
 boundary component   of $\partial W_c$ corresponding to the end of $W$ that is disjoint from $V_1$.   Every neighborhood of $B$ contains $V_i$ for all sufficiently large $i$.         It follows that if $x_i \in V_i$ is  periodic, then the rotation number of $x_i$ with respect to $f$ converges to the rotation number $a$ of the restriction of $f_c$ to $B$.   Theorem~\ref{thm: translation interval} therefore implies that  the interval $\rho(V_i)$ converges to the point $a$  and so has length tending to zero.
\end{proof}

\begin{lemma}\label{lem: ends}
Suppose $\rho(U)$ is non-trivial and $\partial_0 U_c$ is a component of
$\partial U_c$. Let $\rho_0$ be the rotation number of $f_c$
on $\partial_0 U_c.$ There exists $Q > 0$ such that
\begin{enumerate}
\item   If $q$ is any
product of primes, each bigger than $Q$, 
then $\partial_0 U_c$ is a frontier component of a 
(necessarily  unique) essential $V_0(q) \in \cA(q)$ with $V_0(q) \subset U$.
\item The rotation number of the homeomorphism induced by
$f$ on $\partial_0 V_0(q)_c$ is $\rho_0$. 
In particular $\rho_0 \in \rho(V_0(q))$. 
\item  If $\rho_0 \ne p/q$ for some $0 < p < q$ then $\cl_U(V_0(q')) \subset V_0(q)$ for all sufficiently large $q'$.
\end{enumerate}
\end{lemma}

\begin{proof} The first step in the proof of (1) is to prove
that for sufficiently large $Q$ and for $q$ as in the hypothesis
there exists a (necessarily unique) essential $V_0(q) \in \cA(q)$ that is
not separated from $\partial_0 U_c$ by any other essential element in
$\cA(q)$.

By Lemma~\ref{lem: one V} we may assume that there exists
an essential $V_1 \in \cA(q)$ whose closure is contained in $U$.  Let
$\rho(\partial U_c) = \{\rho_0,\rho_1\}$.  We may assume that $Q$ is so large
that neither $\rho_0$ nor $\rho_1$ has the form $p/q$ with $0< p < q.$ Choose
$\delta$ such that $\delta < |\rho_0 -p/q|, |\rho_1-p/q|$ for all $0 < p <q.$

The proof is by contradiction: assuming that no such $V_0(q)$ exists
we will inductively define an infinite sequence $\{V_i\} $ of distinct
essential elements of $\cA(q)$ such that $V_{i+1}$ separates $V_i$
from $\partial_0 U_c$ and such that $|\rho(V_i)| > \delta/2$ in
contradiction to Lemma~\ref{lem: monotonic}. It suffices to assume
that $V_1,\ldots V_{i-1}$ have been defined for $i \ge 2$, and define
$V_i$. By the assumption we wish to contradict, any element of
$\cA(q)$) is separated from $\partial_0 U_c$ by another element of
$\cA(q)$.  In particular $V_{i-1}$ is separated from $\partial_0
U_c$, say by $V^*_{i}.$  Since $V_i^*$ is also separated from
$\partial_0 U_c$ by (yet another) element of $\cA(q),$ it is contained
between two open essential annuli in $U$.  Hence each component of its
frontier is contained in the interior of $U$.  Lemma~\ref{lem: not in
  V} implies that the rotation number of the restriction of $f_c$ to a
component of $\partial {V^*_i}_c$ has the form $p/q$ with $0 <p < q.$

Choose an essential closed curve $\alpha$ in $V^*_i$ and let $W_i$ be the union of $V^*_i$ with the component of $U \setminus \alpha$ that does not contain $V_{i-1}$.    Then $W_i$ is an open annulus whose frontier components are $\partial_0 U_c$ and a component of the frontier of $V^*_i$.    Theorem ~\ref{thm: translation interval} implies that $W_i$ contains a periodic point $z$ whose rotation number  has distance less than $\delta/2$  from $\rho_0$  and so is not of the form $p/q$. 

In particular, $z \in \M_q$ and is contained in some $V_i \in \cA(q)$
by Lemma~\ref{lem: not in V}.  Lemma~\ref{lem: cA_q} (2) implies that
$V_i$ is essential and hence separates $\partial_0 U_c$ from
$V_{i-1}$.  The rotation number of the restriction of $f_c$ to a
component of $\partial {V_i}_c$ has the form $p/q$ with $0 <p < q$ for
the same reason that components of $\partial {V^*_i}_c$ satisfy this
property.  Since $z \in V_i$, it follows that $|\rho(V_i)| >
\delta/2$.  This completes the induction step and hence shows
the existence an infinite family $\{V_i\}$ contradicting
Lemma~\ref{lem: monotonic}. We conclude there is a unique essential
$V_0(q) \in \cA(q)$ that is not separated from $\partial_0 U_c$ by any
essential element of $\cA(q)$.
  
Since there exists $ V_2 \in \cA(q)$ whose closure is
contained in $U$, the component $B(q)$ of $\fr(V_0(q))$
which is separated from $\partial_0 U_c$ by $V_0(q)$ is
contained in $U$.  Lemma~\ref{lem: not in V} implies
that if $x \in B(q)$ then $\omega(f_c,x)$  is contained in a component of $\Fix(f^q)$ that is disjoint from $\Fix(f)$.  It follows that if $q'$ is a product
of primes all greater than $Q$ and with $q$ and $q'$
relatively prime, then $B(q) \cap B(q') = \emptyset$. Let
$W(q)$ be the open sub-annulus of $U_c$ bounded by
$\partial_0 U_c$ and $B(q)$. Note that either $B(q)$
separates $B(q')$ from $\partial_0 U_c$ or $B(q')$
separates $B(q)$ from $\partial_0 U_c$.  Hence either
$\cl_U(W(q)) \subset W(q')$ or $\cl_U(W(q')) \subset
W(q).$

Theorem ~\ref{thm: translation interval} implies that $W(q)$ contains a periodic
point $w$ whose rotation number is arbitrarily close to $\rho_0$, but not
equal to $\rho_0$, and in
particular not of the form $p/q$.  Lemma~\ref{lem: not in V} and
Lemma~\ref{lem: cA_q} (2) imply that $w$ is contained in some element
of $ \cA(q)$ that is essential and hence this element must be $V_0(q)$.
Theorem~\ref{thm: translation interval} implies that $\rho_0 \in \rho(V_0(q)).$
Now choose $q'$ sufficiently large that $1/q' < |\rho_0 - \rho_{f_c}(w)|.$
Since $\rho_0 \in \rho_{f_c}(V_0(q'))$, by the same argument used
for $V_0(q),$ we  conclude that $\rho_{f_c}(w) \notin \rho(V_0(q'))$
and hence $w \notin V_0(q').$ It follows that
$\cl_U(W(q')) \subset W(q).$

Items (1) and (3) will
follow once we prove that $W(q) = V_0(q)$ (which is equivalent to
showing that $\partial_0 U_c$ is the boundary component $B'(q)$ of
$V_0(q)$ which is not $B(q)$). In particular this will
show that there are no inessential elements of $\cA(q)$ contained
in $W$. In order to show this we first prove the following.

{\bf Claim:} {\em If $q'$ is sufficiently large
then for any open set $P$ in  $W(q)\setminus V_0(q)$ we have 
$P \cap W(q') = \emptyset$.}

We choose $q'$ so that, in addition to its
properties above, it is large enough that
$\rho(V_0(q'))$ does not contain a point of the 
form $p/q$ with $0<p<q$. Assuming that there is an open $P \subset
W(q)\setminus V_0(q)$ with $P \cap W(q') \ne \emptyset$, we will argue to a contradiction, thus proving the claim. The open set $P \cap W(q')$ 
(like any open subset of $U_c$)  contains a positive measure
subset $P_0$ of points which are birecurrent and have well
defined rotation numbers in $U_c$. By Lemma~\ref{U  covers}, 
the fact that $\rho(V_0(q'))$ does not contain a point of the 
form $p/q$  implies that $V_0(q') \subset \M_q$ and hence 
there is a positive measure subset $P_1 \subset P_0$
contained in some $V \in \cA(q)$.  This $V$ is necessarily
inessential since otherwise it would separate $\partial_0 U_c$ and
$V_0(q).$ Lemma~\ref{lem: cA_q} (2) therefore implies 
that points in a full measure subset of $P_1$ 
have the same rotation number which is of the form $p/q$ with
$0<p<q$.

It follows that there is a positive measure subset $P_2
\subset P_1$ which is not contained in $V_0(q')$ but is
contained in an essential element of $\cA(q')$, (by
Lemmas~\ref{lem: not in V} and \ref{lem: cA_q} (2) again).    
This contradicts the assumption that $P_0 \subset W(q')$ 
and so verifies the claim.

One consequence of the claim is that $\partial _0
U_c \subset B'(q)$.  This is because if 
$x \in \partial_0 U_c \setminus B'(q)$
then $x$ has a neighborhood which is disjoint
from $V_0(q)$ but intersects $W(q')$ in an open set
contradicting the claim.

We want now to prove $B'(q) \subset \partial _0 U_c$ and
hence that $B'(q) = \partial _0 U_c$.
We note that $\partial_0 U_c$ has neighborhoods
which are disjoint from $\Fix(f_c^q) \setminus \Fix(f_c)$,
since otherwise points in $\partial_0 U_c$ would
have a rotation number of the form $p/q,\  p\ne 0.$ We
let $W_0$ denote such a neighborhood which is chosen
sufficiently small that it is a subset of  $W(q') \cup \partial_0 U_c$
and let $x$ be a point of $B'(q) \cap W_0.$ 
Any point on the frontier  (in $U_c$) of an element of $\cA(q)$  
is either in $\Fix(f^q)$ or $\partial U_c$  
or has arbitrarily small neighborhoods meeting more than
one element of $\cA(q).$ This is because each element of 
$\cA(q)$ is the  interior of its closure by 
Corollary~\ref{interior of closure} and the union of
all elements of $\cA(q)$ and $\Fix(f^q)$ is dense in $S^2.$
But $x$ has a neighborhood which intersects no element of 
$\cA(q)$ other than  $V_0(q)$, because 
otherwise there would be an open $P \subset W(q')$
which is disjoint from $V_0(q)$ contradicting the
claim above.

We conclude 
that $x \in \partial_0 U_c \cup \Fix(f^q).$  Since $x \in W_0$
implies $x \notin \Fix(f_c^q) \setminus \Fix(f_c) $ and
$\Fix(f_c) \subset \partial U_c,$ we conclude that 
$x \in \partial_0 U_c.$ We have shown that $B'(q) \cap W_0 
= \partial_0 U_c$, but $B'(q)$ is connected, so in fact
$B'(q)= \partial_0 U_c.$ This completes the proof
of \ (1) and (3).

Finally to prove (2) we observe that one component
of the complement of $U$ in $S^2$ coincides with a component
of the complement of $V_0(q)$, namely the component corresponding
to $\partial_0 U_c.$  Indeed $V_0(q)$ is a neighborhood of the
corresponding end of $U.$  It follows that $\partial_0 U_c$ and
$\partial_0 V_0(q)_c$ (in the annular compactifications
$U_c$ and $V_0(q)_c$ respectively) can be naturally identified.  Hence
the rotation number of the map induced by $f$ on $\partial_0 V_0(q)_c$
is $\rho_0.$
\end{proof}

\begin{notn} For each $U \in \cA$ there are two components of its
frontier in $S^2$.  Associated to each component and each $q$
satisfying the hypothesis of Lemma~\ref{lem: ends}, there is
 an element of $\cA(q)$ as described in this lemma.  We will
refer to these as the {\em end elements} of $\cA(q)$ and denote them
$V_0(q)$ and $V_1(q).$ They are neighborhoods of the ends
of $U$. We label them $V_0$ and $V_1$ consistent with a
transverse orientation; i.e., for any $q,q',$ we have $V_0(q)
\cap V_0(q') \ne \emptyset$. Any element of $\cA$ which is not an
end element will be called an {\em interior element}.
\end{notn}

\begin{lemma}\label{lem: end limit}
Suppose $x \in U \in \cA$.
If $\{q_n\}$ is a sequence of primes tending to 
infinity and $x \in V_0(q_n)$ for all $n$ then
$\rho_{f_c}(x)$ is well defined and equal to the 
rotation number $\rho_0$ of the component of $\partial U_c$
corresponding to the end elements $V_0(q_n).$ 
\end{lemma}

\begin{proof} 
For $n$ sufficiently large, $\rho_0 \ne p/q_n$ for
$0 < p < q_n.$ Hence by Lemma~\ref{lem: ends} (3),
by choosing a subsequence we may assume
$\cl_U(V_0(q_{n+1})) \subset V_0(q_n).$

 We now apply Lemma~\ref{lem: 2ann} letting 
 $A_0$ be a closed
annulus in the annular compactification of $V_0(q_n)$ which
has $\partial_0 U_c$ as one boundary component and the other an essential
closed curve in $V_0(q_n) \setminus \cl_U(V_0(q_{n+1})).$
We are identifying $\partial_0 U_c$ as a component of the boundary of
both $U_c$ and  the annular compactification of $V_0(q_n).$
Lemma~\ref{lem: 2ann} implies the rotation interval of
$x$ is the same in $U_c$ as it is in the  compactification of $V_0(q_n)$.  Since this holds for all $q_n$ and since $\rho(V_0(q_n))$ contains $\rho_0$ and has length $\le 1/q_n$, \ $\rho_{f_c}(x_0) = \rho_0$.
\end{proof}  

\begin{lemma}\label{lem: cl(V) in U}
Suppose $U \in \cA.$
If $V \in \cA(q)$ is essential in\ $U$
and $\rho(V)$ is disjoint from $\rho(\partial U_c)$, then
$\cl(V) \subset  U.$ Moreover, if $x \in U$ and the rotation interval
for $x$ in $U_c$ is disjoint from $\rho(\partial U_c)$, then for 
every sufficiently large prime $q$  
there exists an essential $V \in \cA(q)$ such that
$x \in V,\ cl(V) \subset U.$ 
\end{lemma}  

\begin{proof}
By Lemma~\ref{lem: ends} the fact that $\rho(V)$ is
disjoint from $\rho(\partial U_c)$ means that $V$ is neither
$V_0(q)$ nor $V_1(q),$ the two end elements whose frontiers
contain the components of the boundary of $U_c.$ It follows
that $V$ is separated by the essential annuli $V_0(q)$ and
$V_1(q)$ from the boundary of $U_c$ and hence $\cl(V) \subset
U.$

To see the moreover part we observe that if the rotation interval
for $x$ in $U_c$ is disjoint from $\rho(\partial U_c)$, then
for sufficiently large $q$, the rotation interval of 
$x$ will be disjoint from $\rho(V_0(q))$ and $\rho(V_1(q))$.  
If the rotation interval of $x$ is a single rational point
choose $q$ larger than its denominator; otherwise choose any $q > 1$.  By Lemma~\ref{lem: not in V}
this will guarantee that $x$ lies in some $V \in \cA(q).$ 
Corollary~\ref{cor: ess V} implies that if $q$ is sufficiently large
this $V$ will be essential. This $V$ is disjoint 
$V_0(q)$ and $V_1(q)$ and hence  will satisfy $\cl(V) \subset U.$
\end{proof}

In principle a point $x \in V \in \cA(q)$ might have
a different rotation interval when viewed in $V$ than
when viewed in $U.$  The following proposition shows this
does not happen, and as a consequence every point of $U$
has a well defined rotation number.

\begin{prop}\label{prop: well-defined}
Suppose  $x \in U \in \cA$.
Then the rotation number $\rho_{f_c}(x)$ of $x$
with respect to $f_c : U_c \to U_c$ exists.
Moreover, if $x$ is in an essential $V \in \cA(q)$ and 
$\rho_{f_c}(x) \notin \rho(\partial V_c)$, then
$\rho_{f_c}(x) = \rho_{h_c}(x)$ where $h = f|_V.$ 
\end{prop}

\begin{proof}
We will first show that $\rho_{f_c}(x)$ exists.
Given $\epsilon >0 $ it suffices to  show that the rotation interval of $x$ in
$U_c$ has length $< \epsilon.$  
Choose an
integer $Q$ 
such that $1/Q < \epsilon$ and such that $Q$ is greater
than the number $r(x)$ from Corollary~\ref{cor: ess V}.
Hence  if $q'$ is any product of primes each
of which is $>Q$ then $q'$ is relatively prime to $r(x)$ so
$x$ is contained in an essential element of $\cA(q).$

Choose three primes $q_i,\ i = 1,2,3$ all greater
than $Q.$ 
Let $V^i$ be the essential element of $\cA(q_i)$ which contains
$x$.  We may assume that each of the $V^i$ are interior in $U_c,$
as otherwise for every sufficiently large prime $q$ the
$V'$ in $\cA(q)$ containing $x$ is an end element and it
follows from Lemma~\ref{lem: end limit} that $x$ has a well
defined rotation number which equals the rotation number of
one boundary component of $U_c.$

Let $V$ be the  essential annulus in $\cA(q_1q_2q_3)$
which contains $x$. 
Note that $V \subset V^i$ and if we define
$h_i$ to be the restriction of $f^{q_i}$ to $V^i$ then
$V$ can be considered as an element of $\cA(q_2q_3, h_1),\ 
\cA(q_1q_3, h_2),$ and $\cA(q_1q_2, h_3).$ 

Suppose for one choice of $i,$ say $i = 1,$ the element $V$ is an
interior element of $\cA(q_2q_3, h_1),$ i.e., $\cl(V) \subset V^1.$ 
Then there exists  an annulus $A_0$
whose boundary consists of two essential simple closed curves, one in
each component of $V^1 \setminus \cl(V).$ The orbit of $x$ lies in
$A_0$ and $A_0$ is an essential closed annulus embedded in both $U_c$
and $V^1_c$. Lemma~\ref{lem: 2ann}  implies that  the rotation interval
of $x$ is the same when calculated in $V^1_c$ as when calculated in
$U_c$ and since $|\rho(V^1)| < 1/Q <\epsilon$, the fact that $\epsilon$
is arbitrary proves that the rotation number of $x$ in $U_c$ is well
defined.


We are left with the possibility that 
each of $\cA(q_2q_3, h_1),\ \cA(q_1q_3, h_2),$ and $\cA(q_1q_2, h_3)$
has $V$ as an end element.  We will show this leads to a contradiction.
We chose three primes $q_i$ in order to guarantee that $V$ corresponds
to the same end (i.e., a $V_0$ or a $V_1$) for two of the 
$h_i: V^i \to V^i.$
Hence we may assume without loss of generality that
$V = V_0(q_2q_3) \in \cA(q_2q_3, h_1)$ and $V = V_0(q_1q_3) \in
\cA(q_1q_3, h_2).$   But Lemma~\ref{lem: ends} (2) applied
to $f^{q_1}$ implies that the rotation numbers of the maps induced
by $f$ on $\partial_0 V^1_c$ and $\partial_0 V_c$ coincide.
Likewise, so do the rotation numbers on 
$\partial_0 V^2_c$ and $\partial_0 V_c.$
Since $V^i$ is interior in $U_c$ both its ends are non-singular.
But it follows from Lemma~\ref{lem: U rot int} applied to 
$h_i$ that the rotation number of the map induced by $f$
on $\partial_0 V^i_c$ has the form $p_i/q_i$ with $0< p_i < q_i$
and hence it is not possible for two of these rotation numbers
to coincide. This contradiction completes the proof that 
$\rho_{f_c}(x)$ exists.

To prove the second assertion of the proposition we note that
$\rho_{h_c}(x)$ exists by the first part applied to $f^q|_V.$
The fact that $\rho_{f_c}(x) \notin \rho(\partial V_c)$, and
Lemma~\ref{lem: cl(V) in U} imply that there is a prime $q'$ and
an essential  $V' \in \cA(q')$ containing $x$ and such that $\cl(V') \subset V.$
Hence we may choose a closed annulus $A_0$ containing $V'$ and
contained in $V$.  Applying Lemma~\ref{lem: 2ann} we
conclude that $\rho_{f_c}(x) = \rho_{h_c}(x)$ where $h = f|_V.$ 
\end{proof}

\begin{remark} In the following definition we assume that $\rho(U)$ is
non-trivial.   If we are willing to pass to a power of $F$
this  is a consequence of our standing hypothesis
that $F:S^2 \to S^2$ has at least three periodic points, because then
$F^q$ will have three fixed points and any $U \in \cA(q)$ will have
a non-trivial end which is sufficient by Lemma~\ref{lem: U rot int}
to imply $\rho(U)$ is non-trivial.
\end{remark}

\begin{defns} \label{defn: hat Y} Suppose that $\rho(U)$ is non-trivial, 
that $\partial_0U_c$ and $\partial_1U_c$ are the frontier components
of $U \in \cA$ and that $a_i$ is the rotation number of
$f$ on $\partial_iU_c$.  Choose $Q = Q(U)$ so that:
\begin{itemize}
\item If $a_i = p/q$ for some $0 < p < q$ then $q < Q$.
\item 
For every prime $q > Q$ there is an 
essential element $V \in \cA(q)$  
 such that $cl(V) \subset U$. (See  Lemma~\ref{lem: cl(V) in U}).)
 \item For every prime $q > Q$ there are distinct elements $V_0(q),
 V_1(q) \in \cA(q)$ (as in Lemma~\ref{lem: ends}) that are contained
 in $U$ such that $\fr(V_i(q)) \subset U_c$ is 
$\partial_i U_c \cup B_i(q)$ for $i=0,1$ where $B_0(q)
 \cap B_1(q) = \emptyset$.
\end{itemize}
 Define
 \[
\hat Y_i = \bigcap_{q > Q} \cl_{U_c}( V_i(q)),
\]
where the intersection is taken over all primes $q >Q$
and define 
\[
\Check U = U_c \setminus ( \hat Y_0 \cup \hat Y_1) \subset U.
\]
\end{defns}

Recall from Definition~\ref{defn: omega-free} that $\W_0$ is the set of   free disk recurrent points.

\begin{lemma}\label{lem: Check U} Assume notation as above and assume that $\rho(U)$ is non-trivial.  The following hold for $i=0$ and $i=1$:
\begin{enumerate}
\item $\hat Y_i$ is well-defined, i.e., independent of the
choice of $Q$.
\item$\Check U$ and $\Check U \cup (\hat Y_i \cap U)$  are 
essential open $f$-invariant annuli.
\item If $a_i = 0$  then $\hat Y_i \cap U$ has measure $0.$
\item If $a_i \ne 0$ then $ \hat Y_{i} \cap U \subset \W_0$.  
\item $\rho(y) = a_i$ for each $y \in \hat Y_i \cap U$.
\end{enumerate}
\end{lemma}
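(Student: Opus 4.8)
\textbf{Proof proposal for Lemma~\ref{lem: Check U}.}

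The plan is to establish the five items roughly in order, with items (1)--(2) being topological/combinatorial and items (3)--(5) relying on rotation number estimates. First I would prove (1). By Lemma~\ref{lem: ends}, for each sufficiently large prime $q$ the annulus $V_i(q)$ is the unique essential element of $\cA(q)$ having $Y_i$ as a frontier component, and for $q' > q$ (both exceeding the threshold from Lemma~\ref{lem: ends}(3)) we have $\cl_U(V_i(q')) \subset V_i(q)$. Hence the family $\{\cl(V_i(q)) : q > Q \text{ prime}\}$ is nested and decreasing, so its intersection is unchanged if we discard finitely many terms; this shows $\hat Y_i$ does not depend on the choice of $Q$. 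For (2), $\Check U$ is an increasing union of the essential open annuli $V(q) \cap V(q')^c \ldots$; more directly, writing $W(q)$ for the open subannulus of $U_c$ bounded by $\partial_0 U_c$ and $B_0(q)$ as in the proof of Lemma~\ref{lem: ends}, we have $\hat Y_i \cap U = \bigcap_q \cl_U(W_i(q))$ and its complement in $U$ is the increasing union $\bigcup_q (U \setminus \cl_U W_i(q))$ of open essential subannuli of $U$; a nested union of essential open annuli in $U$ is an essential open annulus (its two complementary components in $S^2$ are nested intersections of compact connected sets, hence compact and connected, as in Lemma~\ref{lem: monotonic}). Taking the union over both ends gives $\Check U$, and adding back $\hat Y_i \cap U$ gives the intermediate annulus; $f$-invariance follows because $f$ permutes the elements of $\cA(q)$ (Remark~\ref{second centralizer invariance}) and fixes $V_i(q)$ by its uniqueness characterization in Lemma~\ref{lem: ends}(1), so $f(V_i(q)) = V_i(q)$, whence $f(\hat Y_i) = \hat Y_i$.

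Next I would handle (5), which then feeds into (3) and (4). Fix $y \in \hat Y_i \cap U$. For every large prime $q$, $y \in \cl_U(V_i(q))$, and since $V_i(q)$ is essential in $U$, Proposition~\ref{prop: cA_q}-(1) says the rotation number of $y$ computed in $U_c$ agrees with that computed in ${V_i(q)}_c$ (here one must be slightly careful: $y$ may lie on $\fr(V_i(q)) = Y_i \cup B_i(q)$, but if $y \in B_i(q)$ then $y \in \Fix(f^{q'})$ for nearby $q'$, which is incompatible with $y \in \cl_U(V_i(q'))$ for all larger $q'$; so in fact $y \in \Int V_i(q)$ for a cofinal set of $q$, or one argues via continuity of $\rho_f$ as invoked below). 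By Lemma~\ref{lem: ends}(2), $a_i \in \rho(V_i(q))$, and by Corollary~\ref{cor: V(q)} the interval $\rho(V_i(q))$ has length at most $1/q$. Hence $|\rho_f(y) - a_i| \le 1/q$ for all large $q$, forcing $\rho_f(y) = a_i$. This proves (5). Item (4) is immediate from (5): if $a_i \ne 0$ then $\rho_f(y) \ne 0$ for all $y \in \hat Y_i \cap U$, so no point of $\hat Y_i \cap U$ can be asymptotic to a fixed point or lie in $\Fix(f)$ under forward or backward iteration; more precisely, $\omega(F,y)$ must contain a point of $\M$ (it cannot be trapped in $\Fix(F)$ when the rotation number is nonzero, by the Brouwer-type argument that a point with all limit points in $\Fix(f_c)$ would have rotation number $0$, cf.\ Lemma~\ref{lem: frontier fp}), so a free disk about such a point meets $\orb(y)$ infinitely often and $y \in \W_0$.

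Finally (3): suppose $a_i = 0$. Then by (5), $\rho_f(y) = 0$ for every $y \in \hat Y_i \cap U$, viewed inside the essential $f$-invariant open annulus $\Check U \cup (\hat Y_i \cap U)$ from (2) (equivalently inside any ${V_i(q)}_c$). If $\mu(\hat Y_i \cap U) > 0$, then Proposition~\ref{prop: +meas} applied to the annular compactification of $\Check U \cup (\hat Y_i \cap U)$ (or of $V_i(q)$ for large $q$) produces a fixed point of $f$ in the open annulus, contradicting $\Fix(f) = \emptyset$. Hence $\mu(\hat Y_i \cap U) = 0$, proving (3). I expect the main obstacle to be the care needed in (5) and (2) around the boundary circles $B_i(q)$ --- specifically, justifying that a point of $\hat Y_i \cap U$ genuinely sees the essential annulus $V_i(q)$ (so that Proposition~\ref{prop: cA_q}-(1) applies) rather than sitting on $B_i(q) \subset \Fix(f^q)$ for infinitely many $q$, which would be incompatible with the nesting; this is a finiteness/disjointness argument using Lemma~\ref{lem: not in V} (the $B(q)$'s for distinct $q$ are pairwise disjoint) that must be spelled out before the rotation-number estimate can be invoked cleanly.
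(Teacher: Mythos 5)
Your treatment of items (1), (2), (3), and (5) follows the paper's approach closely and is correct. For (1) both you and the paper use Lemma~\ref{lem: ends}-(3) to get a nested sequence and conclude that discarding finitely many terms doesn't change the intersection. For (2) the paper works with the complementary compact connected sets $Z_i(q)$ (components of $S^2 \setminus B_i(q)$ containing $Y_i$), defines $\hat Z_i = \bigcap \cl(Z_i(q_j))$, and recognizes $\Check U = S^2 \setminus (\hat Z_0 \cup \hat Z_1)$ as an open two-ended subsurface of $S^2$ separating two sets each containing a fixed point; your formulation via $W_i(q)$ and a nested union is a mild rephrasing of the same idea. The caveat you flag in (5) about $y$ possibly lying on $B_i(q)$ for a single $q$, handled via disjointness of the $B_i(q)$'s (Lemma~\ref{lem: not in V}), is a legitimate and useful precaution; the paper elides it but it causes no harm.

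Item (4), however, contains a genuine gap. You assert that if $a_i \ne 0$ then for $y \in \hat Y_i \cap U$ the set $\omega(F,y)$ must contain a point of $\M$, arguing that ``a point with all limit points in $\Fix(f_c)$ would have rotation number $0$.'' This conflates $\Fix(F) \subset S^2$ with $\Fix(f_c) \subset U_c$. It is entirely possible that $\omega(F,y) \subset \Fix(F)$ even when $a_i \ne 0$: for example, $y$ may spiral toward a singular fixed point $p$ whose blown-up boundary circle carries nonzero rotation number $a_i$. In that case $\omega(F,y) = \{p\}$ contains no point of $\M$, yet $\rho_f(y) = a_i \ne 0$; no contradiction with Lemma~\ref{lem: frontier fp} arises because the limit points of $y$ in $U_c$ lie in $\partial U_c \setminus \Fix(f_c)$, not in $\Fix(f_c)$. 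The paper's argument avoids this error by working with $\omega(x, f_c)$ inside the compactification $U_c$: when $\omega(F,x)$ misses $U$, there is still a non-fixed point $z \in \omega(x,f_c) \subset \partial U_c$ (because $a_i \ne 0$ forces $f_c|_{\partial_i U_c}$ to be fixed-point free), and then $D_0 \cap U$ for a small free neighborhood $D_0$ of $z$ in $U_c$ is a free disk in $\M$ that the orbit of $x$ visits more than once. So your conclusion $y \in \W_0$ is correct and your endgame (``a free disk about such a point meets $\orb(y)$ infinitely often'') is the right one, but the intermediate claim that the relevant non-fixed point lies in $\M$ is false; it may lie only in $\partial U_c$.
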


\begin{remark}   
If $x \in \Check U$ 
then it has non-zero rotation number.  To see this observe
that if $x \in \Check U$,  the $\omega$-limit set
$\omega(f_c, x)$ is  separated from  $\partial U_c$ because the orbit of $x$ is
separated from  $\partial U_c$ by $V_0(q) \cup V_1(q)$ for some
$q.$  Then Corollary~\ref{cor: ess V}
implies that for every sufficiently large prime $q$ the point
$x$ must lie in some essential $V \in \cA(q)$.  For large $q$
this $V$ must be interior, i.e. separated from $\partial U_c$.

 Let $h=f|_V$ and consider $h_c:V_c \to V_c$.
Observe that $0 \notin \rho_{h_c}(V_c)$,
because if it were Theorem~\ref{thm: translation interval}
would imply $h_c$ has a fixed point in $V_c$.
But then Lemma~\ref{lem: frontier fp}
implies there is a fixed point for $F$ in $\cl(V) \subset U$ 
which is a contradiction.

 Suppose now $\rho_{f_c}(x) = 0$ in $U_c$. Since 
$0 \notin \rho_{h_c}(\partial V_c)$,
Proposition~\ref{prop: well-defined} implies that
$x$ also has rotation number $0$ for $h_c: V_c \to V_c$,
but as noted above this is a contradiction.

If $a_0 = a_1 = 0$,
then item (5) of Lemma~\ref{lem: Check U} implies that $U \setminus \Check U$ 
consists of points with rotation number $0$. Hence in this case
$\Check U$ is precisely the subset of $U$ whose 
points have non-zero rotation number.
\end{remark}

\begin{proof}  By  part (3) of Lemma~\ref{lem: ends}  
there is a sequence of primes  $\{q_j\}$ tending to infinity such that 
\[
\hat Y_i = \bigcap_{q_j} \cl_{U_c}( V_i(q_j))
\]
and
$$
 V_i(q_{j+1}) \cup B_i(q_{j+1}) )\subset V_i(q_j)
$$
for all $j$, where $B_i(q)$ denotes the component of
the frontier of $V_i(q)$ which lies in $U$.  
This proves that $\hat Y_i$ does not depend on the choice of $Q$ in its definition and so is well-defined.
 
Let $Z_i(q)$ be the component of $S^2 \setminus B_i(q)$ that contains $Y_i$.   Then $\cl_{S^2}(Z_i(q_{j+1})) = Z_i(q_{j+1}) \cup B_i(q_{j+1}) \subset Z_i(q_j)$  for the 
sequence of primes $\{q_j\}$ chosen above.   Define
$$
\hat Z_i = \bigcap_{q_j} \cl_{S^2}Z_i(q_j)) = \bigcap_{q>Q} \cl_{S^2}(Z_i(q))
$$
Then $\hat Z_0$ and $\hat Z_1$ are disjoint, compact, connected sets
and $\Check U = U_c \setminus ( \hat Y_0 \cup \hat Y_1) = S^2
\setminus ( \hat Z_0 \cup \hat Z_1).$ Thus $\Check U$ is an open
subsurface of $S^2$ with two ends and hence an annulus.  The set
$\Check U$ separates $\hat Z_0$ and $\hat Z_1$ each of which contains
a point of $\Fix(F).$ Hence $\Check U$ is essential in $\M$ and
therefore in $U$.  The same argument applies to $\Check U \cup (\hat
Y_i \cap U)$: Its complement in $S^2$ has two components.  For
example, if $i=0$ then one of the components is $\hat Z_1$ and the
other is the component of the complement of $U$ that intersects
$\hat Z_0$. Each of these complementary components contains a point
of $\Fix(F)$ and hence $\Check U \cup (\hat Y_i \cap U)$ is an annulus
which is essential in $\M$ and therefore in $U$.

 To show $\Check U$  is $f$-invariant it suffices to
show $\hat Y_i$ is $f_c$-invariant, but this follows from
the definition of $\hat Y_i$ and the fact that $V_i(q)$ is
$f_c$-invariant. Having verified that $\Check U$ is $f$-invariant,
the same is true for $\Check U \cup (\hat Y_i \cap U)$.
This completes the proof of (2).

Item (5) follows from Lemma~\ref{lem: end limit}.  Item (3) then
follows from Proposition~\ref{prop: +meas} and that fact that $\Fix(f)
= \emptyset$.

For (4) suppose that  $a_i \ne 0$ and that $x \in \hat Y_i$.
If the $\omega$-limit set of $x$ contains a point in $U$ then $x \in
\W_0$.  Otherwise there is a non-fixed point
$z$ in $\omega(x, f_c)$.  If $D_0$ is a disk neighborhood of $z$ then
$D_0 \cap U$ is a free disk that the orbit of $x$ intersects more than
once and again $x \in \W_0$. 
\end{proof}

We are now prepared to complete the proof of Theorem~\ref{thm: annuli}.    For the definition of free disk recurrent and weakly free disk recurrent  see Definition~\ref{defn: omega-free}.

\bigskip
\noindent
{\bf Theorem~\ref{thm: annuli}.} {\em Suppose $F \in \newDiff$ has
entropy zero, infinite order and at least three periodic points.
Let $f = F|_{\M}$ where $\M = S^2
\setminus \Fix(F)$.   Then there is a countable collection $\cA$ of pairwise disjoint
open $f$-invariant annuli such that }
\begin{enumerate}
\item  {\em $\U = \bigcup_{U \in \cA} U$ is the set $\W$  of weakly free disk 
recurrent points for $f$. }
\item {\em  $\cA$ is the set of maximal $f$-invariant open annuli in $\M$.}
\item {\em  If $ z \not \in \U  $, there
are components $F_+(z)$ and $F_-(z)$ of $\Fix(F)$ so that $\omega(F,z)
\subset F_+(z) $ and $\alpha(F,z) \subset F_-(z)$.}
\item {\em For each $U \in \cA$ and each component $C_{\M}$ of the frontier of
$U$ in $\M$, $F_+(z)$ and $F_-(z)$ are independent of the choice of $z
\in C_{\M}$. }
\end{enumerate}

\begin{proof}  It suffices to verify items (1) - (4)  for one component $M$ of $\M$ at a time.  Items (2), (3) and (4) follow from the second, third and fifth items of   Proposition~\ref{intermediate} so it suffices   
 to prove (1).   
 
If  $x \in \W_0$ then there is  a free disk $D$ and $n > 0$ such that $x,f^n(x) \in D$.  Choose   lifts $\ti x \in \ti D$ to $\tiM$, let $\ti C$ be a home domain for $\ti x$ and let $T$ be a covering translation  such that $\ti f_{\ti C}^n(x) \in T(D)$.    Thus $T$ is an $\ti f_{\ti C}$-near cycle for $\ti x$.  Since $\ti x$ and $\ti f_{\ti C}^n(\ti x)$ are both contained in $\ti U$, $T$ preserves $\ti U$ and so preserves $\ti C$.  Lemma~\ref{more home domains} (2) implies that $x \in \U$ thereby proving that $\W_0 \subset \U$.  Lemma~\ref{ U is interior of closure} therefore implies that $\W \subset \U$.


To prove the converse note that the $\omega$-limit set of any point in
$\Check U$ lies in $U$ and hence contains points that are not fixed by
$f$.  It follows that $\Check U
\subset \W_0$.  
  If both $a_0$ and $a_1$ are non-zero then $U = \Check U \cup (U
   \cap (\hat Y_0 \cup \hat Y_1)) \subset \W_0 \subset \W$ by  item   (4) of Lemma~\ref{lem: Check U}. If both $a_0$ and $a_1$ are zero then $\Check U$ is   dense in $U$ by  item   (3) of Lemma~\ref{lem: Check U}. Thus $U \subset \Int_M\cl_M(\Check U) \subset \W$ since $\Check U$ is a connected (item   (2) of Lemma~\ref{lem: Check U}) subset of $\W_0$.    For the remaining case we may assume
 that $a_0 =0$ and $a_1 \ne 0$.  Then $U \subset\Int_M\cl_M(\Check U
 \cup (\hat Y_1 \cap U))\subset \W$ because $\Check U
 \cup (\hat Y_1 \cap U)$ is a connected  subset of $\W_0$.
  \end{proof}

\noindent
{\bf Theorem \ref{thm: C(x)}.}
{\em
Suppose $F \in \newDiff$ has entropy zero, has
infinite order and at least three periodic points. Let $f =
F|_{\M}$ where $\M = S^2 \setminus \Fix(F)$ and let $\cA$ be
as in Theorem~\ref{thm: annuli}.  For $U \in \cA$, let $f_c
:U_c \to U_c$ be the annular compactification of $f|_U:U \to
U$.  Then}
\begin{enumerate}
\item  {\em  The rotation number $\rho_{f_c}(x)$  is defined and continuous at every  $x \in U_c$.}
\item   {\em If $\Fix(F)$ contains at least three points then
$\rho_{f_c}$ is non-constant.}
\item  {\em If $C$ is a component of a level set of $\rho_{f_c}$ 
then $C$ is $F$-invariant.  If $C$ does not contain a component
of $\partial U_c$ then it is essential in $U$, meaning that $U_c \setminus C$   has two components each containing a component of $\partial U_c$. }

 \end{enumerate}
 

\begin{proof}   For notational simplicity we write $\rho(x) = \rho_{f_c}(x)$.  We know from Proposition~\ref{prop: well-defined}  that $\rho(x)$ is defined for all $x \in U_c$ so to prove (1) we must verify continuity of $\rho$.

Assume notation as in Definitions~\ref{defn: hat Y}. 
 Recall that $a_i$ is the rotation number of $f_c$
on $\partial_i U_c.$ By item (5) of Lemma~\ref{lem: Check U}, $\rho(y) = a_i$
for all $y \in \hat Y_i$.  By construction, there is a
sequence of primes $\{q_k\}$ tending to infinity such that $N_k =
V_i(q_k) \cup \hat Y_i$ is a nested sequence of $f_c$-invariant
neighborhoods of $\hat Y_i$  whose frontiers have rotation number
$p/q_k$ with $0 < p < q_k$.  For sufficiently large $k$, $p/ q_k \ne
a_0$ and we conclude that $\hat Y_i$ is a level set for $\rho_{f_c}$.
Also $\partial_i U_c$ can be identified with $\partial_i V_i(q_k)_c$  since $N_k$ is a neighborhood of $Y_i$ in both $cl(U)$ and $cl(V_i(q_k))$.
It follows that $a_i \in \rho(V_i(q_k))$ for all $k$.  Proposition~\ref{prop: well-defined}  implies that  if $r \in \rho(V_i(q_k))$   then $r$ is in the rotation interval   for the induced action of $f$ on 
 $V_i(q_k)_c$.   Since the length of this interval tends to zero as $k \to \infty$ (Corollary~\ref{cor: V(q)}),  the same is true for the length of $\rho(V_i(q_k))$.   This proves continuity of  $\rho$ at points of $Y_i$.

The level sets   $C(x)$ for $x \in \Check U$ are defined similarly.   We specify $Q = Q(x) $ by a series of largeness conditions.
By Lemma~\ref{lem: ends} (3) and the assumption that $x \in
\Check U$, we may assume that $x \not \in V_0(q) \cup
V_1(q)$ for $q \ge Q$.  In particular, $\omega(x) \subset
U$.  We may also assume that $\rho(x) \ne p/q$ for $q \ge Q$
and $0 < p < q$.  Lemma~\ref{lem: not in V} therefore
implies that $x$ is contained in some $V(q,x) \in \cA(q)$
which is essential by Lemma~\ref{lem: cA_q}. Since $x \not
\in V_0(q) \cup V_1(q)$, we have $\cl(V(q,x)) \subset U$.

  Define
 \[
C(x) = \bigcap_{q > Q} \cl( V(q,x)),
\]
where the intersection is over all primes $>Q.$

Given $q$ let $\delta$ be the minimum value of $|p/q - \rho(x)|$ for
$0 < p < q$.  If $q' >1/\delta$ then $\rho(V(q',x))$ does not contain
$p/q$ for $0 < p < q$ and so does not contain any points in the
frontier of $V(q,x)$.  It follows that $\cl(V(q',x)) \subset V(q,x)$.
We may therefore choose a sequence of primes $\{q_j\}$ tending to
infinity such that
\[
C(x) = \bigcap_{q_j} \cl( V(q_j,x))
\]
and
$$
 \cl(V(q_{j+1},x))   \subset V(q_j,x)
$$
for all $j$.  This proves that $C(x)$ is non-empty and does not depend on the choice of
$Q$ in its definition and so is well-defined.

 Item (1)  follows from the fact that  $|\rho(V(q,x))| \le 1/q$. 

If $y \notin C(x)$ then there exists $q
> Q$ such that $y \notin \cl(V(q,x)).$ The  frontier of
$V(q,x)$ separates $C(x)$ and $y$ and $q$ may be chosen so
that this frontier consists of points whose rotation number
is not equal to $\rho(x)$ so $y$ is not in the same
connected component as $x$ of the level set of $\rho$.  It
follows that $C(x)$ is a connected component of this level
set.  Since it is clear from the construction that $C(x)$ is $F$-invariant and  essential, we have proved (3).

If $\Fix(F)$ contains at least three points then $U$ has at least one non-singular end.
It then follows from Lemma~\ref{lem: U rot int} that $\rho(U)$ is non-trivial so 
$\rho_{f_c}$ is non-constant. This verifies (2) and completes the proof.  
\end{proof}

\section{Proof of Theorem~\ref{annulus with singularities}}

The proof of Theorem~\ref{annulus with singularities} is given at the end of this section following the statement and proofs of some preliminary lemmas.  Recall that $\mrA = \Int A$ and  that if $H \in \newDiffA$ then the homeomorphism $F:S^2 \to S^2$ obtained from $H$ by collapsing each component of $\partial A$ to a point satisfies   $F\in \newDiff$.   As throughout the paper, $\M = S^2 \setminus \Fix(F)$ and $f = F|_{\M}$.     We identify $\M$ with $A \setminus (\Fix(H) \cup \partial A)$ and $H|_{\M}$ with $f$.

\vspace{.1in}



\noindent{\bf  Theorem~\ref{annulus with singularities}} {\em For each $H \in \newDiffA$ with entropy zero, the rotation number $\rho_H(x)$ is defined and continuous at each $x \in A$.  }

\vspace{.1in}


We assume without loss that $H$, and hence $F$, has infinite order.

\begin{lemma} \label{same rotation number}Suppose that $H \in \newDiffA$ has entropy zero and  that $\Fix(H)$ contains at  least one point 
in $\mrA$.   Let $\cA$ be as in Theorem~\ref{thm: C(x)} applied to the element $F \in\newDiff$ corresponding to $H$.  
\begin{enumerate}
\item \label{item:essential} If $U \in \cA$ is essential 
 in $A$ then $\rho_H(x) =  \rho_{f_c}(x)$ for all $x \in U$.  If a component $\partial_0A$ of $\partial A$ is a frontier component of $U$, then $\rho_H$ is defined and continuous on a neighborhood of $\partial_0A$.
\item \label{item:inessential} If $U \in \cA$ is inessential  in $A$ then $\rho_H(x) =  0$ for all $x \in U$.
\end{enumerate}
\end{lemma}

\proof   We first consider the case that $U \in \cA$ is essential.  Theorem~\ref{thm: C(x)} (2) and Theorem~\ref{thm: translation interval} imply that the image of $\rho_{f_c}$ is a non-trivial interval. Suppose that  $B$ is a component   of a level set  of    $\rho_{f_c}$.  If $B$ is disjoint from   $\partial U_c$, then $B$ is   $H$-invariant and essential  by Theorem~\ref{thm: C(x)} (3).    
Since any such $B$ is contained in a closed  annulus in  $U$ that is essential in both $U$ and $A$,     Lemma~\ref{lem: 2ann} implies that $\rho_{f_c}|_B =   \rho_H|_B $.  It therefore suffices to prove the lemma for $B$  containing a component of $\partial U_c$.

Since $U$ is essential, $B$ corresponds to a singular end of $U$ if and only if  it corresponds to an end of $\mrA$ determined by a  component, say $\partial_0A$, of $\partial A$.  In this case,  Lemma~\ref{U is an annulus} (3) implies that $U \cup \partial_0A$ is a neighborhood of $\partial_0A$ in $A$. 
\ Since $\rho_{f_c}$ is not constant there is a core curve in $U$ which
separates $B$ from the end of $U$ which does not correspond to $B$.
Let $N$ be this curve together with the component
of its complement in $U$ which contains the end corresponding to $B$.
Note that $N$ is a neighborhood of an end of $U$ and also a neighborhood
of an end of $\mrA.$ The compactification of this end can be done in $N$, so
it is the same in $U_c$ and
in $A$.  Hence $\partial_0 A$ can be thought of as  a component of  $\partial U_c$ and $B$ can be thought of as a subset of $A$.  Since 
$N \cup \partial_0A$ is a closed  annulus in $U_c$ 
containing $B$ which is disjoint from the other component of $\partial U_c$, Lemma~\ref{lem: 2ann} implies that $\rho_H|_B=\rho_{f_c}|_B$. Since  $\rho_{f_c}$ is continuous, $\rho_H$ is defined and continuous on a neighborhood of $\partial_0A$.

We may therefore assume that $B$ corresponds to a non-singular end of $U$. Let $X_A$ be the component of the frontier of $U$ in $A$ determined by this non-singular end and let  $B_A  = X_A \cup \ (B\cap U)$. In other words, $B_A \subset A$ is obtained from $B\subset U_c$ by replacing a component of the frontier of $U$ in  $U_c$ with a component of the frontier of  $U$ in $A$.     Lemma~\ref{U is an annulus} (4) implies that $\rho_{f_c}|_B = 0$ so it suffices to show that $\rho_H|_{B_A} = 0$.     For reference below, we note that the remainder of the proof of \pref{item:essential} makes no use of the fact that $U$ is essential.

Let $\ti U_A \subset \ti A$ be the lift of $U$ to the universal (cyclic) cover $\ti A$ of $A$ and let $\ti X_A$ be the component of the frontier of $\ti U_A$ in $\ti A$  that projects to $X_A$.  Denote the full pre-image of $\Fix(H)$ by $\widetilde{\Fix(H)}$. We claim that there is a lift $\ti H : \ti A \to \ti A$ that fixes each point in $\ti X_A \cap \widetilde{\Fix(H)}$.  Up to isotopy rel $\Fix(H)$, $H$ is isotopic to a composition of Dehn twists along a finite set $\Sigma$ of disjoint simple closed curves  in $A \setminus \Fix(H)$. (See Section~\ref{sec: normal form}.)
 To prove the claim, it suffices to show that no two points in $\Fix(H) \cap X_A$  are separated from each other by an element   $\sigma$ of $\Sigma$.  

Let $M$ be the component of $\mrA \setminus \Fix(H)$ that contains $U$.  If $M$ is an annulus then $M=U$ and $X_A \subset  \Fix(H) \cup \partial A$.
In this case, $X_A \cap \Sigma= \emptyset$  so the claim is clear. We may therefore assume that $M$ has at least three ends.
Equip $M$ with a complete hyperbolic metric in which all isolated punctures are cusps, in which the core curve $\tau$ of $U$ is a geodesic and in which the  elements of $\Sigma$ are disjoint simple closed geodesics that have no transverse intersections with $\tau$.  If $\tau$ is not an element of $\Sigma$, let $C$ be the component of $M \setminus \Sigma$   that contains $\tau$.  Corollary~\ref{stays close} (1) implies that $U$ is contained in a bounded (as measured in the hyperbolic metric) neighborhood of $C$.   In particular,  all points in the intersection of $\Fix(H)$ with the closure of $U$ in $A$ are contained in the component of $A \setminus \Sigma$ that contains $C$.    Since any two such points can be connected by an arc in $A \setminus \Sigma$, the claim is proved.   If $\tau$ is   an element of $\Sigma$, let $C_1$ and $C_2$ be the components of $M \setminus \Sigma$ on either side of $\tau$.  Corollary~\ref{stays close} (2) implies that $U$ is contained in a bounded (as measured in the hyperbolic metric) neighborhood of $C_1 \cup C_2$.   Since $X_A$ is disjoint from $\tau$, we may assume that $X_A$  is contained in the closure of $C_1$ and the proof of the claim concludes as in the previous case.

 Identify $\ti A$ with $\R \times [0,1]$ and let $p_1 : \R \times [0,1] \to \R$ be projection onto the first coordinate.   Let $\ti B_A$ be the pre-image of $B_A$ and choose $\ti y \in \ti B_A$. For $\delta > 0$,   we say that {\em $k$ is $\delta$-good} if $p_1(\ti H^{k}(\ti y)) - p_1(\ti y) < k\delta$.     
We complete the proof of \pref{item:essential} by choosing $\epsilon > 0$ arbitrarily and showing that all sufficiently large $k$ are $\epsilon$-good.

    Lemma~\ref{more home domains} (3) implies that  for any neighborhood $W$ of $\Fix(H)$ there exists a positive integer $M$ so that  for all  $x \in X_A$,  we have $H^k(x) \in W$ for all but at most $M$ values of $k$.      We may therefore choose $K_1$ so that 
$$ p_1(\ti H^{K_1}(\ti x)) - p_1(\ti x) < K_1 \epsilon/2$$
for all $\ti x \in \ti X_A$ and hence for all $\ti x$ in a neighborhood $\ti V$ of $\ti X_A$.   Note that this inequality can be concatenated.  Thus, if $\ti H^{jK_1}(\ti x) \in\ti V$ for all $0 \le j \le J$ then $ p_1(\ti H^{JK_1}(\ti x)) - p_1(\ti x) < JK_1 \epsilon/2$.  In particular, if   the forward orbit of $\ti y$ is eventually contained in $\ti V$ then  all sufficiently large $k$ are $\epsilon$ good.    We may therefore assume that there exists arbitrarily large $k$ with $\ti H^k(\ti y) \not \in \ti V$.

There is a compact essential subannulus of $U$  whose lift to $\ti A$ contains $\ti B_A \cap (\ti A \setminus \ti V)$.  We may therefore assume that $p_1$ and the projection $p_1':\ti U_C \to \R$ used to define $\rho_{f_c}$ agree on $\ti B_A \cap (\ti A \setminus \ti V)$.  Since $\rho_{f_c}|B = 0$, we may assume after reducing the size of $\ti V$ if necessary, that there exists $K_2$ so that  $k$ is $\epsilon/2$-good whenever $k \ge K_2$ and $\ti H^k(\ti y)  \not \in  \ti V$.

Given arbitrary $k > K_2$, let $k'$ be the largest value between $k$ and $K_2$  such that $\ti H^k(\ti y)  \not \in  \ti V$ and let $m$ be the largest integer such that $l := K_2 + mK_1 < k$. Then $k'+mK_1$ is $\epsilon/2$-good, and $k-l$ is bounded by $K_1$.  It follows that $k$ is $\epsilon$-good for all sufficiently large $k$.  This completes the proof of  \pref{item:essential} 

 If $U$ is inessential then  the union of $U$ with one of the components of $A \setminus U$ is an open $H$-invariant disk.  If $B$ is a level set  of $\rho_{f_c}$ that does not contain a component of $\partial U_c$ then $B$  is contained in an $H$-invariant open disk  whose closure does not separate the boundary components of $A$.  It follows that the complete lift of this disk to $\ti A$ has bounded components and hence all points in the disk have $0$ rotation number.   The lemma therefore holds for all such $B$ and for the component of the frontier of $U$ in $A$ that is contained in the $H$-invariant disk.    We are therefore reduced to considering the level set $B$ corresponding to the other, necessarily non-singular, end of $U$.   The proof given above applies without change.	
 \endproof

\begin{lemma} \label{rotating boundary}   Suppose that $H \in \newDiffA$ has entropy zero and  that $\Fix(H)$ contains at  least one point 
in $\mrA$.   Let $\cA$ be as in Theorem~\ref{thm: C(x)} applied to the element $F \in\newDiff$ corresponding to $H$. 
  If $\partial_0A$ is a component of $\partial A$ and $\rho_H|_{\partial _0 A} \ne 0$ then $\partial_0A$ is a frontier component of some essential $U \in \cA$.
\end{lemma}

\proof     Since $\Fix(H) \cap \partial_0A = \emptyset$, there is a component $M$   of $\M$ that contains a deleted neighborhood $V$ of $\partial_0A$. 
 If $M$ is an annulus, then it is an element of $\cA$ and we are done.  We may therefore assume that $M$ has at least three ends.  Choose a component $\ti V$ of the full pre-image of $V$ in the universal cover   of $M$, let $T$ be the parabolic covering translation that preserves $\ti V$ and  let $P \in \sinfty $ be the unique fixed point of $T$.  After shrinking $V$ if necessary, we may assume that  $V$ is covered by a   finite collection of free disks, say $k$ free disks.   If $x \in V$ is sufficiently close to $\partial_0A$ then $f^j(x) \in V$ for all  $0 \le j \le k$ and so there exists $0 \le j_1\le j_2 \le k$ such that  $f^{j_1}(x)$ and  $f^{j_2}(x)$ belong to the same free disk.   It follows that some iterate of $T$ is a near cycle for all points in $\ti V$ that are sufficiently close to $P$.  The domain containing $P$ in its closure is a home domain for all such points by Corollary~\ref{stays close} and is obviously $T$-invariant.  Lemma~\ref{isolated puncture} therefore implies that $T$ is the covering translation associated to some $\ti U \in \ti A$ and Lemma~\ref{U is an annulus} (3) implies that $U$ contains a deleted neighborhood of $\partial_0A$.
\endproof

\begin{lemma} \label{short intervals} Suppose that $H :A \to A$ is a homeomorphism of the closed annulus and that $U_1, U_2,\ldots$ is an infinite sequence of disjoint invariant open essential annuli in $A$.   Let $H_i :U_i^c \to U_i^c$ be the annular compactification (see Notation~\ref{notn: annular comp})   of   $H|_{U_i}$ and let   $L_i$ be the length of the forward translation interval (see Definition~\ref{translation number}) of some (any) lift of $H_i$.  Then $L_i \to 0.
$\end{lemma}

\proof      If the lemma fails then, after passing to a subsequence, there  exists  $\epsilon > 0$ such that $ L_i> \epsilon$ for all $i$.   After replacing $H$ with an iterate, we may assume that   $ L_i >2$ for all $i$.  Since each $U_i$ contains points with rotation number $1/2$, we may choose $y_i \in U_i$ such that $H(y_i)$ is antipodal to $y_i$ in the $S^1 \times [0,1]$ structure of $A$.   After passing to a further subsequence we may assume that  $y_i  \to x$  for some non-fixed $x \in A$ and that $U_{i+1}$ separates $U_i$ from $x$ for all $i$.    Choose a free disc neighborhood of $x$ and an  arc $\nu$ in this free disk that begins at $x$  and ends at a point in some $\fr (U_{i_0})$.  For all $i > i_0$,  there is a subarc $\nu_i$ with interior in $U_i$ and with endpoints on both components of $\fr(U_i)$.   
Transporting this to  $H_i :U_i ^c \to U_i ^c$, there is an arc $\nu_i $ with interior in $U_i $,  with endpoints on distinct components of $\partial U_i ^c$ and satisfying $H (\nu_i  ) \cap \nu_i  = \emptyset$.   

We now fix such an $i$ and drop the $i$ subscript from the notation, renaming
$H_i :U_i ^c \to U_i ^c$ by $h :A \to A$ and the arc $\nu_i$ by $\nu$.  Identify $\ti A$ with $\R \times [0,1]$ and let $p_1 : \R \times [0,1] \to \R$ be projection onto the first coordinate.   Let $T$ be the covering translation $T(r,s) = (r+1,s)$, let $\ti \nu$ be a lift of $\nu$ and let $\ti h$ be the lift  of $H$ such that $\ti H(\ti \nu)$ is contained in the interior of the region bounded by  $\ti \nu$  and $T(\ti \nu)$.  Then $\ti H^2(\ti \nu)$ is contained in the interior of the region bounded by $\ti h(\ti \nu)$ and  $\ti h(T(\ti \nu))=T(\ti h(\ti \nu))$ and so  is also  contained in the interior of the region bounded by  $\ti \nu$   and  $T^2(\ti \nu)$.  Continuing in this manner we conclude that $\ti h^k(\ti \nu)$ is  contained in the interior of the region bounded by  $\ti \nu$  and $T^k(\ti \nu)$, which implies that that forward translation interval for $\ti h$ has length at most one.  This contradiction completes the proof.
\endproof

\noindent {\bf Proof  of Theorem~\ref{annulus with singularities}} \ \  Let $F \in \newDiff$ be the  element $F \in\newDiff$ corresponding to $H$.  If $H|_{\mrA}$ has no interior periodic points then    every point in $A$ has the same irrational rotation number by Theorem~\ref{thm: translation interval}.  We may therefore assume that  $F$ satisfies the hypotheses of Theorems~\ref{thm: annuli} and \ref{thm: C(x)}.    Let $\cA$ be the set of annuli produced by those theorems,   let $\U = \cup_{U \in \cA}U$, let $\U_e$ be the union of all essential elements of $\cA$ and let $\U_e'$ be the union of $\U_e$  with any components of $\partial A$ for which $\rho_F$ is non-zero.  After replacing $H$ with an iterate, we may assume that $H$ has at least one interior fixed point.  Lemma~\ref{rotating boundary} implies that $\U_e'$ is an open subset of $A$.

     Each $x \in  A \setminus \U_e'$   satisfies one of the following.
\begin{itemize}
\item $x$ is contained in a components of $\partial A$ with zero rotation number.
\item $x \in \Fix(H)$.
\item $x \in \mrA \setminus \U$.
\item $x$ is contained in an inessential element of $A$.
\end{itemize}
In all of these cases $\rho_H(x) = 0$.  This is obvious for the first two and follows from Theorem~\ref{thm: annuli} (3) for the third and  Lemma~\ref{same rotation number} \pref{item:inessential} for the fourth.  

To complete the proof of  Theorem~\ref{annulus with singularities}, it  suffices to show that $\rho_H$ is defined and continuous on the closure of $\U'_e$.     By Theorem~\ref{thm: C(x)} (1), $\rho_H$ is defined and continuous on $\U_e$.  It is obvious that $\rho_H$ is defined on $\partial A$.  Lemma~\ref{same rotation number} \pref{item:essential} implies that $\rho_H$ is continuous at points in a component of $\partial A$ with  non-zero rotation number.  
It remains only to show that if $x_i \to x$ where $x_i \in \U_e$ and $x \in \fr(\U_e)$ does not belong to a boundary component with non-zero rotation number,   then $\rho_H(x_i) \to 0$.  By   Lemma~\ref{short intervals} we may assume that the $x_i$'s belong to a single essential $U \in A$.  Since $\rho_H(x_i) = \rho_{f_c}(x_i)$ and $\rho_{f_c}$ is continuous, it suffices to show that $\rho_{f_c}|_{\partial_0U_c} =0$ where $\partial_0U_c$ is the component of $\partial U_c$ to which the $x_i$'s converge.   This follows from Lemma~\ref{U is an annulus} (4) if $x$ is not contained in $\partial A$ and is obvious if $x \in \partial A$ because we have excluded components of $\partial A$ with non-zero rotation number.
\endproof

\section{The proof of Theorem~\ref{thm: ent0}.}

Recall that a group $G$ is called {\em indicable} if 
there is a non-trivial homomorphism $\phi: G \to \Z.$
We say $G$ is {\em virtually indicable} if it has
a finite index subgroup which is indicable.

\begin{prop}\label{prop: +ent}
Suppose that $S$ is a surface and 
$F: S \to S$ is $C^{1+\epsilon}$ and has positive
topological entropy.  Then every finitely generated 
infinite subgroup $H$ of the centralizer $Z(F)$of $F$  
is virtually indicable and has a finite index subgroup that 
has a global fixed point.
\end{prop}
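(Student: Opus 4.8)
\textbf{Proof proposal for Proposition~\ref{prop: +ent}.}

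The plan is to use the positivity of entropy to produce, via Katok's theorem, a hyperbolic periodic orbit whose stable/unstable manifolds form a transverse homoclinic point, and hence a horseshoe $\Lambda$ for some iterate $F^N$. Since $H$ commutes with $F$, the key structural feature is that the dynamically defined invariant objects attached to $F$ are essentially canonical and so must be respected, up to finite ambiguity, by every element of $H$. First I would recall that by a theorem of Katok (for $C^{1+\epsilon}$ surface diffeomorphisms, positive entropy implies the existence of a horseshoe, and in particular a hyperbolic periodic point $p$ with a transverse homoclinic intersection), $F$ has a hyperbolic periodic orbit $\orb(p)$. Replacing $F$ by a suitable iterate $F^N$ is harmless for the conclusion about $H$: a global fixed point for $F^N$ on a finite-index subgroup, together with indicability of a finite-index subgroup, is exactly what we want, and $Z(F) \subset Z(F^N)$. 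So I would work with $F^N$ fixing a hyperbolic point $p$.

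Next I would argue that $H$ permutes a finite canonical set of objects associated to $p$. The cleanest route: the set of hyperbolic periodic points of $F$ of a given (small) period with a prescribed local index data is $F$-invariant and finite, and because $h \in H$ commutes with $F$, $h$ carries periodic points to periodic points preserving period, hyperbolicity, and index; moreover $h$ maps $W^s$ and $W^u$ of a hyperbolic periodic orbit to the corresponding invariant manifolds of its image. Among all hyperbolic periodic orbits arising in the horseshoe $\Lambda$ one can isolate a finite $F$-invariant set $\mathcal P$ (e.g. the periodic points of minimal period in $\Lambda$, or a single orbit singled out by an extremal property such as minimal period and maximal expansion rate) on which $H$ acts. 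This gives a homomorphism $H \to \mathrm{Sym}(\mathcal P)$; let $H_0$ be its kernel, a finite-index subgroup of $H$. Every $h \in H_0$ fixes the periodic orbit $\orb(p)$ pointwise, hence fixes $p$: so $H_0$ has a global fixed point $p$.

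Having a global fixed point, I would then apply the Thurston stability theorem (\cite{thurston:stability}, or Theorem 3.4 of \cite{franks:distortion}): the derivatives $DF|_p$ give a homomorphism $H_0 \to GL(2,\R)$, and if this has infinite image we are done (compose with $\det$ or with an eigenvalue character to land in an infinite abelian group, then in $\Z$ after passing to a further finite-index subgroup if necessary). If instead the image of $H_0 \to GL(2,\R)$ is finite, pass to the finite-index kernel $H_1$, whose elements are tangent to the identity at $p$; since $H$ is infinite, $H_1$ is infinite, and a nontrivial group of germs of diffeomorphisms at a fixed point that is tangent to the identity admits, by the Thurston stability argument, a nontrivial homomorphism to $\R$, whose image is finitely generated (as $H_1$ is), hence a nontrivial homomorphism to $\Z$. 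In all cases $H$ has a finite-index indicable subgroup, so $H$ is virtually indicable, and $H_0$ (a finite-index subgroup) has a global fixed point.

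The main obstacle I anticipate is the second step: pinning down a genuinely canonical finite $F$-invariant set attached to the horseshoe that $H$ must permute, with enough rigidity that the permutation kernel actually fixes points (not just fixes a finite set setwise). Katok's theorem gives abundant periodic orbits but one must choose the invariant collection carefully — using index, period, and Lyapunov data — so that it is both finite and $h$-equivariant for every $h$ commuting with $F$; handling the possibility that such extremal orbits are not unique (so $H$ only permutes them) is exactly why one passes to a finite-index subgroup, but one must check the resulting fixed point is common to all of $H_0$. The remaining steps are standard applications of Thurston stability.
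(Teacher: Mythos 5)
Your overall strategy coincides with the paper's (Katok's theorem to get a hyperbolic periodic point, a finite-index stabilizer, then derivative homomorphisms falling back to Thurston stability), but the step you flag as the ``main obstacle'' is genuinely the soft spot in what you wrote, and the paper resolves it more directly than you attempt. Your candidate choices of $\mathcal P$ defined in terms of the horseshoe $\Lambda$ (periodic orbits of minimal period in $\Lambda$, etc.) are not manifestly $H$-invariant, because $\Lambda$ is a horseshoe for $F^N$ and there is no reason $h(\Lambda)=\Lambda$ for $h\in Z(F)$. The paper sidesteps the need for any canonical set: take the $H$-orbit of $p$ itself. Since $h$ commutes with $F$ and $F^q(p)=p$, the point $h(p)$ is a fixed point of $F^q$ and $DF^q_{h(p)} = Dh_p\, DF^q_p\, (Dh_p)^{-1}$ is conjugate to $DF^q_p$, so it is hyperbolic with the same eigenvalues. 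If the $H$-orbit of $p$ were infinite, it would accumulate at some $z$, and by continuity of $DF^q$ the point $z$ would again be a hyperbolic fixed point of $F^q$ with those eigenvalues --- but hyperbolic fixed points are isolated, a contradiction. Thus the orbit is finite and the stabilizer $H_0$ of $p$ has finite index. The derivative step then becomes cleaner than your ``compose with $\det$ or an eigenvalue character'': since $Dh_p$ commutes with the hyperbolic $DF^q_p$, it preserves its two eigenlines; after passing to a further finite-index subgroup to make the eigenvalues of each $Dh_p$ positive, the logarithm of the eigenvalue on each eigenline gives a homomorphism $H_0\to\R$, and if both vanish identically one lands in the Thurston stability case exactly as you planned. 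So the idea is right, but the gap you worried about is real in the form you stated it, and the repair is to discard the horseshoe and argue finiteness of the orbit of $p$ via the isolated-hyperbolic-point observation.
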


\begin{proof}
  A result of Katok \cite{katok:horseshoe} asserts that $F^q$ has a
  hyperbolic  saddle fixed point $p$ for some $q\ge 1.$  The orbit of $p$ under
  $H$ consists of hyperbolic fixed points of $F^q$ at which the
  derivative of $DF^q$ has the same eigenvalues as $DF_p^q.$ If the
  $H$ orbit of $p$ were infinite, continuity of the derivative would
  imply that at any limit point of this orbit $DF^q$ would have the
  same eigenvalues and in particular would be hyperbolic.  But this is
  impossible since hyperbolic fixed points are isolated.  We conclude
  the orbit of $p$ under $H$ is finite and hence that the subgroup
  $H_0$ of $H$ that fixes $p$ has finite index.

  After passing to a further finite index subgroup we may assume that
  $Dh_p$ has positive eigenvalues and the same eigenspaces as $DF_p$
  for each $h \in H_0$.  For each eigenspace the function which
  assigns to $h$ the log of the eigenvalue of $Dh_p$ on that
  eigenspace is a homomorphism from $H_0$ to $\R$.  If this is
  non-trivial we are done.  Otherwise both eigenvalues are $1$ for
  each $Dh_p$.  Hence in the appropriate basis
  \[
  Dh_p =
  \begin{pmatrix}
    1 & r_h\\
    0 & 1
  \end{pmatrix}
  \]
  for some $r_h \in \R$.  The function $h \mapsto r_h$ defines a
  homomorphism from $H_0$ to $\R$, so we are done unless $r_h = 0$ for
  all $h\in H_0.$ But in this latter case $Dh_p = I$ for all $h \in H_0$
  so we may apply the Thurston stability theorem
  (\cite{thurston:stability}; see also Theorem 3.4 of
  \cite{franks:distortion}) to conclude there is a non-trivial
  homomorphism from $H_0$ to $\R.$   
\end{proof}

\begin{exs}\label{example}
{\rm Let $S = S^2$ be the unit sphere in $\R^3$.  Let $F: S \to S$
be a diffeomorphism whose restriction to each of the level sets
$z =c$ is a rotation of that circle and with the property that
$F = id$ for all points $(x,y,z)$ with $|z| \ge 3/4.$ We assume
that $F$ is not the identity on the equator $z = 0.$ Let
$g: S \to S$ be a rotation about the $z$-axis by an angle which
is an irrational multiple of $\pi.$ Let $h:S \to S$ be a 
diffeomorphism supported in the interior of the disks $|z| > 3/4$
with the property that $h$ preserves area and the $h$-orbits of $(0,0,1)$
and $(0,0,-1)$ are infinite. Let $G$ be the group of all rotations
about the $z$-axis through angles which are rational multiples
of $\pi.$}
\begin{enumerate}
\item  The group $H$ generated by $g$ and $h$ lies in the
centralizer $Z(F)$ of $F$ but has no finite index subgroup
with a global fixed point.
\item  The group $G$ is a subgroup of $Z(g)$. Every element
of $G$ has finite order so there are no
non-trivial homomorphisms from any subgroup of
$G$ to $\R$ and hence $G$ is not virtually indicable.
\end{enumerate}
\end{exs}


 The first example above shows that we cannot generalize 
Proposition~\ref{prop: +ent} to the centralizer
of a diffeomorphism $F$ with zero entropy, even in the 
group of area preserving diffeomorphisms.  The second example
shows the necessity of the hypothesis of finitely generated in
the following.

\bigskip
\noindent
{\bf Theorem~\ref{thm: ent0}.}
{\em If $F \in \Diff_{\mu}(S^2)$ has infinite order then each finitely
generated infinite subgroup $H$ of $Z(F)$ is virtually indicable.}

\begin{proof}
The case that $F$ has positive entropy is covered
by Proposition~\ref{prop: +ent} so we need only
consider the case when $F$ has entropy zero.
We assume that every finite index subgroup of $H$ 
admits only the trivial homomorphism to $\R$ and
show this leads to a contradiction.

Assuming for now that $\Per(F)$ contains at least three points,  we may apply Theorem~\ref{thm: annuli} to $F$ and its iterates obtaining the families $\cA(q)$   of $F^q$-invariant annuli 
guaranteed by that theorem.   Since there is no loss in replacing  $F$ by an iterate, we may assume that $F$ has at least three fixed points.    Choose once and for all an element $U \in\cA = \cA(1)$.   Item (2) of Theorem \ref{thm: C(x)} implies that $f_c :U_c \to U_c$ has a non-trivial rotation interval so by Theorem~\ref{thm: translation interval}  we may choose $x \in U$   such that $\rho_{f_c}(x) =
\lambda$  is irrational and not equal to the rotation number of either component of $\partial U_c$.
   
   By Corollary~\ref{second centralizer invariance}, each $h \in H$ permutes the elements of 
$\cA.$ In particular, for any $h \in H$ the open annuli $U$ and
$h(U)$ must be disjoint or equal.
Since elements of $H$ preserve area the $H$ orbit of the open set $U$
must be finite.  We let $H'$ be the finite index subgroup of $H$ which
leaves $U$ invariant.

Let $C(x)$ be the component of the level set of $\rho_{f_c}$ that contains $x$.  Since  $h \in H'$  preserves level sets of $\rho_{f_c}$, $h(C(x))$ is either equal to or disjoint from $C(x)$.   By Theorem \ref{thm: C(x)} (3), $C(x)$ is essential in $U$.  Since $h$ preserves area, it cannot move $C(x)$ off of itself and we conclude that $C(x)$ is $h$-invariant.  

Choose a sequence of primes  $\{q_n\}$ tending to infinity.
By Corollary~\ref{cor: ess V}, for $n$ sufficiently large, $x \in V_n$ 
for some essential $V_n \in
\cA(q_n)$.  Lemma~\ref{lem: not in V}  implies that $C(x)$ is disjoint
from the frontier of $V_n$ and hence contained in $V_n$.  Since $h$
preserves $C(x)$ and permutes the elements of $\cA(q_n)$, it follows
that $V_n$ is $h$-invariant.

Choose one component, $V_n^+,$ of the complement of $C(x)$ 
in $V_n$ in such
a way that $V_{n+1}^+ \subset V_n^+$, i.e., always choose the component
on the same side of $C(x).$  Let
$\bar A_n$  denote $V_n^+$ with its ends compactified 
by the prime end compactification.
Let $\partial^+ A$ denote the
circle of prime ends added to the end corresponding to $C(x).$ The  
natural identification of these circles for different
$n$ is  reflected   in the notation which is independent of $n.$
Let $A_n = V_n^+ \cup \partial^+ A,$ i.e. $V_n^+$ with only
one end (the one corresponding to $C(x)$) compactified.  Then
$A_{n+1} \subset A_n$ and 
\[
\bigcap_{n>0} A_n = \partial^+ A.
\]
Let $\bar f: \bar A_n \to \bar A_n$ 
and $\bar h: \bar A_n \to \bar A_n$ denote the natural extensions of
$F$ and $h \in H'$ to $\bar A_n.$

The rotation number $\rho(\bar f|_{\partial^+ A})$ of the 
restriction of $\bar f$ to
$\partial^+ A$ must be $\lambda.$ This is because if it were not
and $p/q$ is between $\rho(\bar f|_{\partial^+  A})$ and $\lambda$ then
by Theorem~\ref{thm: translation interval} applied to $ \bar A_n$ 
there would be periodic points in the
interior of $\bar A_n \subset V_n$ with rotation number $p/q$ for all $n$, a
contradiction.

For each $n$ there is a homomorphism $\phi_n: H' \to S^1 = \R/\Z$
given by $h \mapsto \rho_\mu(h|_{\bar A_n})$ where $\rho_\mu(h|_{\bar A_n})$
denotes the mean rotation number of $h$ on the annulus $\bar A_n$
(see Definition~\ref{defn: mean rot}).
Let $H''$ denote the subgroup of $H'$ which is the kernel of the
canonical homomorphism from $H'$ to its abelianization.  Then
$\rho_\mu(h|_{\bar A_n}) = 0$ 
for all $h \in H''.$ Also the abelianization
of $H'$ must be finite since this is one of the equivalent conditions 
for $H'$ not to be indicable.  Therefore $H''$ has finite index in $H'$ (and
hence in $H$). 

Since $\rho_\mu(\bar h|_{\bar A_n})=0$ for each $n$ and 
each $h \in H''$ 
we conclude from Proposition~\ref{prop: mean rot}
that each $\bar h$ has a fixed point $x_n$ in 
 the interior of $\bar A_n$, i.e. in $A_n^+$, for all $n$.
Let $B$ be the closed disk which is the union
of  $\partial^+ A$ and the component of the complement
of $C(x)$ in $S^2$ which contains $V_n^+.$ 
Then $cl_B(V_n^+)$ contains
a fixed point $x_n$  of $\bar h.$ 

Taking the limit of a subsequence we note that for each $h \in H''$
there is a fixed point of $\bar h$ in $\partial^+ A$ .  But the rotation
number of $\bar f$ on $\partial^+ A$ is irrational so $\bar f$ has a
unique invariant minimal set which is the omega limit set $\omega(x,
\bar f)$ for each $x \in \partial^+ A$.  Since $\bar f$ preserves
$\Fix(\bar h)$ we conclude this minimal set is in $\Fix(\bar h)$.
Since the minimal set depends only on $\bar f$ and not $\bar h$
we conclude that the this minimal set is in $\Fix(\bar h)$ for
every $ h \in H''$.

We have found a prime end (in fact infinitely many ) in 
$\partial^+ A$ which is fixed by $\bar h$  for 
every $ h \in H''$. It follows from
Corollary~\ref{cor: frontier gfp} that there is a point of $\Fix(H'')$
in $cl(V_n^+)$ for each $n$.  Taking the limit of a subsequence again
we find a point of $\Fix(H'')$ which lies in $\bigcap_n cl(V_n^+) =C(x).$

Choosing an infinite collection $\{\lambda_i\}$ of distinct irrationals in
the rotation interval of $F|_U$ and repeating the construction
we obtain an infinite collection of global fixed points for 
$H''$ with distinct rotation numbers for $F|_U.$  They must
possess a limit point in $\Fix(H'').$

Proposition~(3.1) of \cite{fh:morita} asserts that if there is an
accumulation point of $\Fix(H'')$ then there is a homomorphism from
$H''$ to $\R$.  So $H''$ is indicable. 

 We are left with addressing the special case that $\Per(F)$ contains
only two points. Since $F$ cannot have an empty fixed point set
we conclude $\Per(F) = \Fix(F)$ and this set contains two points. 
If $H = Z(F)$ is the centralizer of $F$ then it
has an index two subgroup $H'$ which fixes both points and hence
the annulus $U = S^2 \setminus \Per(F)$ is invariant under $H'$ with
each element isotopic to the identity.
Let $f = F|_U$. Then $\rho_f(U)$ consists of a single point,
by Theorem~\ref{thm: translation interval}.  
By Proposition~\ref{prop: +meas}
(applied to iterates of $f$) it cannot consist of a single rational
in $\R/\Z.$ We conclude that $\rho_f(U)$ contains a single irrational
number $\lambda.$

Blowing up the two fixed points of
$F$ we obtain the annular compactification  
homeomorphism $f_c : U_c \to U_c.$
The restriction to the boundary component corresponding
to the fixed point $x$ is conjugate to the projectivization of $DF_x$.
It must have rotation number $\lambda$ since otherwise there would be
 additional periodic points in $U$  by Theorem~\ref{thm: translation interval}.

This map on the boundary circle is the projectivization of an element
of $SL(2,\R)$, i.e. a fractional linear transformation.  Since its
rotation number is irrational it
is an irrational rotation in appropriately chosen coordinates.  It follows that the restrictions of
blow-ups of elements of $H'$ to this circle are rotations, since the
centralizer of an irrational rotation consists of rotations.
Therefore this group of restrictions  is abelian.
It is finitely generated because it is the image under a homomorphism of a finitely generated
group. Since it admits no non-trivial homomorphisms to
$\R$ and is finitely generated it must be finite.  
We conclude there is a finite index subgroup
$H''$ of $H'$ whose restrictions to the boundary circle are all the
identity. In other words, the projectivization of $Dh_x$ is the
identity for all $h \in H''$. Since there are no non-trivial
homomorphisms from $H''$ to $\R$, $\det(Dh_x) =1$.  The Thurston
stability theorem (\cite{thurston:stability}) therefore produces a
non-trivial homomorphisms from $H''$ to $\R$ and we have arrived at
the desired contradiction.
\end{proof}

 We now provide the proof of Corollary~\ref{no actions}.

\bigskip
\noindent
{\bf Corollary~\ref{no actions}.}  
{\em 
If $\Sigma_g$ is the closed orientable surface of genus $g \ge 2$ then at least one of the following holds.
\begin{enumerate}
\item  No finite index subgroup of $\mcg(\Sigma_g)$ acts faithfully on $S^2$ by area preserving diffeomorphisms.
\item For all \  $1 \le k \le g-1$,  there is an indicable finite index subgroup $\Gamma$ of  the bounded mapping class group $\mcg(S_k,\partial S_k)$ where $S_k$ is the  surface  with genus $k$ and connected non-empty boundary.
\end{enumerate}
}

\begin{proof}
We assume that (1) fails, i.e., that there is a finite index subgroup
$G$ of $\mcg(\Sigma_g)$ which acts faithfully on $S^2$ by area
preserving diffeomorphisms, and show that this implies (2).

Suppose \ $1 \le k \le g-1$ and $S$ is the compact surface with
genus $k$ and a connected non-empty boundary, $\partial S.$ 
We assume $S$ is embedded in  $\Sigma_g$ with $\partial S$ a separating
closed curve and let $S'$ be the closure of the complement of $S$, 
a surface with genus $g -k$ and boundary $\partial S.$
There is a natural embedding of $\mcg(S, \partial S)$
into $\mcg(\Sigma_g)$ obtained
by extending a representative of an element of $\mcg(S, \partial S)$ to 
all of $\Sigma_g$ by letting it be the identity on the complement
of $S$. Similarly there is a natural embedding of $\mcg(S', \partial S')$
into $\mcg(\Sigma_g)$.  If $\Gamma_0$ and $\Gamma_0'$ are the images of
these two embeddings it is clear that every element of $\Gamma_0$ commutes
with every element of $\Gamma_0'$ since they have representatives in
$\Diff(\Sigma_g)$ which commute.

We let $\Gamma_1 = \Gamma_0 \cap G$ and $\Gamma_1' = \Gamma_0' \cap
G$.  Since $\Gamma_1'$ has finite index in $\mcg(S', \partial S)$ it
contains an element $\gamma$ of infinite order.  Suppose $\phi: G \to
\Diff_\mu(S^2)$ is the injective homomorphism defining the action of
$G$.  Let $F = \phi(\gamma)$.  Then $\phi(\Gamma_1)$ is in the
centralizer $Z(F).$ According to Theorem~\ref{thm: ent0}, $\Gamma_1$
is virtually indicable.  Therefore there is an indicable $\Gamma$ of
finite index in $\Gamma_1$.
\end{proof}

\bibliographystyle{plain}

\end{document}